\documentclass{article}
\addtolength{\textwidth}{2.2cm}
\addtolength{\hoffset}{-1.1cm}
\addtolength{\textheight}{0.4cm}
\addtolength{\voffset}{-0,2cm}

\usepackage{amssymb}
\usepackage{amsthm}
\usepackage{ifthen,amsfonts,graphicx,latexsym,algorithm,algorithmic,url}
\usepackage{amsmath,color}
\usepackage{verbatim}
\usepackage[latin1]{inputenc}
\usepackage[english]{babel}
\usepackage{esint}
\usepackage{bbm}

\usepackage{tcolorbox}
\usepackage{dsfont}
\usepackage{tikz}
\usepackage{enumitem}

\numberwithin{equation}{section}

\newtheorem{theorem}{Theorem}[section]
\newtheorem{lemma}[theorem]{Lemma}
\newtheorem{proposition}[theorem]{Proposition}
\newtheorem{corollary}[theorem]{Corollary}

\newtheorem{defi}[theorem]{Definition}
\theoremstyle{plain}

\theoremstyle{remark}

\def\RRn{\mathbb{R}^n}
\def\RR{\mathbb{R}}
\def\RRN{\mathbb{R}^N}
\def\ZZ{\mathbb{Z}}
\def\ZZN{\mathbb{Z}^N}
\def\RRnm{\mathbb{R}^{n \times m}}

\def\RRdn{\mathbb{R}^{d \times n}}

\def\LLp{\LL^p}
\def\LLinfty{\LL^{\infty}}

\def\dd{\, \mathrm{d}}

\def\grad{\nabla}
\def\weakConv{\rightharpoonup}
\def\W1p{\mathrm{W}^{1,p}}
\def\Wm1p{\mathrm{W}^{-1,p}}
\def\LL{\mathrm{L}}
\def\WW{\mathrm{W}}
\def\naturals{\mathbb{N}}

\def\LpS0{\LL^p_{S,0}}
\def\LpS{\LL^p_S}

\def\calM{\mathcal{M}}
\def\calE{\mathcal{E}_p}

\def\indyk{\mathds{1}}

\def\intOmega{\int_{\Omega}}
\def\intRRnm{\int_{\RRnm}}

\def\scale{\odot}

\def\Probability{\mathcal{P}}
\def\projection{\mathrm{P}_{\ba}}

\def\weaksConv{\overset{\ast}{\rightharpoonup}}

\def\LebesgueN{\mathcal{L}^N}

\def\hp{\mathbb{H}^p}
\def\hpn{\mathbb{H}^p_0}

\newcommand{\ggrad}{\ensuremath{\nabla_{\ba}}}
\newcommand{\fullgrad}{\widetilde{\ggrad}}
\newcommand{\lowgrad}{\widehat{\ggrad}}

\def\WWap{\mathrm{W}^{S,p}}
\def\WWapn{\mathrm{W}^{S,p}_0}
\def\Ccinfty{C_c^{\infty}}

\def\ba{\mathbf{a}}

\def\intRRN{\int_{\RRN}}
\def\QQ{\mathcal{Q}}

\def\overlineF{\overline{F}}
\def\translation{\delta}

\def\gm{\mathcal{G}_M}

\def\II{\mathrm{I}}
\def\ApgradConv{\to_{\ba\text{-}p}}

\DeclareMathOperator{\im}{Im}
\DeclareMathOperator{\id}{Id}
\renewcommand{\phi}{\varphi}
\renewcommand{\epsilon}{\varepsilon}
\renewcommand{\geq}{\geqslant}
\renewcommand{\leq}{\leqslant}

\def\WWS2{\mathrm{W}^{S,2}}
\def\WWS2n{\mathrm{W}^{S,2}_0}

\newcommand{\measurerestr}{%
  \,\raisebox{-.127ex}{\reflectbox{\rotatebox[origin=br]{-90}{$\lnot$}}}\,%
}

\def\Xint#1{\mathchoice
{\XXint\displaystyle\textstyle{#1}}%
{\XXint\textstyle\scriptstyle{#1}}%
{\XXint\scriptstyle\scriptscriptstyle{#1}}%
{\XXint\scriptscriptstyle\scriptscriptstyle{#1}}%
\!\int}
\def\XXint#1#2#3{{\setbox0=\hbox{$#1{#2#3}{\int}$ }
\vcenter{\hbox{$#2#3$ }}\kern-.6\wd0}}

\def\dashint{\Xint-}

\renewcommand{\phi}{\varphi}
\renewcommand{\epsilon}{\varepsilon}
\renewcommand{\geq}{\geqslant}
\renewcommand{\leq}{\leqslant}

\def\RRn{\mathbb{R}^n}
\def\RR{\mathbb{R}}
\def\RRN{\mathbb{R}^N}
\def\ZZ{\mathbb{Z}}
\def\ZZN{\mathbb{Z}^N}
\def\multi-indices{\mathbb{Z}^N_+}
\def\RRnm{\mathbb{R}^{n \times m}}

\def\naturals{\mathbb{N}}

\def\LL{\mathrm{L}}
\def\LLp{\LL^p}
\def\LLq{\LL^q}
\def\LLinfty{\LL^{\infty}}

\def\Ccinfty{C_c^{\infty}}

\def\ba{\mathbf{a}}
\def\bamo{\mathbf{a}^{-1}}

\def\WW{\mathrm{W}}
\def\WWap{\WW^{\ba,p}}
\def\WWapg{\WW^{\ba,p}_g}
\def\WWapn{\WW^{\ba,p}_0}
\def\WWaq{\WW^{\ba,q}}
\def\WWaqg{\WW^{\ba,q}_g}

\def\WWat{\WW^{\ba,2}}

\def\dd{\, \mathrm{d}}
\def\grad{\nabla}
\def\weakConv{\rightharpoonup}
\def\scale{\odot}

\def\intOmega{\int_{\Omega}}
\def\intRRnm{\int_{\RRnm}}
\def\intRRN{\int_{\RRN}}

\def\Qtaui{Q_{\tau,i}}

\def\Ptaui{P_{\tau,i}}
\def\etataui{\eta_{\tau,i}}

\def\Fun{\II}
\def\thetaqc{\theta^{\text{qc}}}

\def\og{\leavevmode\raise.3ex\hbox{$\scriptscriptstyle\langle\!\langle$~}}
\def\fg{\leavevmode\raise.3ex\hbox{~$\!\scriptscriptstyle\,\rangle\!\rangle$}}

\vspace{9mm}
\title{Existence of minimisers of variational problems posed in spaces of mixed smoothness}
\vspace{9mm}
\author{Adam Prosinski \thanks{Carnegie Mellon University {\tt aprosins@andrew.cmu.edu} }}
\vspace{9mm}

\begin{document}
\maketitle

\abstract{
The present work constitutes a first step towards establishing a systematic framework for treating variational problems that depend on a given input function through a mixture of its derivatives of different orders in different directions. For a fixed vector $\ba := (a_1, \ldots, a_N) \in \naturals^N$ and  $u \colon \RRN \supset \Omega \to \RRn$ we denote by $\ggrad u := (\partial^{\alpha} u)_{\langle \alpha, \bamo \rangle = 1}$ the matrix whose $i$-th row is composed of derivatives $\partial^\alpha u^i$ of the $i$-th component of the map $u$, and where the multi-indices $\alpha$ satisfy $\langle \alpha, \bamo \rangle = \sum_{j=1}^N \frac{\alpha_j}{a_j} = 1$. We study functionals of the form
$$ \WWap(\Omega;\RRn) \ni u \mapsto \int_\Omega F(\ggrad u(x)) \dd x,$$
where $\WWap(\Omega; \RRn)$ is an appropriate Sobolev space of mixed smoothness and $F$ is the integrand. We study existence of minimisers of such functionals under prescribed Dirichlet boundary conditions. We characterise coercivity, lower semicontiuity, and envelopes of relaxation of such functionals, in terms of an appropriate generalisation of Morrey's quasiconvexity. 
}


\section{Introduction}
A classical problem in the calculus of variations is asserting existence of minimisers of integral functionals of the form $\WW^{1,p}(\Omega, \RRn) \ni u \mapsto \int_\Omega F(\grad u) \dd x$. Usually, the key difficulty is ensuring that the functional is sequentially lower semicontinuous in the appropriate topology. Thanks to a long series of important contributions (see \cite{AcerbiFusco84}, \cite{BallCurrieOlver81}, \cite{Dacorogna82}, \cite{Kristensen99one}, \cite{Marcellini85}, \cite{Meyers65} among others) we know that this depends on quasiconvexity of the integrand $F$. In the vector-valued case ($n > 1$) quasiconvexity (introduced by Morrey in \cite{Morrey52} and studied in \cite{AlibertDacorogna92}, \cite{Ball78}, \cite{BallKircheimKristensen00}, \cite{BallMurat84}, \cite{Cagnetti11}, \cite{DacorognaMarcellini88}, \cite{DalMasoFonsecaLeoniMorini04}, \cite{Evans86}, \cite{KirchheimKristensen16}, \cite{Kristensen99two}, \cite{Muller99}, \cite{Sverak92}) 
is strictly weaker than ordinary convexity which is sufficient, but far from necessary, for lower semicontinuity when working with gradients of Sobolev functions. This disparity is due to the special structure of gradient vector fields, encompassed in the relation $\mathop{curl} \grad u = 0$. This phenomenon is more general than just gradients and, in the framework of Murat and Tartar's compensated compactness (\cite{Murat78}, \cite{Murat81} \cite{Tartar79}, \cite{Tartar83}, \cite{Tartar92}) has led to another family of variational results (see, for instance, \cite{ArroyoPhilippisRindler17}, \cite{BraidesFonsecaLeoni00}, \cite{FonsecaLeoniMuller04}, \cite{FonsecaMuller99}, \cite{Raita19}) for functionals acting on vector fields $v$ satisfying $\mathcal{A} v = 0$ for a first-order constant-rank differential operator $\mathcal{A}$.

Without attempting to give a comprehensive account of the field (see instead the books \cite{Dacorogna07}, \cite{FonsecaLeoni07}, \cite{RindlerBook}) let us underline that a common point of all these results, be it gradients, higher order gradients, or $\mathcal{A}$-free fields, is that the partial differential operators involved are homogeneous of fixed order. This, however, need not always be the case in applications. Pantographic sheets (see \cite{DellIsolaLekszyckiPawlikowskiGrygorukGreco15}, \cite{EremeyevDellIsolaBoutinSteigmann18}, \cite{TurcoGiorgioMisraDellIsola17}) furnish an example of a recently introduced metamaterial, characterised by energies of the form $\int |\partial_{x} u|^2 + |\partial^2_{yy} u|^2 \dd x$, i.e., with maximal derivatives of different orders in different directions. The aim of the present paper is to develop a systematic framework for studying such variational problems.

We are interested in existence of minimisers of variational problems posed in Sobolev spaces of mixed smoothness. For a fixed vector $\ba := (a_1, \ldots, a_N)$ with positive integer coordinates and a given function $u \colon \RRN \supset \Omega \to \RRn$ we denote by 
$$\ggrad u := (\partial^{\alpha} u)_{\langle \alpha, \bamo \rangle = 1}$$ 
the matrix whose $i$-th row is composed of derivatives $\partial^\alpha u^i$ of the $i$-th component of the map $u$, and where the multi-indices $\alpha$ satisfy $\langle \alpha, \bamo \rangle = \sum_{j=1}^N \frac{\alpha_j}{a_j} = 1$. We study functionals of the form
\begin{equation}\label{eqIntroFunctional} 
\WWap(\Omega;\RRn) \ni u \mapsto \int_\Omega F(\ggrad u(x)) \dd x,
\end{equation}
where $F$ is the integrand and $\WWap(\Omega; \RRn)$ is an appropriate Sobolev space of mixed smoothness, the elements of which satisfy $\partial^{\alpha} u \in \LLp$ for all $\alpha$ with $\langle \alpha, \bamo \rangle = 1$.

The central notion of this paper is that of $\ba$-quasiconvexity. We say that a function $F \colon \RRnm \to [-\infty, \infty)$ is $\ba$-quasiconvex if for every $V \in \RRnm$ one has
$$ F(V) \leq \inf_{u \in \Ccinfty(Q;\RRn)} \dashint_Q F(V + \ggrad u(x)) \dd x \ldotp$$
Our main results are on coercivity, lower semicontinuity, and relaxation of functionals of the form \eqref{eqIntroFunctional}. For continuous integrands $F$ with $|F(V)| \leq C(|V|^p +1)$ we prove in Theorem \ref{thmCoercivity} that, having fixed a Dirichlet class $\WWapg(\Omega)$, all minimising sequences of \eqref{eqIntroFunctional} are bounded if, and only if, there exists a constant $c > 0$ and a point $V_0 \in \RRnm$ such that $V \mapsto F(V) - c|V|^p$ is $\ba$-quasiconvex at $V_0$. Then, in Theorem \ref{thmLSCWithpGrowth}, we show that for continuous integrands $F \colon \RRnm \to [0,\infty)$ satisfying $|F(V)| \leq C(|V|^p + 1)$ the functional \eqref{eqIntroFunctional} is sequentially weakly lower semicontinuous on $\WWap(\Omega)$ if, and only if, $F$ is $\ba$-quasiconvex. Finally, in the last section, we study relaxations of \eqref{eqIntroFunctional} and in Theorem \ref{thmRelaxationPGrowth} show that, if $F$ is as before, then the sequentially weakly lower semicontinuous envelope of \eqref{eqIntroFunctional} is again an integral functional and
\begin{equation}\label{eqIntroRelax} 
\inf_{u_j \weakConv u} \left\{ \liminf_{j \to \infty} \int_\Omega F (\ggrad u_j) \dd x \right\}= \int_{\Omega} \QQ F(\ggrad u(x)) \dd x,
\end{equation}
where $\QQ F$ is the $\ba$-quasiconvex envelope of $F$ given by
$$ \QQ F(V) := \inf_{\phi \in \Ccinfty(\Omega)} \frac{1}{|Q|} \int_Q F(V + \ggrad \phi(x)) \dd x.$$
The infimum in \eqref{eqIntroRelax} is taken over all sequences $u_j$ converging to $u$ weakly in $\WWap(\Omega)$, and $\QQ F$ is the largest $\ba$-quasiconvex function that is no greater than $F$. This result is proven under the additional assumption that $F$ is locally Lipschitz, or that it satisfies $F(V) \geq c|V|^p - C$ for some constants $C, c > 0 $ and all $V \in \RRnm$. In the latter case we are able, in Theorem \ref{thmContinuousImpliesRelaxation}, to remove the $p$-growth bound from above and obtain a relaxation formula also for integrands $F$ that may take the value $+ \infty$. 

The starting point of this project is the theory of Sobolev spaces of mixed smoothness. The main question here is what other regularity and integrability properties follow when a function $u$ is assumed to be $\LLp$ integrable, together with all its derivatives $\partial^{a_i}_{x_i} u$ for some $\ba = (a_1, \ldots, a_N)$. The theory of embeddings of spaces of mixed smoothness was largely developed by Nikolskii, who started the study in \cite{Nikolskii51}. Since then a number of authors have made important contributions to the theory of spaces of mixed smoothness, among which we recall \cite{Besov67}, \cite{Besov74}, \cite{Boman72}, \cite{Burenkov66}, \cite{Ilin68}, \cite{Kolyada07},  \cite{KolyadaPerez04}, \cite{PelczynskiSenator86}, \cite{PelczynskiSenator862}, \cite{Slobodeckii58one}, \cite{Slobodeckii58two}, \cite{Solonnikov75}. Let us note here that the list is far from being fully comprehensive. Instead we refer the reader to the two-volume book by Besov, Il'in, and Nikolskii (see \cite{BesovIlinNikolskii78} and \cite{BesovIlinNikolskii78p2}), which we will follow in most of the technical preliminaries. Let us note that, in their book, the authors call the spaces we work with anisotropic Sobolev spaces. For the purpose of the present article we have opted for the name Sobolev spaces of mixed smoothness to avoid confusion as to the nature of the anisotropy present in the problems we consider. Indeed, variational problems with different growth properties in different directions are often called anisotropic in the existing literature.

The arguments in the main part of the paper, i.e., the lower semicontinuity and relaxation, are based on a Young measures approach.  These measures were first introduced by Young in \cite{Young37}, \cite{Young42one}, \cite{Young42two} and have since been used and studied by a number of authors, see \cite{Balder84}, \cite{Ball89}, \cite{BerliocchiLasry73}, \cite{Kristensen99one},  \cite{KinderlehrerPedregal91}, \cite{KinderlehrerPedregal94}, and \cite{McShane40} among others. A classical reference that contains a much more complete bibliography is \cite{Pedregal97}. The prevalence of Young measures in modern calculus of variations is due to the fact that they conveniently describe oscillation effects (see \cite{DiPernaMajda87} for a generalisation capable of describing concentration as well) that may occur in weakly, but not strongly, convergent sequences of vector fields. Thus, Young measures facilitate limit passages in nonlinear quantities, a crucial issue in the field. Here, we are particularly interested in Young measures generated by sequences of $\ba$-gradients which, as shown in Theorems \ref{thmCharacterizationOfYM} and \ref{thmCharacterisationYMnonhom}, can be characterised by duality with $\ba$-quasiconvex functions in the spirit of the Kinderlehrer-Pedregal (\cite{KinderlehrerPedregal91}, \cite{KinderlehrerPedregal94}) result for classical gradients. 

Throughout the paper we make ample use of what has been done in the classical gradient and $\mathcal{A}$-free frameworks. Our main focus is on the new difficulties induced by the mixed smoothness setting. To separate those from other technical issues, we work with the model case of autonomous integrands $F$, that is ones that only depend on $\ggrad u$ rather than lower order derivatives of $u$ or on the spatial variable $x$. We are also very liberal when it comes to assumptions on the domain $\Omega$ and we do not attempt to optimise our results in that regard. We also limit ourselves to the reflexive regime $\WWap(\Omega)$ with $p \in (1,\infty)$, although it would certainly be interesting to consider the case $p = 1$. 
Finally, whilst the present work establishes existence of minimisers, we do not say anything about their regularity. This will be treated in an ongoing collaboration with Kristensen \cite{KristensenProsinski21}.

\subsection{Acknowledgements}
The present work is based on the author's doctoral thesis \cite{ProsinskiThesis} prepared under the supervision of Prof. Jan Kristensen, whose support and guidance have been of immense help. The author would also like to thank Prof. Sir John Ball FRS, Prof. Gregory Seregin, Prof. Luc Nguyen, Prof. Gui-Qiang Chen, and Prof. Kewei Zhang, who have all acted as referees for the thesis at various stages of its completion. The generous financial support of Oxford EPSRC CDT in Partial Differential Equations, the Clarendon Fund, and St John's College Oxford is gratefully acknowledged.

\section{Spaces of mixed smoothness}\label{chapter:chapter1}
We begin by introducing the function spaces in which our variational problems are set. We collect basic facts about Sobolev spaces of mixed smoothness $\WWap$ which, while available in the literature, might not be as classical and well-known as the corresponding theory of the usual Sobolev spaces $\WW^{k,p}$. Let us note that, what we call Sobolev spaces of mixed smoothness, is often referred to as `anisotropic Sobolev spaces' in the literature that we cite. We have opted for the name `mixed smoothness' as the term `anisotropic variational problems' is already widespread and used to describe problems where the integrand and the input functions exhibit different growth properties in derivatives in different directions. That is, `anisotropic variational problems' usually refer to problems posed in the space $\WW^{k,\mathbf{p}}$ with vector parameter $\mathbf{p}$, rather than $\WWap$ with vector parameter $\ba$, which we are interested in here. Nevertheless, our setting is certainly anisotropic and we shall use this term occasionally, particularly when talking about scaling.

\subsection{Preliminaries}\label{sectionPreliminaries}
Any partial differentiation operator $\partial^{\alpha} := \partial_1^{\alpha_1} \partial_2^{\alpha_2} \ldots \partial_N^{\alpha_N}$ acting on functions mapping a subdomain of $\RRN$ to $\RRn$ may be identified with the multi-index $\alpha = (\alpha_1, \ldots, \alpha_N) \in \ZZN_+$, where $\ZZN_+$ denotes the set of points in $\RRN$ with non-negative integer coordinates. 

Specifying a set of derivatives $A \subset \ZZN_+$ and their desired integrability defines a Sobolev-like space --- for example the classical $\WW^{k,p}$ Sobolev spaces correspond to $A := \{ \alpha \in \ZZN_+ \colon |\alpha| \leq k\}$ with $\LLp$ integrability on all the derivatives. In this work our principal assumption is that the functions in our spaces admit maximal pure derivatives in each direction, but the orders of these maximal derivatives remain arbitrary. 

Here and in all that follows, $\Omega$ is a bounded open Lipschitz subset of $\RRN$ with $|\partial \Omega| = 0$, where $|\partial \Omega|$ denotes the $N$-dimensional Lebesgue measure of the the boundary of $\Omega$. We denote by $\Ccinfty(\Omega, \RRn)$ the space of smooth and compactly supported functions $\phi \colon \Omega \to \RRn$. We often omit the target space when it is clear from the context and simply write $\Ccinfty(\Omega)$.

We fix a vector $\ba = (a_1, \ldots, a_N) \in \naturals^N$, where the respective entries $a_i$ denote the desired maximal order of differentiability with respect to the $x_i$ coordinate. We fix an exponent $p \in (1, \infty)$, let $\bamo := (a_1^{-1}, \ldots, a_N^{-1})$,  and introduce the following

\begin{defi}
For a bounded open set $\Omega \subset \RRN$ the Sobolev space $\WWap(\Omega; \RRn)$ is defined as the completion of $C^{\infty}(\Omega; \RRn) \cap \{ \phi \colon  \sum_{\langle \alpha, \bamo \rangle \leq 1} \| \partial^{\alpha} \phi \|_{\LLp(\Omega; \RRn)} < \infty \}$ with respect to the norm
$$ \| u \|_{\WWap} := \sum_{\langle \alpha, \bamo \rangle \leq 1} \| \partial^{\alpha} u \|_{\LLp(\Omega; \RRn)} \ldotp$$
We often omit the target space $\RRn$ and write simply $\WWap(\Omega)$ or even $\WWap$ if the domain $\Omega$ is clear from the context.
We also denote by $\WWapn(\Omega; \RRn)$ the completion of $C_c^{\infty}(\Omega; \RRn)$ in the same norm. 
\end{defi}

The set $\{\alpha \in \ZZ_{\geq 0}^N \colon \langle \alpha, \bamo \rangle \leq 1\}$ has the important property that if $\alpha, \beta \in\ZZ_{\geq 0}^N$ are two multi-indices with $\beta \leq \alpha$ (coordinate-wise) and $\alpha$ is in our set then so is $\beta$, which makes the collection a smoothness in the language of Pe\l{}czy\'{n}ski-Senator (see \cite{PelczynskiSenator86}). Moreover, all the maximal elements of the smoothness lie on a common hyperplane $\{\alpha \colon \langle \alpha, \bamo \rangle = 1\}$, thus allowing for convenient scaling, which we will discuss later. This hyperplane is called a pattern of homogeneity by Kazaniecki, Stolyarov, and Wojciechowski in \cite{KazanieckiStolyarovWojciechowski17}. It is worth mentioning that their paper is the first one to introduce a simple version of anisotropic quasiconvexity (to be discussed later) thus inspiring the present work. Let us observe that, in our case, the hyperplane of homogeneity intersects all coordinate axes at integer points (i.e. we have maximal pure derivatives in all directions), which is important for the structure of the relevant Sobolev spaces. Our exposition of the theory of these spaces is based on the book \cite{BesovIlinNikolskii78} by Besov, Il'in, and Nikolskii.
Finally, let us remark that the functions considered are, in general, vector-valued and we impose the same differentiability on all components of the functions considered. In general, it would be interesting to allow for different smoothnesses in different components. This is, however, outside of the scope of the present paper. 

\begin{proposition}[see \cite{BesovIlinNikolskii78}]
For $p \in [1, \infty)$ the space $\WWap(\Omega; \RRn)$ coincides with the space of functions $u \in \LLp(\Omega; \RRn)$ with distributional derivatives $\partial^{\alpha} u \in \LLp(\Omega; \RRn)$ for all $\langle \alpha, \bamo \rangle \leq 1$.
The spaces $\WWap(\Omega)$ and $\WWapn(\Omega)$ are both separable Banach spaces. For $p \in (1, \infty)$ the two spaces are also reflexive.
\end{proposition}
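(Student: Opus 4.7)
The plan is to adapt the classical Meyers--Serrin $H = W$ argument to the mixed-smoothness setting and then read off the functional-analytic properties from an isometric identification with a subspace of a finite direct sum of $\LLp$ spaces. First I would introduce the auxiliary space $H^{\ba,p}(\Omega;\RRn)$ of all $u \in \LLp(\Omega;\RRn)$ whose distributional derivatives $\partial^{\alpha} u$ lie in $\LLp(\Omega;\RRn)$ for every $\alpha$ with $\langle \alpha, \bamo \rangle \leq 1$, equipped with the same norm as $\WWap$. The map $\Phi(u) := (\partial^{\alpha} u)_{\langle \alpha, \bamo \rangle \leq 1}$ is a linear isometry of $H^{\ba,p}$ into the finite direct sum $\bigoplus_{\alpha} \LLp(\Omega;\RRn)$, and its image is closed: if $u_j \to u$ in $\LLp$ and $\partial^{\alpha} u_j \to v^{\alpha}$ in $\LLp$ for each relevant $\alpha$, then testing against $\Ccinfty(\Omega)$ identifies $v^{\alpha} = \partial^{\alpha} u$ in the distributional sense. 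Since $\LLp(\Omega;\RRn)$ is a separable Banach space that is reflexive for $p \in (1,\infty)$, and these properties pass to finite direct sums and to closed subspaces, $H^{\ba,p}$ inherits them all.

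Next I would establish $\WWap = H^{\ba,p}$ isometrically. The inclusion $\WWap \subseteq H^{\ba,p}$ is immediate: any Cauchy sequence $\{\phi_j\}$ of smooth functions in the $\WWap$-norm converges in $\LLp$ together with each $\partial^{\alpha}\phi_j$, and by continuity of distributional differentiation the limit $u$ satisfies $\partial^{\alpha} u = \lim_j \partial^{\alpha}\phi_j \in \LLp$. The reverse inclusion is the Meyers--Serrin density step. Given $u \in H^{\ba,p}$ and $\eta > 0$, I would pick a locally finite open cover $\{U_j\}$ of $\Omega$ with $\dist(\overline{U_j}, \partial\Omega) > 0$, a partition of unity $\{\zeta_j\} \subset \Ccinfty(\Omega)$ subordinate to it, and standard mollifiers $\rho_{\epsilon_j}$ with $\epsilon_j$ so small that each $(\zeta_j u) * \rho_{\epsilon_j}$ is supported in $\Omega$ and
$$\| (\zeta_j u) * \rho_{\epsilon_j} - \zeta_j u \|_{\WWap} < \eta\, 2^{-j}.$$
This is possible because mollification commutes with distributional differentiation and $\LLp$-converges when $p < \infty$, and because the Leibniz rule expands $\partial^{\alpha}(\zeta_j u)$ into sums of products of $C^{\infty}$ cutoffs with $\partial^{\beta} u$ for $\beta \leq \alpha$, each of which stays in $\LLp$ thanks to the smoothness property of the index set $\{\alpha : \langle \alpha, \bamo \rangle \leq 1\}$ noted after the definition. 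Summing over $j$ yields a smooth function within $\eta$ of $u$ in $\WWap$.

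Finally, $\Phi$ has closed image by the first step, so $\WWap$ is isometrically isomorphic to a closed subspace of $\bigoplus_{\alpha} \LLp$, hence a separable Banach space and reflexive when $p \in (1,\infty)$. The space $\WWapn$ is by definition the closure of $\Ccinfty(\Omega)$ inside $\WWap$, therefore a closed subspace, and it inherits the same three properties. The main technical obstacle is the Meyers--Serrin step, specifically the simultaneous $\LLp$-convergence of all the mollified derivatives $\partial^{\alpha}[(\zeta_j u) * \rho_{\epsilon_j}]$ near $\partial\Omega$. This goes through precisely because the index set is a smoothness in the sense of Pe\l{}czy\'{n}ski--Senator, so every Leibniz term $(\partial^{\gamma}\zeta_j)(\partial^{\beta} u)$ appearing in the expansion has $\beta$ in the admissible range, keeping all ingredients in $\LLp$ and allowing the $p$-growth bound on $F$ later in the paper to interact with the full class of derivatives uniformly.
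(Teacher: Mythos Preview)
Your proposal is correct and aligns with the paper's own treatment: the paper defers the $H=W$ identification and the Banach/separable assertions to \cite{BesovIlinNikolskii78} and only supplies the reflexivity argument, which is exactly your closed-subspace-of-$\bigoplus_{\langle \alpha, \bamo \rangle \leq 1} \LLp$ embedding via $u \mapsto (\partial^\alpha u)_\alpha$. Your additional Meyers--Serrin sketch for $H=W$ is the standard one and works precisely because the index set is downward closed (a ``smoothness''), as you note; the closing remark about the $p$-growth bound on $F$ is extraneous to this proposition and should be dropped.
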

This is shown in \cite{BesovIlinNikolskii78}, bar the reflexivity part, which may be immediately deduced by considering $\WWap(\Omega; \RRn)$ as a closed subspace of $\bigoplus_{\langle \alpha, \bamo \rangle \leq 1} \LL^p(\RRN; \RRn)$ through the embedding $u \mapsto (\partial^\alpha u)_{\langle \alpha, \bamo \rangle \leq 1}$.

\subsection{Embeddings of Sobolev spaces of mixed smoothness}\label{sectionEmbeddings}
An important structural property of spaces of mixed smoothness is the existence of continuous and compact embeddings, similar to the classical Sobolev embeddings. To begin with, we note that, as in the classical case of, say $\WW^{1,p}(\Omega)$, there are certain regularity assumptions that one must impose on the domain $\Omega$.

\begin{defi}
Let $b \in \RRN$ be a vector with non-zero coordinates. Fix $h \in (0,\infty)$ and $\epsilon \in (0,\infty)$. The set 
$$V(b, h, \epsilon) := \bigcup_{0 < v < h} \left\{ x \in \RRN \colon \frac{x_i}{b_i} > 0, v < (\frac{x_i}{b_i})^{a_i} < (1+\epsilon)v \text{ for all } i \in \{1,2 \ldots, N\} \right\}$$
is called an $\ba$-horn of radius $h$ and opening $\epsilon$.  
\end{defi}

\begin{defi}
Let $\Omega \subset \RRN$ be open and let $K \in \naturals$. Suppose that for $k \in \{1, 2, \ldots, K\}$ there exist open sets $\Omega_k$ and $\ba$-horns $V_k$ (with coefficients $b_k,h_k,\epsilon_k$ depending on $k$) such that 
$$ \Omega = \bigcup_{k=1}^K \Omega_k = \bigcup_{k=1}^K (\Omega_k + V_k) \ldotp$$
Then we say that $\Omega$ satisfies the weak $\ba$-horn condition.
\end{defi}

\begin{theorem}[see Theorem 9.5 in \cite{BesovIlinNikolskii78}]\label{thmBesovIlinNikolskiiIneq}
Suppose that an open set $\Omega \subset \RRN$ satisfies the weak $\ba$-horn condition and let $p \in (1,\infty)$. Then there exists a real number $h_0 \in (0,\infty)$ depending on $\Omega$ and a constant $C$ such that, for all $h \in (0,h_0)$ and all $u \in \WWap(\Omega)$, one has, for all multi-indices $\beta \in \ZZ^N_+$ with $\langle \beta, \bamo \rangle \leq 1$, that 
$$ \| \partial^{\beta} u \|_p \leq C \left( h^{1 - \langle \beta, \bamo\rangle} \sum_{i=1}^N \| \partial_i^{a_i} u \|_p +  h^{- \langle \beta, \bamo \rangle} \|u\|_p \right) \ldotp$$
\end{theorem}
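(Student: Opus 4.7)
The inequality is an anisotropic interpolation estimate equipped with an explicit scale parameter $h$, so my plan is to decouple the scaling from the interpolation: first prove a dimensionless unit-scale inequality, then recover the parametric one by anisotropic rescaling.

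\textbf{Scaling argument.} Set $v(y) := u(h^{1/a_1}y_1, \ldots, h^{1/a_N}y_N)$; then $\partial^\gamma_y v = h^{\langle\gamma,\bamo\rangle}\partial^\gamma_x u$, and the Jacobian $h^{\sum_i 1/a_i}$ appears in every $\LLp$ norm and thus cancels across a linear inequality. Applying a unit-scale estimate of the form
\[ \|\partial^\beta v\|_{\LLp}\;\leq\; C\Bigl(\sum_{i=1}^N \|\partial_i^{a_i}v\|_{\LLp} + \|v\|_{\LLp}\Bigr) \]
to $v$ and converting each norm back to $u$, then dividing by $h^{\langle\beta,\bamo\rangle}$, produces exactly the claimed bound with the exponents $1-\langle\beta,\bamo\rangle$ and $-\langle\beta,\bamo\rangle$. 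The restriction $h < h_0$ arises because the rescaled reference domain must continue to satisfy the weak $\ba$-horn condition with uniform parameters; $h_0$ can be taken comparable to the smallest horn radius $h_k$ in a fixed covering of $\Omega$.

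\textbf{Unit-scale estimate.} On $\RRN$ the inequality is a Fourier multiplier assertion. The pointwise bound $|\xi^\beta|\leq 1+\sum_i|\xi_i|^{a_i}$, which follows from weighted Young's inequality applied under the constraint $\langle\beta,\bamo\rangle\leq 1$, shows that $\xi^\beta/(1+\sum_i|\xi_i|^{a_i})$ is a bounded multiplier satisfying anisotropic Mihlin-type derivative conditions, hence $\LLp$-bounded for $p\in(1,\infty)$. To transfer the estimate to a domain $\Omega$ satisfying the weak $\ba$-horn condition, I would build an extension operator $\WWap(\Omega)\to\WWap(\RRN)$: on each patch $\Omega_k$ the outward horn $V_k$ offers anisotropically matched thickness (width $\sim v^{1/a_i}$ in direction $x_i$), which allows extension across $\partial\Omega_k$ by convolution with an anisotropically scaled mollifier supported in $V_k$; a partition of unity subordinate to the cover $\bigcup(\Omega_k+V_k)$ then glues the local extensions together, and one concludes by applying the Mihlin bound to the global extension and restricting to $\Omega$.

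\textbf{Main obstacle.} The hardest step is the anisotropic extension. Classical reflection across $\partial\Omega$ is inadequate, since it introduces lower-order cross derivatives whose $\LLp$ control is \emph{not} available in $\WWap$, this space only supplying the pure maximal derivatives $\partial_i^{a_i}u$. The weak $\ba$-horn condition is tailored precisely so that an anisotropically scaled mollifier supported inside a horn plays the role of reflection while respecting all $N$ pure maximal derivatives simultaneously. Verifying this simultaneous control uniformly across the partition of unity is the principal technical work; once it is in place, the Fourier-side multiplier bound and the scaling argument combine routinely to give the inequality.
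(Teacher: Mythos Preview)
The paper does not give its own proof of this theorem; it is quoted verbatim from Theorem~9.5 of Besov--Il'in--Nikolskii. Your approach is therefore being compared not against anything in the paper but against the argument in that reference, which proceeds differently: rather than extending to $\RRN$ and invoking Fourier multipliers, Besov--Il'in--Nikolskii construct an \emph{integral representation} of $\partial^\beta u$ directly on $\Omega$, writing it as a sum of integrals of $u$ and of the $\partial_i^{a_i}u$ against explicit kernels supported in the horns $V_k$; the $h$-dependence then falls out of a direct scaling of those kernels, with no global extension needed.

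Your route is nonetheless sound. The scaling reduction to a unit-scale estimate on $\RRN$ is clean because $\RRN$ is invariant under the anisotropic dilation, and the multiplier step is legitimate via an anisotropic Mihlin condition (Fabes--Rivi\`ere, or the Calder\'on--Torchinsky parabolic theory the paper already cites). The transfer to $\Omega$ via extension also works: when you bound $\|\partial_i^{a_i}Eu\|_{\LLp(\RRN)}$ by $C_E\|u\|_{\WWap(\Omega)}$, the ``spurious'' contribution $h^{1-\langle\beta,\bamo\rangle}\|u\|_{\LLp(\Omega)}$ this produces is harmlessly absorbed into the $h^{-\langle\beta,\bamo\rangle}\|u\|_{\LLp(\Omega)}$ term already present, since $h<1$. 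One point of care in your write-up: the scaling should be performed on $\RRN$ \emph{after} extension, not on $\Omega$ directly, since rescaling changes the domain and you would otherwise need the unit-scale estimate on the whole family $h^{-1}\scale\Omega$ with a uniform constant.

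The trade-off is this: your method separates the harmonic analysis (multipliers on $\RRN$) from the geometry (extension), which is conceptually tidy but front-loads all the difficulty into building a bounded anisotropic extension operator under merely the weak $\ba$-horn condition --- itself a result of comparable depth to the theorem, and one the paper only asserts for rectangular domains (via Burenkov--Fain). The Besov--Il'in--Nikolskii integral-representation approach handles analysis and geometry simultaneously, obtains the inequality directly on $\Omega$ with the $h$-scaling emerging from the kernels, and does not require a separate extension theorem as input.
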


Thus, on domains satisfying the weak $\ba$-horn condition (see \cite{BesovIlin68} where the condition was first studied) the intermediate derivatives are controlled by the maximal pure derivatives and the function itself, so that the relevant Sobolev space is well behaved. 

In the isotropic case (i.e., $a_i = a_j$ for all $i,j$) the $\ba$-horn is in fact a cone and horn conditions are equivalent to the, more familiar, cone conditions. In the genuinely anisotropic scenario the $\ba$-horn condition is more surprising. For instance, (see \cite{BesovIlinNikolskii78}) the two-dimensional disc only satisfies the weak $\ba$-horn condition if $\frac{1}{2} a_1 \leq a_2 \leq 2 a_1$. Fortunately, there are no issues when working with rectangular domains, which we note in the following:

\begin{lemma}[see \cite{BesovIlinNikolskii78}]\label{lemmaBoxesAHorn}
Any set of the form $(l_1, r_1) \times \ldots \times (l_N, r_N) \subset \RRN$ for some $l_i, r_i \in \RR$ satisfies the weak $\ba$-horn condition.
\end{lemma}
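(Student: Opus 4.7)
The plan is to cover $\Omega := \prod_{i=1}^{N} (l_i, r_i)$ by $2^N$ overlapping open sub-boxes, one per corner direction $\sigma \in \{0,1\}^N$, each paired with an $\ba$-horn whose $b$-vector points from the associated corner into the interior of $\Omega$. Since translations and coordinate-wise positive rescalings send $\ba$-horns to $\ba$-horns (after rescaling $b$ and $h$), I may assume without loss of generality that $\Omega = (0,1)^N$. Set $I_i^0 := (0, 3/4)$ and $I_i^1 := (1/4, 1)$, so that $I_i^0 \cup I_i^1 = (0,1)$, and for $\sigma \in \{0,1\}^N$ define $\Omega_\sigma := \prod_{i=1}^N I_i^{\sigma_i}$. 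The distributive identity $\bigcup_\sigma \prod_i I_i^{\sigma_i} = \prod_i (I_i^0 \cup I_i^1)$ immediately gives $\bigcup_\sigma \Omega_\sigma = \Omega$.

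For each $\sigma$ I set $b^\sigma_i := +1$ when $\sigma_i = 0$ and $b^\sigma_i := -1$ when $\sigma_i = 1$, fix any $\epsilon > 0$, and choose $h > 0$ small enough that $((1+\epsilon)h)^{1/a_i} < 1/4$ for every $i$; let $V_\sigma := V(b^\sigma, h, \epsilon)$. Any $y \in V_\sigma$ satisfies $y_i/b^\sigma_i \in (0, 1/4)$, so a coordinate-wise check shows that for $x \in \Omega_\sigma$ one has $x_i + y_i \in (0,1)$ regardless of the value of $\sigma_i$. This establishes $\Omega_\sigma + V_\sigma \subseteq \Omega$ for every $\sigma$.

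The remaining, and only subtle, step is the reverse inclusion $\Omega \subseteq \bigcup_\sigma (\Omega_\sigma + V_\sigma)$. Given $z \in (0,1)^N$ I choose $\sigma_i := 0$ when $z_i < 3/4$ and $\sigma_i := 1$ otherwise; then $M_i := \min(z_i, 1 - z_i) > 0$ for every $i$, and the requirement $z - y \in \Omega_\sigma$ reduces, after noting that the choice of $\sigma$ makes the inequality on the opposite side trivial, to $y_i/b^\sigma_i > 0$ together with $|y_i| < M_i$ for each $i$. The main point is that the horn $V(b^\sigma, h, \epsilon)$ is a union over $v \in (0, h)$ of vectors with $|y_i| \in (v^{1/a_i}, ((1+\epsilon)v)^{1/a_i})$, so picking $v$ strictly smaller than both $h$ and $\min_i M_i^{a_i}/(1+\epsilon)$ yields a $y \in V_\sigma$ with $|y_i| < M_i$ for every $i$; hence $z - y \in \Omega_\sigma$ and $z \in \Omega_\sigma + V_\sigma$. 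The conceptual content of this last step is that no horn with a positive lower cut-off on $|y_i|$ could reach points close to $\partial\Omega$, but the definition of $V(b, h, \epsilon)$ as a union over $v$ arbitrarily close to $0$ provides precisely the anisotropic scaling needed to match the distance to the boundary in each direction.
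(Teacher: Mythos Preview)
The paper does not actually prove this lemma; it is stated with a reference to \cite{BesovIlinNikolskii78} and no argument is given in the text. Your proposal therefore cannot be compared against a proof in the paper, but it is a correct and self-contained verification of the statement. The $2^N$-corner decomposition with horns pointing inward is the natural construction, and your checks of both inclusions $\Omega_\sigma + V_\sigma \subseteq \Omega$ and $\Omega \subseteq \bigcup_\sigma(\Omega_\sigma + V_\sigma)$ are sound; in particular, the key observation that the union over $v \in (0,h)$ in the definition of the horn lets you take $|y_i|$ arbitrarily small (and thus reach points of $\Omega$ that are close to the boundary) is exactly what makes the reverse inclusion work. The reduction to $\Omega = (0,1)^N$ via affine coordinate changes is also valid, since a diagonal linear map $x \mapsto (\lambda_1 x_1, \ldots, \lambda_N x_N)$ sends $V(b,h,\epsilon)$ to $V((\lambda_i b_i)_i, h, \epsilon)$.
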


Before we proceed, let us note several important consequences of the embeddings:
\begin{proposition}\label{propCompactIntoLp}
Let $\Omega \subset \RRN$ be a bounded open set with a Lipschitz boundary. Then the embedding of $\WWap(\Omega)$ into $\LL^p(\Omega)$ is compact.
\end{proposition}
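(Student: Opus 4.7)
The plan is to reduce the statement to the classical Rellich--Kondrachov compactness theorem for $\WW^{1,p}(\Omega)$. The crucial observation is that, by the very definition of the norm on $\WWap(\Omega)$, the space is continuously embedded in $\WW^{1,p}(\Omega)$. Indeed, each coordinate multi-index $e_i \in \ZZ^N_+$ satisfies
\[
\langle e_i, \bamo \rangle = \frac{1}{a_i} \leq 1,
\]
because $a_i \in \naturals$ and hence $a_i \geq 1$. Consequently the index $e_i$ belongs to the smoothness $\{\alpha \colon \langle \alpha, \bamo \rangle \leq 1\}$, so that the term $\|\partial_i u\|_{\LL^p(\Omega)}$ appears in the defining sum for $\|u\|_{\WWap(\Omega)}$. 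Summing over $i = 1, \ldots, N$ together with the $\alpha = 0$ term yields $\|u\|_{\WW^{1,p}(\Omega)} \leq \|u\|_{\WWap(\Omega)}$ for every $u \in \WWap(\Omega)$.

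Given any bounded sequence $(u_j)_j \subset \WWap(\Omega)$, I would invoke the preceding bound to deduce boundedness of $(u_j)_j$ in $\WW^{1,p}(\Omega)$. Since $\Omega$ is assumed to be a bounded open set with Lipschitz boundary, the classical Rellich--Kondrachov theorem then produces a subsequence converging strongly in $\LL^p(\Omega)$. This directly proves compactness of the embedding $\WWap(\Omega) \hookrightarrow \LL^p(\Omega)$, as required.

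There is essentially no obstacle: the anisotropic structure encoded by $\ba$ only strengthens the first-order weak differentiability one already possesses in every coordinate direction, and no recourse to Theorem \ref{thmBesovIlinNikolskiiIneq} or the weak $\ba$-horn condition is needed. (If one wished to avoid quoting Rellich--Kondrachov on a general Lipschitz domain and rely only on the machinery developed in this section, one could instead cover $\Omega$ by finitely many open rectangles, which by Lemma \ref{lemmaBoxesAHorn} satisfy the weak $\ba$-horn condition, combine the Besov--Il'in--Nikolskii inequality with the Kolmogorov--Riesz--Fréchet compactness criterion on each rectangle, and use a standard partition-of-unity argument to conclude; but for this proposition the direct reduction above is the most economical route.)
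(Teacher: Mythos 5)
Your argument is correct and is exactly the one the paper gives: the inclusion $\WWap(\Omega) \subset \WW^{1,p}(\Omega)$ follows since $\langle e_i,\bamo\rangle = 1/a_i \leq 1$, and compactness then comes from Rellich--Kondrachov. No differences worth noting.
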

\begin{proof}
For any $\ba$ one has the inclusion $\WWap(\Omega) \subset \WW^{1,p}(\Omega)$ into the standard Sobolev space. Since the inclusion $\WW^{1,p}(\Omega) \subset \LLp(\Omega)$ is compact the proof is finished.
\end{proof}

\begin{lemma}\label{lemmaCompactEmbedding}
Suppose that a bounded open set $\Omega \subset \RRN$ with a Lipschitz boundary satisfies the weak $\ba$-horn condition and let $p \in (1,\infty)$.
Then for any $\beta$ with $\langle \beta, \bamo \rangle < 1$ the mapping $\WWap(\Omega) \hookrightarrow \LLp(\Omega)$ given by $u \mapsto \partial^\beta u$ is completely continuous, i.e., if $u_j \weakConv u$ in $\WWap(\Omega)$ then $\partial^\beta u_j \to \partial^\beta u$ in $\LLp(\Omega)$.
\end{lemma}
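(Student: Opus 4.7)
The plan is to combine the interpolation-type inequality from Theorem \ref{thmBesovIlinNikolskiiIneq} with the compact embedding into $\LLp$ from Proposition \ref{propCompactIntoLp}. First, note that if $u_j \weakConv u$ in $\WWap(\Omega)$, then the sequence $\{u_j\}$ is norm-bounded in $\WWap(\Omega)$ by the uniform boundedness principle, and hence so is the sequence $v_j := u_j - u$. In particular, $M := \sup_j \sum_{i=1}^N \|\partial_i^{a_i} v_j\|_p < \infty$. Moreover, by Proposition \ref{propCompactIntoLp}, the weak convergence $v_j \weakConv 0$ in $\WWap(\Omega)$ upgrades to strong convergence $v_j \to 0$ in $\LLp(\Omega)$.

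Next I would apply Theorem \ref{thmBesovIlinNikolskiiIneq} to each $v_j$ with the multi-index $\beta$. Since $\Omega$ satisfies the weak $\ba$-horn condition, there exists $h_0 > 0$ and a constant $C$ such that for every $h \in (0, h_0)$,
\begin{equation*}
\| \partial^{\beta} v_j \|_p \leq C \left( h^{1 - \langle \beta, \bamo\rangle} \sum_{i=1}^N \| \partial_i^{a_i} v_j \|_p + h^{- \langle \beta, \bamo \rangle} \|v_j\|_p \right) \leq C \bigl( h^{1 - \langle \beta, \bamo\rangle} M + h^{- \langle \beta, \bamo \rangle} \|v_j\|_p \bigr).
\end{equation*}
The strict inequality $\langle \beta, \bamo \rangle < 1$ is crucial here, as it guarantees that the exponent $1 - \langle \beta, \bamo \rangle$ is strictly positive.

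Finally, given $\epsilon > 0$, I would first fix $h \in (0, h_0)$ small enough that $C h^{1 - \langle \beta, \bamo \rangle} M < \epsilon/2$, and then use $v_j \to 0$ in $\LLp(\Omega)$ to pick $J$ such that for all $j \geq J$ we have $C h^{-\langle \beta, \bamo \rangle}\|v_j\|_p < \epsilon/2$. For such $j$, the inequality above yields $\|\partial^\beta v_j\|_p < \epsilon$, which proves $\partial^\beta u_j \to \partial^\beta u$ strongly in $\LLp(\Omega)$.

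There is no real obstacle in this argument: the heavy lifting has already been done by Theorem \ref{thmBesovIlinNikolskiiIneq} and by the standard $\WW^{1,p} \hookrightarrow \LLp$ compactness used to prove Proposition \ref{propCompactIntoLp}. The only point worth emphasising is that compactness requires the strict inequality $\langle \beta, \bamo \rangle < 1$; the endpoint case $\langle \beta, \bamo \rangle = 1$ cannot be expected to be completely continuous since $\partial^\beta u$ is then one of the components controlling the $\WWap$-norm itself.
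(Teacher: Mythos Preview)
Your proof is correct and follows essentially the same approach as the paper: reduce to $u=0$, combine the interpolation inequality of Theorem \ref{thmBesovIlinNikolskiiIneq} with the compact embedding of Proposition \ref{propCompactIntoLp}, and exploit $1 - \langle \beta, \bamo \rangle > 0$ to make the first term small. The only cosmetic difference is that the paper lets $h = h_j \to 0$ along a suitable sequence, whereas you use the equivalent $\epsilon/2$--$\epsilon/2$ formulation with $h$ fixed first and $J$ chosen afterwards.
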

\begin{proof}
Considering $u_j - u$ instead of $u_j$ we may assume that $u = 0$. Fix a $\beta$ with $\langle \beta, \bamo \rangle < 1$ and note that Theorem \ref{thmBesovIlinNikolskiiIneq} shows that there exists an $h_0 > 0$ such that, for all $h \in (0, h_0)$, one has 
$$ \| \partial^{\beta} u_j \|_p \leq C \left( h^{1 - \langle \beta, \bamo\rangle} \left(\sum_{\langle \alpha, \bamo \rangle = 1} \| \partial^\alpha u \|_p \right)  + h^{- \langle \beta, \bamo\rangle} \|u_j\|_p \right) \ldotp$$
Proposition \ref{propCompactIntoLp} implies that $u_j$ converges strongly to $0$ in $\LLp$. Hence, there exists a sequence $h_j \in (0, h_0)$ with $h_j \to 0$ and $C h_j^{-\langle \beta, \bamo\rangle} \|u_j\|_p \to 0 $ as $j \to \infty$. Finally, because $u_j$ is bounded in $\WWap$, we know that  $\left(\sum_{\langle \alpha, \bamo \rangle = 1} \| \partial^\alpha u \|_p \right)$ is bounded, and since $C h_j^{1 - \langle \beta, \bamo\rangle} \to 0$ we conclude that $\| \partial^{\beta} u_j \|_p \to 0 $, which ends the proof.
\end{proof}

Another consequence of Theorem \ref{thmBesovIlinNikolskiiIneq} is 
\begin{corollary}\label{corAllNormsEquivalent}
Let $\Omega \subset \RRN$ satisfy the weak $\ba$-horn condition. Then all of the following
\begin{equation}
\begin{aligned}
\| u \| &:= \| u \|_p + \sum_{i=1}^N \| \partial^{a_i}_i u \|_p, \\
\|u \| &:= \| u \|_p + \sum_{\langle \alpha, \bamo \rangle = 1} \| \partial^{\alpha} u \|_p, \\
\|u \| &:= \sum_{\langle \beta, \bamo \rangle \leq 1} \| \partial^{\beta} u \|_p,
\end{aligned}
\end{equation}
yield equivalent norms on $\WWap(\Omega)$.
\end{corollary}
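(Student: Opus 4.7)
The plan is to prove the chain of inequalities
$$ \|u\|_{(1)} \leq \|u\|_{(2)} \leq \|u\|_{(3)} \leq C \|u\|_{(1)}, $$
where $\|u\|_{(1)}$, $\|u\|_{(2)}$, $\|u\|_{(3)}$ denote the three candidate norms in the order they appear in the statement. Two of these inequalities are immediate from the algebra of the index sets, and only the last one requires work; this last bound is precisely the content of Theorem \ref{thmBesovIlinNikolskiiIneq}.

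First I would observe that the pure-derivative multi-indices $a_i e_i$ (where $e_i$ is the $i$-th standard basis vector of $\RRN$) satisfy $\langle a_i e_i, \bamo \rangle = 1$, so each term $\|\partial_i^{a_i} u\|_p$ appears in the sum defining $\|u\|_{(2)}$; hence $\|u\|_{(1)} \leq \|u\|_{(2)}$. Likewise, the set $\{\alpha : \langle \alpha, \bamo \rangle = 1\}$ is contained in $\{\beta : \langle \beta, \bamo \rangle \leq 1\}$, giving $\|u\|_{(2)} \leq \|u\|_{(3)}$.

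For the remaining inequality $\|u\|_{(3)} \leq C \|u\|_{(1)}$, I would fix any $h \in (0,h_0)$ provided by Theorem \ref{thmBesovIlinNikolskiiIneq} (say $h = h_0/2$) and apply the theorem's estimate to every multi-index $\beta$ with $\langle \beta, \bamo \rangle \leq 1$. Writing $C_\beta := C \max(h^{1-\langle\beta,\bamo\rangle}, h^{-\langle\beta,\bamo\rangle})$, I obtain
$$ \|\partial^\beta u\|_p \leq C_\beta \left( \sum_{i=1}^N \|\partial_i^{a_i} u\|_p + \|u\|_p \right). $$
The crucial structural point is that the index set $\{\beta \in \ZZ_+^N : \langle \beta, \bamo \rangle \leq 1\}$ is finite, because each $\beta_j$ is bounded by $a_j$. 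Hence summing over this finite collection gives $\|u\|_{(3)} \leq C'\|u\|_{(1)}$ for a constant $C'$ depending only on $\ba$, $p$, and $\Omega$, completing the proof.

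There is no real obstacle: the statement is a direct corollary once Theorem \ref{thmBesovIlinNikolskiiIneq} and the elementary combinatorics of the index set are in hand. The only thing worth double-checking is that the case $\langle \beta, \bamo\rangle = 0$ (i.e.\ $\beta = 0$) is covered, for which the estimate reduces to the trivial $\|u\|_p \leq C(\cdots + \|u\|_p)$ and is harmless.
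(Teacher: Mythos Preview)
Your argument is correct and is exactly the intended one: the paper states this as an immediate corollary of Theorem \ref{thmBesovIlinNikolskiiIneq} without supplying a proof, and the chain $\|u\|_{(1)}\leq\|u\|_{(2)}\leq\|u\|_{(3)}\leq C\|u\|_{(1)}$ you give (the first two inclusions trivial, the third from the theorem applied at a fixed $h$ and summed over the finite index set) is precisely how one unpacks it.
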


In closing this subsection let us note that the study of embeddings for spaces of mixed smoothness has been started by Besov and Il'in in \cite{BesovIlin68}. Together with Nikolskii, these authors have expanded, compiled, and clarified the theory in \cite{BesovIlinNikolskii78}. It is, however, not the only relevant source. Similar questions have been studied, among others, by Boman (see  \cite{Boman72}) who also treats the case $p = \infty$, as well as Demidenko and Upsenskii (see \cite{DemidenkoUpsenskii03}) who worked on quasielliptic operators, a class important in regularity of solutions to mixed smoothness variational problems, which we will return to in a forthcoming paper. Finally, in the particular case of rectangular sets (like in Lemma \ref{lemmaBoxesAHorn}) a bounded extension may be constructed using the Hestenes' method (as observed by Burenkov and Fain in \cite{BurenkovFain76}) which then, together with embeddings on the full space, yields existence of embeddings on rectangular domains as well.

\subsection{Canonical Projection}\label{sectionProjection}
For $u \in \WWap$ we write $\ggrad u$ for the $\ba$-gradient of $u$ given by $\ggrad u := (\partial^{\alpha}u)_{\langle \alpha, \bamo \rangle = 1}$. For future use let us denote the cardinality of the set $\{ \alpha \in \mathbb{Z}_+ \colon \langle \alpha, \bamo \rangle = 1\}$ by $m$, so that for $u \in \WWap(\Omega, \RRn)$ the $\ba$-gradient is a map defined on $\Omega$ with values in $\RRnm$.
In what follows we will often need to carry out certain operations, for example truncations, on $\ba$-gradients of various functions. These are easy to do on mappings $\Omega \to \RRnm$, but they need not preserve the $\ba$-gradient structure, and to remedy that we turn to Canonical Sobolev Projections following Pe\l{}czy\'{n}ski's work in \cite{Pelczynski89}. The aim is to obtain an analogue of the Helmholtz decomposition for the mixed smoothness setting. We will then use it for regularizing generating sequences of certain Young measures, similarly to what has been done by Fonseca and M\"{u}ller in \cite{FonsecaMuller99} in the case of $\mathcal{A}$-free vector fields and $\mathcal{A}$-quasiconvexity.

There is a canonical embedding 
$$ \WWap(\RRN; \RRn) \rightarrow \bigoplus_{\langle \alpha, \bamo \rangle \leq 1} \LL^p(\RRN; \RRn)$$
given by $ u \mapsto (\partial^{\alpha}u)_{\langle \alpha, \bamo \rangle \leq 1},$
but it is not surjective. With $p=2$ one may define the canonical projection of the target space onto the image of this embedding, i.e.,
\begin{equation}\label{eqProjectionDef} 
\projection \colon \bigoplus_{\langle \alpha, \bamo \rangle \leq 1} \LL^2(\RRN; \RRn) \to \im \left(\WWat(\RRN; \RRn) \to \bigoplus_{\langle \alpha, \bamo \rangle \leq 1} \LL^2(\RRN; \RRn) \right).
\end{equation}
It has been shown in \cite{Pelczynski89} (see Corollary 5.1 therein) that the projection from \eqref{eqProjectionDef} is of strong type $(p,p)$ for $1<p<\infty$, thus one can extend it by continuity from 
$$\bigoplus_{\langle \alpha, \bamo \rangle \leq 1} \LL^2 (\RRN; \RRn) \to \bigoplus_{\langle \alpha, \bamo \rangle \leq 1} \LL^2 (\RRN; \RRn)$$ 
to 
$$\bigoplus_{\langle \alpha, \bamo \rangle \leq 1} \LL^p (\RRN; \RRn) \to \bigoplus_{\langle \alpha, \bamo \rangle \leq 1} \LL^p (\RRN; \RRn).$$

\begin{lemma}\label{lemmaProjection}
Fix any $p \in (1, \infty)$ and denote by $\projection$ the extension of the canonical projection discussed above. Then:

i) the map $\projection$ is a bounded linear operator on $\bigoplus_{\langle \alpha, \bamo \rangle \leq 1} \LL^p(\RRN; \RRn)$;

ii) for any $V \in \bigoplus_{\langle \alpha, \bamo \rangle \leq 1} \LL^p$ we have $\projection(\projection V) = \projection V$; 

iii) if the family $\{ V_j \} \subset \bigoplus_{\langle \alpha, \bamo \rangle \leq 1} \LL^p$ is $p$-equiintegrable then so is $\{ \projection V_j \}$.
\end{lemma}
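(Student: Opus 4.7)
The approach rests on three ingredients: $\projection$ is an orthogonal projection on the $\LL^2$-direct sum; the Pe\l{}czy\'{n}ski strong type $(q,q)$ bound cited just above the lemma extends it continuously to every $\LL^q$ with $q \in (1,\infty)$; and a density argument transfers algebraic properties from $\LL^2$ to $\LL^p$.

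Parts (i) and (ii) should be essentially immediate from these ingredients. Boundedness in (i) is precisely the strong type $(p,p)$ statement from \cite{Pelczynski89}. Linearity passes to the extension because the dense subspace $\bigoplus_{\langle \alpha, \bamo \rangle \leq 1} \LL^p \cap \bigoplus_{\langle \alpha, \bamo \rangle \leq 1} \LL^2$, which contains all smooth compactly supported tuples, is closed under addition and scalar multiplication. For (ii), the identity $\projection \circ \projection = \projection$ holds on $\LL^2$ by the very definition of an orthogonal projection; by (i) both $\projection$ and $\projection \circ \projection$ are continuous on $\bigoplus \LL^p$, and they agree on the dense subspace $\bigoplus \LL^p \cap \bigoplus \LL^2$, hence they agree on all of $\bigoplus \LL^p$.

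For (iii), the main step, I would use a truncation argument exploiting that Pe\l{}czy\'{n}ski's bound holds at \emph{every} exponent $q \in (1,\infty)$. Given a $p$-equiintegrable family $\{V_j\}$, split $V_j = V_j^{\leq M} + V_j^{>M}$ with $V_j^{\leq M} := V_j \indyk_{|V_j| \leq M}$. The equiintegrability of $\{|V_j|^p\}$ gives $\sup_j \|V_j^{>M}\|_p \to 0$ as $M \to \infty$, whence by (i) also $\sup_j \|\projection V_j^{>M}\|_p \to 0$. On the other hand, for any fixed $q > p$,
\[ \|V_j^{\leq M}\|_q^q \leq M^{q-p} \|V_j\|_p^p,\]
so by the $\LL^q$ version of (i) the family $\{\projection V_j^{\leq M}\}_j$ is bounded in $\LL^q$; H\"{o}lder's inequality $\int_E |\projection V_j^{\leq M}|^p \leq |E|^{1-p/q} \|\projection V_j^{\leq M}\|_q^p$ then yields $p$-equiintegrability of this family. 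The elementary bound $|a+b|^p \leq 2^{p-1}(|a|^p + |b|^p)$ combines the two parts to deliver (iii).

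The main obstacle is the non-locality of $\projection$, which behaves like a vector of Calder\'{o}n--Zygmund singular integrals. If the working definition of $p$-equiintegrability on $\RRN$ includes a tightness condition at infinity, this has to be checked separately, e.g.\ by replacing the amplitude cut-off by a spatial cut-off $\indyk_{|x|>R}$ and running an analogous argument built on the $\LL^p$ and $\LL^q$ boundedness of $\projection$; in the intended applications (generating sequences of Young measures on a bounded domain $\Omega$), the supports of the $V_j$ are uniformly bounded, so tightness is automatic and this subtlety can be bypassed.
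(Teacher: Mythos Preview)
Your proposal is correct and matches the paper's approach: parts (i) and (ii) are handled identically via the Pe\l{}czy\'{n}ski bound and density, and for (iii) the paper uses the soft truncation $\tau_k(X) = X$ for $|X|\leq k$, $kX/|X|$ otherwise, in place of your hard cut-off $V_j\indyk_{|V_j|\leq M}$, but the logic is the same---the truncated family is bounded in $\LL^q$ for $q>p$ (hence $p$-equiintegrable after applying $\projection$) while the remainder is small in $\LL^p$ uniformly in $j$. Your closing remark on tightness at infinity is a legitimate caveat the paper does not raise.
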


\begin{proof}
The first assertion is the content of Corollary 5.1 in \cite{Pelczynski89}. 
Point ii) is, by definition, true in $\bigoplus_{\langle \alpha, \bamo \rangle \leq 1} \LL^2 (\RRN; \RRn)$. For a general exponent $p$ let us fix $V \in \bigoplus_{\langle \alpha, \bamo \rangle \leq 1} \LL^p (\RRN; \RRn)$ and a family $V_j \subset \bigoplus_{\langle \alpha, \bamo \rangle \leq 1} \LL^2 (\RRN; \RRn)$ with $\| V - V_j \|_p \to 0$. By continuity of $\projection$ (and thus of $\projection \circ \projection$) one has 
$$ \projection V_j \to \projection V \, \text{ and } \, \projection(\projection V_j) \to \projection(\projection V) \, \text{ in } \, \LL^p (\RRN; \RRn),$$
and since $\projection V_j = \projection (\projection V_j)$ for each $j$ the claim is proven.

For the last part consider the standard truncation $\tau_k$ given by 
\begin{equation}\label{eqDefTau}
\tau_{k}(X) := 
\begin{cases}
X \quad \text{if } |X| \leq k,\\
k \frac{X}{|X|} \quad \text{if } |X| > k \ldotp
\end{cases}
\end{equation}
Then fix any sequence $V_j$ satisfying the assumptions of point iii). Since $\{\tau_{k}(V_j)\}$ is bounded in $\LL^{\infty}$ and in $\LL^p$ we know, by continuity of $\projection$ as a map from $\LL^q$ to $\LL^q$, that $\{ \projection \tau_{k}(V_j) \}$ is bounded in any $\LL^q$ with $p \leq q < \infty$, so that this family is equiintegrable in $\LL^p$. Then again, $p$-equiintegrability of $\{V_j\}$ itself yields
$$ \lim_{k \rightarrow \infty} \sup_j \left|\left|V_j - \tau_{k}(V_j)\right|\right|_p = 0,$$
so again continuity of $\projection: \LL^p \rightarrow \LL^p$ gives
$$ \lim_{k \rightarrow \infty} \sup_j \left|\left|\projection( V_j - \tau_{k}(V_j) )\right|\right|_p = 0,$$
hence $\{ \projection(V_j) \}$ is $p$-equiintegrable as claimed.

\end{proof}

\subsection{Anisotropic scaling}\label{sectionScaling}
In what follows we will often need to use a specific anisotropic scaling. For a real number $R > 0$ and a vector $v = (v_1, v_2, \ldots, v_N) \in \RRN$ we let 
$$R \scale v := (R^{1/{a_1}}v_1, R^{1/a_2} v_2, \ldots, R^{1/a_N} v_N).$$ 
Here, and in all that follows, $Q \subset \RRN$ will, unless otherwise specified, denote $[-1,1]^N$. We let $Q_R(x_0) \subset \RRN$ be the open box centred at $x_0 \in \RRN$ and scaled according to the rule just described, so that 
$$ Q_R(x_0) := \{ x \in \RRN \colon |x^i - x^i_0|^{a_i} < R \text{ for all } 1 \leq i \leq N \}.$$
Equivalently, we could write $Q_R(x_0) = x_0 + R \scale Q$. We call $R$ the anisotropic radius of the box $Q_R$ and $x_0$ its centre. 
From now on we will always understand a `box' to mean a set of the form above, i.e., an anisotropically scaled and translated unit cube. 

Observe that, unless the scaling is in fact isotropic (that is, $a_i = a_j$ for all $i,j$),  our family of boxes is not of bounded eccentricity, i.e., there does not exist a constant $c >0$ such that each box $Q_R(x_0)$ in our collection is contained in some (Euclidean) ball $B$ with $|Q_{R}(x_0)| \geq c|B|$. Therefore, it is not obvious if standard results such as the Vitali covering lemma, or the Lebesgue differentiation theorem, hold for balls replaced by anisotropically scaled boxes. Note that even sharper statements of the Vitali covering lemma, such as the one in \cite{Saks37} by Saks (which only requires the eccentricity to be bounded along fixed sequences converging to a given point), or the one in \cite{MejlbroTopsoe77} by Mejlbro and Tops{\o}e (where the condition on the eccentricity is in integral form), are not directly applicable. 

Nevertheless, these results may be proven in a straightforward way --- it suffices to follow the proof of Vitali's covering lemma given in \cite{BenedettoCzaja}, and the Lebesgue differentiation theorem (which is what we are after) follows immediately. In fact, the result could also be deduced as a special case of the more general work by Calder\'{o}n and Torchinsky (see \cite{CalderonTorchinsky}), thus we omit the proof of the following:

\begin{theorem}[Anisotropic Lebesgue differentiation theorem]\label{thmAnisotropicLebesgueDiff}
Let $f \in \LL^1_{\text{loc}}(\RRN)$. For Lebesgue almost every $x_0 \in \RRN$ one has
$$ \limsup_{R \to 0} \frac{1}{|Q_R(x_0)|} \int_{Q_R(x_0)} |f(x) - f(x_0)| \dd x = 0 \ldotp$$
\end{theorem}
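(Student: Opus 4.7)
My plan is to reduce the statement to the standard theory of the Lebesgue differentiation theorem on a space of homogeneous type, with the ``balls'' being the anisotropic boxes $Q_R(x_0)$. To this end, introduce the quasi-distance
$$ \rho(x,y) := \max_{1 \leq i \leq N} |x^i - y^i|^{a_i}, \qquad \text{so that } Q_R(x_0) = \{ x \in \RRN : \rho(x,x_0) < R \}.$$
Using the elementary inequality $(s+t)^{a_i} \leq 2^{a_i - 1}(s^{a_i} + t^{a_i})$ one verifies that $\rho$ satisfies the quasi-triangle inequality $\rho(x,z) \leq K(\rho(x,y) + \rho(y,z))$ with $K = 2^{\max_i a_i - 1}$. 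Since $|Q_R(x_0)| = 2^N R^{\langle \mathbf{1}, \bamo \rangle}$, Lebesgue measure is doubling with respect to these balls, so $(\RRN, \rho, \LebesgueN)$ is a space of homogeneous type.

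Next I would establish an anisotropic Vitali covering lemma: given a family $\{Q_{R_\alpha}(x_\alpha)\}_{\alpha}$ with $\sup_\alpha R_\alpha < \infty$, there exists a countable pairwise disjoint subfamily $\{Q_{R_k}(x_k)\}_k$ such that
$$ \bigcup_\alpha Q_{R_\alpha}(x_\alpha) \subset \bigcup_k Q_{cR_k}(x_k) $$
for a constant $c = c(\ba)$ depending only on $K$. The proof is the usual greedy algorithm, grouping the boxes into dyadic size-classes and, within each class, iteratively picking boxes disjoint from those already chosen; the enlargement constant arises from a single application of the quasi-triangle inequality to absorb any overlapping box into the enlarged selected one. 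With the Vitali lemma in hand, define the anisotropic maximal operator
$$ M_\ba f(x) := \sup_{R > 0} \frac{1}{|Q_R(x)|} \int_{Q_R(x)} |f(y)| \dd y,$$
and the standard argument yields the weak-type $(1,1)$ bound $|\{ M_\ba f > \lambda \}| \leq \frac{C}{\lambda} \|f\|_{\LL^1}$ for any $f \in \LL^1(\RRN)$, and hence for $f \in \LL^1_{\text{loc}}(\RRN)$ with a localisation argument.

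To conclude the Lebesgue point statement, observe that for a continuous and compactly supported function $g$ the claim is immediate, since the diameter of $Q_R(x_0)$ in the Euclidean sense tends to $0$ as $R \to 0$ and one may invoke uniform continuity. For general $f \in \LL^1_{\text{loc}}$ and any $\epsilon > 0$, split $f = g + h$ on a neighbourhood of a fixed compact set, with $g$ continuous and $\|h\|_{\LL^1} < \epsilon$; the exceptional set $\{ x_0 : \limsup_{R \to 0} \dashint_{Q_R(x_0)} |f - f(x_0)| \dd x > 2\lambda\}$ is then contained in $\{M_\ba h > \lambda\} \cup \{|h| > \lambda\}$, whose measure is controlled by $C\epsilon / \lambda$. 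Letting $\epsilon \to 0$ and then $\lambda \to 0$ gives the conclusion at almost every $x_0$.

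The only genuine obstacle is the Vitali covering step: the boxes have unbounded Euclidean eccentricity, so the classical Euclidean Vitali lemma does not apply directly, and the sharpenings of Saks or Mejlbro--Tops{\o}e, which track eccentricity along sequences, are also not immediately applicable. The fix is conceptual rather than technical --- one must recognise that the boxes are genuine balls in the quasi-metric $\rho$, so that the standard proof (as presented, e.g., in \cite{BenedettoCzaja}) carries over with the Euclidean triangle inequality replaced by the quasi-triangle inequality for $\rho$, with the constant tracked explicitly through the covering argument.
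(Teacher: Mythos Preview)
Your proposal is correct and follows precisely the route the paper itself indicates: the paper omits the proof but remarks that one may simply follow the Vitali covering argument as in \cite{BenedettoCzaja} (or alternatively invoke the Calder\'on--Torchinsky parabolic maximal function theory \cite{CalderonTorchinsky}), which is exactly the space-of-homogeneous-type approach you carry out. Your identification of the obstacle---unbounded Euclidean eccentricity, so that the refinements of Saks and Mejlbro--Tops{\o}e do not directly apply---and its resolution via the quasi-metric $\rho$ also match the paper's own discussion.
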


\subsection{Polynomial approximation}\label{sectionPolynomial}
For a function $f \in \LL^1(Q_R(x_0))$ we denote by $(f)_{Q_R(x_0)}$ its average over $Q_R(x_0)$, i.e., 
$$(f)_{Q_R(x_0)} := \dashint_{Q_R(x_0)} f(x) \dd x.$$
We often write simply $Q_R$ if the center is not important or clear from the context.
\begin{lemma}\label{lemmaCutOff}
There exists a constant $C$ such that, for any $r > 0$ and any $\sigma \in (0,\frac{1}{2})$, there exists a cut-off function $\eta \in \Ccinfty (Q_r; [0,1])$ which is identically equal to $1$ on $Q_{(1-\sigma)r}$ and satisfies
$$ \| \partial^{\beta} \eta \|_{\LLinfty} \leq C r^{-\langle \beta, \bamo \rangle} \sigma^{-|\beta|},$$
for all multi-indices $\beta$ with $\langle \beta, \bamo \rangle \leq 1$. 
\end{lemma}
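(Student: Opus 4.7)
My plan is to build $\eta$ as a tensor product of one-dimensional cut-offs, exploiting the fact that the anisotropic boxes are Cartesian products of intervals. By the definition of the anisotropic radius,
$$Q_r = \prod_{i=1}^N \bigl(-r^{1/a_i}, r^{1/a_i}\bigr), \qquad Q_{(1-\sigma)r} = \prod_{i=1}^N \bigl(-((1-\sigma)r)^{1/a_i}, ((1-\sigma)r)^{1/a_i}\bigr),$$
so in direction $i$ the transition between the two sets has width
$$r^{1/a_i} - ((1-\sigma)r)^{1/a_i} = r^{1/a_i}\bigl(1 - (1-\sigma)^{1/a_i}\bigr).$$

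The key quantitative point is that this transition width is \emph{linear} in $\sigma$, not of order $\sigma^{1/a_i}$. Applying the mean value theorem to $t \mapsto t^{1/a_i}$ on $[1-\sigma, 1]$ yields
$$1 - (1-\sigma)^{1/a_i} = \tfrac{1}{a_i}\,\xi^{1/a_i - 1}\sigma$$
for some $\xi \in (1-\sigma, 1) \subset (1/2, 1)$, and since $a_i \geq 1$ the factor $\xi^{1/a_i - 1} \geq 1$. Hence the transition width in the $i$-th direction is bounded below by $r^{1/a_i}\sigma/a_i$. Without this observation one would pay an extra factor of $\sigma^{1/a_i - 1}$ per direction and the claimed bound would degrade; this is the one step that really uses the anisotropic structure.

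Now fix once and for all a smooth non-decreasing profile $\chi \in C^\infty(\RR;[0,1])$ with $\chi \equiv 0$ on $(-\infty, 0]$ and $\chi \equiv 1$ on $[1,\infty)$, and for each $i$ define
$$\phi_i(t) := \chi\!\left(\frac{r^{1/a_i} - |t|}{r^{1/a_i} - ((1-\sigma)r)^{1/a_i}}\right).$$
Each $\phi_i$ is supported in $[-r^{1/a_i}, r^{1/a_i}]$ and identically $1$ on $[-((1-\sigma)r)^{1/a_i}, ((1-\sigma)r)^{1/a_i}]$; smoothness at $t = 0$ is automatic because $\phi_i \equiv 1$ on an open neighbourhood of the origin, so the nondifferentiability of $|t|$ never enters. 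The chain rule together with the lower bound on the transition width gives $\|\phi_i^{(k)}\|_{\LLinfty} \leq C_k (r^{1/a_i}\sigma)^{-k}$, where $C_k$ depends only on $\chi$ and on $\ba$.

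Finally set $\eta(x) := \prod_{i=1}^N \phi_i(x_i)$. Then $\eta \in \Ccinfty(Q_r;[0,1])$ with $\eta \equiv 1$ on $Q_{(1-\sigma)r}$, and since the factors depend on disjoint variables,
$$\partial^{\beta}\eta(x) = \prod_{i=1}^N \phi_i^{(\beta_i)}(x_i), \qquad \|\partial^{\beta}\eta\|_{\LLinfty} \leq \prod_{i=1}^N C_{\beta_i}\bigl(r^{1/a_i}\sigma\bigr)^{-\beta_i} = C_\beta\, r^{-\langle\beta,\bamo\rangle}\,\sigma^{-|\beta|}.$$
Under the hypothesis $\langle\beta,\bamo\rangle \leq 1$ one has $\beta_i \leq a_i$ for every $i$, so the admissible multi-indices form a finite set and $C := \max_{\langle\beta,\bamo\rangle\leq 1} C_\beta$ is a single constant depending only on $\ba$, as required.
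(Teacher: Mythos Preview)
Your proof is correct and follows essentially the same approach as the paper: both build $\eta$ as a tensor product of one-dimensional cut-offs and rely on the key lower bound $1-(1-\sigma)^{1/a_i}\geq c\,\sigma$ for the transition width. The only cosmetic differences are that the paper first rescales to $r=1$ and obtains the lower bound via a monotonicity argument, whereas you work directly with general $r$ and use the mean value theorem; your version is in fact slightly more explicit.
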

\begin{proof}
It is enough to consider the case $r = 1$, as the general result will then follow by scaling. With $r = 1$ observe that the distance between the faces of $Q$ and $Q_{1-\sigma}$ along the $x_i$ axis is equal to $1 - (1-\sigma)^{1/a_i}$. For sufficiently small $C$ the function $C \sigma + (1-\sigma)^{1/a_i}$ is decreasing in $\sigma$, so there exists a constant $C > 0$ such that, for all $\sigma \in (0, \frac{1}{2})$, we have
$$ 1 - (1-\sigma)^{1/a_i} \geq C \sigma,$$
for all $i$.  Now it is enough to construct one dimensional cut-off functions $\eta^i(x_i)$ that realise the desired cut-off along the particular axes and satisfy $\| \partial^k \eta^i \|_{\LLinfty} \leq 2 (C\sigma)^{-k}$. We then conclude by setting $\eta(x) := \prod_{i=1}^N \eta^i(x_i)$.
\end{proof}

We recall the following version of the Poincar\'{e} inequality in $\WWap$ proven by Dupont and Scott in \cite{DupontScott80}, where, instead of requiring zero boundary values, we allow for correction in terms of the kernel of the operator $\ggrad$. 
\begin{proposition}[see Theorem 4.2 in \cite{DupontScott80}]\label{propPolynomialApproximation}
There exists a constant $C$ such that for any function $u \in \WWap(Q)$ with $p \in [1,\infty)$ there exists a polynomial $P_u \in C^{\infty}(Q)$ with $\ggrad P_u \equiv 0$ satisfying
$$ \| u - P_u \|_{\WWap(Q)} \leq C \|\ggrad u \|_{\LLp(Q)}.$$
\end{proposition}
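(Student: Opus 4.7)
The plan is to exploit the compact embeddings developed earlier in this section together with a standard Bramble--Hilbert style contradiction argument. The first step is to identify the kernel $K := \{P \in \WWap(Q) : \ggrad P \equiv 0\}$. Since the conditions $\langle \alpha, \bamo\rangle = 1$ include $\alpha = a_i e_i$ for each $i$, any element of $K$ is a polynomial of degree at most $a_i - 1$ in the variable $x_i$; the remaining conditions $\partial^{\alpha} P = 0$ for $\langle \alpha, \bamo \rangle = 1$ cut this down to $K = \mathrm{span}\{x^\beta : \langle \beta, \bamo \rangle < 1\}$, a finite-dimensional subspace of $C^\infty(Q)$. Since $K$ is finite-dimensional, I fix any continuous linear projection $\Pi \colon \WWap(Q) \to K$, for instance the one uniquely determined by the invertible finite linear system $\int_Q \partial^\beta(u - \Pi u) \, \dd x = 0$ for every $\beta$ with $\langle \beta, \bamo \rangle < 1$, and set $P_u := \Pi u$, which is automatically smooth with $\ggrad P_u \equiv 0$.

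Next, I argue by contradiction. Suppose the desired estimate fails; then there is a sequence $u_j \in \WWap(Q)$ with $\|u_j - \Pi u_j\|_{\WWap(Q)} > j\|\ggrad u_j\|_{\LLp(Q)}$. Setting $v_j := (u_j - \Pi u_j)/\|u_j - \Pi u_j\|_{\WWap(Q)}$, one obtains a sequence with $\|v_j\|_{\WWap(Q)} = 1$, $\Pi v_j = 0$ (by linearity of $\Pi$ together with $\Pi|_K = \mathrm{id}$), and $\|\ggrad v_j\|_{\LLp(Q)} \to 0$. Since $Q$ satisfies the weak $\ba$-horn condition by Lemma \ref{lemmaBoxesAHorn}, the space $\WWap(Q)$ is reflexive, so, after extracting a subsequence, $v_j \weakConv v$ in $\WWap(Q)$. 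Proposition \ref{propCompactIntoLp} yields $v_j \to v$ in $\LLp(Q)$, while Lemma \ref{lemmaCompactEmbedding} yields $\partial^\beta v_j \to \partial^\beta v$ in $\LLp(Q)$ for each $\beta$ with $\langle \beta, \bamo \rangle < 1$. For $\alpha$ with $\langle \alpha, \bamo\rangle = 1$, $\partial^\alpha v_j \to 0$ strongly in $\LLp(Q)$ by hypothesis, and uniqueness of weak limits in $\LLp(Q)$ then forces $\partial^\alpha v = 0$, so $v \in K$.

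Continuity of $\Pi$ together with $\Pi v_j = 0$ gives $\Pi v = 0$; since $\Pi$ restricts to the identity on $K$, this forces $v = 0$. But then all top-order and intermediate derivatives of $v_j$, as well as $v_j$ itself, converge to $0$ in $\LLp(Q)$, so by Corollary \ref{corAllNormsEquivalent} we have $v_j \to 0$ strongly in $\WWap(Q)$, contradicting $\|v_j\|_{\WWap(Q)} = 1$. The only step where one could worry is the compactness input: a priori the genuinely anisotropic character of $\ggrad$ could obstruct a Rellich-type embedding and leave one stuck with only weak convergence of the intermediate derivatives, which would not suffice to reach the contradiction. Fortunately, Lemma \ref{lemmaCompactEmbedding} (itself a consequence of the Besov--Il'in--Nikolskii inequality in Theorem \ref{thmBesovIlinNikolskiiIneq}) provides precisely the required strong convergence for all $\partial^\beta$ with $\langle \beta, \bamo\rangle < 1$, so the classical Bramble--Hilbert / Poincar\'{e} scheme transfers verbatim to the mixed-smoothness setting.
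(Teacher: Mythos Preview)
The paper does not prove this proposition; it is quoted from Dupont and Scott \cite{DupontScott80}, whose argument is constructive (averaged Taylor polynomials / Sobolev integral representation). Your compactness-and-contradiction approach in the Bramble--Hilbert style is a legitimate and more elementary alternative, and for $p \in (1,\infty)$ it goes through essentially as written. Two points, however, need attention.

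First, your identification of the kernel is not correct in general: it is \emph{not} always true that $K = \mathrm{span}\{x^\beta : \langle \beta, \bamo\rangle < 1\}$. For example, with $\ba = (2,3)$ the only multi-indices with $\langle \alpha,\bamo\rangle = 1$ are $(2,0)$ and $(0,3)$, so $\ggrad P = (\partial_1^2 P,\, \partial_2^3 P)$ and the monomial $x_1 x_2^2$ lies in $K$ even though $\langle (1,2), \bamo\rangle = 7/6 > 1$. Your proposed projection (moment conditions indexed by $\{\beta : \langle \beta,\bamo\rangle < 1\}$) is then underdetermined. This is harmless for the argument itself, since all you actually use is that $K$ is finite-dimensional (clear, as $K$ sits inside polynomials of degree at most $a_i-1$ in each variable) and that \emph{some} continuous projection onto $K$ exists; but the parenthetical example should be dropped or replaced.

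Second, and more substantively, the statement covers $p \in [1,\infty)$, while your extraction step invokes reflexivity of $\WWap(Q)$ and Lemma~\ref{lemmaCompactEmbedding}, both of which are available in this paper only for $p \in (1,\infty)$. The endpoint $p = 1$ can be recovered along similar lines (use the compact embedding $\WW^{1,1}(Q)\hookrightarrow \LL^1(Q)$ from Proposition~\ref{propCompactIntoLp} to pass to a strong $\LL^1$ limit, identify the limit in $K$ distributionally, and then control the intermediate derivatives via the $p=1$ version of the interpolation inequality underlying Theorem~\ref{thmBesovIlinNikolskiiIneq}), but as written the proof is incomplete there. The constructive Dupont--Scott argument, by contrast, handles all $p \in [1,\infty)$ uniformly without any compactness input.
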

Observe that, in the above, $Q$ is fixed to be the unit cube, and we do not assert anything about approximations on other domains. However, we will only ever use this result on anisotropic boxes, and it is easy to see how to adjust the constant to scaling, as shown in the following:
\begin{corollary}\label{corPolynomialApproximation}
There exists a constant $C$ such that for any function $u \in \WWap(Q_r)$ with $p \in [1,\infty)$ there exists a polynomial $P_u \in C^{\infty}(Q_r)$ with $\ggrad P_u \equiv \left( \ggrad u \right)_{Q_r}$ such that for any $\beta$ with $\langle \beta, \bamo \rangle \leq 1$ we have
$$ r^{-1 + \langle \beta, \bamo \rangle} \| \partial^\beta (u - P_u) \|_{\LLp(Q_r)} \leq C \|\ggrad (u - P_u) \|_{\LLp(Q_r)}.$$
The constant $C$ does not depend on the function $u$ nor the radius $r$.
\end{corollary}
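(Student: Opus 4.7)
The plan is to reduce to the unit-cube case of Proposition \ref{propPolynomialApproximation} by an anisotropic change of variables, after first absorbing the prescribed average of $\ggrad u$ into a polynomial correction.

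\textbf{Step 1: construction of the correction polynomial.} Set
\[
L(x) := \sum_{\langle \alpha, \bamo \rangle = 1} (\partial^{\alpha} u)_{Q_r} \, \frac{x^{\alpha}}{\alpha!}.
\]
For any two multi-indices $\alpha, \gamma$ lying on the hyperplane $\langle \cdot, \bamo \rangle = 1$, the identity $\partial^{\alpha} x^{\gamma} \not\equiv 0$ forces $\gamma \geq \alpha$ coordinate-wise; since $\langle \gamma - \alpha, \bamo \rangle = 0$ and both multi-indices are non-negative, this in turn forces $\gamma = \alpha$. Therefore $\partial^{\alpha} L = (\partial^{\alpha} u)_{Q_r}$ for every $\alpha$ on the hyperplane, so $\ggrad L \equiv (\ggrad u)_{Q_r}$. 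Put $w := u - L$, so that $\ggrad w$ has zero average on $Q_r$.

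\textbf{Step 2: anisotropic rescaling.} Define $\tilde w(y) := w(r \scale y)$ for $y \in Q$. The chain rule gives, for every multi-index $\beta$,
\[
\partial^{\beta}_y \tilde w (y) = r^{\langle \beta, \bamo \rangle}\, (\partial^{\beta} w)(r \scale y),
\]
and consequently, writing $\sigma := \sum_{i=1}^N a_i^{-1}$,
\[
\| \partial^{\beta}_y \tilde w \|_{\LLp(Q)} = r^{\langle \beta, \bamo \rangle - \sigma/p} \, \| \partial^{\beta} w \|_{\LLp(Q_r)}.
\]

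\textbf{Step 3: application of Proposition \ref{propPolynomialApproximation}.} Since $\tilde w \in \WWap(Q)$, the proposition produces a polynomial $P_{\tilde w}$ with $\ggrad P_{\tilde w} \equiv 0$ and
\[
\sum_{\langle \beta, \bamo \rangle \leq 1} \| \partial^{\beta}_y (\tilde w - P_{\tilde w}) \|_{\LLp(Q)} \leq C \, \| \ggrad \tilde w \|_{\LLp(Q)}.
\]
Define the candidate polynomial on $Q_r$ by $P_u(x) := L(x) + P_{\tilde w}(r^{-1} \scale x)$. Because $\ggrad P_{\tilde w} \equiv 0$ and differentiating $P_{\tilde w}(r^{-1}\scale\, \cdot)$ by $\partial^{\alpha}$ with $\langle \alpha, \bamo \rangle = 1$ produces an extra factor $r^{-1}$ times $\partial^{\alpha} P_{\tilde w}$, we have $\ggrad P_u \equiv \ggrad L \equiv (\ggrad u)_{Q_r}$, as required.

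\textbf{Step 4: transferring the estimate.} Note that $u - P_u = w - P_{\tilde w}(r^{-1}\scale\,\cdot)$, and its rescaled version on $Q$ is precisely $\tilde w - P_{\tilde w}$. Applying the scaling identity from Step~2 to $u - P_u$ (for a general $\beta$) and to $w$ (for $\beta$ on the hyperplane, where $\partial^{\beta} w = \partial^{\beta}(u - P_u)$ since $\ggrad L$ and $\ggrad P_u$ agree), the inequality in Step 3 becomes
\[
r^{\langle \beta, \bamo \rangle - \sigma/p} \| \partial^{\beta}(u - P_u) \|_{\LLp(Q_r)} \leq C \, r^{1 - \sigma/p} \| \ggrad (u - P_u) \|_{\LLp(Q_r)}
\]
for every $\beta$ with $\langle \beta, \bamo \rangle \leq 1$. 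Cancelling $r^{-\sigma/p}$ and dividing by $r$ yields the claimed inequality.

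The only genuinely non-routine point is the combinatorial observation in Step 1 guaranteeing that $L$ has the prescribed derivatives on the hyperplane; everything else is a bookkeeping exercise with the anisotropic scaling $\scale$.
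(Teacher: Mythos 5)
Your proof is correct and follows essentially the same route as the paper: anisotropic rescaling to the unit cube plus subtraction of an $\ba$-polynomial with prescribed $\ba$-gradient, reducing to Proposition \ref{propPolynomialApproximation}. In fact your Step~1 is slightly more careful than the paper's sketch, which omits the $1/\alpha!$ normalisation in the correction polynomial (needed so that $\ggrad L$ equals $(\ggrad u)_{Q_r}$ rather than a diagonally rescaled version of it); your combinatorial observation that $\alpha,\gamma$ on the hyperplane with $\gamma\geq\alpha$ forces $\gamma=\alpha$ is exactly the right point.
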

\begin{proof}
First of all, note that using our anisotropic rescaling we may reduce to the case $r = 1$. This also determines the scaling, i.e., the $ r^{-1 + \langle \beta, \bamo \rangle}$ factor. Secondly, it is clearly enough to prove the result for $u \in C^\infty(Q)$, as the general case then follows from density of smooth functions in $\WWap(Q)$. Observe that by considering 
$$ \widetilde{u}(x) := u(x) - \sum_{\langle \alpha, \bamo \rangle = 1} x^\alpha \left( \partial^\alpha u \right)_Q,$$
we reduce our task to finding a polynomial $\widetilde{P}_{\widetilde{u}}$ with $\ggrad \widetilde{P}_{\widetilde{u}} \equiv 0$ and such that 
$$ \| \partial^\beta (\widetilde{u} - \widetilde{P}_{\widetilde{u}}) \|_{\LLp(Q)} \leq C \|\ggrad \widetilde{u} \|_{\LLp(Q)}$$
for all $\beta$ with $\langle \beta, \bamo \rangle < 1$, as the case $\langle \beta, \bamo \rangle = 1$ is trivial. The existence of such a $\widetilde{P}_{\widetilde{u}}$ is the content of Proposition \ref{propPolynomialApproximation}, which completes the proof.
\end{proof}

\begin{proposition}[see Theorem 10.16 in \cite{DacorognaMarcellini99}]\label{propPiecewiseApprox}
For any bounded open Lipschitz domain $\Omega$, any $u \in \WWap(\Omega)$ and any $\epsilon > 0$ there exist a function $u_\epsilon \in \WWap_u(\Omega)$ and a finite family of disjoint boxes $\{Q_{\epsilon,i}\}_i$ such that $\ggrad u_\epsilon$ is constant on each $Q_{\epsilon,i} \subset \Omega$, $\left| \Omega \setminus \bigcup_i Q_{\epsilon, i} \right| < \epsilon$, and $\|u - u_\epsilon\|_{\WWap(\Omega)} < \epsilon$.
\end{proposition}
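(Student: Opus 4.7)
The plan is to adapt the classical argument for ordinary gradients to the mixed-smoothness setting: cover most of $\Omega$ by a disjoint family of anisotropic boxes on each of which $\ggrad u$ is close in $\LLp$-mean to its average; replace $u$ on each box by the polynomial from Corollary \ref{corPolynomialApproximation} (whose $\ba$-gradient is precisely that average); and glue the polynomial back to $u$ on a thin boundary layer using the anisotropic cut-off from Lemma \ref{lemmaCutOff}. The scaling of the cut-off and of the corollary are designed to match, which is what makes the final error estimate close up.

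\textbf{Selection of boxes and polynomials.} Choose $\sigma \in (0, 1/2)$ and $\delta > 0$, both to be fixed later. Pick a compact $K \subset \Omega$ with $|\Omega \setminus K| < \epsilon/4$. Theorem \ref{thmAnisotropicLebesgueDiff} applied componentwise to $\ggrad u$ shows that for a.e.\ $x_0 \in K$ the quantity $\dashint_{Q_r(x_0)} |\ggrad u - (\ggrad u)_{Q_r(x_0)}|^p \dd x$ tends to $0$ as $r \to 0$. An anisotropic Vitali covering argument (see Subsection \ref{sectionScaling}) then extracts a finite, pairwise disjoint family $\{Q_{r_i}(x_i)\}_{i=1}^{M}$ of boxes contained in $\Omega$ satisfying
\[ \Bigl(\dashint_{Q_{r_i}(x_i)} |\ggrad u - (\ggrad u)_{Q_{r_i}(x_i)}|^p \dd x \Bigr)^{1/p} < \delta \quad \text{and} \quad \Bigl| K \setminus \bigcup_{i} Q_{r_i}(x_i) \Bigr| < \epsilon/4. \]
For each $i$, Corollary \ref{corPolynomialApproximation} produces a polynomial $P_i$ on $Q_{r_i}(x_i)$ with $\ggrad P_i \equiv (\ggrad u)_{Q_{r_i}(x_i)}$, and Lemma \ref{lemmaCutOff} produces a cut-off $\eta_i \in \Ccinfty(Q_{r_i}(x_i))$ equal to $1$ on $Q_{(1-\sigma)r_i}(x_i)$ and satisfying $\|\partial^\gamma \eta_i\|_{\LLinfty} \leq C r_i^{-\langle \gamma, \bamo\rangle}\sigma^{-|\gamma|}$.

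\textbf{Definition and basic properties.} Set $u_\epsilon := u + \sum_{i=1}^{M} \eta_i(P_i - u)$. Since $u_\epsilon - u$ is supported in $\bigcup_i Q_{r_i}(x_i) \Subset \Omega$, one has $u_\epsilon - u \in \WWapn(\Omega)$, so $u_\epsilon \in \WWap_u(\Omega)$. On each inner box $Q_{(1-\sigma) r_i}(x_i)$ the cut-off is identically $1$, so $u_\epsilon = P_i$ and $\ggrad u_\epsilon \equiv (\ggrad u)_{Q_{r_i}(x_i)}$ is constant there. Taking the boxes in the statement to be $\{Q_{(1-\sigma)r_i}(x_i)\}_{i=1}^{M}$, we estimate
\[ \Bigl| \Omega \setminus \bigcup_i Q_{(1-\sigma)r_i}(x_i) \Bigr| \leq |\Omega \setminus K| + \Bigl| K \setminus \bigcup_i Q_{r_i}(x_i) \Bigr| + \bigl(1 - (1-\sigma)^{\sum_j 1/a_j}\bigr)|\Omega|, \]
and since the last term tends to $0$ as $\sigma \to 0$, fixing $\sigma$ small enough renders the whole expression smaller than $\epsilon$.

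\textbf{The $\WWap$-estimate (main step and main obstacle).} For any multi-index $\alpha$ with $\langle \alpha, \bamo\rangle \leq 1$, Leibniz's rule together with disjointness of the boxes gives
\[ \|\partial^\alpha(u - u_\epsilon)\|_{\LLp(\Omega)}^p = \sum_{i=1}^{M} \Bigl\| \sum_{\gamma \leq \alpha} \tbinom{\alpha}{\gamma} \partial^\gamma \eta_i \cdot \partial^{\alpha-\gamma}(u - P_i) \Bigr\|_{\LLp(Q_{r_i}(x_i))}^p. \]
Corollary \ref{corPolynomialApproximation} applied with $\beta = \alpha - \gamma$, combined with the cut-off bound and the fact that $\ggrad(u - P_i) = \ggrad u - (\ggrad u)_{Q_{r_i}}$, yields
\[ \|\partial^\gamma \eta_i \cdot \partial^{\alpha-\gamma}(u - P_i)\|_{\LLp(Q_{r_i})} \leq C \sigma^{-|\gamma|} r_i^{1 - \langle \alpha, \bamo\rangle} \|\ggrad u - (\ggrad u)_{Q_{r_i}}\|_{\LLp(Q_{r_i})} \leq C \sigma^{-|\gamma|} \delta \, r_i^{1 - \langle \alpha, \bamo\rangle} |Q_{r_i}|^{1/p}, \]
the crucial cancellation being $r_i^{-\langle \gamma, \bamo\rangle} \cdot r_i^{1 - \langle \alpha - \gamma, \bamo\rangle} = r_i^{1 - \langle \alpha, \bamo\rangle}$. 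Since $1 - \langle \alpha, \bamo\rangle \geq 0$ and $r_i$ is bounded by a constant depending on $\Omega$ and $\ba$, summing over $i$ using disjointness gives $\|\partial^\alpha(u - u_\epsilon)\|_{\LLp(\Omega)} \leq C(\sigma, \Omega, \ba, p) \delta \, |\Omega|^{1/p}$. Fixing $\sigma$ first (to control the measure gap) and then taking $\delta$ sufficiently small delivers $\|u - u_\epsilon\|_{\WWap(\Omega)} < \epsilon$. The main obstacle is exactly the scaling bookkeeping just outlined: the exponents in Lemma \ref{lemmaCutOff} and Corollary \ref{corPolynomialApproximation} are finely tuned so that all anisotropic scales cancel, and one must track this cancellation carefully across every term of the Leibniz sum.
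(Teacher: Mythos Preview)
Your proof is correct and follows essentially the same approach as the paper's: cover most of $\Omega$ by disjoint anisotropic boxes, replace $u$ on each box by the polynomial from Corollary~\ref{corPolynomialApproximation}, glue via the cut-offs from Lemma~\ref{lemmaCutOff}, and exploit the scaling cancellation $r_i^{-\langle \gamma,\bamo\rangle}\cdot r_i^{1-\langle \alpha-\gamma,\bamo\rangle}=r_i^{1-\langle \alpha,\bamo\rangle}$ in the Leibniz expansion. The only difference is cosmetic: the paper decomposes $\Omega$ into a grid of boxes of common small radius $\tau$ and argues afterwards that $\sum_i\|\ggrad u-(\ggrad u)_{Q_{\tau,i}}\|_{\LLp(Q_{\tau,i})}^p\to 0$ as $\tau\to 0$, whereas you use a Vitali covering to select boxes on which the mean oscillation is already below $\delta$; both routes lead to the same final estimate and the same two-step choice of parameters (first $\sigma$, then $\tau$ or $\delta$).
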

\begin{proof}
Fix an arbitrary $u \in \WWap(\Omega)$ and a parameter $\tau \in (0,1)$ to be determined later. Decompose $\Omega$, up to a set of measure zero, into a countable family of disjoint, open boxes $\{Q_{\tau,i}\}$ of radii equal, or smaller than, $\tau$. Select a finite subfamily of $I$ boxes covering $\Omega$ up to a set of measure less than $\epsilon /2$ and relabel the elements so that $\left| \Omega \setminus \bigcup_{i=1}^I Q_{\tau,i} \right| < \epsilon / 2$. From now on we focus our attention only on the boxes $Q_{\tau,i}$ with $1 \leq i \leq I$. 
Let $P_{\tau,i}$ denote the polynomial approximating $u$ on $\Qtaui$ given by Corollary \ref{corPolynomialApproximation}. Let $\sigma \in (0,1/2)$ be a parameter to be determined later. For every $\Qtaui$ take a cut-off function $\etataui \in \Ccinfty(\Qtaui)$ identically equal to $1$ on $(1-\sigma) \scale Q_{\tau,i}$, as in Lemma \ref{lemmaCutOff}. Define
$$ v(x) := u(x) + \sum_{i = 1}^I \etataui(x) \left(\Ptaui(x) - u(x)\right),$$
so that $\ggrad v$ is constant on each $(1-\sigma) \scale Q_{\tau,i}$ and $v \in \WWap_u(\Omega)$. We may now calculate
\begin{equation*}
\begin{aligned} 
\| u - v\|_{\WWap(\Omega)}^p = & \sum_{\langle \beta, \bamo \rangle \leq 1} \int_\Omega | \partial^\beta (u-v) |^p \dd x \\
= & \sum_{\langle \beta, \bamo \rangle \leq 1}  \sum_{i=1}^I \int_{\Qtaui} \left| \partial^\beta \left(\etataui(x) \left(u(x) - \Ptaui(x)\right) \right)  \right|^p \dd x \\
 \leq  & \sum_{\langle \beta, \bamo \rangle \leq 1}  \sum_{i=1}^I \int_{\Qtaui} \sum_{0 \leq \gamma \leq \beta}  \left| \partial^\gamma \etataui(x) \right|^p \left| \partial^{\beta - \gamma} \left(u(x) - \Ptaui(x)\right)   \right|^p \dd x.
\end{aligned}
\end{equation*}
Using the bounds on the derivatives of $\eta$ we may write
$$ \| u - v\|_{\WWap(\Omega)}^p  \leq C \sum_{\langle \beta, \bamo \rangle \leq 1} \sum_{0 \leq \gamma \leq \beta} \sum_{i=1}^I \sigma^{-p|\beta|} \tau^{-p\langle \gamma, \bamo \rangle} \int_{\Qtaui}  \left| \partial^{\beta - \gamma} \left(u(x) - \Ptaui(x)\right)   \right|^p \dd x.$$
Using the bound from Corollary \ref{corPolynomialApproximation} on each $\Qtaui$ now yields
$$ \| u - v\|_{\WWap(\Omega)}^p  \leq C \sum_{\langle \beta, \bamo \rangle \leq 1} \sum_{0 \leq \gamma \leq \beta} \sum_{i=1}^I \sigma^{-p|\beta|} \tau^{-p \langle \gamma, \bamo \rangle} \tau^{p - p\langle \beta - \gamma, \bamo \rangle} \|\ggrad (u - \Ptaui)\|_{\LLp(\Qtaui)}^p.$$
Thus,
$$ \| u - v\|_{\WWap(\Omega)}^p  \leq C \sum_{\langle \beta, \bamo \rangle \leq 1} \sum_{0 \leq \gamma \leq \beta} \sum_{i=1}^I \sigma^{-p|\beta|} \tau^{p - p\langle \beta, \bamo \rangle} \|\ggrad (u - \Ptaui)\|_{\LLp(\Qtaui)}^p,$$
and finally, since $\langle \beta, \bamo \rangle \leq 1$, $\sigma < 1/2$, and $\tau < 1$, we may write
$$ \| u - v\|_{\WWap(\Omega)}^p  \leq C \sigma^{-p\max_j a_j} \sum_{i=1}^I  \|\ggrad (u - \Ptaui)\|_{\LLp(\Qtaui)}^p.$$
Now it is time to choose the parameters $\sigma$ and $\tau$. Observe that $\frac{ |(1-\sigma) \scale Q_{\tau,i}|}{|Q_{\tau,i}|} = (1 - \sigma)^{|\bamo|}$ for any $\tau$ and any $i$. Thus, choosing $\sigma$ small enough ensures that, with any $\tau$, we will have $\left| \Omega \setminus \left( \bigcup_i (1-\sigma) \scale Q_{\tau,i}\right) \right| < \epsilon$. With $\sigma$ fixed it is enough to choose $\tau$ small enough so that 
$$ \sum_{i=1}^I  \|\ggrad (u - \Ptaui)\|_{\LLp(\Qtaui)}^p \leq C^{-1} \sigma^{p \max_j a_j} \epsilon,$$
which is possible as, thanks to Theorem \ref{thmAnisotropicLebesgueDiff}, one can approximate $\ggrad u$ in the $\LLp$ norm by its averages over a grid of boxes of sufficiently small radii. Using the corresponding $v$ as $u_\epsilon$ ends the proof.
\end{proof}

\section{Young measures}\label{chapter:chapter2}
The main technical tool that we will use in studying lower semicontinuity of integral functionals defined on Sobolev spaces of mixed smoothness is the theory of Young measures (see  \cite{Young37}, \cite{Young42one}, \cite{Young42two}, and \cite{Young00}). These measures describe the behaviour of weakly converging sequences more accurately than just their weak limits and facilitate limit passages in nonlinear quantities. 

In this section we introduce, and study, the oscillation Young measures generated by weakly convergent sequences of $\ba$-gradients. 
In doing so we follow the strategies of Kristensen in \cite{KristensenNotes} (who studies classical gradients) and Fonseca and M\"{u}ller in \cite{FonsecaMuller99} (who work with $\mathcal{A}$-free vector fields). In what follows we focus on the adjustments required by the mixed smoothness setting whilst skipping the technical parts that translate without major adaptations. We refer the reader to the author's thesis \cite{ProsinskiThesis}, where a full step-by-step exposition may be found.

\subsection{Oscillation Young measures}\label{sectionOscillationYMs}
We denote by $\calM(\RRnm)$ the set of all Radon measures on $\RRnm$, and by $\Probability(\RRnm) \subset \calM(\RRnm)$ the set of all probability measures on $\RRnm$. We say that a function $F \colon \Omega \times \RRnm \rightarrow (-\infty, \infty]$ is a normal integrand if $F$ is Borel measurable and, for every fixed $x \in \Omega$, the function $W \mapsto F(x, W)$ is lower semicontinuous. Similarly, we say that a function $F \colon \Omega \times \RRnm \rightarrow \RR$ is Carath\'{e}odory if both $F$ and $-F$ are normal integrands. Finally, a map $\nu \colon \Omega \rightarrow \calM (\RRnm)$ is said to be weak*-measurable if $x \mapsto \langle \nu_x, \varphi \rangle$ is (Lebesgue) measurable for any continuous and compactly supported function $\varphi \colon \RRnm \to \RR$.

We begin with the following version of the Fundamental Theorem of Young Measures which may be found, for example, in Pedregal's book, (see \cite{Pedregal97}, followed by a simple result on translations.
\begin{theorem}[see \cite{Pedregal97}]\label{thmFToYM}
Let $\Omega \subset \RRN$ be a measurable set of finite measure and $V_j \colon \Omega \rightarrow \RRnm$ be a bounded sequence of $\LL^p$ functions for some $p \in [1, \infty]$. Then there exists a subsequence $V_{j_k}$ and a weak$^*$-measurable map $\nu \colon \Omega \rightarrow \calM (\RRnm)$ such that the following hold:

i) every $\nu_x$ is a probability measure;

ii) if $F \colon \Omega \times \RRnm \rightarrow \RR \cup \{\infty\}$ is a normal integrand bounded from below, then
$$ \liminf_{j \rightarrow \infty} \int_{\Omega} F(x, V_{j_k}(x)) \dd x  \geqslant \int_{\Omega} \overline{F}(x) \dd x,$$
where
$$ \overline{F}(x) := \langle \nu_x, F(x,\cdot) \rangle = \int_{\RRnm}F(x,y) \dd \nu_x(y);$$

iii) if $F \colon \Omega \times \RRnm \rightarrow \RR \cup \{\infty\}$ is Carath\'{e}odory and bounded from below, then
$$ \lim_{j \rightarrow \infty} \int_{\Omega} F(x, V_{j_k}(x)) \dd x  = \int_{\Omega} \overline{F}(x)\dd x < \infty$$
if and only if $\{F(\cdot, V_{j_k}(\cdot))\}$ is equiintegrable (in the usual $\LL^1$ sense). In this case
$$ F(\cdot, V_{j_k}(\cdot)) \weakConv \overline{F} \text{ in } \LL^1(\Omega) \cdot$$
The family $\{ \nu_x \}_{x \in \Omega}$ is called the (oscillation) Young measure generated by $V_{j_k}$. If there exists some $x_0 \in \Omega$ such that $\nu_x = \nu_{x_0}$ for almost every $x \in \Omega$ then we say that $\nu$ is a homogeneous Young measure and often identify the family $\{\nu_x\}$ with the single measure $\nu_{x_0}$ if there is no risk of confusion.
\end{theorem}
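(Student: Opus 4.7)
The plan is to prove this as a standard weak-$*$ compactness result in a suitable space of parametrized measures, adapted from the classical proof of Berliocchi--Lasry and Balder. To each $V_j \colon \Omega \to \RRnm$ I would associate the Dirac-valued parametrized measure $\mu^j_x := \delta_{V_j(x)}$, viewed as an element of the dual $(\LL^1(\Omega; \CC_0(\RRnm)))^*$ through the pairing $\langle \mu^j, \phi \rangle := \int_\Omega \int_{\RRnm} \phi(x, W) \dd \mu^j_x(W) \dd x = \int_\Omega \phi(x, V_j(x)) \dd x$. Since $\Omega$ has finite measure and $\CC_0(\RRnm)$ is a separable Banach space, the test space $\LL^1(\Omega; \CC_0(\RRnm))$ is separable, so bounded subsets of its dual are sequentially weak-$*$ compact. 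The $\mu^j$ have mass $|\Omega|$ and satisfy $|\langle \mu^j, \phi \rangle| \leq \|\phi\|_{\LL^1(\Omega; \CC_0)}$, so a subsequence $\mu^{j_k}$ converges to a weak-$*$ measurable map $\nu \colon \Omega \to \calM(\RRnm)$.

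Next I would show each $\nu_x$ is a probability measure. Non-negativity follows by testing against non-negative $\phi$ and localising in $x$ through a Lebesgue point argument. For unit mass the key point is tightness: boundedness of $\{V_j\}$ in $\LL^p(\Omega; \RRnm)$ together with Chebyshev's inequality gives, for every $\epsilon > 0$, a radius $R > 0$ with $|\{x \in \Omega \colon |V_j(x)| > R\}| < \epsilon$ uniformly in $j$. Testing $\nu$ against cut-off functions $\chi_R \in \Ccinfty(\RRnm)$ that are equal to $1$ on $B_R$ and letting $R \to \infty$ shows $\nu_x(\RRnm) = 1$ for a.e.\ $x \in \Omega$.

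For part (ii) I would approximate a normal integrand $F$ bounded from below by an increasing sequence of bounded Carath\'{e}odory integrands $F_k$, obtained via truncation followed by a Moreau--Yosida / inf-convolution regularisation in the second variable, so that $F_k \uparrow F$ pointwise while each $F_k$ is jointly Borel and continuous in $W$. Each $F_k$ may then be further approximated, using Lusin's theorem applied to $x \mapsto F_k(x, \cdot)$ viewed as an $\CC_0$-valued map, by elements of $\LL^1(\Omega; \CC_0(\RRnm))$, for which the weak-$*$ convergence of $\mu^{j_k}$ gives exact convergence of the integrals. Chaining these approximations with the inequality $\liminf_k \int F(x, V_{j_k}) \dd x \geq \liminf_k \int F_k(x, V_{j_k}) \dd x$ and applying monotone convergence on the right yields the claimed lower bound.

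Finally, part (iii) follows by applying (ii) to both $F$ and $-F$, which is legitimate because a Carath\'{e}odory integrand is normal in both orientations. Under equi-integrability the two inequalities pinch the limit to the desired identity, and the $\LL^1$ weak convergence $F(\cdot, V_{j_k}) \weakConv \overline{F}$ is obtained by testing against $\LLinfty$ functions and invoking the Dunford--Pettis characterisation of weak $\LL^1$ compactness to extract a weakly convergent further subsequence whose limit is forced to be $\overline{F}$ by the already-proved identity. The converse direction is a standard contradiction: failure of equi-integrability, via de la Vall\'{e}e-Poussin, produces a concentrating subsequence whose integrals exceed $\int \overline{F} \dd x$. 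The main obstacle in this programme is the construction of the joint Borel approximants $F_k$: one must ensure simultaneously pointwise monotone approximation from below, joint measurability, and continuity in $W$, which is the technical heart of the theorem and the reason we simply quote Pedregal rather than reproving it.
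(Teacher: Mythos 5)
The paper itself does not prove Theorem~\ref{thmFToYM}: it only cites Pedregal's book, so there is no in-paper argument to compare against. Your sketch follows the standard route from that literature (embedding $x\mapsto\delta_{V_j(x)}$ into the dual of the separable space $\LL^1(\Omega;\CC_0(\RRnm))$, extracting a weak-$*$ limit, using the $\LL^p$-bound and Chebyshev for tightness, and approximating a normal integrand from below by bounded Carath\'{e}odory ones), and parts (i) and (ii) are outlined in a way that could be fleshed out into a complete proof. The main technical chore in (ii) --- building joint Borel, $W$-continuous approximants $F_k\uparrow F$ and then passing from $\LL^1(\Omega;\CC_0)$ test functions to bounded Carath\'{e}odory ones using tightness --- is exactly what you defer to Pedregal, which is a fair reading of why the paper quotes it.

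There is, however, a genuine gap in your treatment of part (iii). You propose to obtain the two-sided equality by ``applying (ii) to both $F$ and $-F$,'' justified by the observation that both are normal. But normality is not the constraint; (ii) also requires the integrand to be bounded \emph{from below}, and if $F$ is merely Carath\'{e}odory and bounded below with $\{F(\cdot,V_{j_k})\}$ equiintegrable, then $-F$ will in general be unbounded from below, so (ii) does not apply to it. The standard fix is a truncation: set $F_M:=\min(F,M)$, observe that both $F_M$ and $-F_M$ are bounded (hence bounded below) Carath\'{e}odory integrands, apply (ii) to $\pm F_M$ to obtain
$$\lim_{k\to\infty}\int_\Omega F_M(x,V_{j_k}(x))\dd x=\int_\Omega\langle\nu_x,F_M(x,\cdot)\rangle\dd x,$$
and then use the equiintegrability of $\{F(\cdot,V_{j_k})\}$ to show that $\sup_k\int_\Omega\bigl(F-F_M\bigr)(x,V_{j_k}(x))\dd x\to0$ as $M\to\infty$, together with monotone convergence in $M$ on the Young-measure side. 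It is this truncation step, not the ``apply (ii) to $-F$'' step, that lets the pinch close; as written your argument would also need to verify $\int_\Omega\overline{F}\dd x<\infty$ before quoting monotone convergence, which again follows from the truncation plus the uniform $\LL^1$ bound supplied by equiintegrability.
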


\begin{proposition}[see \cite{Pedregal97}]\label{propTranslatedYM}
If $\{V_j\}$ generates an oscillation Young measure $\nu$ and if $W_j \rightarrow W$ in measure, then $\{V_j + W_j\}$ generates the translated Young measure
$$\widetilde{\nu}_x := \delta_{W(x)} \ast \nu_x,$$
where
$$\langle \delta_{U} \ast \mu, \varphi \rangle = \langle \mu, \varphi(\cdot + U) \rangle$$
for $U \in \RRnm$ and $\varphi \in C_0(\RRnm)$. In particular, if $W_j \rightarrow 0$ in measure, then $\{V_j + W_j\}$ still generates $\nu$. Similarly, if $\| V_j - W_j \|_p \rightarrow 0$ for some $p \in [1,\infty]$ then both $V_j$ and $W_j$ generate the same Young measure. 
\end{proposition}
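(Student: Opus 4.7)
The plan is to verify the identification of the generated Young measure directly from the defining duality. By Theorem \ref{thmFToYM}, a bounded sequence $\{U_j\}$ in $\LL^p$ generates a weak$^*$-measurable map $\mu \colon \Omega \to \Probability(\RRnm)$ as its oscillation Young measure if, for every $\varphi \in C_0(\RRnm)$ and every measurable $E \subset \Omega$,
\[
\lim_{j \to \infty} \int_E \varphi(U_j(x)) \dd x = \int_E \langle \mu_x, \varphi \rangle \dd x.
\]
Applied to $U_j := V_j + W_j$ and $\mu_x = \widetilde{\nu}_x = \delta_{W(x)} \ast \nu_x$, the right hand side reads $\int_E \langle \nu_x, \varphi(\cdot + W(x))\rangle \dd x$, so it is enough to verify this convergence.

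First I would decompose
\[
\varphi(V_j + W_j) = \varphi(V_j + W) + \bigl[\varphi(V_j + W_j) - \varphi(V_j + W)\bigr].
\]
For the bracketed remainder, uniform continuity of $\varphi$ (which holds because $\varphi \in C_0(\RRnm)$ vanishes at infinity) together with $W_j \to W$ in measure gives that the remainder converges to $0$ in measure: for any $\delta > 0$, choosing $\eta > 0$ so that $|a - b| < \eta$ implies $|\varphi(a) - \varphi(b)| < \delta$, the set where the remainder exceeds $\delta$ is contained in $\{|W_j - W| \geq \eta\}$, which has vanishing measure. Combined with the uniform bound $2\|\varphi\|_{\LLinfty}$ and $|\Omega| < \infty$, bounded convergence yields $\int_E \bigl[\varphi(V_j + W_j) - \varphi(V_j + W)\bigr] \dd x \to 0$.

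For the leading term I would apply Theorem \ref{thmFToYM}(iii) to the Carath\'{e}odory integrand $F(x, V) := \varphi(V + W(x))$, which is continuous in $V$ and measurable in $x$ (as a composition of the measurable $W$ with the continuous $\varphi$), bounded by $\|\varphi\|_{\LLinfty}$, so that $\{F(\cdot, V_j(\cdot))\}$ is trivially equiintegrable. The theorem then yields
\[
\varphi(V_j + W) \weakConv \langle \nu_{(\cdot)}, \varphi(\cdot + W(\cdot))\rangle \quad \text{in } \LL^1(\Omega),
\]
which gives the required convergence on every measurable $E$ and, combined with the previous paragraph, identifies the Young measure generated by $\{V_j + W_j\}$ as $\widetilde{\nu}$.

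The two remaining assertions are then immediate corollaries. When $W_j \to 0$ in measure, take $W \equiv 0$ and observe $\delta_0 \ast \nu_x = \nu_x$. When $\|V_j - W_j\|_p \to 0$ for some $p \in [1, \infty]$, the difference $W_j - V_j$ converges to $0$ in measure (via Chebyshev's inequality for $p < \infty$, directly for $p = \infty$), and writing $W_j = V_j + (W_j - V_j)$ reduces to the previous case. I do not anticipate any real obstacle: the only delicate point is that uniform continuity of the test function is essential in the measure-convergence step, which is precisely why one tests against $C_0(\RRnm)$ rather than against arbitrary continuous $\varphi$.
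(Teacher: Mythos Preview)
Your argument is correct. The paper does not actually prove this proposition: it is stated with a reference to Pedregal's book \cite{Pedregal97} and used without proof, so there is no ``paper's own proof'' to compare against. Your direct verification via the $C_0(\RRnm)$ duality --- splitting off the uniformly-continuous remainder $\varphi(V_j+W_j)-\varphi(V_j+W)$ and then invoking Theorem~\ref{thmFToYM}(iii) for the Carath\'{e}odory integrand $(x,V)\mapsto\varphi(V+W(x))$ --- is exactly the standard route and works cleanly here. One very minor remark: to speak of $\{V_j+W_j\}$ \emph{generating} a Young measure in the sense of Theorem~\ref{thmFToYM} one implicitly wants the sequence to sit in some $\LL^q$; since $\Omega$ has finite measure and $W_j\to W$ in measure, the sequence is automatically bounded in $\LL^0$ (measurable functions with convergence in measure), and your test against bounded $\varphi\in C_0$ is insensitive to any $\LL^p$ bound on $W_j$, so the identification goes through regardless.
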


\subsection{Decomposition and localisation}\label{sectionDecomposition}
First, following the approach from Kristensen's lecture notes (see \cite{KristensenNotes}), we show how to decompose a given weakly convergent sequence of $\ba$-gradients into a $p$-equiintegrable oscillation part supported away from the boundary, and a concentration part that converges to $0$ in measure.

In this section we denote by $\fullgrad u$ the full gradient of $u$ given by $\fullgrad u := (\partial^\alpha u)_{\langle \alpha, \bamo \rangle \leq 1}$ and by $\lowgrad u$ its lower gradient $\lowgrad u := (\partial^\alpha u)_{\langle \alpha, \bamo \rangle < 1}$, so that $\fullgrad u = \lowgrad u \oplus \ggrad u$. We also set $d := \left| \left\{ \alpha \colon \langle \alpha, \bamo \rangle \leq 1 \right\} \right|$ so that for $u \colon \Omega \to \RRn$ we have $\fullgrad u \colon \Omega \to \RRdn$ and $\lowgrad u \colon \Omega \to \RR^{(d-m) \times n}$.

\begin{defi}
For $p \in (1, \infty)$ we say that a family of functions $\{ V_k \} \subset \LL^p(\Omega, \RR^n)$ is $p$-equiintegrable if the family $\{ |V_k|^p \} \subset \LL^1(\Omega; \RR)$ is equiintegrable in the usual sense.
\end{defi}

\begin{lemma}\label{lemmaZeroBoundary}
Let $\Omega \subset \RRN$ be a bounded open Lipschitz set satisfying the weak $\ba$-horn condition. Then, for any sequence $u_j \weakConv 0$ in $\WWap(\Omega)$, there exists a sequence $v_j \in \Ccinfty(\Omega)$ such that the sequence $(\fullgrad u_j - \fullgrad v_j)$ converges to $0$ in measure. In particular, if $\ggrad u_j$ (or, equivalently, $\fullgrad u_j$) generates some oscillation Young measure $\nu$ then $\ggrad v_j$ (respectively $\fullgrad v_j$) also generates $\nu$. Furthermore, if $\{ \fullgrad u_j \}$ is $p$-equiintegrable then so is $\{\fullgrad v_j\}$.
\end{lemma}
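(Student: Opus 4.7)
The plan is to combine density of smooth functions with a cutoff procedure, selecting the cutoff thickness to balance the blow-up of cutoff derivatives against the strong $\LL^p$ convergence of lower-order derivatives. First, I would use density of $C^\infty(\Omega) \cap \WWap(\Omega)$ in $\WWap(\Omega)$ to pick $\widetilde{u}_j \in C^\infty(\Omega) \cap \WWap(\Omega)$ with $\|u_j - \widetilde{u}_j\|_{\WWap(\Omega)} < 2^{-j}$. For each $\delta > 0$ I would fix a cutoff $\eta_\delta \in \Ccinfty(\Omega;[0,1])$ with $\eta_\delta \equiv 1$ on $\Omega_\delta := \{x \in \Omega : \dist(x,\partial\Omega) > \delta\}$ and $\|\partial^\gamma \eta_\delta\|_{\LL^\infty} \leq C_\gamma \delta^{-|\gamma|}$, available because $\Omega$ is Lipschitz. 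Setting $v_j := \eta_{\delta_j} \widetilde{u}_j \in \Ccinfty(\Omega)$ then reduces the problem to choosing $\delta_j \to 0$ appropriately.

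For the convergence-in-measure assertion, I would expand via the Leibniz rule:
\begin{equation*}
\partial^\alpha(u_j - v_j) = \partial^\alpha(u_j - \widetilde{u}_j) + (1-\eta_{\delta_j})\partial^\alpha \widetilde{u}_j - \sum_{0 < \gamma \leq \alpha} \binom{\alpha}{\gamma} \partial^\gamma \eta_{\delta_j} \, \partial^{\alpha-\gamma}\widetilde{u}_j.
\end{equation*}
The first summand tends to zero in $\LL^p$ by construction. Every other summand is supported in $\Omega \setminus \Omega_{\delta_j}$, whose measure vanishes as $\delta_j \to 0$, so it converges to zero in measure regardless of its pointwise size. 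Hence $\fullgrad u_j - \fullgrad v_j \to 0$ in measure for any choice of $\delta_j \to 0$, and the Young-measure assertion follows directly from Proposition \ref{propTranslatedYM}, noting that $\ggrad$ is a linear projection of $\fullgrad$.

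For the $p$-equiintegrability claim I would aim for the stronger statement $\|\fullgrad u_j - \fullgrad v_j\|_{\LL^p(\Omega)} \to 0$, which transfers $p$-equiintegrability through $|a+b|^p \leq 2^{p-1}(|a|^p + |b|^p)$. The first summand above is bounded by $2^{-j}$ in $\LL^p$; for the second, the assumed $p$-equiintegrability of $\{\partial^\alpha u_j\}$ passes to $\{\partial^\alpha \widetilde{u}_j\}$ by $\LL^p$-proximity, which, combined with $|\Omega \setminus \Omega_{\delta_j}| \to 0$, forces $\|(1-\eta_{\delta_j})\partial^\alpha \widetilde{u}_j\|_{\LL^p} \to 0$. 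The delicate pieces will be the mixed terms: for $0 < \gamma \leq \alpha$ with $\langle\alpha,\bamo\rangle \leq 1$ one has $\langle \alpha-\gamma, \bamo\rangle < 1$, so Lemma \ref{lemmaCompactEmbedding} yields $\partial^{\alpha-\gamma}\widetilde{u}_j \to 0$ in $\LL^p$. Writing $c_j := \sum_{\langle \beta, \bamo\rangle < 1} \|\partial^\beta \widetilde{u}_j\|_{\LL^p} \to 0$ and $M := \max\{|\alpha| : \langle\alpha,\bamo\rangle \leq 1\}$, the mixed terms are controlled in $\LL^p$ by $C \delta_j^{-M} c_j$, so picking $\delta_j := c_j^{1/(2M)}$ (and $\delta_j := 1/j$ whenever $c_j = 0$) reduces this bound to $c_j^{1/2} \to 0$.

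The main obstacle is precisely this final balance: the cutoff derivatives blow up at rate $\delta_j^{-|\gamma|}$, and we must absorb this using the decay of the lower-order derivatives. What makes it work is the compact embedding of Lemma \ref{lemmaCompactEmbedding}, which, courtesy of the weak $\ba$-horn condition, supplies strong $\LL^p$ convergence for every derivative of weight strictly less than one, even though the top-weight derivatives only converge weakly.
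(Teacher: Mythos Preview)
Your argument is correct and follows essentially the same route as the paper: multiply by a cutoff vanishing near $\partial\Omega$, use that the resulting error is supported on a set of vanishing measure for the convergence-in-measure claim, and balance the cutoff blow-up against the strong $\LL^p$ decay of the lower-weight derivatives supplied by Lemma~\ref{lemmaCompactEmbedding} for the $p$-equiintegrability claim. Your preliminary approximation $u_j \mapsto \widetilde u_j \in C^\infty(\Omega)\cap\WWap(\Omega)$ is in fact a small improvement over the paper's version, which sets $v_j := \phi_{k_j} u_j$ and thus lands only in $\WWapn(\Omega)$ rather than in $\Ccinfty(\Omega)$ as the statement promises.
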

\begin{proof}
Take an increasing family of smooth, open sub-domains $\Omega_k \Subset \Omega_{k+1} \Subset \Omega$ (here $\Subset$ denotes compact inclusion) with $\bigcup_{k=1}^{\infty} \Omega_k = \Omega$. Fix a family of cut-off functions $\phi_k \in \Ccinfty(\Omega; [0,1])$ with $\phi_k \equiv 1$ on $\Omega_k$ and denote $M_k := ||\fullgrad \phi_k||_{\LLinfty(\Omega)}  < \infty$.

For any $k, j$ we may write $u_j = \phi_k u_j + (1-\phi_k) u_j$, and the goal is to show that the second term is small. We have
$$ \intOmega |\fullgrad ((1-\phi_k)u_j)| \dd x = \int_{\Omega \setminus \Omega_k} |\fullgrad ((1-\phi_k)u_j)| \dd x,$$
because on $\Omega_k$ the integrand is identically equal to $0$. Differentiating the product we distinguish between the case where all the derivatives fall on $u_j$ and the one where we also differentiate $(1-\phi_k)$, which yields
$$ \intOmega |\fullgrad ((1-\phi_k)u_j)| \dd x \leq  \int_{\Omega \setminus \Omega_k} (1-\phi_k) |\fullgrad u_j| \dd x +  \int_{\Omega \setminus \Omega_k} M_k |\lowgrad u_j| \dd x \ldotp$$

Since the family $\{|\fullgrad u_j|\}$ is bounded in $\LL^p$, it is uniformly integrable in $\LL^1$. Adding the fact that $|1-\phi_k| \leq 1$ and $|\Omega \setminus \Omega_k| \to 0$ we deduce that the first term in our inequality converges to $0$ with $k \to \infty$, uniformly in $j$. On the other hand, Lemma \ref{lemmaCompactEmbedding} shows that the lower gradients $\lowgrad u_j$ converge to $0$ strongly in $\LL^p$, thus in particular in $\LL^1$. Therefore, there exists a sequence $k_j \to \infty$ such that $\int_{\Omega \setminus \Omega_{k_j}} M_{k_j} |\lowgrad u_j| \dd x  \to 0$ with $j \to \infty$. Combining the two we see that $\fullgrad((1-\phi_{k_j})u_j \to 0$ strongly in $\LL^1$. 

Repeating the above reasoning with $\LL^1$ norm replaced by $\LL^p$ we get
\begin{equation}\label{eqCutOffArgument} \intOmega |\fullgrad ((1-\phi_k)u_j)|^p \dd x \leq  C\int_{\Omega \setminus \Omega_k} (1-\phi_k)^p |\fullgrad u_j|^p \dd x +  C\int_{\Omega \setminus \Omega_k} M_k^p |\lowgrad u_j|^p \dd x ,
\end{equation}
with some absolute constant $C$. The first term is now bounded uniformly in $k$ and $j$, whereas for the second one we may select a sequence ${k'}_j$ such that it converges to $0$. Hence, adjusting the first sequence $k_j$ (i.e., slowing it down if necessary) we obtain that $(1-\phi_{k_j})u_j$ is bounded in $\WWap$.

Combining the two we deduce that $(1 - \phi_{k_j})u_j \weakConv 0$ in $\WWap$, since the sequence is bounded and the only possible limit is $0$, as the full gradient converges to $0$ in $\LL^1$. By construction, the sequence also converges to $0$ in measure, thus setting $v_j := \phi_{k_j} u_j$ ends the proof of the first part of the statement. 

For equiintegrability it is enough to notice that if $\{ \fullgrad u_j\}$ is $p$-equiintegrable then the first term in \eqref{eqCutOffArgument} converges to $0$ with $k \to \infty$, uniformly in $j$ (as $(1 - \phi_k)$ is bounded in $\LL^{\infty}$ and $|\Omega \setminus \Omega_k| \to 0$). Thus, in this case, $\fullgrad u_j - \fullgrad v_j$ converges to $0$ strongly in $\LL^p$, which proves $p$-equiintegrability of $\fullgrad v_j$.
\end{proof}

The following result is the key point of this section. It shows that (up to a subsequence) one may decompose a weakly convergent sequence into a $p$-equiintegrable part that carries the oscillation and a part converging to $0$ in measure, which carries the concentration.
\begin{proposition}\label{propDecomposition}
Let $\Omega \subset \RRN$ be a bounded open Lipschitz domain satisfying the weak $\ba$-horn condition and let $p \in (1, \infty)$. Suppose that $u_j \weakConv u$ in $\WWap(\Omega; \RRn)$.
Then, there exists a subsequence $u_{j_k}$ and sequences $\{ g_k \} \subset \Ccinfty(\Omega; \RRn)$ and $\{b_k\} \subset \WWap(\Omega; \RRn)$, both weakly convergent to $0$ in $\WWap(\Omega; \RRn)$, and such that the family $\fullgrad g_k$ is $p$-equiintegrable, $\fullgrad b_k \to 0$ in measure, and $u_{j_k} = u + g_k + b_k$. 
\end{proposition}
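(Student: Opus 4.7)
The plan is to reduce to a smooth compactly supported sequence via Lemma \ref{lemmaZeroBoundary}, then split it into an equiintegrable oscillating part and a concentrating remainder using Chacon's biting lemma combined with the canonical Sobolev projection of Lemma \ref{lemmaProjection}.

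\textbf{Reduction.} Passing from $u_j$ to $u_j - u$ we may assume $u = 0$. Lemma \ref{lemmaZeroBoundary} then produces $v_j \in \Ccinfty(\Omega; \RRn)$ with $v_j \weakConv 0$ in $\WWap(\Omega)$ and $\fullgrad u_j - \fullgrad v_j \to 0$ in measure, so it suffices to find a decomposition $v_{j_k} = g_k + c_k$ of the required form, and then set $b_k := u_{j_k} - g_k$; weak convergence and convergence in measure for $b_k$ both follow by combining the analogous properties of $c_k$ with the Lemma \ref{lemmaZeroBoundary} error.

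\textbf{Biting and projection.} Since $\{\fullgrad v_j\}$ is bounded in $\LL^p(\Omega; \RRdn)$, Chacon's biting lemma provides a subsequence $\{v_{j_k}\}$ and a decreasing family $E_k \subset \Omega$ with $|E_k| \to 0$ such that $\{\fullgrad v_{j_k} \cdot \mathbbm{1}_{\Omega \setminus E_k}\}_k$ is $p$-equiintegrable. Extending $V_k := \fullgrad v_{j_k} \cdot \mathbbm{1}_{\Omega \setminus E_k}$ by zero to $\RRN$ and applying the canonical Sobolev projection $\projection$ from Lemma \ref{lemmaProjection} yields $\projection V_k = \fullgrad h_k$ for some $h_k \in \WWap(\RRN)$, and $\{\fullgrad h_k\}$ remains $p$-equiintegrable by Lemma \ref{lemmaProjection}(iii). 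A mollification at a small scale followed by the cut-off from Lemma \ref{lemmaCutOff} converts $h_k$ into $g_k \in \Ccinfty(\Omega; \RRn)$ with $\fullgrad g_k$ still $p$-equiintegrable, $g_k \weakConv 0$ in $\WWap$, and $\fullgrad(h_k - g_k) \to 0$ in $\LL^p$.

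\textbf{Controlling the remainder.} Since $\fullgrad v_{j_k}$ lies in the image of $\projection$ and is therefore fixed by it, we have
\[ \fullgrad(v_{j_k} - h_k) = \projection\bigl(\fullgrad v_{j_k} - V_k\bigr) = \projection\bigl(\fullgrad v_{j_k} \cdot \mathbbm{1}_{E_k}\bigr). \]
The main obstacle is showing that this quantity converges to zero in measure. The crucial observation is that, although the bad part $\fullgrad v_{j_k} \cdot \mathbbm{1}_{E_k}$ need not vanish in $\LL^p$, for any $q \in (1, p)$ H\"older's inequality gives
\[ \bigl\|\fullgrad v_{j_k} \cdot \mathbbm{1}_{E_k}\bigr\|_q \leq \|\fullgrad v_{j_k}\|_p \cdot |E_k|^{1/q - 1/p} \to 0. \]
Since, by Lemma \ref{lemmaProjection}(i), $\projection$ is bounded on $\bigoplus_{\langle \alpha, \bamo\rangle \leq 1} \LL^q(\RRN; \RRn)$, we deduce $\projection(\fullgrad v_{j_k} \cdot \mathbbm{1}_{E_k}) \to 0$ strongly in $\LL^q$, and hence in measure. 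Combined with $\fullgrad(h_k - g_k) \to 0$ in $\LL^p$, this yields $\fullgrad(v_{j_k} - g_k) \to 0$ in measure; setting $b_k := u_{j_k} - g_k$ then completes the decomposition, with $b_k \weakConv 0$ in $\WWap$ following from the $\WWap$-weak convergence of $u_{j_k}$ and $g_k$.
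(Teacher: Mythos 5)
Your argument is correct and follows the same overall structure as the paper's: reduce to $u = 0$, invoke Lemma \ref{lemmaZeroBoundary}, split off a $p$-equiintegrable part, restore the gradient structure via the canonical projection $\projection$ of Lemma \ref{lemmaProjection}, and show the remainder vanishes in measure using $\LL^q$-boundedness of $\projection$ for $q < p$. The genuine divergence is in how the $p$-equiintegrable part is isolated. The paper truncates $\fullgrad v_j$ at levels $l \to \infty$ and uses the Monotone Convergence Theorem together with Theorem \ref{thmFToYM}(iii), applied to the Young measure $\nu$ generated by $\fullgrad v_j$, to show that a suitable diagonal $\tau_l(\fullgrad v_{j_l})$ is $p$-equiintegrable; you instead invoke Chacon's biting lemma in its decomposition form. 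The two are morally equivalent (the biting lemma is itself proved by a truncation--diagonalisation argument), but your route avoids the Young-measure machinery and is more self-contained for this statement, whereas the paper's route records along the way that the equiintegrable part still generates $\nu$, which is precisely what Corollary \ref{corOscillationEquiintegrable} needs next; that conclusion is nevertheless recovered from your decomposition via Proposition \ref{propTranslatedYM}, since $\fullgrad b_k \to 0$ in measure, so nothing is lost. Your H\"older estimate $\|\fullgrad v_{j_k}\indyk_{E_k}\|_q \leq \|\fullgrad v_{j_k}\|_p\, |E_k|^{1/q - 1/p}$ plays exactly the role of the paper's bound that $\tau_l(\fullgrad v_{j_l}) - \fullgrad v_{j_l} \to 0$ in $\LL^q$. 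One small caution: the form of the biting lemma you invoke---a single subsequence together with sets $E_k$, $|E_k| \to 0$, such that the diagonal family $\{\fullgrad v_{j_k}\indyk_{\Omega \setminus E_k}\}_k$ is $p$-equiintegrable---is strictly stronger than the bare Brooks--Chacon statement, which only yields $p$-equiintegrability of $\{\fullgrad v_{j_k}\indyk_{\Omega \setminus E_m}\}_k$ for each fixed $m$; the diagonal version is standard and correct but requires an additional extraction, so you should cite a reference that states it in this strengthened form.
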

\begin{proof}
By considering $\{u_j - u\}$ instead of $\{u_j\}$ we reduce to the case $u \equiv 0$. Furthermore, taking a subsequence if necessary, we may assume that $\fullgrad u_j$ generates some oscillation Young measure $\nu$. Lemma \ref{lemmaZeroBoundary} shows that we may also take $u_j = v_j + b^1_j$ with $b^1_j \to 0$ in measure and $v_j \in \WWapn(\Omega)$. Thus, in what follows we focus on decomposing $v_j$, remembering that $\fullgrad v_j$ generates the same Young measure as $\fullgrad u_j$.

For $l \in \naturals$ recall that the standard truncation $\tau_l \colon \RRdn \to \RRdn$ is given by
\begin{equation*}
\tau_{l}(W) := 
\begin{cases}
W \quad \text{if } |W| \leq l,\\
l \frac{W}{|W|} \quad \text{if } |W| > l \ldotp
\end{cases}
\end{equation*}
Since the truncation is bounded and continuous we get
$$ \lim_{l \to \infty} \lim_{j \to \infty} \int_{\Omega} |\tau_l(\fullgrad v_j)|^p \dd x = \lim_{j \to \infty} \int_{\Omega} \int_{\RRdn} |\tau_l(W)|^p \dd \nu_x \dd x = \int_{\Omega} \int_{\RRdn} |\cdot|^p \dd \nu_x \dd x,$$
where the first equality is due to Theorem \ref{thmFToYM} and the second one is an application of the Monotone Convergence Theorem. We infer that one can extract a sequence $j_l \to \infty$ such that
$$\lim_{l \to \infty} \int_{\Omega} |\tau_l(\fullgrad v_{j_l})|^p \dd x = \int_{\Omega} \int_{\RRdn} |\cdot|^p \dd \nu_x \dd x.$$
Note that $\LL^p$ boundedness of $\fullgrad v_{j_l}$ implies equiintegrability in $\LL^q$ for $q \in [1,p)$, hence $\tau_l(\fullgrad v_{j_l}) - \fullgrad v_{j_l} \to 0$ strongly in $\LL^q$, and thus the two generate the same oscillation Young measure $\nu$. This, paired with the last equality and Theorem \ref{thmFToYM}, implies that the family $\{\tau_l(\fullgrad v_{j_l}\}$ is $p$-equiintegrable.

In this way we have constructed a sequence $w_l := \tau_l(\fullgrad v_{j_l})$ that is $p$-equiintegrable and generates $\nu$. The last thing we need to take care of is the fact that $w_l$ is not necessarily a full gradient of a $\WWapn$ function. To remedy that, extend $w_l$ by $0$ to the whole of $\RRN$ and do the same with $v_{j_l}$, keeping the same notation for the extensions. Since we had $v_{j_l} \in \WWapn(\Omega)$, its extension by $0$ is in the space $\WWap(\RRN)$.
Apply the canonical projection $P_{\ba}$ to $w_l$ to get the decomposition
$$w_l = \fullgrad g_l + r_l \ldotp$$
We have 
\begin{equation*}
\begin{aligned}
\|r_l\|_{\LL^q} = & \| w_l - \fullgrad g_l \|_{\LL^q} \leq 
\| w_l - \fullgrad v_{j_l} \|_{\LL^q} + \|\fullgrad g_l - \fullgrad v_{j_l} \|_{\LL^q} \\
= &  \| w_l - \fullgrad v_{j_l} \|_{\LL^q} + \| P_{\ba}(w_l - \fullgrad v_{j_l}) \|_{\LL^q} \leq C\| w_l - \fullgrad v_{j_l} \|_{\LL^q} \to 0,
\end{aligned}
\end{equation*}
where we have used $P_{\ba}(\fullgrad v_{j_l}) = \fullgrad v_{j_l}$ and the $\LL^q$ continuity of $P_{\ba}$. 
In particular, this gives $r_l \to 0$ in measure.
Furthermore, Lemma \ref{lemmaProjection} shows that $p$-equiintegrability of $\{ w_l \}$ yields the same for $\{ \fullgrad g_l \}$. Lastly, we restrict $g_l$ to $\Omega$ to get $g_l \in \WWap(\Omega)$ and apply the cut-off argument from Lemma \ref{lemmaZeroBoundary} to end the proof.
\end{proof}
The following is a simple and useful corollary of the above.
\begin{corollary}\label{corOscillationEquiintegrable}
Let $\Omega$ be a bounded open Lipschitz domain satisfying the weak $\ba$-horn condition. Let $\nu$ be a $\WWap$-gradient oscillation Young measure on $\Omega$. Then there exists a sequence $u_j \in \WWap(\Omega)$ generating $\nu$ and such that the family $\{\ggrad u_j\}$ is $p$-equiintegrable. Furthermore, if the barycentre of $\nu$ is $0$ at all points of $\Omega$, then the functions $u_j$ may be chosen in the space $\Ccinfty(\Omega)$.
\end{corollary}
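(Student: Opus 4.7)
The plan is to invoke the decomposition result of Proposition \ref{propDecomposition} on any generating sequence for $\nu$ and then absorb the concentration part using Proposition \ref{propTranslatedYM}.

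First I would take, by definition, a sequence $u_j \in \WWap(\Omega; \RRn)$ whose $\ba$-gradients generate $\nu$. Such a sequence is necessarily bounded in $\LLp(\Omega; \RRnm)$ by Theorem \ref{thmFToYM}, so after replacing each $u_j$ by $u_j - P_{u_j}$ where $\ggrad P_{u_j} \equiv 0$ (using Corollary \ref{corPolynomialApproximation}) and passing to a subsequence, we may further assume $u_j \weakConv u$ in $\WWap(\Omega; \RRn)$ for some $u \in \WWap(\Omega; \RRn)$.

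Next, I would apply Proposition \ref{propDecomposition} to write, along a further subsequence,
\[ u_{j_k} = u + g_k + b_k,\]
with $g_k \in \Ccinfty(\Omega; \RRn)$, $b_k \in \WWap(\Omega; \RRn)$, both $\weakConv 0$, such that $\{\fullgrad g_k\}$ is $p$-equiintegrable and $\fullgrad b_k \to 0$ in measure. Setting $\widetilde{u}_k := u + g_k \in \WWap(\Omega; \RRn)$, the first claim follows: the family $\{\ggrad \widetilde{u}_k\} = \{\ggrad u + \ggrad g_k\}$ is $p$-equiintegrable because $\ggrad u$ is a fixed $\LLp$ function and $\{\ggrad g_k\}$ is $p$-equiintegrable by construction; moreover $\ggrad u_{j_k} = \ggrad \widetilde{u}_k + \ggrad b_k$ with $\ggrad b_k \to 0$ in measure, so Proposition \ref{propTranslatedYM} guarantees that $\ggrad \widetilde{u}_k$ generates the same oscillation Young measure as $\ggrad u_{j_k}$, namely $\nu$.

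For the second assertion I would show that the vanishing of the barycentre forces $\ggrad u \equiv 0$ a.e., at which point $g_k$ itself is the desired sequence. Since $\{\ggrad u_j\}$ is bounded in $\LLp$ with $p > 1$, an application of H\"older's inequality gives uniform $\LL^1$ equiintegrability; thus part (iii) of Theorem \ref{thmFToYM} applied componentwise to the Carath\'eodory integrand $F(x,W) = W$ (shifted by a constant to be bounded below on the range in question) yields
\[ \ggrad u_j \weakConv \Bigl(x \mapsto \int_{\RRnm} W \dd \nu_x(W)\Bigr) = 0 \quad \text{in } \LL^1(\Omega; \RRnm).\]
On the other hand $\ggrad u_j \weakConv \ggrad u$ in $\LLp$, hence in $\LL^1$, so by uniqueness of the weak limit $\ggrad u = 0$ a.e. Consequently $\ggrad u_{j_k} = \ggrad g_k + \ggrad b_k$ with $\ggrad b_k \to 0$ in measure, and one more appeal to Proposition \ref{propTranslatedYM} shows that $\{\ggrad g_k\}$ already generates $\nu$. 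Since $g_k \in \Ccinfty(\Omega; \RRn)$, this closes the argument. The only mild subtlety is confirming that the weak $\LL^1$ limit of $\ggrad u_j$ is genuinely the barycentre field of $\nu$, but this is immediate from $\LLp$-boundedness combined with Theorem \ref{thmFToYM}(iii); everything else is bookkeeping.
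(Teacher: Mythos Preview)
Your argument is correct and is exactly the route the paper intends: the corollary is stated immediately after Proposition~\ref{propDecomposition} with no proof because it amounts to applying that decomposition and discarding the concentration part $b_k$ via Proposition~\ref{propTranslatedYM}, precisely as you do. One small remark: the polynomial normalisation step is superfluous (a $\WWap$-gradient Young measure is by definition generated by a $\WWap$-bounded sequence, cf.\ Theorem~\ref{thmCharacterisationYMnonhom}), and in any case the result giving $\ggrad P_u \equiv 0$ is Proposition~\ref{propPolynomialApproximation}, not Corollary~\ref{corPolynomialApproximation}.
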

The final technical ingredient of this subsection is the following localisation result. We omit the proof and instead refer the reader to \cite{ProsinskiThesis} where a full argument may be found. It is simply a matter of following Kristensen's approach (see \cite{KristensenNotes}) and replacing the technical tools such as decomposition of generating sequences, polynomial approximations, and scalings, by their mixed smoothness counterparts stated and proven above.
\begin{proposition}\label{propLocalisation}
Let $\Omega \subset \RRN$ be an open and bounded domain. 
Fix $1 < p < \infty$ and let $\nu = (\nu_x)_{x \in \Omega}$ be a $\WWap$-gradient Young measure on $\Omega$. Then for $\mathcal{L}^n$-a.e. $x_0 \in \Omega$, $(\nu_{x_0})_{y \in Q}$ is a homogeneous $\WWap$-gradient Young measure on the unit cube $Q$. Its barycentre is $\overline{\nu_{x_0}} \indyk_{Q}$.
\end{proposition}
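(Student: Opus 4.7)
The plan is to adapt the classical blow-up localisation argument (as in \cite{KristensenNotes} for gradients and \cite{FonsecaMuller99} for $\mathcal{A}$-free fields) to the anisotropic setting, building an explicit generating sequence for $\nu_{x_0}$ on the unit cube $Q$ by rescaling a generating sequence of $\nu$ around $x_0$, with the classical dilation and differentiation theorem replaced by their anisotropic counterparts developed above. First, using Corollary \ref{corOscillationEquiintegrable}, I will fix a generating sequence $u_j \in \WWap(\Omega;\RRn)$ for $\nu$ with $\{\ggrad u_j\}$ $p$-equiintegrable. For $x_0 \in \Omega$ and $r>0$ with $Q_r(x_0) \Subset \Omega$, let $P^{x_0,r}_j$ denote the polynomial approximation on $Q_r(x_0)$ from Corollary \ref{corPolynomialApproximation} (satisfying $\ggrad P^{x_0,r}_j \equiv (\ggrad u_j)_{Q_r(x_0)}$), and set
$$ v^{x_0,r}_j(y) := \frac{1}{r}\bigl(u_j(x_0 + r \scale y) - P^{x_0,r}_j(x_0 + r \scale y)\bigr), \qquad y \in Q.$$
The anisotropic chain rule $\partial^\alpha_y[f(x_0 + r\scale\cdot)] = r^{\langle\alpha,\bamo\rangle}(\partial^\alpha f)(x_0 + r\scale\cdot)$ gives $\ggrad v^{x_0,r}_j(y) = \ggrad u_j(x_0 + r\scale y) - (\ggrad u_j)_{Q_r(x_0)}$, while the scaled form of Corollary \ref{corPolynomialApproximation} bounds $\|v^{x_0,r}_j\|_{\WWap(Q)}$ in terms of $\|\ggrad v^{x_0,r}_j\|_{\LLp(Q)}$, uniformly in $j$ and $r$.

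Next I will execute a double limit. Fix a countable dense family $\{\varphi_i\} \subset C_0(\RRnm)$. Since the functions $x \mapsto \langle \nu_x,\varphi_i \rangle$ and $x \mapsto \overline{\nu_x}$ all lie in $\LL^1_{\mathrm{loc}}(\Omega)$, Theorem \ref{thmAnisotropicLebesgueDiff} yields a full-measure set $\Omega' \subset \Omega$ of common anisotropic Lebesgue points. For $x_0 \in \Omega'$, writing $a_{j,r}:=(\ggrad u_j)_{Q_r(x_0)}$, the change of variables $x = x_0 + r\scale y$ gives
$$\dashint_Q \varphi_i\bigl(\ggrad v^{x_0,r}_j(y)\bigr)\dd y = \dashint_{Q_r(x_0)} \varphi_i\bigl(\ggrad u_j(x) - a_{j,r}\bigr)\dd x.$$
Since $a_{j,r} \to (\overline{\nu})_{Q_r(x_0)}$ as $j \to \infty$ and $\varphi_i$ is uniformly continuous, Theorem \ref{thmFToYM} (using the trivial equiintegrability of bounded integrands) allows passing to $j\to\infty$, obtaining $\dashint_{Q_r(x_0)}\langle \nu_x, \varphi_i(\cdot - (\overline{\nu})_{Q_r(x_0)}) \rangle \dd x$. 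Sending $r\to 0$ and using the anisotropic differentiation theorem at $x_0$ (together with uniform continuity of $\varphi_i$ to absorb the vanishing shift), this tends to $\langle \nu_{x_0}, \varphi_i(\cdot - \overline{\nu_{x_0}}) \rangle = \langle \delta_{-\overline{\nu_{x_0}}} \ast \nu_{x_0},\varphi_i \rangle$.

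A standard diagonal extraction then produces sequences $j_k \to \infty$ and $r_k \to 0$ such that $v^{x_0,r_k}_{j_k}$ is bounded in $\WWap(Q)$ and $\ggrad v^{x_0,r_k}_{j_k}$ generates the homogeneous Young measure $\delta_{-\overline{\nu_{x_0}}} \ast \nu_{x_0}$ on $Q$, which has barycentre $0$. To correct the barycentre, fix a polynomial $L\colon Q \to \RRn$ with $\ggrad L \equiv \overline{\nu_{x_0}}$, for example
$$ L^i(y) := \sum_{\langle \alpha,\bamo\rangle = 1}\bigl(\overline{\nu_{x_0}}\bigr)_{i,\alpha}\, \frac{y^\alpha}{\alpha!},$$
and set $w_k := v^{x_0,r_k}_{j_k} + L$. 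Then $\{w_k\} \subset \WWap(Q;\RRn)$ is bounded and, by Proposition \ref{propTranslatedYM}, $\ggrad w_k$ generates $\delta_{\overline{\nu_{x_0}}} \ast (\delta_{-\overline{\nu_{x_0}}} \ast \nu_{x_0}) = \nu_{x_0}$ on $Q$, with barycentre $\overline{\nu_{x_0}}\indyk_Q$, as required.

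The hardest part will be synchronising the anisotropic dilation $r\scale y$ with the polynomial correction so that all lower-order derivatives of $v^{x_0,r}_j$ remain controlled on the fixed cube $Q$ uniformly as $r \to 0$; this is exactly what Corollary \ref{corPolynomialApproximation} is designed to provide, once it is used in its scaled form. Almost equally delicate is the reliance on Theorem \ref{thmAnisotropicLebesgueDiff}, which is indispensable because the boxes $Q_r(x_0)$ lack bounded eccentricity and therefore fall outside the classical Vitali regime.
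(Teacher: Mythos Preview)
Your proposal is correct and follows precisely the route the paper indicates: the paper omits the proof and simply says to adapt Kristensen's blow-up argument using the anisotropic rescaling, Corollary \ref{corPolynomialApproximation}, and Theorem \ref{thmAnisotropicLebesgueDiff}, which is exactly what you assemble. One point to tighten: to conclude that the diagonal sequence generates a \emph{homogeneous} Young measure on $Q$ (not merely that spatial averages converge), you should test against products $f(y)\varphi_i(W)$ with $\{f\}$ dense in $\LL^1(Q)$ and include $g_0=|\cdot|^p$ among the $\varphi_i$ to secure the $\LLp$ bound, exactly as in the proof of Lemma \ref{lemmahpoclosed}; the $\LL^1$ form of Theorem \ref{thmAnisotropicLebesgueDiff} you already invoke handles these extra test functions without further difficulty.
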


\subsection{$\ba$-quasiconvexity}\label{sectionaQC}
This section aims to develop the mixed smoothness equivalent of quasiconvexity introduced first by Morrey in \cite{Morrey52} (see also \cite{Dacorogna07} for a broad list of references). For now, we present only a few background results that we will need to establish the dual characterisation of $\WWap$-gradient Young measures. We will revisit $\ba$-quasiconvexity in more detail in the next section, when we study lower semicontinuity properties of functionals.

\begin{defi}
We say that a function $g \colon \RRnm \to [-\infty, \infty)$ is $\ba$-quasiconvex if for every $V \in \RRnm$ one has
$$ g(V) \leq \inf_{u \in \Ccinfty(Q;\RRn)} \dashint_Q g(V + \ggrad u(x)) \dd x \ldotp$$
\end{defi}
For functions that need not be $\ba$-quasiconvex we introduce the following:

\begin{defi}
For a measurable function $g \colon \RRnm \rightarrow [-\infty, \infty)$ we define the function $\QQ g \colon \RRnm \to [- \infty; \infty)$ by
\begin{equation}\label{eqDefQCEnvelope} 
\QQ g(V) := \inf \left\{ \dashint_Q g(V + \ggrad u(x)) \dd x \colon u \in \Ccinfty(Q)  \right\} \ldotp
\end{equation}
\end{defi}
The expression on the right-hand side of the above definition is often called Dacorogna's formula in the standard first order gradient case (see \cite{Dacorogna07}).
In what follows we will often refer to $\QQ g$ as the $\ba$-quasiconvex envelope of $g$, and the next lemma justifies this terminology, by asserting that $\QQ g$ is the largest $\ba$-quasiconvex function that is smaller than or equal to $g$. By definition $\QQ g \leq g$, simply by testing the definition with $u \equiv 0$ and, again from the definition, one sees immediately that any $\ba$-quasiconvex function that is no bigger than $g$ must be no bigger than $\QQ g$. Thus it only remains to show that $\QQ g$ is $\ba$-quasiconvex. This can be done following the approach of Fonseca and M\"{u}ller from \cite{FonsecaMuller99}. One only needs to use the anisotropic scaling we introduced earlier instead of the classical isotropic one. Thus, for the sake of brevity, we simply state the result and refer the reader to \cite{ProsinskiThesis} for the full argument.
\begin{lemma}\label{lemmaQgIsQC}
For a continuous function $g \colon \RRnm \to \RR$ we have
$$ \QQ (\QQ g) = \QQ g,$$
that is, $\QQ g$ is indeed $\ba$-quasiconvex.
\end{lemma}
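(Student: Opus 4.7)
The direction $\QQ(\QQ g) \leq \QQ g$ is immediate by taking $\phi \equiv 0$ as a test in the definition of $\QQ(\QQ g)$. My plan for the reverse inequality is to fix $V \in \RRnm$ together with an arbitrary test function $u \in \Ccinfty(Q;\RRn)$ and produce, for every $\epsilon > 0$, some $\Phi \in \Ccinfty(Q;\RRn)$ with
$$\dashint_Q g(V + \ggrad \Phi(x)) \dd x \leq \dashint_Q \QQ g(V + \ggrad u(x)) \dd x + C\epsilon,$$
from which the definition of $\QQ g(V)$ and $\epsilon \to 0$ give the result.

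The construction proceeds in the spirit of Fonseca--M\"uller, with the classical scaling replaced by the anisotropic one from Subsection \ref{sectionScaling}. First, I decompose $Q$ up to a set of measure $\epsilon$ into disjoint anisotropic boxes $Q_{\epsilon,i} = x_i + r_i \scale Q$ on which $\ggrad u$ has small oscillation; since $u$ is smooth, I may take the diameters $r_i$ small enough that $|\ggrad u(x) - V_i| \leq \omega(\epsilon)$ for $x \in Q_{\epsilon,i}$, where $V_i$ denotes the value of $\ggrad u$ at the centre $x_i$. Next, for each $i$ I use the definition of $\QQ g(V + V_i)$ to pick $\phi_i \in \Ccinfty(Q;\RRn)$ with
$$\dashint_Q g(V + V_i + \ggrad \phi_i(y))\dd y \leq \QQ g(V + V_i) + \epsilon.$$
The anisotropic rescaling $\psi_i(x) := r_i \phi_i(T_i^{-1}(x))$ with $T_i(y) := x_i + r_i \scale y$ then produces $\psi_i \in \Ccinfty(Q_{\epsilon,i};\RRn)$ satisfying $\ggrad \psi_i(x) = (\ggrad \phi_i)(T_i^{-1}(x))$, owing to the computation $\partial^\alpha \psi_i = r_i \cdot r_i^{-\langle \alpha, \bamo\rangle} (\partial^\alpha \phi_i)\circ T_i^{-1}$ and the fact that $\langle \alpha, \bamo \rangle = 1$ for every $\ba$-gradient component. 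I then set $\Phi := u + \sum_i \psi_i$, extending each $\psi_i$ by zero; since each $\psi_i$ is compactly supported in its cell, $\Phi \in \Ccinfty(Q;\RRn)$.

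To estimate the resulting integral I split $Q = \bigcup_i Q_{\epsilon,i} \cup E_\epsilon$ with $|E_\epsilon| < \epsilon$. On each $Q_{\epsilon,i}$ the change of variables $y = T_i^{-1}(x)$ (whose Jacobian $r_i^{|\bamo|}$ cancels with $|Q_{\epsilon,i}|$) gives
$$\dashint_{Q_{\epsilon,i}} g(V + V_i + \ggrad \psi_i(x))\dd x = \dashint_Q g(V + V_i + \ggrad \phi_i(y))\dd y \leq \QQ g(V + V_i) + \epsilon.$$
Uniform continuity of $g$ on a bounded set (which contains $V + V_i + \ggrad \psi_i(x)$ and $V + \ggrad u(x) + \ggrad \psi_i(x)$ for all $i$ and $x$) lets me replace $V_i$ by $\ggrad u(x)$ inside the integrand at the price of an error $\omega_\epsilon \to 0$. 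Summing over $i$, using that $g \circ (V + \ggrad u)$ is bounded on the exceptional set $E_\epsilon$, and observing that $\sum_i |Q_{\epsilon,i}|\QQ g(V+V_i)/|Q|$ is a Riemann-type approximation of $\dashint_Q \QQ g(V + \ggrad u(x))\dd x$ (upper semicontinuity of $\QQ g$ as an infimum of continuous functions yields the correct inequality in the limit) concludes the estimate.

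The main technical nuisance, and the step where care is needed, is the ordering of the quantifiers: the test functions $\phi_i$ depend on $V_i$, and their $\ba$-gradients' $\LLinfty$-norms determine the compact set on which $g$ must be uniformly continuous; only after fixing the family $\{\phi_i\}_i$ can I shrink the $r_i$ further to control $\omega(\epsilon)$ and the Riemann-sum error. Once the parameters are threaded in this order, the continuity of $g$ (together with upper semicontinuity and Borel measurability of $\QQ g$, ensuring that the right-hand side integral makes sense) closes the argument.
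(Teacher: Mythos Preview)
Your strategy is the Fonseca--M\"uller scheme with anisotropic scaling, exactly as the paper intends, and the Riemann-sum step via upper semicontinuity is sound: tagged sums of the bounded upper semicontinuous function $h(x):=\QQ g(V+\ggrad u(x))$ are dominated by upper Darboux sums, whose limit as the mesh shrinks is the Lebesgue integral, so you do get the inequality in the direction you need.

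The gap is in your resolution of the quantifier circularity. ``Fixing $\{\phi_i\}$ and then shrinking the $r_i$'' does not work as you have set it up: since $V_i$ is the value of $\ggrad u$ at the centre of $Q_{\epsilon,i}$, refining the partition changes the $V_i$, which forces new $\phi_i$, which changes $M=\max_i\|\ggrad\phi_i\|_\infty$, and the loop is not broken. If instead you keep the old $V_i$ and $\phi_i$ on the refined sub-boxes, the oscillation $|\ggrad u(x)-V_i|$ is still governed by the coarse mesh, so refining buys nothing. The standard fix is to choose the test functions from a \emph{fixed finite family} before partitioning: since $\Ccinfty(Q)$ is separable in the $C^{\max_j a_j}$-norm and $W\mapsto \dashint_Q g(W+\ggrad\phi)\dd y$ is continuous in $\phi$ for that topology, one has $\QQ g=\inf_n G_{\phi_n}$ for a countable set $\{\phi_n\}$; then $h_N(x):=\min_{n\leq N}G_{\phi_n}(V+\ggrad u(x))$ is continuous, decreases to $h$, and satisfies $\int_Q h_N<\int_Q h+\epsilon$ for $N$ large by monotone convergence. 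With $\phi_1,\dots,\phi_N$ now fixed, $M$ is determined once and for all, and you may partition as finely as you like, assign to each box the $\phi_n$ realising the minimum there, and apply the averaging argument of Lemma~\ref{lemmaElementaryScaleAndTile} on each piece without circularity.
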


\subsection{Topological structure of the space of oscillation $\WWap$-gradient Young measures}\label{sectionTopologicalYM}
Our next aim is to obtain a dual characterisation of oscillation $\WWap$-gradient Young measures in terms of $\WWap$-quasiconvex functions. The root of the principal results (Theorems \ref{thmCharacterizationOfYM} and \ref{thmCharacterisationYMnonhom}) goes back to the seminal work of Kinderlehrer and Pedregal (see \cite{KinderlehrerPedregal91} and \cite{KinderlehrerPedregal94}). Our approach, however, is based on \cite{FonsecaMuller99} and the proofs we give are adaptations of the techniques presented therein --- for the sake of brevity we focus on the main points that require adaptation to our mixed smoothness setting, and refer the reader to \cite{ProsinskiThesis} for the full arguments.

In the following we work with the space $\calE$ defined as
$$ \calE := \left\{ g \in C(\RRnm) \colon \lim_{|W| \to \infty} \frac{g(W)}{1+|W|^p} \text{ exists in } \RR \right\},$$
and equipped with the norm 
$$ \| g \|_{\calE} := \sup_{W \in \RRnm} \frac{|g(W)|}{1+|W|^p} \ldotp$$
It is not difficult to see that $\calE$ is a separable Banach space. Furthermore, the space of probability measures with finite $p$-th moment is a subset of $\calE^*$ through the pairing
$$ \langle \nu, g \rangle := \int_{\RRnm} g(W) \dd \nu(W),$$
for $\nu \in \{ \mu \in \Probability(\RRnm) \colon \intRRnm |W|^p \dd \nu(W) < \infty \}$ and $g \in \calE$. In particular, the space of homogeneous $\WWap$-gradient Young measures is a subset of $\calE^*$.
For future use, we note the following technical result.

\begin{lemma}\label{lemmaElementaryScaleAndTile}
Fix a bounded open Lipschitz set $\Omega \subset \RRN$ and a function $\phi \in \WWapn(Q)$. Suppose that for every $j \in \naturals$ we have fixed a countable family $\{Q^j_i\}_i = \{Q_{x^j_i}(r^j_i)\}_i$ of pairwise disjoint anisotropic boxes contained in $\Omega$ and with $\lim_{j \to \infty} \sup_i r^j_i = 0$. Assume furthermore that $\lim_{j \to \infty} |\Omega \setminus \bigcup_i Q^j_i| = 0$. Define
$$ \phi_j := \sum_i r^j_i \phi((r^j_i)^{-1} \scale (x - x^j_i)).$$
Then  $\phi_j \weakConv 0$ weakly in $\WWapn(\Omega)$. The sequence $\ggrad \phi_j$ is $p$-equiintegrable and generates the homogeneous Young measure $\ggrad \phi_{\#} \left( \frac{\LebesgueN \measurerestr Q}{|Q|} \right)$.
\end{lemma}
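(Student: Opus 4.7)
The plan is to verify each claim by direct computation, leveraging the anisotropic scaling and the $p$-growth structure to show the weak convergence, $p$-equiintegrability, and Young measure identification in turn.

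The starting point is to observe that, because $\phi \in \WWapn(Q)$, each scaled-and-shifted copy $\phi^j_i(x) := r^j_i \phi((r^j_i)^{-1}\scale(x - x^j_i))$ is supported in the closure of $Q^j_i$ and extends by zero to an element of $\WWapn(\Omega)$; since the boxes $Q^j_i$ are pairwise disjoint, the sum $\phi_j$ lies in $\WWapn(\Omega)$. The chain rule for the anisotropic scaling gives, on $Q^j_i$,
\begin{equation*}
\partial^{\alpha} \phi_j(x) = (r^j_i)^{1 - \langle \alpha, \bamo \rangle} (\partial^{\alpha}\phi)\bigl((r^j_i)^{-1}\scale(x - x^j_i)\bigr),
\end{equation*}
and a change of variables $y = (r^j_i)^{-1}\scale(x - x^j_i)$, which satisfies $\dd x = (r^j_i)^{|\bamo|}\dd y$ and $|Q^j_i| = (r^j_i)^{|\bamo|}|Q|$, yields
\begin{equation*}
\|\partial^{\alpha}\phi_j\|_{\LLp(\Omega)}^p = \sum_i (r^j_i)^{p(1-\langle\alpha,\bamo\rangle)} \frac{|Q^j_i|}{|Q|} \|\partial^{\alpha}\phi\|_{\LLp(Q)}^p \leq \frac{|\Omega|}{|Q|} \Bigl(\sup_i r^j_i\Bigr)^{p(1-\langle\alpha,\bamo\rangle)}\|\partial^{\alpha}\phi\|_{\LLp(Q)}^p.
\end{equation*}
For $\langle \alpha,\bamo\rangle = 1$ the scaling factor is $1$, giving uniform boundedness of $\ggrad \phi_j$ in $\LLp$; for $\langle\alpha,\bamo\rangle < 1$, the factor tends to zero since $\sup_i r^j_i \to 0$, so all subordinate derivatives (including $\phi_j$ itself) converge strongly to $0$ in $\LLp(\Omega)$. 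By Corollary \ref{corAllNormsEquivalent} the sequence is bounded in $\WWap$, and since the only possible weak limit is the strong $\LLp$ limit, $\phi_j \weakConv 0$ in $\WWapn(\Omega)$.

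For $p$-equiintegrability of $\ggrad \phi_j$, I would use a truncation split: for $M > 0$ and any measurable $E \subset \Omega$,
\begin{equation*}
\int_E |\ggrad \phi_j|^p \dd x \leq M^p |E| + \sum_i \int_{Q^j_i \cap \{|\ggrad\phi_j|>M\}} |\ggrad\phi_j|^p \dd x.
\end{equation*}
Changing variables in each box as before collapses the second term to $\frac{1}{|Q|}\sum_i |Q^j_i| \int_{\{|\ggrad\phi|>M\}} |\ggrad\phi|^p \dd y \leq \frac{|\Omega|}{|Q|}\int_{\{|\ggrad\phi|>M\}}|\ggrad\phi|^p \dd y$, which can be made arbitrarily small by choosing $M$ large thanks to $|\ggrad\phi|^p \in \LL^1(Q)$. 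Then, with $M$ fixed, the first term is controlled by requiring $|E|$ small, which is the desired uniform absolute continuity.

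It remains to identify the Young measure. Denote $\nu := \ggrad\phi_{\#}(\Lebesgue^N \measurerestr Q / |Q|)$, so that for $F \in C_0(\RRnm)$ one has $\langle \nu, F\rangle = \frac{1}{|Q|}\int_Q F(\ggrad\phi(y))\dd y =: c_F$. It suffices to show $F(\ggrad \phi_j) \weakConv c_F$ in the $\LL^{\infty}$-weak$^*$ sense, i.e., $\int_{\Omega} F(\ggrad\phi_j)\psi \dd x \to c_F \int_{\Omega}\psi \dd x$ for every $\psi \in C(\overline{\Omega})$; the extension from $C_0$ to $F \in \calE$ will then follow from the $p$-equiintegrability via Theorem \ref{thmFToYM}. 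Splitting the integral over the boxes $Q^j_i$ and the residual set $\Omega \setminus \bigcup_i Q^j_i$, the residual contribution is bounded by $\|F\|_{\infty}\|\psi\|_{\infty}|\Omega \setminus \bigcup_i Q^j_i| \to 0$. On each $Q^j_i$ I approximate $\psi(x)$ by $\psi(x^j_i)$, with error bounded by $\|F\|_{\infty}\,\mathrm{osc}(\psi, Q^j_i)|Q^j_i|$; summing and using uniform continuity of $\psi$ together with $\sup_i r^j_i \to 0$ kills the total error. The main term becomes $c_F \sum_i \psi(x^j_i)|Q^j_i|$, a Riemann sum which converges to $c_F \int_{\Omega}\psi \dd x$ (again using $|\Omega \setminus \bigcup_i Q^j_i| \to 0$ and uniform continuity of $\psi$). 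The main nuisance, rather than a genuine obstacle, is the bookkeeping in this last Riemann-sum approximation across boxes of varying sizes; everything else is an exercise in anisotropic change of variables.
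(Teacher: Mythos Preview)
Your argument is correct and follows essentially the same route as the paper's own proof: the anisotropic change of variables to control each derivative, the tail estimate $\int_{\{|\ggrad\phi|>M\}}|\ggrad\phi|^p$ for $p$-equiintegrability, and the box-by-box Riemann-sum approximation for the Young measure. The only cosmetic differences are that the paper tests directly against $f\in\Ccinfty(\Omega)$ and $g\in\calE$ (using the Lipschitz bound on $f$), whereas you test first against $F\in C_0$ and $\psi\in C(\overline\Omega)$ and then upgrade via equiintegrability; both work.
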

\begin{proof}
First of all note that the function $\phi_j$ is well-defined as an element of $\WWapn(\Omega)$. To see this observe that for every $\langle \beta, \bamo \rangle \leq 1$ and every $i$ we have
$$ \| \partial^\beta  [ r^j_i \phi((r^j_i)^{-1} \scale x)] \|_{\LLp(Q^j_i)} = (r^j_i)^{1 - \langle \beta, \bamo \rangle} \frac{|Q^j_i|}{|Q|}  \| \partial^\beta \phi \|_{\LLp(Q)}.$$
This also shows that the sequence $\phi_j$ converges strongly to $0$ in $\LLp(\Omega)$ and that it is bounded in $\WWapn(\Omega)$, thus converges weakly to $0$ in that space. The $p$-equiintegrability of $\{ \ggrad \phi_j\}$ follows from the fact that for any $M > 0$ we have
$$ \sup_j \int_\Omega |\ggrad \phi_j|^p \indyk_{|\ggrad \phi_j| > M} \dd x = \frac{|\Omega|}{|Q|} \int_Q  |\ggrad \phi|^p \indyk_{|\ggrad \phi| > M} \dd x,$$
which we get by a simple change of variables on each $Q^j_i$.

To show that $\ggrad \phi_j$ generates the desired Young measure let us fix arbitrary functions $f \in \Ccinfty(\Omega)$ and $g \in \calE$. Then
$$ \int_\Omega f(x) g(\ggrad \phi_j(x)) \dd x = \sum_i \int_{Q^j_i} f(x) g(\ggrad \phi_j(x)) \dd x.
$$
Since $f$ is Lipschitz we may write
\begin{equation*}
\begin{aligned} 
\left| \sum_i \int_{Q^j_i} (f(x) - f(x^j_i)) g(\ggrad \phi_j(x))  \dd x \right| \leq & \epsilon_j  \left| \sum_i \int_{Q^j_i}  g(\ggrad \phi_j(x))  \dd x \right|  \\
\leq & \epsilon_j \frac{|\Omega|}{|Q|} \int_Q \left| g(\ggrad \phi) \right| \dd x, 
\end{aligned}
\end{equation*}
where $\epsilon_j = \epsilon_j(f, \sup_i r^j_i) \to 0$ as $j \to \infty$. Changing variables on each $Q^j_i$ we may write that
$$  \sum_i \int_{Q^j_i}  f(x^j_i) g(\ggrad \phi_j(x))  \dd x  = \sum_i \frac{|Q^j_i|}{|Q|} f(x^j_i) \int_Q g(\ggrad \phi(y)) \dd y.$$
Note that the term $\sum_i |Q^j_i| f(x^j_i)$ is a Riemann sum of $f$ on the anisotropic boxes $Q^j_i$, from which we deduce that it converges, as $j \to \infty$, to the integral $\int_\Omega f(x) \dd x$. Thus, finally
$$ \lim_{j \to \infty} \int_\Omega f(x) g(\ggrad \phi_j(x)) \dd x = \left( \int_\Omega f(x) \dd x \right) \left( \dashint_Q g(\ggrad \phi(y)) \dd y \right),$$
and a standard density argument on $f$ and $g$ ends the proof.
\end{proof}
Since every bounded open set may be covered, up to a subset of measure $0$, with arbitrarily small anisotropic boxes we get the following:
\begin{corollary}\label{corElementaryHomogenous}
For any $\phi \in \WWapn(Q)$ and any bounded open Lipschitz set $\Omega \in \RRN$ the measure given by $\ggrad \phi_{\#} \left( \frac{\LebesgueN \measurerestr Q}{|Q|} \right)$ is a homogeneous $\WWap$-gradient Young measure on $\Omega$. 
\end{corollary}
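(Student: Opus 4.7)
The plan is to reduce directly to Lemma \ref{lemmaElementaryScaleAndTile}. That lemma already shows that, once we have a suitable sequence of tilings of $\Omega$ by pairwise disjoint anisotropic boxes of radii tending to zero, the rescaled-and-pasted functions $\phi_j$ generate the homogeneous Young measure $\ggrad\phi_{\#}\!\left(\tfrac{\LebesgueN \measurerestr Q}{|Q|}\right)$. Thus the only thing to verify is the hypothesis on the existence of the tilings, and this is a purely geometric/measure-theoretic fact.

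Concretely, I would proceed as follows. First, fix $j \in \naturals$ and consider the tiling of $\RRN$ by translates of the anisotropic box $(1/j) \scale Q$ along the anisotropic lattice $\{(1/j) \scale k : k \in \ZZN\}$, so that the translates are pairwise disjoint (up to their boundaries, which are Lebesgue null) and their union is all of $\RRN$. Let $\{Q^j_i\}_i$ denote the subfamily of those tiles that are entirely contained in $\Omega$. These boxes are pairwise disjoint, all have anisotropic radius $1/j$, and their union is contained in $\Omega$. The complement $\Omega \setminus \bigcup_i Q^j_i$ is contained in the set of points of $\Omega$ whose enclosing tile meets $\partial\Omega$; since each tile has diameter at most $C/j^{1/\min_l a_l}$, this set is contained in a tubular neighbourhood $\{x \in \Omega : \dist(x, \partial\Omega) \leq C/j^{1/\min_l a_l}\}$. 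Because $\Omega$ is bounded with Lipschitz boundary (so $|\partial\Omega|=0$), the measure of this tubular neighbourhood tends to $0$ as $j \to \infty$.

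With the tilings in hand, the hypotheses of Lemma \ref{lemmaElementaryScaleAndTile} are satisfied: the families $\{Q^j_i\}_i$ are pairwise disjoint, $\sup_i r^j_i = 1/j \to 0$, and $|\Omega \setminus \bigcup_i Q^j_i| \to 0$. The lemma then immediately produces a sequence $\phi_j \in \WWapn(\Omega)$ with $\phi_j \weakConv 0$ and $\ggrad \phi_j$ generating the homogeneous Young measure $\ggrad \phi_{\#}\!\left(\tfrac{\LebesgueN \measurerestr Q}{|Q|}\right)$ on $\Omega$. By definition this measure is a homogeneous $\WWap$-gradient Young measure on $\Omega$, which is the claim.

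I do not expect any real obstacle here, since the covering argument is elementary and everything nontrivial is already absorbed into Lemma \ref{lemmaElementaryScaleAndTile}. The only mild subtlety is making sure the tiling construction is valid with anisotropic boxes, but the lattice $\{(1/j)\scale k\}_{k \in \ZZN}$ tiles $\RRN$ by translates of $(1/j)\scale Q$ just as in the isotropic case; the anisotropy only affects the diameters of the boxes, which is irrelevant for a measure estimate that depends solely on their volumes and their proximity to $\partial\Omega$.
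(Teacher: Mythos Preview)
Your proposal is correct and follows essentially the same route as the paper: the paper simply states that every bounded open set may be covered, up to a set of measure zero, by arbitrarily small anisotropic boxes, and then invokes Lemma \ref{lemmaElementaryScaleAndTile}; you spell out the covering construction explicitly via the anisotropic lattice. One small slip: the diameter of $(1/j)\scale Q$ is of order $j^{-1/\max_l a_l}$ rather than $j^{-1/\min_l a_l}$ (for $j>1$ the largest side corresponds to the largest $a_l$), but this is immaterial since either quantity tends to zero and that is all the tubular neighbourhood argument needs.
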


We denote the space of homogeneous $\WWap$-gradient Young measures on $Q$ with barycentre $0$ by $\hpn(Q)$.

\begin{lemma}\label{lemmahpnIsConvex}
The set $\hpn$ is convex.
\end{lemma}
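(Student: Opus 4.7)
The plan is to fix $\nu^1, \nu^2 \in \hpn(Q)$ and $\lambda \in (0,1)$ and to explicitly construct a generating sequence for $\mu := \lambda \nu^1 + (1-\lambda) \nu^2$ via a two-level scale-and-tile argument. The algebraic properties of $\mu$ --- probability measure, vanishing barycentre, finite $p$-th moment --- are immediate from linearity; the work is in producing a generating sequence in $\WWapn(Q; \RRn)$.

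First I would use Corollary \ref{corOscillationEquiintegrable} to pick $u_j^i \in \Ccinfty(Q; \RRn)$ generating $\nu^i$ with $p$-equiintegrable $\ba$-gradients, $i = 1,2$. The inner level spatially premixes the two sequences: take an anisotropic sub-box $E := \lambda^{1/|\bamo|} \scale Q \subset Q$ of measure $\lambda |Q|$, place a rescaled copy of $u_j^1$ on $E$, cover $Q \setminus E$ up to measure zero by a countable family $\{F_l\}$ of disjoint anisotropic boxes (routine, as in the proof of Lemma \ref{lemmaElementaryScaleAndTile}), and place rescaled copies of $u_j^2$ in each $F_l$. Since the $u_j^i$ have compact support in $Q$ the rescalings vanish near the boundaries of $E$ and of each $F_l$, so the pieces fit together to produce $\psi_j \in \Ccinfty(Q; \RRn)$. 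A change of variables on each placed copy yields
\[
\dashint_Q g(\ggrad \psi_j) \dd x = \lambda \dashint_Q g(\ggrad u_j^1) \dd y + (1-\lambda) \dashint_Q g(\ggrad u_j^2) \dd y
\]
for every $g \in \calE$, and as $j \to \infty$ the right-hand side converges to $\int g \, d\mu$ because $u_j^i$ generates $\nu^i$.

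The outer level homogenises $\psi_j$ in space. For each fixed $j$, applying Lemma \ref{lemmaElementaryScaleAndTile} to $\phi = \psi_j$ with tilings of $Q$ by disjoint anisotropic boxes of vanishing maximal radius produces a sequence $\phi_{j,j'} \in \WWapn(Q; \RRn)$ with $p$-equiintegrable $\ba$-gradients generating the elementary homogeneous Young measure $\mu_j := (\ggrad \psi_j)_\#(\LebesgueN \measurerestr Q / |Q|)$, which lies in $\hpn(Q)$ by Corollary \ref{corElementaryHomogenous}. The previous identity reads $\int g \, d\mu_j \to \int g \, d\mu$ for every $g \in \calE$, so $\mu_j \weaksConv \mu$ in $\calE^*$.

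The concluding step, and the main technical obstacle, is a diagonal extraction. Using separability of $\calE$ and of $C(Q)$, a countable dense test family in each, together with the uniform $p$-integrability bounds that propagate through every rescaling, I would pick $n(j) \to \infty$ so that $w^j := \phi_{j, n(j)}$ generates $\mu$ homogeneously on $Q$ with $p$-equiintegrable $\ba$-gradients. The delicate point is the bookkeeping: to certify homogeneity one must simultaneously control $\int_Q f(x) g(\ggrad w^j) \dd x$ for $f \in C(Q)$ and $g \in \calE$, and keep the weak$^*$ limits inside a norm-bounded subset of $\calE^*$ so that density of the test families suffices. Once this is done, $\mu \in \hpn(Q)$, which establishes convexity.
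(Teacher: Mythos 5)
Your proof is correct, but it follows a genuinely different route from the paper's. The paper glues the two generating sequences by a \emph{multiplicative} cut-off: it takes cut-offs $\phi_i$ increasing to the indicator of a slab $(-1,-1+2\theta)\times Q^{N-1}$, forms $w^i_j := \phi_i v_j + (1-\phi_i) u_j$, and must then control the remainder $R(i,j) = \ggrad w^i_j - (\phi_i \ggrad v_j + (1-\phi_i)\ggrad u_j)$ coming from the Leibniz rule. This is exactly where Lemma~\ref{lemmaCompactEmbedding} enters: the lower-order derivatives of $v_j,u_j$ tend to $0$ strongly in $\LLp$, so $R(i,j)\to 0$ for fixed $i$ and a diagonal in $j$ then gives a generating sequence for the two-slab mixture $\indyk_{\text{slab}}\nu + \indyk_{\text{comp}}\mu$, which is afterwards homogenised by periodic extension and anisotropic rescaling. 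Your construction instead achieves the spatial mixing \emph{additively}, by placing rescaled compactly supported copies of $u^1_j$ and $u^2_j$ in disjoint boxes; this avoids the Leibniz remainder and the appeal to compact embedding, and the distribution identity $\dashint_Q g(\ggrad\psi_j) = \lambda\dashint g(\ggrad u^1_j) + (1-\lambda)\dashint g(\ggrad u^2_j)$ is an exact change of variables rather than a limit. The price is the countable tiling of the annular region $Q\setminus E$. In both proofs the second-level homogenisation and the final diagonal extraction are essentially the same. Two small remarks. First, your $\psi_j$ is in $\WWapn(Q)$ but is not necessarily $\Ccinfty(Q)$: the countable family of boxes accumulates at $\partial E$, and derivatives of order higher than those in the pattern $\langle\alpha,\bamo\rangle\leq 1$ need not stay bounded there; this is harmless since $\WWapn$ membership is all that Lemma~\ref{lemmaElementaryScaleAndTile} and Corollary~\ref{corElementaryHomogenous} require, but you should state it that way. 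Second, once you have $\mu_j\in\hpn$ with $\mu_j\weaksConv\mu$ in $\calE^*$ and $\mu$ a probability measure with finite $p$-th moment, the cleanest finish is to appeal to Lemma~\ref{lemmahpoclosed} (relative closedness of $\hpn$) and skip the diagonal bookkeeping altogether --- though this route is only available if closedness is established before convexity, which is not the paper's chosen order.
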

\begin{proof}
Fix $\nu, \mu \in \hpn$ and $\theta \in (0,1)$. Let $v_j, u_j \subset \Ccinfty(Q)$ satisfy $v_j \weakConv 0$, $u_j \weakConv 0$ in $\WWapn(Q)$ with $\ggrad v_j$, $\ggrad u_j$ generating $\nu$ and $\mu$ respectively.
Pick a sequence of smooth cut-off functions $\phi_i \in \Ccinfty(Q;[0,1])$ with $\phi_i \nearrow \indyk_{(-1,-1+2\theta) \times Q^{N-1}}$ and such that $| \{ \phi_i \not= \indyk_{(-1,-1+2\theta) \times Q^{N-1})} \} | \to 0$ with $i \to \infty$. Here $Q^{N-1}$ is the $(N-1)$ dimensional cube. Define $w_j^i := \phi_i v_j + (1-\phi_i) u_j$. Then
$$ \ggrad w_j^i = \phi_i \ggrad v_j + (1-\phi_i) \ggrad u_j + R(i,j),$$
where $R(i,j)$ is the remainder, consisting of the terms where we put some of the derivatives on $\phi_i$ or $(1-\phi_i)$. Due to Lemma \ref{lemmaCompactEmbedding} the lower order derivatives of $v_j$ and $u_j$ converge stronly to $0$ in $\LL^p$, thus
$$ \| R(i,j) \|_{\LL^p} = \| \ggrad w_j^i - (\phi_i \ggrad v_j + (1-\phi_i) \ggrad u_j \|_{\LL^p} \leq C(\phi_i) (\| \lowgrad v_j \|_{\LL^p} + \|\lowgrad u_j\|_{\LL^p}) \to 0$$
with $j \to \infty$ for any fixed $i$ (here $C(\phi_i)$ is a finite constant that depends on the function $\phi_i$). Therefore, we may select a subsequence $j(i) \to \infty$ as $i \to \infty$ such that
$$ \| \ggrad w_{j(i)}^i - (\phi_i \ggrad v_{j(i)} + (1-\phi_i) \ggrad u_{j(i)}) \|_{\LL^p} \to 0$$
as $i \to \infty$. Let $w_i := w_{j(i)}^i \in \Ccinfty(Q)$. It is straightforward to see that $\ggrad w_i$ generates $\indyk_{(-1,-1+2\theta) \times Q^{N-1}} \nu + \indyk_{(-1+2\theta, 1) \times Q^{N-1}} \mu$ as its oscillation Young measure. This clearly holds for $ \indyk_{(-1,-1+2\theta) \times Q^{N-1}} \ggrad v_{j(i)} + \indyk_{(-1+2\theta, 1) \times Q^{N-1}} \ggrad u_{j(i)}$ and, by construction, the difference of this sequence and $(\phi_i \ggrad v_{j(i)} + (1-\phi_i) \ggrad u_{j(i)})$ converges to $0$ in measure, whilst the difference of $\ggrad w_i$ and the latter sequence converges to $0$ in $\LL^p$.
Finally, $w_i$ is compactly supported in $Q$, so we may extend it periodically to $\RRN$ and define
$$ w^k_i(x) := R_k^{-1} w_i(R_k \scale x),$$
where $R_k := k^{a_1 \cdot \ldots \cdot a_N}$. Then, by Lemma \ref{lemmaElementaryScaleAndTile}, for all $\phi \in C_0^{\infty}$ and $\psi \in \calE$ we have
\begin{equation*}
\lim_{i \rightarrow \infty} \lim_{k \rightarrow \infty} \dashint_Q \phi(x) \psi(\ggrad w^k_i(x)) \dd x = \lim_{j \rightarrow \infty} \dashint_Q \phi(x) \left( \dashint_Q \psi(\ggrad w_i(y)) \dd y \right) \dd x \ldotp
\end{equation*}
Finally, since we have already identified the measure generated by $\ggrad w_i$, we may write
$$  \lim_{i \rightarrow \infty} \lim_{k \rightarrow \infty} \dashint_Q \phi(x) \psi(\ggrad w_i^k(x)) \dd x = \dashint_Q \phi(x) \dd x (\theta \langle \nu, \psi \rangle + (1 - \theta) \langle \mu, \psi \rangle ) \ldotp$$
A standard density argument and a diagonal extraction in the separable spaces $\LL^1 (Q)$ and $C_{0}(\RRnm)$ let us obtain a subsequence $\widetilde{w}_l \subset \{w^k_i\} \subset C^{\infty} $ with $\ba$-gradients generating the measure $\theta \nu + (1-\theta)\mu $, which ends the proof.
\end{proof}

\begin{lemma}\label{lemmahpoclosed}
The set $\hpn$ is relatively closed in $\Probability(\RRnm) \cap \{\mu \colon \intRRN |W|^p \dd \mu < \infty \}$ with respect to the weak* topology on $\calE^*$.
\end{lemma}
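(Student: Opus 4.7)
The plan is as follows. Let $\{\nu_k\} \subset \hpn(Q)$ converge weak$^*$ in $\calE^*$ to some $\nu \in \Probability(\RRnm)$ with $\int_{\RRnm} |W|^p \dd \nu < \infty$; the goal is to show $\nu \in \hpn(Q)$. First I would invoke the definition of $\hpn$ together with Corollary \ref{corOscillationEquiintegrable} to select, for each $k$, a sequence $u^k_j \in \Ccinfty(Q)$ with $u^k_j \weakConv 0$ in $\WWapn(Q)$ such that the $p$-equiintegrable family $\{\ggrad u^k_j\}_j$ generates $\nu_k$ as a homogeneous Young measure. The strategy is then to extract a diagonal sequence $u^k_{j(k)}$ whose $\ba$-gradients generate $\nu$.

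To set up the diagonalisation, I would fix countable dense families $\{f_l\} \subset C(\overline{Q})$ and $\{g_i\} \subset \calE$, arranging in particular that $|\cdot|^p$ belongs to the $\calE$-family. Because $\ggrad u^k_j$ generates $\nu_k$ and because the embedding $\WWap(Q) \hookrightarrow \LLp(Q)$ is compact (Proposition \ref{propCompactIntoLp}), for each fixed $k$ one may choose $j(k)$ so that
$$ \left| \int_Q f_l(x) g_i(\ggrad u^k_{j(k)}(x)) \dd x - \Bigl(\int_Q f_l \dd x\Bigr) \langle \nu_k, g_i \rangle \right| < \tfrac{1}{k}, \qquad \|u^k_{j(k)}\|_{\LLp(Q)} < \tfrac{1}{k},$$
for all $i, l \leq k$. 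Combining this with weak$^*$ convergence $\langle \nu_k, g_i \rangle \to \langle \nu, g_i \rangle$ yields
$$ \lim_{k \to \infty} \int_Q f_l(x) g_i(\ggrad u^k_{j(k)}(x)) \dd x = \Bigl(\int_Q f_l \dd x \Bigr) \langle \nu, g_i \rangle$$
for every $i$ and $l$.

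To conclude that a subsequence of $\ggrad u^k_{j(k)}$ generates the homogeneous Young measure $\nu$, I would upgrade this convergence to arbitrary $f \in C(\overline{Q})$ and arbitrary $g \in \calE$. Testing against $|\cdot|^p \in \calE$ delivers a uniform $\LLp$-bound on $\{\ggrad u^k_{j(k)}\}$; paired with $\|u^k_{j(k)}\|_{\LLp(Q)} \to 0$ and Corollary \ref{corAllNormsEquivalent}, this also gives $\WWapn$-boundedness, whence $u^k_{j(k)} \weakConv 0$ in $\WWapn(Q)$ since any weak subsequential limit vanishes in $\LLp$. Applying Theorem \ref{thmFToYM} to extract a Young-measure-generating subsequence and identifying the limit measure via the above density test then shows that the subsequence generates $\nu$, which is thus a homogeneous $\WWap$-gradient Young measure. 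The vanishing of the barycentre of $\nu$ follows by testing the weak$^*$ convergence against the linear maps $W \mapsto W \cdot \eta$, which belong to $\calE$ precisely because $p > 1$.

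The step I expect to be the main obstacle is the density upgrade from the countable set $\{g_i\}$ to all of $\calE$: elements of $\calE$ grow like $|W|^p$ and cannot be uniformly approximated by bounded continuous functions in the $\calE$-norm. I would handle this via the standard factorisation $g(W) = (1+|W|^p) h(W)$ with $h(W) := g(W)/(1+|W|^p)$ bounded, continuous, and admitting a limit at infinity, approximating $h$ uniformly on large compact sets by elements of the countable dense family, and controlling the tail contribution through the uniform $\LLp$-bound on $\{\ggrad u^k_{j(k)}\}$ together with the finiteness of $\int |W|^p \dd \nu$. This is the routine duality manoeuvre for oscillation Young measures on $\calE^*$, and its adaptation to the mixed smoothness setting is automatic once the preceding preliminaries are in place.
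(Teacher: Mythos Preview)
Your approach is essentially the same diagonalisation as the paper's: pick generating sequences for each $\nu_k$, extract a diagonal, bound in $\LLp$ via the test function $|\cdot|^p$, and identify the resulting Young measure with $\nu$ by density. Two points are worth noting. First, starting from a \emph{sequence} $\nu_k \to \nu$ tacitly assumes that weak$^*$ closure equals sequential weak$^*$ closure on this (unbounded) set; the paper avoids this by working directly from the definition of closure and choosing, for each $k$, some $\nu_k \in \hpn$ close to $\nu$ on the finitely many test functions $g_0,\ldots,g_k$. Your version can be repaired by the observation that the test function $|\cdot|^p \in \calE$ forces nearby measures into a fixed $\calE^*$-ball, where weak$^*$ is metrizable, but you should say so. Second, the ``density upgrade'' you flag as the main obstacle is unnecessary: the paper takes $\{g_j\} \subset \Ccinfty(\RRnm)$ dense in $C_0(\RRnm)$ (not in $\calE$), uses $g_0 = |\cdot|^p$ solely for the $\LLp$ bound, extracts a Young measure $\mu$ by Theorem~\ref{thmFToYM}, and then identifies $\mu_x = \nu$ a.e.\ via density of $\{g_j\}$ in $C_0(\RRnm)$ and of $\{f_i\}$ in $\LL^1(Q)$. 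No tail estimate or factorisation $g = (1+|\cdot|^p)h$ is needed.
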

\begin{proof}
Fix an arbitrary $\nu \in \overline{\hpn}^{E^*} \cap \Probability(\RRnm)$ and let $\{f_i\} \subset C^{\infty}(Q)$, $\{g_j\} \subset C_c^{\infty} (\RRnm)$ be countable dense subsets of $L^1(Q)$ and $C_0(\RRnm)$ respectively.
Take also $f_0(x) \equiv 1$ and $g_0(W) := \left|W\right|^p$. By definition of the weak* topology, for any fixed $g \in  C_0(\RRnm)$ there exists a $\nu_k \in \hpn$ with
$$ \left|\langle \nu - \nu_k, g \rangle\right| < \frac{1}{2k}\ldotp$$
Through a diagonal argument we may ensure that this is satisfied simultaneously for any finite set of $g$'s, i.e., for any $k \in \naturals$ there exists a $\nu_k \in \hpn$ such that
$$ \left|\langle \nu - \nu_k, g_j \rangle\right| < \frac{1}{2k}, \text{ for all } j \in \{ 0, 1, \ldots , k \} \ldotp$$
Since $\nu_k \in \hpn$ we may find a sequence $\{ w^k_j \} \subset \WWapn(Q)$ with $\ba$-gradients generating $\nu_k$. 

Theorem \ref{thmFToYM} implies that, for any $g \in C_c^{\infty} (\RRnm)$, we have $g(\ggrad w^k_j) \weakConv \langle g,\nu_k \rangle$ in $\LL^1(Q)$. Another diagonal extraction and the triangle inequality let us establish existence of a sequence $\{ w_k \} \subset \WWapn(Q)$ such that
\begin{equation}\label{eq42Fonseca}
\left| \langle \nu,g_j \rangle \int_Q f_i \dd x - \int_Q f_i g_j(\ggrad w_k) \dd x \right| < \frac{1}{k}, \text{ for all } 0 \leq i,j \leq k,
\end{equation}
as all $f_i$'s are smooth and therefore bounded, so that they are admissible test functions for weak convergence in $\LL^1$. 

Setting $i = j = 0$ shows that $\{\ggrad w_k\}$ is bounded in $\LL^p(Q)$, therefore we may find a subsequence generating some $\WWap$-gradient Young measure $\mu$. For notational simplicity we assume that the entire sequence generates $\mu$. 
From \eqref{eq42Fonseca} and the fact that $g_j(\ggrad w_k) \weakConv \langle \mu, g_j \rangle$ in $\LL^1$ as $k \to \infty$ we infer that
$$ \langle \nu,g_j \rangle \int_Q f_i \mathop{dx} =  \int_Q f_i \langle \mu, g_j \rangle \mathop{dx}, \text{ for all } i, j \ldotp$$
By density of $f_i$ in $\LL^p(Q)$ we deduce that $\langle \mu, g_j \rangle = \langle \nu, g_j \rangle$ in $(\LL^p)^*$. In particular, they are equal almost everywhere, so that for almost every $x \in Q$ we have $\langle \mu_x, g_j \rangle = \langle \nu, g_j \rangle $ for all $j$. By density of $\{g_j\}$ in $C_0(\RRnm)$ we deduce that $\mu_x = \nu$ for almost all $x$, which shows that $\mu$ is in fact homogeneous so $\nu = \mu \in \hpn$, which ends the proof.
\end{proof}

Because bounded continuous functions are a subset of $\calE$ we immediately get:
\begin{lemma}\label{lemmaPortmanteau}
If a sequence $\{ \nu_j \} \subset \Probability(\RRnm) \cap \calE^*$ converges to some $\nu \in \Probability(\RRnm) \cap \calE^*$ in the space $\calE^*$ then it also converges in the sense of weak convergence of probability measures. In particular, by the portmanteau theorem, we have
$$ \lim_{j \to \infty} \int_{\RRnm} g \dd \nu_j = \int_{\RRnm} g \dd \nu$$
for all bounded and continuous functions $g$, and
$$ \liminf_{j \to \infty} \int_{\RRnm} g \dd \nu_j \geqslant \int_{\RRnm} g \dd \nu$$
for all lower semicontinuous functions $g$ bounded from below.
\end{lemma}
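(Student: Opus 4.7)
The plan is to reduce the statement to the standard portmanteau theorem by showing that convergence in $\calE^*$ implies weak convergence in the sense of probability measures, and then quote the classical result. The key observation is an inclusion of test function spaces: I claim that every bounded continuous function $g \colon \RRnm \to \RR$ belongs to $\calE$. Indeed, if $g$ is bounded then $|g(W)|/(1+|W|^p) \to 0$ as $|W| \to \infty$, so the defining limit exists (and equals $0$), which places $g$ in $\calE$. Moreover $g/(1+|\cdot|^p)$ is bounded on $\RRnm$, so $\|g\|_\calE < \infty$.

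Given this inclusion, the first step is immediate: the assumed $\calE^*$ convergence $\nu_j \weaksConv \nu$ applied to any $g \in C_b(\RRnm)$ yields
\begin{equation*}
\lim_{j \to \infty} \int_{\RRnm} g \dd \nu_j = \int_{\RRnm} g \dd \nu,
\end{equation*}
which is precisely the definition of weak convergence of the probability measures $\nu_j$ to $\nu$. This already establishes the first displayed equality of the lemma, since $C_0(\RRnm) \subset C_b(\RRnm)$, and in fact the same holds for every bounded continuous $g$.

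The second step invokes the classical portmanteau theorem: for probability measures on a metric space, weak convergence $\nu_j \weaksConv \nu$ is equivalent to the assertion that $\liminf_{j \to \infty} \nu_j(U) \geq \nu(U)$ for every open set $U$, and this in turn is equivalent to
\begin{equation*}
\liminf_{j \to \infty} \int_{\RRnm} g \dd \nu_j \geq \int_{\RRnm} g \dd \nu
\end{equation*}
for every lower semicontinuous function $g$ bounded from below (write $g$ as a monotone increasing pointwise limit of bounded continuous functions and apply Fatou together with the case of bounded continuous test functions already treated). This gives the second displayed inequality and completes the proof.

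There is essentially no obstacle here: the only thing to verify is the inclusion $C_b(\RRnm) \subset \calE$, which is immediate from $p > 0$, and the rest is a direct appeal to the portmanteau theorem in its standard form. No adaptation to the mixed smoothness setting is needed since the statement is purely measure-theoretic on $\RRnm$.
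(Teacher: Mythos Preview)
Your proof is correct and follows exactly the paper's approach: the paper simply notes that bounded continuous functions lie in $\calE$ and states the lemma as an immediate consequence, without further argument. Your write-up merely spells out the easy verification of $C_b(\RRnm)\subset\calE$ and the invocation of the classical portmanteau theorem, which is precisely what the paper leaves implicit.
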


\subsection{Dual characterisation of oscillation $\WWap$-gradient Young measures}\label{sectionDualYM}
We are finally ready to show the two main results of this section: 

\begin{theorem}\label{thmCharacterizationOfYM}
A probability measure $\mu \in \Probability(\RRnm)$ is a homogeneous oscillation $\WWap$-gradient Young measure with mean $W_0$ if and only if $\mu$ satisfies $\intRRnm W \dd \mu(W) = W_0$, $\intRRnm \left|W\right|^p \dd \mu (W) < \infty$ and
$$ \intRRnm g(W) \dd \mu (W) \geq \QQ g(W_0) $$
for all $g \in \calE$.
\end{theorem}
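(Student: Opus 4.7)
The plan is to prove the two directions separately.

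For the necessity, I would first translate the mean to zero via Proposition \ref{propTranslatedYM}: the measure $\delta_{-W_0}*\mu$ is a homogeneous $\WWap$-gradient Young measure with mean $0$. Corollary \ref{corOscillationEquiintegrable} then supplies a generating sequence $\phi_j \in \Ccinfty(Q;\RRn)$ with $\{\ggrad \phi_j\}$ being $p$-equiintegrable, so that $\{W_0 + \ggrad\phi_j\}$ generates $\mu$ and is itself $p$-equiintegrable. For $g \in \calE$ the growth bound $|g(W)| \leq C(1+|W|^p)$ combined with $p$-equiintegrability renders $\{g(W_0 + \ggrad\phi_j)\}$ equiintegrable in $\LL^1$, and by decomposing $g$ as the difference $(g + C(1+|\cdot|^p)) - C(1+|\cdot|^p)$ of two continuous integrands bounded below, Theorem \ref{thmFToYM}(iii) yields $\dashint_Q g(W_0 + \ggrad \phi_j)\, dx \to \int_{\RRnm} g \, d\mu$. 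Since each $\phi_j$ is admissible in Dacorogna's formula \eqref{eqDefQCEnvelope}, every term on the left is at least $\QQ g(W_0)$, giving the desired inequality. The barycentre and $p$-th moment identities are immediate from weak $\LL^p$-convergence and $\LL^p$-boundedness of the generating sequence.

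For sufficiency, I would first reduce to $W_0 = 0$ using the translation equivariance $\QQ(g(\cdot + W_0))(0) = \QQ g(W_0)$ and the fact that $\delta_{-W_0}*\mu$ inherits all three hypotheses. Then argue by contradiction: suppose the translated measure, still denoted $\mu$, is not in $\hpn$. Since $1 \in \calE$ and $W \mapsto |W|^p \in \calE$, any weak*-limit in $\calE^*$ of a sequence of probability measures with uniformly bounded $p$-th moment is still a probability measure with finite $p$-th moment; combining this with Lemma \ref{lemmahpoclosed}, $\mu$ lies outside the weak*-closure of $\hpn$ in $\calE^*$. Convexity of $\hpn$ (Lemma \ref{lemmahpnIsConvex}) makes this closure a closed convex subset of $\calE^*$, so the Hahn-Banach separation theorem applied in the locally convex space $(\calE^*, \mathrm{weak}^*)$ produces $g \in \calE$ and $\alpha \in \RR$ with
\begin{equation*}
\langle \mu, g \rangle < \alpha \leq \langle \nu, g \rangle \quad \text{for every } \nu \in \hpn.
\end{equation*}

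To close the argument I would test this inequality against ``elementary'' homogeneous Young measures: for any $\phi \in \Ccinfty(Q;\RRn)$ integration by parts gives $\dashint_Q \ggrad\phi \, dy = 0$, so by Corollary \ref{corElementaryHomogenous} the pushforward $\nu_\phi := \ggrad\phi_{\#}(\LebesgueN \measurerestr Q / |Q|)$ belongs to $\hpn$. Hence $\dashint_Q g(\ggrad\phi) \, dx = \langle \nu_\phi, g \rangle \geq \alpha$, and taking the infimum over $\phi$ yields $\QQ g(0) \geq \alpha$, contradicting $\QQ g(0) \leq \langle \mu, g \rangle < \alpha$. The hard part will be the sufficiency, and more precisely the requirement that the separating functional live in $\calE$ itself: this is exactly why the paper has already established convexity (Lemma \ref{lemmahpnIsConvex}) and weak*-closedness (Lemma \ref{lemmahpoclosed}) of $\hpn$ inside $\calE^*$, together with a rich enough supply of elementary homogeneous YMs in $\hpn$ (Corollary \ref{corElementaryHomogenous}) to recover $\QQ g$ by infimum.
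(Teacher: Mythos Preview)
Your proposal is correct and follows essentially the same route as the paper: reduce to mean zero, use Corollary \ref{corOscillationEquiintegrable} plus Theorem \ref{thmFToYM}(iii) for necessity, and for sufficiency argue by contradiction via Hahn--Banach separation in $(\calE^*,\text{weak}^*)$ using Lemmas \ref{lemmahpnIsConvex}, \ref{lemmahpoclosed} and then test against the elementary measures of Corollary \ref{corElementaryHomogenous} to recover $\QQ g(0)$. Your extra remark that weak* limits in $\calE^*$ of probability measures with bounded $p$-th moment remain in $\Probability(\RRnm)\cap\{\int|W|^p\,d\mu<\infty\}$ fills a small gap the paper leaves implicit when passing from the \emph{relative} closedness of Lemma \ref{lemmahpoclosed} to the separation argument.
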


\begin{theorem}\label{thmCharacterisationYMnonhom}
Fix a bounded open Lipschitz domain $\Omega$ satisfying the weak $\ba$-horn condition. Let $\{ \nu_x\}_{x \in \Omega}$ be a weak* measurable family of probability measures on $\RRnm$. Then there exists a $\WWap(\Omega)$-bounded sequence $\{v_n\} \subset \WWap(\Omega)$ with $\{ \ggrad v_n \}$ generating the oscillation Young measure $\nu$ if and only if the following conditions hold:

i) there exists $v \in \WWap(\Omega)$ such that
$$ \ggrad v(x) = \langle \nu_x, \id \rangle \text{ for a.e. } x \in \Omega;$$

ii) 
$$ \int_{\Omega} \intRRnm |W|^p \dd \nu_x(W) \dd x < \infty;$$

iii) for a.e. $x \in \Omega$ and all $g \in \calE$ we have
$$ \langle \nu_x, g \rangle \geq \QQ g( \langle \nu_x, \id \rangle) \ldotp$$
\end{theorem}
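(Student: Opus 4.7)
The plan is to prove necessity and sufficiency separately, with sufficiency being the main content. The argument reduces the spatially-varying measure $\nu$ to a sequence of piecewise-homogeneous approximations via Proposition~\ref{propPiecewiseApprox}, handling each piece by the characterisation of homogeneous Young measures in Theorem~\ref{thmCharacterizationOfYM}, and then diagonalising.

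\textbf{Necessity.} Given a bounded sequence $\{v_n\} \subset \WWap(\Omega)$ with $\{\ggrad v_n\}$ generating $\nu$, pass to a subsequence with $v_n \weakConv v$ in $\WWap(\Omega)$. For (i), identify the barycentre $\langle \nu_x, \id \rangle$ with $\ggrad v$ by applying Theorem~\ref{thmFToYM}(iii) to the truncations $\tau_k(\ggrad v_n)$ (which are uniformly bounded, hence equiintegrable) and letting $k \to \infty$, using the $\LL^1$-equiintegrability that follows from the $\LL^p$-boundedness for $p>1$. Condition (ii) is an immediate consequence of Theorem~\ref{thmFToYM}(ii) applied to $W \mapsto |W|^p$. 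For (iii), Proposition~\ref{propLocalisation} shows that for a.e.\ $x_0 \in \Omega$ the measure $\nu_{x_0}$, viewed as a homogeneous measure on $Q$, is a homogeneous $\WWap$-gradient Young measure with barycentre $\ggrad v(x_0)$, and Theorem~\ref{thmCharacterizationOfYM} gives the required inequality.

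\textbf{Sufficiency.} Combining (i)--(iii) with Theorem~\ref{thmCharacterizationOfYM} shows that, for a.e.\ $x \in \Omega$, $\nu_x$ is itself a homogeneous $\WWap$-gradient Young measure with barycentre $\ggrad v(x)$. Fix $\epsilon > 0$. Proposition~\ref{propPiecewiseApprox} supplies $v_\epsilon \in \WWap(\Omega)$ with $\|v - v_\epsilon\|_{\WWap(\Omega)} < \epsilon$ together with pairwise disjoint anisotropic boxes $\{Q_{\epsilon,i}\}_{i=1}^{I_\epsilon}$ on which $\ggrad v_\epsilon$ is constant. Using Theorem~\ref{thmAnisotropicLebesgueDiff}, choose centres $x_{\epsilon,i} \in Q_{\epsilon,i}$ so that $\ggrad v(x_{\epsilon,i})$ matches the constant value of $\ggrad v_\epsilon$ on $Q_{\epsilon,i}$ up to an $\LL^p$-error that vanishes with $\epsilon$, and so that each $\nu_{x_{\epsilon,i}}$ is a bona fide homogeneous $\WWap$-gradient Young measure. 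On each such box, Theorem~\ref{thmCharacterizationOfYM} combined with Corollary~\ref{corOscillationEquiintegrable} yields a $p$-equiintegrable sequence $\{w_n^{\epsilon,i}\} \subset \Ccinfty(Q)$ whose $\ba$-gradients generate the shifted measure $\delta_{-\ggrad v(x_{\epsilon,i})} \ast \nu_{x_{\epsilon,i}}$. Paste anisotropically rescaled copies of these into $v_\epsilon$ in the spirit of Lemma~\ref{lemmaElementaryScaleAndTile} by setting
\begin{equation*}
u_n^\epsilon(x) := v_\epsilon(x) + \sum_{i=1}^{I_\epsilon} r_{\epsilon,i}\, w_n^{\epsilon,i}\bigl(r_{\epsilon,i}^{-1} \scale (x - x_{\epsilon,i})\bigr) \indyk_{Q_{\epsilon,i}}(x),
\end{equation*}
where $r_{\epsilon,i}$ is the anisotropic radius of $Q_{\epsilon,i}$. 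Since each $w_n^{\epsilon,i}$ has compact support in $Q$, the rescaled copies vanish on $\partial Q_{\epsilon,i}$ and $u_n^\epsilon$ belongs to $\WWap(\Omega)$; Lemma~\ref{lemmaElementaryScaleAndTile} combined with Proposition~\ref{propTranslatedYM} shows that $\{\ggrad u_n^\epsilon\}$ is $p$-equiintegrable and generates, as $n \to \infty$, the piecewise-homogeneous Young measure $\mu^\epsilon$ given by $\mu^\epsilon_x := \nu_{x_{\epsilon,i}}$ for $x \in Q_{\epsilon,i}$.

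To conclude, Theorem~\ref{thmAnisotropicLebesgueDiff} gives $\int_\Omega f(x) \langle \mu_x^\epsilon, g \rangle \dd x \to \int_\Omega f(x) \langle \nu_x, g \rangle \dd x$ as $\epsilon \to 0$ for every $f \in \Ccinfty(\Omega)$ and $g \in \calE$. A standard diagonal extraction, based on separability of $\LL^1(\Omega; C_0(\RRnm))$ and of $\calE$, then delivers a subsequence $v_n := u_{k(n)}^{\epsilon_n}$ with $\epsilon_n \to 0$ whose $\ba$-gradients generate $\nu$, and condition (ii) bounds this sequence in $\WWap(\Omega)$. The main technical obstacle is ensuring that the pasting respects the $\ba$-gradient structure and preserves $p$-equiintegrability across the anisotropic rescaling; this is handled precisely by the compact support of each $w_n^{\epsilon,i}$ inside $Q$ and by the scale-invariance built into Lemma~\ref{lemmaElementaryScaleAndTile}.
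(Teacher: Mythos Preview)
The paper does not actually prove this theorem in the text; it states that the argument follows Fonseca--M\"{u}ller \cite{FonsecaMuller99} and refers to \cite{ProsinskiThesis} for the details. Your outline is precisely that strategy, and the necessity direction is correct as written.

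In the sufficiency direction there is a barycentre mismatch that you have not fully resolved. On each box $Q_{\epsilon,i}$ the constant value of $\ggrad v_\epsilon$ is, by the construction in Proposition~\ref{propPiecewiseApprox}, the average $(\ggrad v)_{Q_{\epsilon,i}}$, whereas your sequence $\{w_n^{\epsilon,i}\}$ is built so that its $\ba$-gradients have mean zero and generate $\delta_{-\ggrad v(x_{\epsilon,i})} \ast \nu_{x_{\epsilon,i}}$. Consequently $\{\ggrad u_n^\epsilon\}$ generates $\delta_{(\ggrad v)_{Q_{\epsilon,i}} - \ggrad v(x_{\epsilon,i})} \ast \nu_{x_{\epsilon,i}}$ on $Q_{\epsilon,i}$, not $\nu_{x_{\epsilon,i}}$ as you assert. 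Your remark about ``an $\LL^p$-error that vanishes with $\epsilon$'' points in the right direction, but you still owe an argument that these shifts disappear in the diagonal limit; the cleanest route is to paste the rescaled $w_n^{\epsilon,i}$ into $v$ rather than $v_\epsilon$ and then invoke Proposition~\ref{propTranslatedYM}, since $\ggrad v(\cdot) - \ggrad v(x_{\epsilon,i}) \to 0$ in measure on each box as the radii shrink (Theorem~\ref{thmAnisotropicLebesgueDiff}). A second, smaller point: Lemma~\ref{lemmaElementaryScaleAndTile} treats the tiling of $\Omega$ by many rescaled copies of a \emph{fixed} $\phi$, not the placement of a \emph{single} rescaled copy of each element of a \emph{sequence} on a single box; the claim you need (that rescaling a generating sequence to an anisotropic box preserves both the homogeneous Young measure it generates and $p$-equiintegrability) is true, but it follows from a direct change of variables rather than from that lemma.
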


The proof strategy is based on Fonseca and M\"{u}ller's approach from \cite{FonsecaMuller99}. Having established all the necessary technical ingredients above, there are no major difficulties in adapting their arguments. Below we present a proof for the case of homogeneous measures and, for the sake of brevity, refer the reader to \cite{ProsinskiThesis} for the full argument in the general case. 

\begin{proof}(of Theorem \ref{thmCharacterizationOfYM})
First observe that it is enough to consider the case $W_0 = 0$, as we may always take a translation. For the proof of this case we argue by contradiction. Suppose that $\nu \in \Probability(\RRnm)$ satisfies
\begin{equation*}
\begin{cases}
\intRRnm W \dd \nu = 0, \\
\intRRnm \left|W\right|^p \dd \nu (W) < \infty, \\
\intRRnm g(W) \dd \nu(W) \geq \QQ g(0) \text{ for all } g \in \calE,
\end{cases}
\end{equation*}
but $\nu \not\in \hpn$. The $p$-th moment assumption on $\nu$ implies that $\nu \in \calE^*$. By Lemmas \ref{lemmahpnIsConvex} and \ref{lemmahpoclosed} and the Hahn-Banach separation theorem, there exist $g \in \calE$ and $\alpha \in \RR$ such that 
$$  \QQ g(0) \leq \langle \nu,g \rangle < \alpha \leq \langle \mu, g \rangle  \text{ for all } \mu \in \hpn \ldotp$$
For any $\phi \in \Ccinfty(Q)$ the measure $(\ggrad \phi)_{\#} \frac{\LebesgueN \measurerestr Q}{|Q|} $ is an element of $\hpn$ (see Corollary \ref{corElementaryHomogenous}) so that
$$ \dashint_Q g(\ggrad \phi(x)) \dd x\geq \alpha \ldotp$$
Taking infimum over all such $\phi$'s yields
$$\QQ g(0) =  \inf_{\phi \in \Ccinfty(Q)} \dashint_Q g(\ggrad \phi(x)) \dd x \geq \alpha,$$
which contradicts $\QQ g(0) \leq \langle \nu,g \rangle < \alpha$ and shows that $\nu$ is indeed in the set $\hpn$.

For the reverse implication let $\phi_k \in \Ccinfty$ be such that $\{\ggrad \phi_k\}$ is a $p$-equiintegrable sequence generating $\nu$ (which is possible thanks to Corollary \ref{corOscillationEquiintegrable}). Then for all $k$ we have
$$ \intRRnm g(\ggrad \phi_k(x)) \dd x \geq \QQ g(0)$$
by definition of $\QQ g$. On the other hand, $p$-equiintegrability of $\{\ggrad \phi_k\}$ and the growth bound on $g$ imply that we may use point iii) of Theorem \ref{thmFToYM} to deduce that
$$ \lim_{k \to \infty} \intRRnm g( \ggrad \phi_k(x))  \dd x = \intRRnm g \dd \nu,$$
as for a continuous function the boundedness from below assumption is irrelevant, since we may simply consider the positive and negative parts of $g$ separately. 
This and the previous estimate finish the proof. 
\end{proof}

\section{Coercivity}\label{sectionCoercivity}
We are now ready to move on to the core part of the paper, i.e., the study of existence of solutions to variational problems posed in Sobolev spaces of mixed smoothness. To use the Direct Method we need two ingredients: compactness of sequences of minimisers and sequential lower semicontinuity of the given functional. In this section we focus on the first part and study coercivity of the relevant functionals.
Our first goal is Theorem \ref{thmCoercivity}, which is a generalisation to the mixed smoothness framework of a recent result due to Chen and Kristensen from \cite{ChenKristensen17}, that was further improved by Gmeineder and Kristensen in \cite{GmeinederKristensen18}.

Here $F \colon \RRnm \to \RR$ is a continuous integrand satisfying the growth condition
\begin{equation}\label{eqGrowth} 
|F(X)| \leq C(|X|^p + 1)
\end{equation}
for some $C \in (0,\infty)$ and all $X \in \RRnm$. For a non-empty bounded open set $\Omega \subset \RRN$ we denote by $\Fun(\cdot,\Omega) \colon \WWap(\Omega) \to \RR$ the mapping defined by $ \Fun(u,\Omega) := \int_{\Omega} F(\ggrad u) \dd x$. To be able to properly introduce a Dirichlet problem for this functional we define, for a fixed $g \in \WWap(\RRN)$, the Dirichlet class
$$ \WWapg(\Omega) := \{ g + \phi \colon \phi \in \WWapn(\Omega) \},$$
Observe that we require the boundary datum $g$ to be defined on the whole of $\RRN$, rather than just $\Omega \subset \RRN$, thus bypassing possible trace issues. For $q \in [1,p]$ we introduce, as in \cite{ChenKristensen17}, the following notions:
\begin{defi} 
We say that $\Fun(\cdot, \Omega)$ is $\LLq$ coercive on $\WWapg(\Omega)$ if for any sequence $u_j \in \WWapg(\Omega)$ with $\| \ggrad u_j \|_{\LLq} \to \infty$ one has $\Fun(u_j, \Omega) \to \infty$.
\end{defi}

\begin{defi} 
We say that $\Fun(\cdot, \Omega)$ is $\LLq$ mean coercive on $\WWapg(\Omega)$ if for any $u \in \WWapg(\Omega)$ we have 
$$\Fun(u, \Omega) \geq C_1 \| \ggrad u\|_{\LLq}^q - C_2$$ 
for some strictly positive constants $C_1, C_2$ independent of $u$, but not necessarily of $g$.
\end{defi}

As in \cite{ChenKristensen17} (see also Proposition 3.1 in \cite{GmeinederKristensen18}), our main goal here is the following:
\begin{theorem}\label{thmCoercivity}
Let $F \colon \RRnm \to \RR$ be a continuous integrand satisfying \eqref{eqGrowth}. Then, for any $q \in [1, p]$ the following are equivalent:
\begin{enumerate}[label=\alph*)]
\item For any bounded open Lipschitz set $\Omega \subset \RRN$ and any boundary datum $g \in \WWap(\RRN)$ the functional $\Fun(\cdot,\Omega)$ is $\LLq$ coercive on $\WWapg(\Omega)$.
\item There exist a non-empty bounded open Lipschitz set $\Omega \subset \RRN$ and a boundary datum $g \in \WWap(\RRN)$ such that the functional $\Fun(\cdot,\Omega)$ is $\LLq$ coercive on $\WWapg(\Omega)$.
\item For any bounded open Lipschitz set $\Omega \subset \RRN$ and any boundary datum $g \in \WWap(\RRN)$ the functional $\Fun(\cdot,\Omega)$ is $\LLq$ mean coercive on $\WWapg(\Omega)$.
\item There exist a non-empty bounded open Lipschitz set $\Omega \subset \RRN$ and a boundary datum $g \in \WWap(\RRN)$ such that the functional $\Fun(\cdot,\Omega)$ is $\LLq$ mean coercive on $\WWapg(\Omega)$.
\item For any bounded open Lipschitz set $\Omega \subset \RRN$ and any boundary datum $g \in \WWap(\RRN)$ all $\WWapg(\Omega)$ minimising sequences for the functional $\Fun(\cdot,\Omega)$ are bounded in $\WWaqg(\Omega)$.
\item There exist a non-empty bounded open Lipschitz set $\Omega \subset \RRN$ and a boundary datum $g \in \WWap(\RRN)$ such that all $\WWapg(\Omega)$ minimising sequences for the functional $\Fun(\cdot,\Omega)$ are bounded in $\WWaqg(\Omega)$.
\item There exist a constant $c > 0$ and a point $X_0 \in \RRnm$ such that the integrand $X \mapsto F(X) - c|X|^q$ is $\ba$-quasiconvex at $X_0$.
\end{enumerate}
\end{theorem}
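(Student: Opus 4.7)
The plan is to establish a network of implications covering all seven statements. The inclusions a)~$\Rightarrow$~b), c)~$\Rightarrow$~a), c)~$\Rightarrow$~d), d)~$\Rightarrow$~b), and e)~$\Rightarrow$~f) are immediate by specialisation of the domain and boundary datum. The step c)~$\Rightarrow$~e) holds because along any $\WWapg(\Omega)$-minimising sequence $(u_j)$ the energies $\Fun(u_j,\Omega)$ are bounded above (use $g$ itself as a competitor), so mean coercivity forces $\|\ggrad u_j\|_{\LLq}$ to remain bounded. The step b)~$\Rightarrow$~f) is analogous: the growth bound \eqref{eqGrowth} makes the infimum of $\Fun(\cdot,\Omega)$ finite on $\WWapg(\Omega)$, whence a minimising sequence has $\Fun(u_j,\Omega)$ bounded and $\LLq$ coercivity yields the $\LLq$ bound on $\ggrad u_j$.

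The two substantive implications are g)~$\Rightarrow$~c) and f)~$\Rightarrow$~g). For g)~$\Rightarrow$~c), set $G := F - c|\cdot|^q$, a continuous function with $|G(X)| \leq C'(|X|^p + 1)$. Given $u \in \WWapg(\Omega)$, write $u = g + \phi$ with $\phi \in \WWapn(\Omega)$, extend $\phi$ by zero to a large anisotropic box $Q_R \supset \Omega$, keeping $\phi \in \WWapn(Q_R)$. Applying the $\ba$-quasiconvexity of $G$ at $X_0$ on $Q_R$ (density of $\Ccinfty$ in $\WWapn$ together with the $p$-growth of $G$ justifies extending test functions to $\WWapn$) and anisotropic scaling from the unit cube, one obtains
\[
G(X_0)|Q_R| \leq \int_{Q_R} G(X_0 + \ggrad \phi)\dd x = \int_\Omega G(X_0 + \ggrad \phi)\dd x + G(X_0)(|Q_R| - |\Omega|),
\]
so $\int_\Omega G(X_0 + \ggrad \phi)\dd x \geq G(X_0)|\Omega|$. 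The remaining task is to compare $G(X_0 + \ggrad\phi)$ with $G(\ggrad u) = G(\ggrad g + \ggrad \phi)$ via the $p$-growth of $F$, Young's inequality, and the fixed boundary datum $g$, absorbing the resulting error into $C_2$. Carried out on the piecewise-constant refinement of Proposition \ref{propPiecewiseApprox} combined with the Poincar\'{e}-type bound of Corollary \ref{corPolynomialApproximation}, this yields the mean-coercivity inequality of c).

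For f)~$\Rightarrow$~g), the approach is by contradiction. Assume f) but suppose that for every $c > 0$ and every $X_0 \in \RRnm$ the function $F - c|\cdot|^q$ fails to be $\ba$-quasiconvex at $X_0$. Pick $c_k \downarrow 0$, witness points $X_k$, and test functions $\phi_k \in \Ccinfty(Q;\RRn)$ satisfying $\dashint_Q (F - c_k|\cdot|^q)(X_k + \ggrad \phi_k)\dd x < (F - c_k|\cdot|^q)(X_k)$. Use the scale-and-tile procedure of Lemma \ref{lemmaElementaryScaleAndTile} to spread these oscillation patterns across $\Omega$, adjoin a correction in $\WWapn(\Omega)$ to match the prescribed boundary datum $g$, and obtain a sequence $(v_k) \subset \WWapg(\Omega)$ with $\Fun(v_k,\Omega)$ uniformly bounded yet $\|\ggrad v_k\|_{\LLq(\Omega)} \to \infty$, contradicting f).

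The principal obstacle lies in the absorption step of g)~$\Rightarrow$~c): passing from the one-point bound $\int G(X_0 + \ggrad \phi)\dd x \geq G(X_0)|\Omega|$ to a lower bound for $\int G(\ggrad u)\dd x$, where the reference gradient $\ggrad g$ need not equal $X_0$. The canonical projection of Lemma \ref{lemmaProjection}, the piecewise approximation of Proposition \ref{propPiecewiseApprox}, and the mixed-smoothness scaling should enable a decomposition handling the mismatch boundary layer, but balancing the resulting error terms against the $p$-growth of $F$ and the target $\LLq$ norm requires careful book-keeping tailored to the anisotropic structure of $\ggrad$.
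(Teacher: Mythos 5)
Your implication graph is structurally complete, but two of the substantive arrows do not hold as sketched, and they are precisely the ones you flag as delicate. In g)\,$\Rightarrow$\,c), the absorption step fails: you want to pass from the $\ba$-quasiconvexity of $G := F - c|\cdot|^q$ at $X_0$ to mean coercivity of $u \mapsto \int_\Omega F(\ggrad g + \ggrad\phi)\dd x$ with an \emph{arbitrary} $g$, by estimating the error $E := \int_\Omega \bigl(F(\ggrad g + \ggrad\phi) - F(X_0 + \ggrad\phi)\bigr)\dd x$. But $F$ is only assumed continuous, not locally Lipschitz, so no pointwise $p$-growth Lipschitz bound is available; and even if one adds such a bound, Young's inequality produces an error term of order $\epsilon\|\ggrad\phi\|^p_{\LLp}$, which cannot be absorbed into a lower bound of the form $C_1\|\ggrad u\|^q_{\LLq} - C_2$ when $q < p$. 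The paper circumvents this entirely: it proves g)\,$\Rightarrow$\,d) only for the polynomial datum $\ggrad g \equiv X_0$ (where the quasiconvexity inequality \emph{is} the mean-coercivity estimate, with nothing to absorb), and then transfers from that special $(\Omega_0, g_0)$ to arbitrary $(\Omega, g)$ via the scaling lemma establishing c)\,$\Leftrightarrow$\,d), which you treat as immediate but which requires a genuine transplantation argument using cut-offs and anisotropic rescaling, not mere specialisation.

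Your f)\,$\Rightarrow$\,g) argument is also incorrect in shape. Condition f) asserts that all \emph{minimising} sequences are bounded in $\WWaqg(\Omega)$; a sequence $(v_k)$ with $\Fun(v_k,\Omega)$ uniformly bounded but $\|\ggrad v_k\|_{\LLq} \to \infty$ does not contradict f), because bounded energy is not the same as converging to the infimum. Moreover, the witness data $(c_k, X_k, \phi_k)$ for failure of quasiconvexity have no a priori control --- the $X_k$ may escape to infinity and the energy deficit is only a strict inequality, so the scale-and-tile construction gives no handle on $\inf_{\WWapg}\Fun$. The paper's f)\,$\Rightarrow$\,c) instead starts from an actual minimising sequence $u_j$, approximates it by piecewise $\ba$-polynomials on boxes (Proposition~\ref{propPiecewiseApprox}), and on each box uses the \emph{negation of mean coercivity with a polynomial datum} to inject unbounded perturbations that \emph{preserve the minimising property}, producing an unbounded minimising sequence. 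It then deduces g) from c) separately, via the auxiliary function $\theta$, its domain-independence (Lemma~\ref{lemmaAuxiliaryFunction}), convexity (Lemma~\ref{lemmaThetaConvex}), and a pushforward-plus-portmanteau argument that locates a point $X_0$ in the support of a limiting measure where $F - \delta|\cdot|^q$ equals its quasiconvex envelope. This auxiliary-function machinery, which your proposal omits entirely, is what makes both of the problematic implications go through.
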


Simply put, the above asserts that $\LLq$ coercivity, $\LLq$ mean coercivity, and boundedness of all minimising sequences are mutually equivalent, and it is enough to check either of these on a particular choice of domain and boundary datum. Furthermore, coercivity may be characterized in terms of $\ba$-quasiconvexity of $F(\cdot) - c|\cdot|^q$ at a single point.

\begin{lemma}[see Proposition 3.1 in \cite{ChenKristensen17}]
Under the hypotheses of Theorem \ref{thmCoercivity} its point a) is equivalent to b), and point c) is equivalent to d).
\end{lemma}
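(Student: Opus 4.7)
The implications $a) \Rightarrow b)$ and $c) \Rightarrow d)$ are trivial, as each ``for all'' statement specialises to the corresponding ``there exists'' statement. The plan for the converses is an \emph{anisotropic transplant}: given a distinguished pair $(\Omega_0, g_0)$ supplied by b) or d) and an arbitrary target pair $(\Omega', g')$, I would embed a rescaled copy of $\Omega'$ into $\Omega_0$ and push test maps through. Concretely, pick $r > 0$ small enough and $x_0 \in \Omega_0$ so that $x_0 + r \scale \overline{\Omega'} \subset \Omega_0$, and to any $u' = g' + \phi$ with $\phi \in \WWapn(\Omega')$ associate
\[
\psi(x) := r\,\phi\bigl(r^{-1} \scale (x - x_0)\bigr) \text{ on } x_0 + r \scale \Omega', \quad \psi \equiv 0 \text{ elsewhere in } \Omega_0,
\]
and $u := g_0 + \psi \in \WWap_{g_0}(\Omega_0)$. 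The scaling of Section \ref{sectionScaling} is tailored precisely so that a derivative $\partial^\alpha$ with $\langle \alpha, \bamo \rangle = 1$ carries a factor $r^{1 - \langle \alpha, \bamo \rangle} = 1$; hence $\ggrad\psi(x) = \ggrad\phi(r^{-1} \scale (x - x_0))$ on the tile, while lower-order derivatives scale down by positive powers of $r$.

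For $d) \Rightarrow c)$, I would apply the mean coercivity inequality at $(\Omega_0, g_0)$ to this $u$. The integrals split over the tile and its complement: on the latter, $u = g_0$ and both sides contribute constants depending only on $g_0, \Omega_0$, controlled via the growth bound (\ref{eqGrowth}). On the tile, the anisotropic change of variables $y = r^{-1} \scale (x - x_0)$ carries Jacobian $r^{|\bamo|}$ and yields
\[
r^{|\bamo|}\!\int_{\Omega'} F\bigl(\ggrad g_0(x_0 + r \scale y) + \ggrad\phi(y)\bigr)\dd y, \quad r^{|\bamo|}\!\int_{\Omega'} \bigl|\ggrad g_0(x_0 + r \scale y) + \ggrad\phi(y)\bigr|^q\dd y.
\]
Relating the integrands to $F(\ggrad u'(y))$ and $|\ggrad u'(y)|^q$ via (\ref{eqGrowth}), rearranging and dividing by $r^{|\bamo|}$, should produce an inequality $\Fun(u', \Omega') \geq C_1' \|\ggrad u'\|_{\LLq(\Omega')}^q - C_2'$ with constants depending on $g_0, g', \Omega_0, \Omega', r$ but not on $u'$. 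The implication $b) \Rightarrow a)$ is the contrapositive of the same construction: any sequence $u_j' \in \WWap_{g'}(\Omega')$ with $\|\ggrad u_j'\|_{\LLq} \to \infty$ and $\Fun(u_j')$ bounded above would transfer to a sequence on $\Omega_0$ violating $\LLq$-coercivity at $(\Omega_0, g_0)$.

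The main obstacle, and the reason the argument requires real care, is the mismatch between $\ggrad g_0(x_0 + r \scale y)$ and $\ggrad g'(y)$ appearing in the comparison of integrands. The bare growth bound (\ref{eqGrowth}) delivers only
\[
\bigl|F(A + C) - F(A' + C)\bigr| \leq C\bigl(1 + |A|^p + |A'|^p + |C|^p\bigr),
\]
which introduces an error proportional to $\|\ggrad\phi\|_{\LLp(\Omega')}^p$, a quantity not dominated by $\|\ggrad u'\|_{\LLq}^q$ when $q < p$. Following the strategy of Chen--Kristensen in \cite{ChenKristensen17}, I would resolve this by choosing $x_0$ to be a Lebesgue point of $\ggrad g_0$ (available through the anisotropic Theorem \ref{thmAnisotropicLebesgueDiff}) and sending $r \to 0$ in a scaling/blow-up argument, so that $\ggrad g_0(x_0 + r \scale y)$ collapses effectively to the constant $\ggrad g_0(x_0)$, and the comparison reduces to an intrinsic pointwise $\ba$-quasiconvexity-type inequality that is independent of the boundary datum. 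In this way the equivalences funnel through a single-point condition in the spirit of g), which both follows from the hypotheses in b) or d) and yields the universal conclusions a) or c).
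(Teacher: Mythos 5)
Your proposal correctly identifies the anisotropic transplant as the mechanism and handles the scaling with the right Jacobian $r^{|\bamo|}$, but the specific construction differs from the paper's in two ways that together create a genuine gap.

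First, you transplant only the perturbation $\phi \in \WWapn(\Omega')$, setting $u := g_0 + \psi$ with $\psi$ a rescaled copy of $\phi$. This leaves $\ggrad g_0$ alive inside the tile, so the integrand there becomes $F\bigl(\ggrad g_0(x_0 + r\scale y) + \ggrad\phi(y)\bigr)$; moreover the target datum $g'$ never enters the construction at all, so there is no way to produce $F(\ggrad u') = F(\ggrad g'(y) + \ggrad\phi(y))$ from this. The paper instead cuts $g_0$ off inside the tile and transplants a cut-off of the \emph{whole} map $u' = g' + \phi$: it sets $\widetilde{w} := \rho\,u'$ (with $u'$ extended by $g'$ outside $\Omega'$ and $\rho$ a cut-off $\equiv 1$ on $\Omega'$), rescales $\widetilde{w}$ to get $w$, and defines $v := (1 - \phi)\,g_0 + r\,w(r^{-1}\scale(x - x_0))$ with $\phi$ a cut-off $\equiv 1$ on $Q_r(x_0)$. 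Because $g_0$ is annihilated on $Q_r(x_0)$, on the tile one recovers $F(\ggrad u'(y))$ \emph{exactly} after the change of variables, with no mismatch; the annulus $Q_{2r}\setminus Q_r$ (where $(1-\phi)g_0$ lives) and $Q_R\setminus\Omega'$ (where $\rho g'$ lives) contribute fixed, $u'$-independent constants via the growth bound. The result is the clean identity that $\Fun(u',\Omega')$ and $\Fun(v,\Omega_0)$ (and likewise the $\LLq$ norms) differ by a constant and the same scaling factor, from which mean coercivity transfers immediately.

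Second, your proposed fix — blow-up at a Lebesgue point of $\ggrad g_0$ — does not close the gap. It would collapse $\ggrad g_0(x_0 + r\scale y)$ to the constant $\ggrad g_0(x_0)$, but the object you need to compare against, $\ggrad g'(y)$, is a genuinely $y$-dependent field that has nothing to do with that constant. The resulting error term, which as you note carries a factor of $|\ggrad\phi|^p$ not dominated by $\|\ggrad u'\|_{\LLq}^q$ when $q < p$, therefore does not vanish in the limit. What this strategy does achieve, if pushed through, is the single-point statement g) — and the paper does deploy an argument of that flavour for the equivalences with g) later in the proof of Theorem \ref{thmCoercivity} — but it is not a proof of a) $\Leftrightarrow$ b) or c) $\Leftrightarrow$ d) for arbitrary boundary data, which is all this lemma asserts and which the paper handles by the cut-off construction without any blow-up.
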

\begin{proof}
The fact that a) is equivalent to b) in the first order gradient case is the content of Proposition 3.1 in \cite{ChenKristensen17}. The proof of c) being equivalent to d) is similar --- we will do this here omitting the first part, as all that needs to be changed in the argument of \cite{ChenKristensen17} is the scaling factor.

It is obvious that c) implies d), thus we only need to prove the other implication. Let $\Omega_0$ and $g_0$ be the domain and the boundary datum for which $\Fun(\cdot,\Omega_0)$ is $\LLq$ mean coercive on $\WWap_{g_0}(\Omega_0)$. Fix an arbitrary non-empty bounded open Lipschitz set $\Omega \subset \RRN$ and a boundary datum $g \in \WWap(\RRN)$. Take boxes $Q_{2r}(x_0) \Subset \Omega_0$ (for some $x_0 \in \Omega_0$) and $Q_R(0) \Supset \Omega$, and fix cut-off functions $\phi \in \Ccinfty(Q_{2r}(x_0))$ and $\rho \in \Ccinfty(Q_R(0))$ satisfying
$$ \mathbf{1}_{Q_r(x_0)} \leq \phi \leq \mathbf{1}_{Q_{2r}(x_0)}  \quad \text{ and } \quad \mathbf{1}_{\Omega} \leq \rho \leq \mathbf{1}_{Q_R(0)}.$$
Any $u \in \WWapg(\Omega)$ may be seen as a function in $\WWap(\RRN)$ if we extend it by $u = g$ outside $\Omega$, simply from our definition of $\WWapg(\Omega)$. We may then cut-off outside $Q_R(0)$ by setting $\widetilde{w} := \rho u \in \WWapn(Q_R(0))$. Define $w(x) := R^{-1} \widetilde{w}(R \scale x) \in \WWapn(Q_1(0))$ and set
$$ v(x) := (1-\phi(x))g_0(x) + r w(r^{-1} \scale (x - x_0)) \text{ for } x \in \Omega_0.$$
Clearly $v \in \WWap_{g_0}(\Omega_0)$ and we may calculate, using a simple change of variables, that
\begin{equation*}
\begin{aligned} \| \ggrad v \|_{\LLq(\Omega_0)}^q = & \int_{\Omega_0 \setminus Q_r(x_0)} |\ggrad(1-\phi(x))g_0|^q \dd x  \\
& + (\frac{r}{R})^{|\bamo|} \int_{Q_R(0) \setminus \Omega} |\ggrad(\rho g)|^q \dd x + (\frac{r}{R})^{|\bamo|} \|\ggrad u\|_{\LLq(\Omega)}^q,
\end{aligned}
\end{equation*}
so that $\| \ggrad v \|_{\LLq(\Omega_0)}^q = D_1 + D_2 \|\ggrad u\|_{\LLq(\Omega)}^q,$ where $D_i$ do not depend on $u$ and $D_2 > 0$. In the same way we get
\begin{equation*}
\begin{aligned}
\int_{\Omega_0} F(\ggrad v) \dd x = & \int_{\Omega_0 \setminus Q_r(x_0)} F(\ggrad(1-\phi(x))g_0) \dd x  \\
& + (\frac{r}{R})^{|\bamo|} \left( \int_{Q_R(0) \setminus \Omega} F(\ggrad(\rho g)) \dd x 
+   \int_\Omega F(\ggrad u) \dd x \right).
\end{aligned}
\end{equation*}
Thus, $\Fun(u, \Omega)$ differs from $\Fun(v,\Omega_0)$ by a constant and a scaling. Since the same is true for the $\LLq$ norms of $u$ and $v$ we easily conclude that $\Fun(u, \Omega) \geq -c_1 + c_2 \|u\|_{\LLq(\Omega)}^q$ for some $c_i > 0$, which ends the proof.
\end{proof}

\begin{defi}
Let $F \colon \RRnm \to \RR$ be a continuous integrand satisfying the growth condition \eqref{eqGrowth}. For $q \in [1,p]$ and a non-empty bounded and open Lipschitz subset $\Omega \subset \RRN$ we define, for $t \geq 0$,
$$ \theta(t)  = \theta^{\Omega}_q(t) := \inf \left\{ \dashint_{\Omega} F(\ggrad \phi) \dd x \colon \phi \in \WWapn(\Omega, \RRn), \, \dashint_{\Omega} |\ggrad \phi|^q \dd x \geq t \right\}\ldotp$$
\end{defi}

\begin{lemma}[see Lemma 2.1 in \cite{ChenKristensen17}]\label{lemmaChangeDomains}
For any open, bounded, and non-empty domains $\omega, \Omega \subset \RRN$ and any function $\phi \in \Ccinfty(\Omega)$ there exists a function $\psi \in \Ccinfty(\omega)$ such that the pushforward measures $ [\ggrad \phi]_{\#} \left( \frac{ \mathcal{L}^N \restriction \Omega}{\mathcal{L}^N(\Omega)} \right)$ and $ [\ggrad \psi]_{\#} \left( \frac{ \mathcal{L}^N \restriction \omega}{\mathcal{L}^N(\omega)} \right)$ are equal.
\end{lemma}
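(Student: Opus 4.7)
The plan is to tile $\omega$ by pairwise disjoint anisotropic rescalings of $\Omega$ and then define $\psi$ on each piece as a properly rescaled copy of $\phi$, so that the $\ba$-gradient is simply the transported $\ba$-gradient of $\phi$ under the anisotropic change of coordinates. Concretely, I would invoke an anisotropic Vitali covering theorem --- analogous to the one underlying the proof of Theorem \ref{thmAnisotropicLebesgueDiff} in the framework of Calder\'on--Torchinsky --- applied to the family $\{y + r \scale \Omega : y \in \RRN, \, r>0\}$, to produce a countable collection $\{x_i + r_i \scale \Omega\}_i$ of pairwise disjoint sets contained in $\omega$ exhausting $\omega$ up to a Lebesgue null set. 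Since $\omega$ is bounded and the pieces are disjoint, necessarily $r_i \to 0$.

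On each tile $x_i + r_i \scale \Omega$ I set $\psi(x) := r_i \, \phi \bigl( r_i^{-1} \scale (x - x_i) \bigr)$ and $\psi \equiv 0$ elsewhere on $\omega$. The prefactor $r_i$ is exactly the critical weight: for any multi-index $\alpha$ with $\langle \alpha, \bamo \rangle = 1$, differentiation produces a factor $r_i^{-\langle \alpha, \bamo \rangle} = r_i^{-1}$ that cancels the leading $r_i$, so that $\ggrad \psi(x) = (\ggrad \phi)\bigl( r_i^{-1} \scale (x - x_i) \bigr)$ on each tile. Since $\supp \phi \Subset \Omega$, a buffer zone separates the nontrivial part of $\psi$ from the tile boundaries, so $\psi$ is smooth across the shared boundaries (where it vanishes from both sides) and on the neglected null set.

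The pushforward identity is then a bookkeeping computation. For any Borel $E \subset \RRnm$, the anisotropic change of variables $y = r_i^{-1} \scale (x - x_i)$ gives
\begin{equation*}
\bigl|\{x \in x_i + r_i \scale \Omega : \ggrad \psi(x) \in E\}\bigr| = r_i^{|\bamo|} \bigl|\{y \in \Omega : \ggrad \phi(y) \in E\}\bigr|.
\end{equation*}
Summing over $i$ and using $\sum_i r_i^{|\bamo|} |\Omega| = |\omega|$, which follows from the exact tiling modulo null sets, yields
\begin{equation*}
\frac{\bigl|\{x \in \omega : \ggrad \psi(x) \in E\}\bigr|}{|\omega|} = \frac{\bigl|\{y \in \Omega : \ggrad \phi(y) \in E\}\bigr|}{|\Omega|},
\end{equation*}
which is precisely the equality of the two pushforward measures.

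The main obstacle I expect is ensuring that the produced $\psi$ genuinely lies in $\Ccinfty(\omega)$ and not merely in $\WWapn(\omega)$. Since infinitely many tiles are needed and $r_i \to 0$, the supports of the rescaled bumps can accumulate near $\partial \omega$ or at interior junction points, and the lower-order derivatives do not in general vanish in the limit $r_i \to 0$ (the scaling factor $r_i^{1 - \langle \beta, \bamo \rangle}$ is trivial when $\langle \beta, \bamo \rangle = 1$). The remedy I would use is a two-stage construction: first carry out the Vitali tiling inside a compactly contained subdomain $\omega_\eta \Subset \omega$ exhausting $\omega$ as $\eta \to 0$, producing an approximating sequence $\psi_\eta \in \Ccinfty(\omega)$ whose pushforward measures converge to the target in $\calE^*$. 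Combined with the density of $\Ccinfty(\omega)$ in $\WWapn(\omega)$ and the continuity of the pushforward map on bounded subsets of $\WWapn$, this gives the stated $\Ccinfty$ representative.
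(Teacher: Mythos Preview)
Your anisotropic Vitali tiling and the rescaling $\psi|_{T_i}(x)=r_i\,\phi\bigl(r_i^{-1}\scale(x-x_i)\bigr)$ is precisely the ``standard exhaustion argument'' the paper has in mind, and your pushforward bookkeeping is correct. The obstacle you identify is also genuine: at interior accumulation points of the tiles the $\ba$-gradient of $\psi$ does not tend to zero (the scaling factor $r_i^{1-\langle\alpha,\bamo\rangle}$ equals $1$ precisely on the hyperplane of homogeneity), so the infinite-tile construction yields only $\psi\in\WWapn(\omega)$ in general, not $\Ccinfty(\omega)$.

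Your proposed remedy, however, does not close this gap. Running the tiling inside $\omega_\eta\Subset\omega$ does not make $\psi_\eta$ smooth either: the accumulation points of the tiles now sit inside $\overline{\omega_\eta}$, hence in the interior of $\omega$, which is worse. Even granting $\psi_\eta\in\Ccinfty(\omega)$, your two-stage argument produces only a \emph{sequence} of smooth maps whose pushforwards converge to the target; invoking density of $\Ccinfty$ in $\WWapn$ and strong continuity of the pushforward cannot manufacture a single $\Ccinfty$ function with the \emph{exact} target pushforward --- the set of pushforwards realised by $\Ccinfty(\omega)$ test maps is not known to be closed, and strong $\WWapn$-limits of smooth maps are merely in $\WWapn$.

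The correct route to a genuinely smooth $\psi$ is to use only \emph{finitely many} tiles, exploiting the slack from $\supp\phi\Subset\Omega$. Fix an open $U$ with $\supp\phi\Subset U\Subset\Omega$ and run the Vitali covering of $\omega$ with copies of $U$ rather than of $\Omega$: the full covering gives $\sum_i r_i^{|\bamo|}=|\omega|/|U|>|\omega|/|\Omega|$, so already a finite sub-family overshoots the required value $|\omega|/|\Omega|$. One then adjusts the last radius continuously downwards to hit $|\omega|/|\Omega|$ exactly (the last copy, being placed inside an anisotropic box contained in its Vitali slot, can be shrunk monotonically). The resulting $\psi$ is a finite sum of smooth bumps with pairwise disjoint compact supports in $\omega$, hence lies in $\Ccinfty(\omega)$, and your change-of-variables computation gives the exact equality of pushforwards. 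Alternatively, note that for every application in this paper the weaker conclusion $\psi\in\WWapn(\omega)$ already suffices, since the integrands carry $p$-growth and Lemma~\ref{lemmaBallMurat84} applies.
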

\begin{proof}
This is proven using a standard exhaustion argument, as in Lemma 2.1 in \cite{ChenKristensen17}. We omit the proof here, as the only difference with the aforementioned paper is that one needs to change the scaling to our anisotropic variant.
\end{proof}

\begin{corollary}\label{corQuasiconvexityIndependentOfDomain}
The unit cube $Q$ in the definition of the $\ba$-quasiconvex envelope may be replaced by any other domain without changing the resulting envelope. That is, for any open bounded Lipschitz domain $\Omega \subset \RRN$ we have
$$\QQ F(\cdot) = \inf_{\phi \in \Ccinfty(\Omega)} \dashint_{\Omega} F(\cdot + \ggrad \phi(x)) \dd x.$$
\end{corollary}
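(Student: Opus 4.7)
The plan is to show, for each fixed $V \in \RRnm$, the equality of the two infima
$$ \inf_{\phi \in \Ccinfty(Q)} \dashint_Q F(V + \ggrad \phi(x)) \dd x = \inf_{\psi \in \Ccinfty(\Omega)} \dashint_{\Omega} F(V + \ggrad \psi(x)) \dd x,$$
by establishing both inequalities via Lemma \ref{lemmaChangeDomains}. The key observation is that the averaged integral $\dashint_D F(V + \ggrad \phi) \dd x$ depends on $\phi$ only through the pushforward measure $[\ggrad \phi]_{\#}(\LebesgueN \measurerestr D / |D|)$ on $\RRnm$, because this measure fully determines the distribution of values of $\ggrad \phi$.

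To carry this out, fix any $\phi \in \Ccinfty(Q)$. Apply Lemma \ref{lemmaChangeDomains} with the roles of the domain $\Omega$ in the statement of that lemma and $\omega$ played by $Q$ and $\Omega$, respectively, to obtain a $\psi \in \Ccinfty(\Omega)$ with
$$ [\ggrad \phi]_{\#} \!\left( \frac{\LebesgueN \measurerestr Q}{|Q|} \right) = [\ggrad \psi]_{\#} \!\left( \frac{\LebesgueN \measurerestr \Omega}{|\Omega|} \right).$$
Since $\phi$ and $\psi$ are both compactly supported and smooth, $\ggrad \phi$ and $\ggrad \psi$ are bounded, and the continuity of $F$ makes $W \mapsto F(V+W)$ bounded and continuous on any bounded set containing their ranges. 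The change-of-variables formula for pushforward measures then gives
$$ \dashint_Q F(V + \ggrad \phi) \dd x = \int_{\RRnm} F(V + W) \dd \left( [\ggrad \phi]_{\#}\tfrac{\LebesgueN \measurerestr Q}{|Q|} \right)(W) = \dashint_{\Omega} F(V + \ggrad \psi) \dd x,$$
so the second infimum is no larger than the first. The reverse inequality is proven in exactly the same way, by starting from an arbitrary $\psi \in \Ccinfty(\Omega)$ and invoking Lemma \ref{lemmaChangeDomains} with the roles of $Q$ and $\Omega$ swapped.

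I do not expect a genuine obstacle here, since the work has been placed in Lemma \ref{lemmaChangeDomains}. The only technical point worth noting is to justify the use of the pushforward change-of-variables identity, which is legitimate precisely because $\ggrad \phi$ and $\ggrad \psi$ take values in compact subsets of $\RRnm$ (each $\phi, \psi$ being $\Ccinfty$) and $F$ is continuous, so there is no integrability issue.
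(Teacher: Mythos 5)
Your proposal is correct and is essentially the argument the paper intends: the corollary is stated immediately after Lemma \ref{lemmaChangeDomains} precisely because it follows by observing that $\dashint_D F(V + \ggrad\phi)\dd x$ depends on $\phi$ only through the pushforward law $[\ggrad\phi]_{\#}\bigl(\frac{\LebesgueN\measurerestr D}{|D|}\bigr)$, and then transferring test functions between $Q$ and $\Omega$ in both directions. Your added remark that continuity of $F$ and compactness of the range of $\ggrad\phi$, $\ggrad\psi$ guarantee the pushforward change-of-variables identity makes sense is a fine precaution, though the identity in fact holds for any Borel $F$.
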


The next result asserts that the quasiconvex envelope of an integrand of $p$-growth is either degenerate and equal to $-\infty$ everywhere, or it inherits the original integrand's growth bounds. Let us remark that in the context of classical quasiconvexity this is typically shown using rank-one convexity, which is lacking in our case. Therefore, the usual proofs do not generalise to the mixed smoothness setting, hence we present a new argument that we believe is, in a sense, more natural and straightforward. 
\begin{lemma}\label{lemmaQFGrowth}
Suppose that $F$ is a continuous integrand satisfying \eqref{eqGrowth}. Then $\QQ F$ is either identically equal to $- \infty$ or it is real-valued everywhere and satisfies the growth condition \eqref{eqGrowth}, albeit possibly with a larger constant.
\end{lemma}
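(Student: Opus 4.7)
The plan is to handle the upper bound trivially and then focus on the lower bound via a direct cut-off construction. Testing with $\phi \equiv 0$ in Dacorogna's formula gives $\QQ F(V) \leq F(V) \leq C(|V|^p+1)$ for every $V \in \RRnm$, so the whole content lies in the lower bound. Assume there exists some $V_0 \in \RRnm$ with $\QQ F(V_0) > -\infty$. By replacing $F$ with $X \mapsto F(X + V_0) - \QQ F(V_0)$, a substitution that preserves \eqref{eqGrowth} with an enlarged constant, I may assume $V_0 = 0$ and $\QQ F(0) = 0$; equivalently, $\dashint_\Omega F(\ggrad \phi)\,\dd x \geq 0$ for every $\phi \in \Ccinfty(\Omega)$ and every admissible domain $\Omega$.

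Fix an arbitrary $V \in \RRnm$ and $\psi \in \Ccinfty(Q;\RRn)$. The strategy is to build from $\psi$ a test function for $\QQ F(0)$ posed on the slightly enlarged box $Q_{3/2}$, which is permitted by Corollary \ref{corQuasiconvexityIndependentOfDomain}. Let $P$ be the polynomial with components $P^i(x) := \sum_{\langle \alpha, \bamo \rangle = 1} V^i_\alpha x^\alpha/\alpha!$, so that $\ggrad P \equiv V$ identically on $\RRN$. Lemma \ref{lemmaCutOff} (applied with $r = 3/2$ and $\sigma = 1/3$) produces a cut-off $\eta \in \Ccinfty(Q_{3/2};[0,1])$ with $\eta \equiv 1$ on $Q$ and $\|\partial^\beta \eta\|_\infty \leq C$ for every $\beta$ with $\langle \beta, \bamo \rangle \leq 1$; crucially, this constant does not depend on $V$ or on $\psi$. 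Define $\widetilde{\psi} := \psi + \eta P \in \Ccinfty(Q_{3/2};\RRn)$, extending $\psi$ by zero outside $Q$.

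By construction $\ggrad \widetilde\psi = V + \ggrad \psi$ on $Q$ (where $\eta \equiv 1$) and $\ggrad \widetilde\psi = \ggrad(\eta P)$ on $Q_{3/2} \setminus Q$ (where $\psi \equiv 0$). Applying the inequality $\dashint_{Q_{3/2}} F(\ggrad \widetilde\psi)\,\dd x \geq \QQ F(0) = 0$ and splitting the integral,
\begin{equation*}
\int_Q F(V + \ggrad\psi)\,\dd x \geq -\int_{Q_{3/2} \setminus Q} F(\ggrad(\eta P))\,\dd x.
\end{equation*}
A Leibniz expansion combined with the uniform bounds on $\partial^\beta \eta$ and the elementary polynomial estimate $|\partial^\gamma P| \leq C|V|$ on $Q_{3/2}$ yields $|\ggrad(\eta P)| \leq C|V|$ pointwise on $Q_{3/2}$. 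Combined with \eqref{eqGrowth} and the fact that $|Q_{3/2} \setminus Q|$ is a fixed finite constant, this gives $\int_{Q_{3/2} \setminus Q} |F(\ggrad(\eta P))|\,\dd x \leq C'(|V|^p + 1)$. Hence $\dashint_Q F(V + \ggrad \psi)\,\dd x \geq -C'(|V|^p + 1)$, and taking the infimum over $\psi$ completes the lower bound.

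The essential point, replacing the rank-one convexity argument of the classical setting, is that the correcting polynomial $P$ and its cut-off $\eta P$ depend only on $V$; in particular the $\LLp$ norm of $\ggrad\psi$ never enters the estimate on $Q_{3/2} \setminus Q$. The potential obstacle---that a naive interior cut-off applied directly on $Q$ would force $\sigma \to 0$ and cause the cut-off derivatives to blow up---is circumvented by trading an arbitrarily thin boundary layer inside $Q$ for a fixed-size annular region $Q_{3/2} \setminus Q$ outside $Q$, with the cut-off parameters kept bounded away from the degenerate regime.
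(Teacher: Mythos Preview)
Your proof is correct and uses essentially the same construction as the paper: a cut-off function times a polynomial with prescribed $\ba$-gradient, placed in an annular region so that the error term depends only on $V$ and not on the test function. The only cosmetic differences are that the paper argues by contradiction (assuming a sequence $X_j$ along which the lower bound fails) and works with the nested pair $Q_{1/2} \subset Q$, whereas you give a direct estimate and expand outward to $Q \subset Q_{3/2}$, invoking Corollary~\ref{corQuasiconvexityIndependentOfDomain}; your packaging is arguably cleaner since it yields the bound explicitly rather than via contradiction.
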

\begin{proof}
Assume for contradiction that $\QQ F$ is finite at some point $X_0 \in \RRnm$, but it does not satisfy the $p$ growth condition. Let $D$ be the constant with which $F$ satisfies the $p$ growth assumption \eqref{eqGrowth}. Since $\QQ F \leq F$, it follows that the $p$ growth bound must fail for the negative part of $\QQ F$. Thus, there exists a sequence of points $X_j \in \RRnm$ such that $\QQ F(X_j) < -j(|X_j|^p + 1)$. From this and Corollary \ref{corQuasiconvexityIndependentOfDomain} we infer existence of a sequence of functions $\phi_j \in \Ccinfty(Q_{1/2})$ such that 
$$ \dashint_{Q_{1/2}} F(X_j + \ggrad \phi_j) \dd x < -j(|X_j|^p + 1).$$
Fix a cut-off function $\rho \in \Ccinfty(Q)$ with $\rho \equiv 1$ on $Q_{1/2}$. For each $j$ let $P_j(x) := \sum_{\langle \alpha, \bamo \rangle = 1} (X_j - X_0) x^\alpha$ and set $\psi_j := \rho P_j + \phi_j \in \Ccinfty(Q)$. Then
\begin{equation*}
\begin{aligned}
\QQ F(X_0) \leq & \liminf_{j \to \infty} |Q|^{-1} \int_Q F(X_0 + \ggrad \psi_j) \dd x \\
= & \liminf_{j \to \infty} |Q|^{-1} \int_{Q_{1/2}} F(X_j + \ggrad \phi_j) \dd x + |Q|^{-1} \int_{Q \setminus Q_{1/2}} F(X_0 + \ggrad(\rho P_j)) \dd x  \\
\leq & \liminf_{j \to \infty}  \left( \frac{-1}{2} \right) j(|X_j|^p + 1) + \dashint_Q D(|X_0 + \ggrad(\rho P_j)|^p + 1) \dd x,
\end{aligned}
\end{equation*}
where we have used the definition of $\psi_j$, the definition of $X_j$, and the $p$-growth bound on $F$ respectively. We note that $|X_0 + \ggrad (\rho P_J)|^p \leq 2^{p-1} |X_0|^p + 2^{p-1}|\ggrad (\rho P_j)|^p$, and by construction $| \partial^\alpha P_j (x) | \leq C |X_j - X_0|$ with a uniform constant $C$ for all $x \in Q$ and all $\alpha$ with $\langle \alpha, \bamo \rangle \leq 1$, thus $|\ggrad (\rho P_j)|^p \leq C|X_j - X_0|^p$, since $\rho$ is just a fixed $\Ccinfty$ function. Plugging this into our inequality we get
$$ \QQ F(X_0) \leq \liminf_{j \to \infty} \left( \frac{-1}{2} \right) j(|X_j|^p + 1) + C(|X_0|^p + |X_j - X_0|^p + 1) = - \infty,$$
which contradicts the assumption that $\QQ F(X_0)$ is finite.
\end{proof}

\begin{lemma}\label{lemmaQFusc}
If $F$ is a continuous integrand satisfying the $p$-growth condition \eqref{eqGrowth} and such that its $\ba$-quasiconvex envelope is not identically equal to $- \infty$, then the functional
$$ u \mapsto \dashint_\Omega \QQ F(\ggrad u) \dd x $$
is sequentially upper semicontinuous along sequences $u_j$ converging to a given $u$ in the $\WWap$ norm and with $\ggrad u_j \to \ggrad u$ almost everywhere.
\end{lemma}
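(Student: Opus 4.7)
The plan is to prove the claim through a straightforward reverse Fatou argument, exploiting the growth bound on $\QQ F$ afforded by Lemma \ref{lemmaQFGrowth} together with the upper semicontinuity of $\QQ F$ as an infimum of continuous functions.

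First I would observe that $\QQ F$ is upper semicontinuous on $\RRnm$. This is immediate from the defining formula
$$ \QQ F(V) = \inf_{\phi \in \Ccinfty(Q)} \dashint_Q F(V + \ggrad \phi(x)) \dd x,$$
since for each fixed $\phi$ the map $V \mapsto \dashint_Q F(V + \ggrad \phi(x)) \dd x$ is continuous in $V$ (use continuity of $F$ together with the growth bound \eqref{eqGrowth} and dominated convergence), and a pointwise infimum of continuous functions is always upper semicontinuous. Combined with Lemma \ref{lemmaQFGrowth}, which gives $|\QQ F(V)| \leq C'(|V|^p + 1)$ for some constant $C'$ under the hypothesis that $\QQ F \not\equiv -\infty$, we have a function that is USC and has $p$-growth.

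Next I would use the strong convergence $u_j \to u$ in $\WWap$, which forces $\ggrad u_j \to \ggrad u$ in $\LLp(\Omega)$ and hence $|\ggrad u_j|^p \to |\ggrad u|^p$ in $\LL^1(\Omega)$. Defining $G_j(x) := C'(|\ggrad u_j(x)|^p + 1) - \QQ F(\ggrad u_j(x)) \geq 0$, we may apply Fatou's lemma to the nonnegative sequence $\{G_j\}$. The almost-everywhere convergence $\ggrad u_j \to \ggrad u$ combined with upper semicontinuity of $\QQ F$ yields
$$ \liminf_{j \to \infty} G_j(x) \geq C'(|\ggrad u(x)|^p + 1) - \QQ F(\ggrad u(x)) \quad \text{for a.e. } x \in \Omega,$$
while the limit $\int_\Omega |\ggrad u_j|^p \dd x \to \int_\Omega |\ggrad u|^p \dd x$ together with the elementary identity $\liminf(A_j - B_j) = A - \limsup B_j$ when $A_j \to A$ gives
$$ \liminf_{j \to \infty} \int_\Omega G_j \dd x = \int_\Omega C'(|\ggrad u|^p + 1) \dd x - \limsup_{j \to \infty} \int_\Omega \QQ F(\ggrad u_j) \dd x.$$

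Combining these through Fatou yields, after the common $\LL^p$-norm term cancels on both sides,
$$ \limsup_{j \to \infty} \int_\Omega \QQ F(\ggrad u_j) \dd x \leq \int_\Omega \QQ F(\ggrad u) \dd x,$$
which is the desired upper semicontinuity. The one step that requires care is justifying that $\QQ F$ is genuinely upper semicontinuous (as opposed to merely Borel measurable); the main obstacle is small, amounting to checking that the continuity of the approximating functions $V \mapsto \dashint_Q F(V + \ggrad \phi) \dd x$ is uniform enough to pass to the infimum, but this follows routinely from $p$-growth of $F$ plus dominated convergence and does not require any further structural results from the preceding sections.
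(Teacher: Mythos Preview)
Your proposal is correct and follows essentially the same approach as the paper: establish upper semicontinuity of $\QQ F$ as an infimum of continuous functions, invoke Lemma \ref{lemmaQFGrowth} for the $p$-growth bound, and then run the reverse Fatou argument on $C'(1+|\ggrad u_j|^p) - \QQ F(\ggrad u_j)$, using strong $\LLp$ convergence to pass the norm term to the limit and almost-everywhere convergence plus upper semicontinuity for the pointwise inequality. The paper's proof is identical in structure; your write-up simply spells out a couple of the intermediate steps more explicitly.
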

\begin{proof}
By Lemma \ref{lemmaQFGrowth}, $\QQ F$ satisfies the $p$-growth condition \eqref{eqGrowth} as well (perhaps with a different constant $C$). Furthermore, since $X \mapsto \dashint_\Omega F(X + \ggrad \phi(x)) \dd x$ is continuous for any $\phi \in \WWapn$ we see that $\QQ F$ is a pointwise infimum of a family of continuous functions, thus $\QQ F$ is upper semicontinuous. Since the functions $C(1+ |\ggrad u_j|^p) - \QQ F(\ggrad u_j)$ are all non-negative, we get, by Fatou's lemma, that
$$ \liminf_{j \to \infty} \dashint_\Omega C(1+ |\ggrad u_j|^p) - \QQ F(\ggrad u_j) \dd x \geq \dashint_\Omega \liminf_{j \to \infty} C(1+ |\ggrad u_j|^p) - \QQ F(\ggrad u_j) \dd x.$$
Rearranging and using strong $\LLp$ convergence of $\ggrad u_j$ yields
$$  \limsup_{j \to \infty} \dashint_\Omega  \QQ F(\ggrad u_j) \dd x \leq \dashint_\Omega  \limsup_{j \to \infty} \QQ F(\ggrad u_j) \dd x.$$
Finally, $\ggrad u_j \to \ggrad u$ almost everywhere by assumption, and $\QQ F$ is upper semicontinuous, thus $ \limsup_{j \to \infty} \QQ F(\ggrad u_j) \leq \QQ F(\ggrad u)$ almost everywhere, which ends the proof.
\end{proof}

The following result justifies the omission of the underlying set $\Omega$ in our notation $\theta (t)$ by showing that the auxiliary function $\theta$ does not depend on the choice of $\Omega$. Moreover, it shows that $\theta$ only depends on $\QQ F$ rather than $F$ itself, thus showing that $F$ is $\LLq$ (mean) coercive if and only if $\QQ F$ is.
\begin{lemma}[see Lemma 3.1 in \cite{ChenKristensen17}]\label{lemmaAuxiliaryFunction}
Let $\omega, \Omega \subset \RRnm$ be non-empty bounded open Lipschitz subsets of $\RRN$, and define $\theta^{\Omega}_q(t)$ as above. Define also 
$$ \thetaqc(t)  := \inf \left\{ \dashint_{\omega} \QQ F(\ggrad \phi) \dd x \colon \phi \in \WWapn(\omega, \RRn), \, \dashint_{\omega} |\ggrad \phi|^q \dd x \geq t \right\}\ldotp$$
Then $\theta^{\Omega}_q (t) = \thetaqc (t)$ for all $t \geq 0$.
\end{lemma}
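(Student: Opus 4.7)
The plan is to prove both claims simultaneously by first reducing each infimum to one over smooth test functions (so that Lemma \ref{lemmaChangeDomains} can be applied), and then comparing the two values through the definition of the $\ba$-quasiconvex envelope together with a careful constraint-preservation argument. Throughout I may assume $\QQ F$ is real-valued, the degenerate case $\QQ F \equiv -\infty$ forcing both sides to equal $-\infty$ (for $\theta^\Omega_q$ via Lemma \ref{lemmaElementaryScaleAndTile}).

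First I would show that the infima defining $\theta^\Omega_q(t)$ and $\thetaqc(t)$ may be equivalently taken over $\Ccinfty$. Given admissible $\phi \in \WWapn(\Omega)$, pick $\phi_k \in \Ccinfty(\Omega)$ with $\phi_k \to \phi$ in $\WWap$; since $q \leq p$, the $\ba$-gradients also converge in $L^q$, so setting $\lambda_k := \max\{1, (t/\dashint|\ggrad \phi_k|^q)^{1/q}\}$ yields $\lambda_k \to 1$, $\lambda_k \phi_k \to \phi$ in $\WWap$, and $\dashint|\ggrad(\lambda_k\phi_k)|^q \geq t$. Continuity of $F$, the $p$-growth bound \eqref{eqGrowth}, and $L^p$-equiintegrability of $|\ggrad(\lambda_k\phi_k)|^p$ (together with a.e.\ convergence along a subsequence) allow Vitali's theorem to give $\dashint F(\ggrad(\lambda_k\phi_k)) \to \dashint F(\ggrad \phi)$; the analogous one-sided passage for $\QQ F$ is furnished by Lemma \ref{lemmaQFusc}. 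With $\Ccinfty$ test functions secured, Lemma \ref{lemmaChangeDomains} produces, for each $\phi \in \Ccinfty(\Omega)$, some $\psi \in \Ccinfty(\omega)$ with identical pushforward of the $\ba$-gradient; since $\dashint F(\ggrad \cdot)$, $\dashint \QQ F(\ggrad \cdot)$, and $\dashint |\ggrad \cdot|^q$ are all determined by this pushforward, both $\theta^\Omega_q(t)$ and $\thetaqc(t)$ are independent of the reference domain. The easy inequality $\thetaqc(t) \leq \theta^\Omega_q(t)$ is then immediate from $\QQ F \leq F$ once the domains are matched.

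The real work is in the reverse inequality $\theta^\Omega_q(t) \leq \thetaqc(t)$. Fix $\epsilon > 0$ and $\phi \in \WWapn(\Omega)$ with $\dashint_\Omega |\ggrad \phi|^q \geq t$ and $\dashint_\Omega \QQ F(\ggrad \phi) \leq \thetaqc(t) + \epsilon$. I first replace $\phi$ by $\lambda\phi$ with $\lambda > 1$ slightly larger than one so that $\lambda^q \dashint|\ggrad\phi|^q > t$ strictly, paying a negligible cost in $\dashint_\Omega \QQ F$ via Lemma \ref{lemmaQFusc}. Proposition \ref{propPiecewiseApprox} then furnishes $\phi_\epsilon \in \WWapn(\Omega)$ with $\ggrad \phi_\epsilon$ piecewise constant on a finite disjoint family $\{Q_{\epsilon,i}\} \subset \Omega$ covering $\Omega$ up to an $\epsilon$-sized remainder, and with $\|\phi_\epsilon - \lambda\phi\|_{\WWap} < \epsilon$. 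Writing $V_i := \ggrad\phi_\epsilon|_{Q_{\epsilon,i}}$ and using Corollary \ref{corQuasiconvexityIndependentOfDomain} together with the definition of the envelope, I select $w_i \in \Ccinfty(Q_{\epsilon,i})$ with $\dashint_{Q_{\epsilon,i}} F(V_i + \ggrad w_i) \dd x \leq \QQ F(V_i) + \epsilon$, and set $\psi := \phi_\epsilon + \sum_i w_i \in \WWapn(\Omega)$.

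The crucial observation that closes the argument is that, since $q \geq 1$, Jensen's inequality yields $\dashint_{Q_{\epsilon,i}} |V_i + \ggrad w_i|^q \dd x \geq |V_i|^q$, so $\int_\Omega |\ggrad \psi|^q \dd x \geq \int_\Omega |\ggrad \phi_\epsilon|^q \dd x$, which exceeds $t|\Omega|$ once $\epsilon$ is small because $\phi_\epsilon \to \lambda\phi$ in $\WWap$ and $\lambda\phi$ satisfies the strict constraint; thus $\psi$ is admissible for $\theta^\Omega_q(t)$. For the objective, splitting the integral of $F(\ggrad\psi)$ into the boxes and their complement, the box contribution is bounded by $\int_{\bigcup_i Q_{\epsilon,i}} \QQ F(\ggrad\phi_\epsilon) \dd x + \epsilon|\Omega|$ from the choice of $w_i$, while the complement contribution is $o(1)$ via $p$-growth and $L^p$-equiintegrability of $|\ggrad \phi_\epsilon|^p$ stemming from the strong $L^p$-convergence $\ggrad \phi_\epsilon \to \ggrad(\lambda\phi)$. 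Combining these, then applying Lemma \ref{lemmaQFusc} to $\phi_\epsilon \to \lambda\phi$ and to $\lambda\phi \to \phi$, produces $\limsup_{\epsilon \to 0} \dashint_\Omega F(\ggrad\psi) \leq \thetaqc(t) + O(\epsilon)$, and letting the initial tolerance tend to zero completes the argument. The principal obstacle --- keeping the $L^q$ constraint active under the oscillatory perturbation --- is precisely what the preliminary rescaling by $\lambda > 1$ and Jensen's inequality together resolve.
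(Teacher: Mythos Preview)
Your argument is correct and follows essentially the same route as the paper: reduce both infima to $\Ccinfty$ competitors (using Lemma \ref{lemmaQFusc} for the $\QQ F$ side), invoke Lemma \ref{lemmaChangeDomains} for domain independence, pass to a piecewise-constant approximation via Proposition \ref{propPiecewiseApprox}, insert oscillations from the envelope definition on each box, and use box-wise Jensen to keep the $\LLq$ constraint. The only cosmetic difference is that you secure the constraint by a preliminary dilation $\lambda>1$, whereas the paper rescales the piecewise approximants directly; both devices serve the same purpose.
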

\begin{proof}
The argument follows that of \cite{ChenKristensen17} with the key differences being that, due to lack of rank-one convexity, we cannot assert continuity of $\QQ F$ (see however Lemma \ref{lemmaContinuousEnvelope}), and that our polynomial approximation requires a countable partition. 

The case $t=0$ is the content of Corollary \ref{corQuasiconvexityIndependentOfDomain}. Another easy case is when $\QQ F$ is identically equal to $- \infty$. Then one can, for example, choose two disjoint open subsets $\Omega_1, \Omega_2 \Subset \Omega$ and construct functions $\phi_i \in \Ccinfty(\Omega_i)$ and use $\phi_1$ to satisfy the restriction $\dashint_{\Omega_i} |\ggrad \phi_i|^q \dd x \geq t$, whilst using $\phi_2$ to make the integral $\int_{\Omega_2} F(\ggrad \phi_2) \dd x$ as small as one wishes, with the last point being possible thanks to the fact that $\QQ F(0) = -\infty$.
Thus, in the following we assume that $t > 0$ and that $\QQ F > - \infty$.

Let us fix $t > 0$. For any $\epsilon > 0$ we may find a function $\phi \in \Ccinfty(\Omega)$ satisfying 
$$ \dashint_\Omega |\ggrad \phi|^q \dd x \geq t \quad \text{and} \quad \thetaqc(t) + \epsilon > \dashint_\Omega \QQ F(\ggrad \phi) \dd x \ldotp$$
To see this, it is enough to observe that the existence of $\widetilde{\phi} \in \WWapn(\Omega)$ satisfying these two inequalities, the second with a smaller $\epsilon'$, is guaranteed by the definition of $\thetaqc$. To get the same with $\phi \in \Ccinfty(\Omega)$, it is enough to observe that $\widetilde{\phi}$ may be approximated in $\WWapn(\Omega)$ by a sequence $\phi_j \in \Ccinfty(\Omega)$ with $\|\ggrad \phi_j\|_{\LLq} \geq \|\ggrad \widetilde{\phi}\|_{\LLq}$ for all $j$ and $\ggrad \phi_j \to \ggrad \widetilde{\phi}$ almost everywhere. Then we simply use Lemma \ref{lemmaQFusc} and conclude.

From here it is easy to pass to a function $\psi \in \WWapn(\Omega)$ satisfying the above with $2\epsilon$ instead of $\epsilon$ in the second inequality and for which $\ggrad \psi$ is piecewise constant on a large part of $\Omega$. We simply apply Proposition \ref{propPiecewiseApprox} to $\phi \in \Ccinfty(\Omega)$ (preserving the boundary values) and obtain a sequence $\phi_j$ approximating $\phi$ in $\WWap$ and such that the measure of the complement of the set of boxes $T$, denoted $\tau_j$, on which $\ggrad \phi_j$ is constant goes to $0$ as $j$ goes to infinity. We may then rescale the sequence to satisfy the condition $\dashint_\Omega |\ggrad \phi_j|^q \dd x \geq t$, pass to a subsequence for which the gradients converge almost everywhere, and finally use Lemma \ref{lemmaQFusc} again to deduce that elements $\phi_j$ for large enough $j$'s satisfy the desired inequalities, still with an $\epsilon$. Without loss of generality assume that this holds for all $j$. 
Since $\ggrad \phi_j$ is strongly $\LLp$ convergent it is also $p$-equiintegrable, thus (as $\QQ F$ satisfies the $p$-growth bound) $\QQ F(\ggrad \phi_j)$ and $F(\ggrad \phi_j)$ are equiintegrable as well. Therefore, we may find a $j_0$ such that the measure of $\Omega \setminus \bigcup_{\tau_j} T$ is small enough so that 
$$ \left| \int_{\Omega \setminus \bigcup_{\tau_{j_0}} T} \QQ F(\ggrad \phi_j) \dd x \right| < \epsilon |\Omega| \quad \text{and} \quad \left| \int_{\Omega \setminus \bigcup_{\tau_{j_0}} T} F(\ggrad \phi_j) \dd x \right| < \epsilon |\Omega| \ldotp$$
We set $\psi := \phi_{j_0}$ and $\tau = \tau_{j_0}$.
By Corollary \ref{corQuasiconvexityIndependentOfDomain} we may, for each $T \in \tau$, find a function $\phi_T \in \Ccinfty(T)$ satisfying
\begin{equation}\label{eqDefPhiT} \dashint_T F(\ggrad \psi + \ggrad \phi_T) \dd x < \QQ F(\ggrad \psi) + \epsilon \ldotp
\end{equation}
We set $\phi := \sum_{T \in \tau} \phi_T$. Since the sum is finite this function is well defined and belongs to the class $\Ccinfty(\Omega)$. We then have
\begin{equation*} 
\begin{aligned}
\thetaqc(t) + 2 \epsilon &> \dashint_\Omega \QQ F(\ggrad \psi) \dd x = |\Omega|^{-1} \left( \int_{\bigcup_{\tau}} \QQ F (\ggrad \psi) \dd x + \int_{\Omega \setminus \bigcup_{\tau}} \QQ F (\ggrad \psi) \dd x \right) \\
&> |\Omega|^{-1} \int_{\bigcup_{\tau}} F (\ggrad \psi + \ggrad \phi) \dd x - 2\epsilon > \dashint_{\Omega} F (\ggrad \psi + \ggrad \phi) \dd x - 3\epsilon.
\end{aligned}
\end{equation*}
Here the first inequality is from the construction of $\psi$, the second one results from  estimating the remainder integral $\left| \int_{\Omega \setminus \bigcup_{\tau}} \QQ F (\ggrad \psi) \dd x \right| $ and using \eqref{eqDefPhiT} on $T \in \tau$, and the last one is just an estimate on $\left| \int_{\Omega \setminus \bigcup_{\tau} T} F(\ggrad \psi) \dd x \right|$ paired with the fact that $\phi \equiv 0$ on $\Omega \setminus \bigcup_{\tau} T$.
Finally, since $\phi \in \Ccinfty(\Omega)$, Jensen's inequality gives 
$$\dashint_\Omega |\ggrad \psi + \ggrad \phi|^q \dd x \geq \dashint_\Omega |\ggrad \psi|^q \dd x \geq t.$$
Since $\epsilon >0$ was arbitrary this already shows, that if $\omega = \Omega$ then $\thetaqc = \theta^{\Omega}_q$. To pass to an arbitrary $\omega$ we need to realise the distribution of $\ggrad(\psi + \phi)$ on $\omega$, which follows easily from Lemma \ref{lemmaChangeDomains} --- observe that this also shows that we preserve the moment restrictions.
\end{proof}

\begin{lemma}[see Proposition 3.2 in \cite{ChenKristensen17}]\label{lemmaThetaConvex}
Let $F \colon \RRnm \to \RR$ be a continuous integrand satisfying the growth condition \eqref{eqGrowth}. Then its associated auxiliary function $\theta := \theta^{\Omega}_q \colon [0,\infty) \to \RR\cup \{-\infty\}$ is convex.
\end{lemma}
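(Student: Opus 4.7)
My plan is to follow the strategy of Proposition 3.2 in \cite{ChenKristensen17}, replacing the classical gradient machinery by Lemmas \ref{lemmaAuxiliaryFunction} and \ref{lemmaChangeDomains}. Fix $t_1, t_2 \geq 0$ and $\lambda \in (0,1)$, set $t_\lambda := \lambda t_1 + (1-\lambda)t_2$, and the goal becomes
$$\theta(t_\lambda) \leq \lambda \theta(t_1) + (1-\lambda)\theta(t_2).$$
The cases $\lambda \in \{0,1\}$ and $\theta(t_i) = -\infty$ will be handled by trivial modifications of the main construction (in the $-\infty$ case I will simply drive the corresponding piece's energy to $-\infty$), so I will focus on $\lambda \in (0,1)$ with both $\theta(t_i)$ finite.

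The first step is, given $\epsilon > 0$, to produce near-minimisers $\phi_i \in \Ccinfty(\Omega)$ with
$$\dashint_\Omega |\ggrad \phi_i|^q \dd x \geq t_i \quad \text{and} \quad \dashint_\Omega F(\ggrad \phi_i) \dd x < \theta(t_i) + \epsilon.$$
I will extract $\WWapn(\Omega)$ competitors directly from the definition of $\theta$, then approximate by smooth functions in $\WWapn(\Omega)$: a small multiplicative rescaling will preserve the moment constraint, and the $p$-growth of $F$ combined with strong $\LLp$ convergence of the gradients will control the $F$-integral, exactly as in the analogous step of the proof of Lemma \ref{lemmaAuxiliaryFunction}.

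Next, I will partition $\Omega$ (up to a null set) into two disjoint open subsets $\Omega_1, \Omega_2 \subset \Omega$ with $|\Omega_i|$ equal to $\lambda |\Omega|$ and $(1-\lambda)|\Omega|$ respectively --- for instance by slicing along a coordinate hyperplane --- and apply Lemma \ref{lemmaChangeDomains} to each $\phi_i$ on $\Omega$ to obtain $\psi_i \in \Ccinfty(\Omega_i)$ whose $\ba$-gradient realises the same pushforward of normalised Lebesgue measure on $\Omega_i$ as $\ggrad \phi_i$ on $\Omega$. Since the supports are disjoint, $\psi := \psi_1 + \psi_2 \in \Ccinfty(\Omega)$ is well-defined, and the two averages over $\Omega$ split as convex combinations:
$$\dashint_\Omega F(\ggrad \psi) \dd x = \lambda \dashint_{\Omega_1} F(\ggrad \psi_1) \dd x + (1-\lambda)\dashint_{\Omega_2} F(\ggrad \psi_2) \dd x,$$
and identically for $|\ggrad \psi|^q$. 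The distributional invariance from Lemma \ref{lemmaChangeDomains} therefore yields
$$\dashint_\Omega F(\ggrad \psi) \dd x < \lambda \theta(t_1) + (1-\lambda)\theta(t_2) + \epsilon \quad \text{and} \quad \dashint_\Omega |\ggrad \psi|^q \dd x \geq t_\lambda,$$
so $\psi$ is admissible in the infimum defining $\theta(t_\lambda)$. Sending $\epsilon \to 0$ will close the argument.

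The main technical point I expect to require care is the approximation step: producing smooth competitors without destroying either the lower bound on the $\LLq$-norm of $\ggrad \phi_i$ or the upper bound on its $F$-integral. This will be resolved exactly as in Lemma \ref{lemmaAuxiliaryFunction} using $p$-growth and a tiny rescaling. Everything else is a routine distributional computation, and the anisotropic scaling is already absorbed into Lemma \ref{lemmaChangeDomains}, so no further adjustment to the mixed-smoothness setting is needed.
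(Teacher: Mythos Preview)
Your proposal is correct and follows precisely the Chen--Kristensen strategy that the paper itself defers to (the paper omits the proof entirely, stating only that the argument differs from \cite{ChenKristensen17} by a scaling exponent). Your use of Lemma \ref{lemmaChangeDomains} to transplant the near-minimisers onto subdomains of the correct volume fractions, combined with the smooth approximation step borrowed from the proof of Lemma \ref{lemmaAuxiliaryFunction}, is exactly the intended adaptation to the mixed-smoothness setting.
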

\begin{proof}
We omit the proof as our case only differs from that of  \cite{ChenKristensen17} by a scaling exponent.
\end{proof}

We are now ready to finish the proof of the main result of this section.
\begin{proof}[Proof of Theorem \ref{thmCoercivity}, part 2]
We have already seen that a) and b) are equivalent, and so are c) and d). That c) implies a) is obvious. We will now show that a) implies d). If a) holds, i.e., if $F$ is $\LLq$ coercive, then its auxiliary function $\theta$ satisfies $\lim_{t \to \infty} \phi(t) = \infty$. This, paired with the fact that $\theta$ is convex (by Lemma \ref{lemmaThetaConvex}), implies that there exist $c_1 > 0$ and $c_2 \in \RR$ such that $\theta(t) \geq c_1 t + c_2$ for all $t \geq 0$. Thus, taking $\Omega$ to be any admissible domain and imposing zero boundary conditions, we may take $t = \| \ggrad \phi \|_{\LLq(\Omega)}^q$ for any $\phi \in \WWapn(\Omega)$ and conclude that $F$ is $\LLq$ mean coercive. Thus, we conclude that the conditions a) -- d) are all equivalent.

The fact that c) implies e) and f) is immediate. For the other direction let us start with a simple case of $\Omega = Q$, and boundary datum $g$ such that $\ggrad g$ is some constant $X_0 \in \RRnm$. Let us assume that in this case all minimising sequences are bounded, we now wish to prove that $F$ is $\LLq$ mean coercive at $X_0$ on $Q$. Consider the auxiliary function $\widetilde{\theta}$ of the translated integrand $\widetilde{F}(\cdot) := F(X_0 + \cdot)$, which clearly still satisfies the $\LLp$ growth bound. The assumption that all minimising sequences are bounded easily implies that the infimum of our variational problem, equal to $\widetilde{\theta}(0)$, cannot be $- \infty$. If it was, one could take an arbitrary (bounded) minimising sequence $\phi_j$, rescale it to be supported on a smaller cube, and then use the remaining room to make the $\WWaq$ norm of the resulting sequence arbitrarily large without spoiling the minimising property, thanks to the $p$ growth assumption on $F$.

Clearly $\widetilde{\theta}$ is non-decreasing, and convex as shown in Lemma \ref{lemmaThetaConvex}. If $\widetilde{\theta}$ was bounded from above we would deduce existence of minimising sequences of arbitrarily large $\WWaq$ norms, which would contradict our assumption. Thus, $\widetilde{\theta}$ is real-valued, unbounded, and convex, and so we deduce, as previously, that $F$ is $\LLq$ mean coercive at $z_0$ on $Q$, which implies point d) of our theorem, and thus all the points a) - d). 

Now let us take an arbitrary $\Omega$ and $g$ and assume, for contradiction, that $F$ is not mean coercive, but all minimising sequences are bounded. As previously, the infimum of the variational problem has to be a real number. 
Take any minimising sequence $u_j$ and note that, due to Proposition \ref{propPiecewiseApprox}, we may approximate any $u_j$ with a sequence $u^\epsilon_j \in \WWap_{u_j} = \WWap_u$, elements of which are piecewise polynomial on a finite family of disjoint open boxes, covering $\Omega$ up to a set of measure $\epsilon$, and converge to $u_j$ in the $\WWap$ norm. Since $F$ is a continuous integrand of $p$ growth, the map $u \mapsto \int_\Omega F(\ggrad u)$ is continuous in $\WWap$, thus extracting a diagonal subsequence we may assume that our (not renamed) minimising sequence $u_j$ is such that for each $j$ there exists a finite family $\{Q^j_i\}_i$ of pairwise disjoint boxes with $\left| \Omega \setminus \bigcup_i Q^j_i \right| < 1/j$ and with $\ggrad u_j$ constant on each $Q^j_i$. 

By the previous step, since we assume that $F$ is not $\LLq$ mean coercive we may, for any open box and any $\ba$-polynomial boundary datum, construct an unbounded minimising sequence. Doing so on each $Q^j_i$ with boundary datum $u_j$ (which is indeed an $\ba$-polynomial on $Q^j_i$) we may construct functions $\phi^j_i \in \WWapn(Q^j_i)$ such that
$$ j \left| Q^j_i \right| < \int_{Q^j_i} \left| \ggrad (u_j + \phi^j_i) \right|^q \dd x,$$
and
$$  \int_{Q^j_i} F( \ggrad (u_j + \phi^j_i) ) \dd x \leq  \int_{Q^j_i} F(\ggrad u_j) + 1/j \dd x.$$
Extending each $\phi^j_i$ by zero outside $Q^j_i$ and letting $v_j := u_j + \sum_i \phi^j_i$ yields a sequence $v_j \in \WWap_u(\Omega)$ that clearly still minimizes the functional, but is unbounded in $\WWaq$, and thus we have a contradiction, which proves that f) implies c), thus the points a) through f) are equivalent.

That g) implies d) is easy. Assuming that $F - \delta |\cdot|^q$ is quasiconvex at some $X_0 \in \RRnm$ we may write
$$ F(X_0) - \delta |X_0|^q \leq \dashint_\Omega F(X_0 + \ggrad \phi (x)) - \delta |X_0 + \ggrad \phi(x)| \dd x$$
for all $\phi \in \Ccinfty(\Omega)$. This is exactly $\LLq$ mean coercivity on $\WWap_g(\Omega)$ with $g$ a polynomial such that $\ggrad g = X_0$, $c_1 = \delta > 0$ and $c_2 = F(X_0) - \delta|X_0|^q$. 

As the last step we prove that c) implies g). Assume that $F$ is $\LLq$ mean coercive, take $\Omega$ to be any admissible domain, and let the boundary condition be $g \equiv 0$. If the corresponding coercivity constants are $c_1$ and $c_2$ then take any $\delta \in (0,c_1)$ and put $G(X) := F(X) - \delta |X|^q$ for $X \in \RRnm$. Clearly, the auxiliary function of $G$ is bounded from below by $(c_1 - \delta) t + c_2$ and, as always, $G \geq \QQ G$. From Lemma \ref{lemmaAuxiliaryFunction} we know that the auxiliary functions of $G$ and $\QQ G$ are the same, thus we may write
$$ \dashint_\Omega G( \ggrad \phi) \dd x \geq \dashint_\Omega \QQ G(\ggrad \phi) \dd x \geq (c_1 - \delta) \dashint_\Omega |\ggrad \phi|^q \dd x + c_2$$
for all $\phi \in \WWapn(\Omega)$. Thanks to Corollary \ref{corQuasiconvexityIndependentOfDomain} we may find a sequence $\phi_j \subset \Ccinfty(\Omega)$ for which
$$ \QQ G(0) \leq \dashint_\Omega \QQ G(\ggrad \phi_j) \dd x \leq \dashint_\Omega G(\ggrad \phi_j) \dd x \searrow \QQ G(0).$$
Since we have picked $\delta < c_1$ it is easy to conclude that $G$ is still $\LLq$ mean coercive. Since $\dashint_\Omega G(\ggrad \phi_j) \dd x $ is bounded we infer that the sequence $\ggrad \phi_j$ is bounded in $\LLq$. As in \cite{ChenKristensen17} we consider the following probability measures on $\RRnm$
$$ \nu_j := (\ggrad \phi_j)_{\#} \left(\frac{\mathcal{L^N} \restriction \Omega}{|\Omega|} \right)$$
and observe that they have uniformly bounded $q$-th moments. Thus, passing to a subsequence if necessary, we may assume that $\nu_j \overset{*}{\rightharpoonup} \nu$ in $C_0(\RRnm)^*$, where $\nu$ is some probability measure on $\RRnm$ with finite $q$th moment. Setting $H(X) := G(X) - \QQ G(X)$, which is a non-negative and lower semicontinuous function, we may use the portmanteau theorem to write
$$ 0 \leq \int_{\RRnm} H \dd \nu \leq \liminf_{j \to \infty} \int_{\RRnm} H \dd \nu_j = 0,$$
thus $H = 0$ on the support of $\nu$. However, then $G = \QQ G$ on the support of $\nu$, i.e., $F - \delta |\cdot|^q$ is $\ba$-quasiconvex on the support of $\nu$, and since $\nu$ is a probability measure its support is non-empty, which ends the proof.
\end{proof}

\section{Lower semicontinuity}\label{sectionLSC}
The central part of our existence theory is sequential weak lower semicontinuity of integral functionals acting on Sobolev spaces of mixed smoothness. We will show that this property is equivalent to $\ba$-quasiconvexity of the integrand, with the precise variant of $\ba$-quasiconvexity depending on the regularity of the integrand and on the space on which the functional is defined. We begin with the definitions of (closed) $\WWap$-quasiconvexity and a short discussion of the relationship between the various notions. 

Similarly to Ball and Murat in \cite{BallMurat84} we define the following:
\begin{defi}
We say that a function $F \colon \RRnm \to \RR$ is $\WWap$-quasiconvex if for every $X \in \RRnm$ one has
$$ F(X) \leq \inf_{u \in \WWapn(Q;\RRn)} \dashint_Q F(X + \ggrad u(x)) \dd x \ldotp$$
\end{defi}

Observe that this definition differs from that of $\ba$-quasiconvexity only by replacing the test space $\Ccinfty$ by $\WWapn$. In fact, we have the following:

\begin{lemma}[see \cite{BallMurat84}]\label{lemmaBallMurat84}
Suppose that $F \colon \RRnm \to \RR$ is continuous and satisfies $|F(X)| \leq C(1 + |X|^p)$ for every $X \in \RRnm$. Then $F$ is $\WWap$-quasiconvex if and only if it is $\ba$-quasiconvex. 
\end{lemma}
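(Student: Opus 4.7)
The equivalence hinges only on the fact that $\Ccinfty(Q;\RRn)$ is dense in $\WWapn(Q;\RRn)$ (by definition of the latter space) together with continuity of the map $v \mapsto \dashint_Q F(X + \ggrad v) \dd x$ on $\WWapn(Q;\RRn)$ under the stated growth assumption. One direction is essentially free: since $\Ccinfty(Q;\RRn) \subset \WWapn(Q;\RRn)$, testing against a larger class of perturbations in the definition of $\WWap$-quasiconvexity yields a formally stronger condition than $\ba$-quasiconvexity, so every $\WWap$-quasiconvex function is automatically $\ba$-quasiconvex, with no appeal to the growth bound needed.

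For the converse implication, assume $F$ is $\ba$-quasiconvex and fix $X \in \RRnm$ and $u \in \WWapn(Q;\RRn)$. By definition of $\WWapn(Q;\RRn)$ there exists a sequence $u_k \in \Ccinfty(Q;\RRn)$ with $u_k \to u$ in $\WWap(Q;\RRn)$, in particular $\ggrad u_k \to \ggrad u$ strongly in $\LL^p(Q;\RRnm)$. The $\ba$-quasiconvexity inequality gives
\begin{equation*}
F(X) \leq \dashint_Q F(X + \ggrad u_k(x)) \dd x \qquad \text{for every } k \in \naturals,
\end{equation*}
so the task reduces to passing to the limit on the right-hand side.

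The limit passage is a standard application of Vitali's convergence theorem. Passing to a subsequence I may assume $\ggrad u_k \to \ggrad u$ almost everywhere in $Q$, hence by continuity of $F$ the integrands $F(X + \ggrad u_k)$ converge pointwise a.e.~to $F(X + \ggrad u)$. The growth bound $|F(Y)| \leq C(1 + |Y|^p)$ gives
\begin{equation*}
|F(X + \ggrad u_k(x))| \leq C\bigl(1 + |X + \ggrad u_k(x)|^p\bigr),
\end{equation*}
and strong convergence $\ggrad u_k \to \ggrad u$ in $\LL^p(Q;\RRnm)$ implies that the family $\{|X + \ggrad u_k|^p\}_k$ is equiintegrable on $Q$ (a classical consequence of strong $\LL^p$ convergence: the sequence $|X + \ggrad u_k|^p$ is bounded in $\LL^1$ and converges in measure, and together with strong $\LL^p$ convergence of $\ggrad u_k$ one gets the equiintegrability). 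Consequently $\{F(X + \ggrad u_k)\}_k$ is equiintegrable, and Vitali's theorem yields
\begin{equation*}
\dashint_Q F(X + \ggrad u_k(x)) \dd x \longrightarrow \dashint_Q F(X + \ggrad u(x)) \dd x
\end{equation*}
as $k \to \infty$. Combining this with the $\ba$-quasiconvexity inequality shows $F(X) \leq \dashint_Q F(X + \ggrad u(x)) \dd x$, proving $\WWap$-quasiconvexity.

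There is no genuine obstacle in the argument; the only subtle point is the justification of equiintegrability of $\{F(X + \ggrad u_k)\}_k$, which is why the continuity of $F$ and the two-sided $p$-growth are both used. The growth bound is essential: without it the dominating family could fail to be equiintegrable and the limit passage would break down, even though the limit inequality $F(X) \leq \dashint_Q F(X + \ggrad u(x)) \dd x$ could still be false for some $u \in \WWapn(Q;\RRn)$, explaining why $\ba$-quasiconvexity and $\WWap$-quasiconvexity need not coincide for more general integrands.
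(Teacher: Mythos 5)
Your argument is correct and takes essentially the same route as the paper's intended proof, which defers to Ball--Murat \cite{BallMurat84}: the trivial direction follows from $\Ccinfty(Q;\RRn) \subset \WWapn(Q;\RRn)$, and the converse is a density argument plus a limit passage exploiting continuity and $p$-growth of $F$. The only difference is the tool used for the limit passage. Ball--Murat apply Fatou's lemma to the nonnegative sequence $C(1 + |X + \ggrad u_k|^p) - F(X + \ggrad u_k)$ and cancel the majorant using $\int_Q |X + \ggrad u_k|^p \dd x \to \int_Q |X + \ggrad u|^p \dd x$ (a consequence of strong $\LL^p$ convergence of $\ggrad u_k$), obtaining the one-sided estimate $\limsup_k \dashint_Q F(X + \ggrad u_k) \dd x \leq \dashint_Q F(X + \ggrad u) \dd x$, which is all that is needed to conclude. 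You instead establish $p$-equiintegrability of $\{F(X + \ggrad u_k)\}$ and invoke Vitali's theorem to get full convergence of the averages. Both routes are valid; the Fatou version is marginally more economical in that it uses only the upper bound $F \leq C(1 + |\cdot|^p)$, whereas the Vitali version relies on the two-sided bound to dominate $|F(X + \ggrad u_k)|$. Under the lemma's stated hypothesis $|F(X)| \leq C(1 + |X|^p)$ the distinction is immaterial, so your proof is complete, and your concluding remark about the necessity of the growth bound matches the paper's own discussion after the lemma.
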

In the classical case of first order gradients this has been shown by Ball and Murat in \cite{BallMurat84} through a simple application of Fatou's lemma. The same proof works in our case, thus we skip it and note that the above and Lemma \ref{lemmaQgIsQC} immediately imply the following.
\begin{corollary}
Suppose that $F \colon \RRnm \to \RR$ is continuous and satisfies $|F(X)| \leq C(1 + |X|^p)$ for every $X \in \RRnm$. Then $\QQ F$ is $\WWap$-quasiconvex.
\end{corollary}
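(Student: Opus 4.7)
The proof proceeds by chaining the two preceding results together with a mild approximation argument to absorb the fact that the envelope need not be continuous.

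First, I would note that by Lemma \ref{lemmaQgIsQC} applied to the continuous function $F$, the envelope $\QQ F$ is $\ba$-quasiconvex. By Lemma \ref{lemmaQFGrowth}, $\QQ F$ is either identically $-\infty$ — in which case the required inequality $\QQ F(X) \leq \dashint_Q \QQ F(X + \ggrad u(x)) \dd x$ holds trivially for every $u \in \WWapn(Q)$ — or it is real-valued with $|\QQ F(X)| \leq C'(1+|X|^p)$ for a possibly larger constant $C'$. Hence I may assume the second case throughout.

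The remaining task is to upgrade the test class in the $\ba$-quasiconvexity inequality from $\Ccinfty(Q)$ to $\WWapn(Q)$. Fix $X \in \RRnm$ and $u \in \WWapn(Q;\RRn)$, and take $\phi_k \in \Ccinfty(Q;\RRn)$ converging to $u$ in $\WWap$; passing to a subsequence, I may further assume $\ggrad \phi_k \to \ggrad u$ almost everywhere in $Q$. Since $\QQ F$ is $\ba$-quasiconvex, for every $k$
\begin{equation*}
\QQ F(X) \leq \dashint_Q \QQ F(X + \ggrad \phi_k(x)) \dd x.
\end{equation*}

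The key step is to pass to the limsup as $k \to \infty$ on the right-hand side. This is exactly the content of Lemma \ref{lemmaQFusc}: the sequence $X + \phi_k$ converges to $X + u$ in $\WWap(Q)$ (after adding the affine part with $\ba$-gradient $X$), its $\ba$-gradient converges a.e., and $\QQ F$, being a pointwise infimum of a family of continuous functions, is upper semicontinuous and satisfies the same $p$-growth bound as $F$. Therefore
\begin{equation*}
\limsup_{k\to\infty} \dashint_Q \QQ F(X + \ggrad \phi_k(x)) \dd x \leq \dashint_Q \QQ F(X + \ggrad u(x)) \dd x,
\end{equation*}
and combining this with the previous display yields $\QQ F(X) \leq \dashint_Q \QQ F(X + \ggrad u(x)) \dd x$, which is $\WWap$-quasiconvexity.

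The only genuine subtlety, and what I expect to be the main point worth flagging, is that one cannot invoke Lemma \ref{lemmaBallMurat84} directly because $\QQ F$ need not be continuous — as an infimum of continuous functions it is only upper semicontinuous. However, this is harmless: the Ball--Murat argument passes from $\Ccinfty$ to $\WWapn$ via a generalised Fatou lemma (dominating the upper $p$-growth bound by the strongly convergent sequence $C'(1+|X+\ggrad \phi_k|^p)$ in $\LL^1$), and exactly this argument has already been packaged into Lemma \ref{lemmaQFusc}, so no new work is needed.
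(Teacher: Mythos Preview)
Your proof is correct and follows essentially the same route as the paper, which simply says the corollary is immediate from Lemma \ref{lemmaBallMurat84} and Lemma \ref{lemmaQgIsQC}. You have in fact been more careful than the paper: Lemma \ref{lemmaBallMurat84} as stated assumes continuity of the integrand, which $\QQ F$ need not enjoy, and you correctly bypass this by invoking Lemma \ref{lemmaQFusc} (the reverse-Fatou argument for upper semicontinuous integrands with upper $p$-growth) to pass from $\Ccinfty$ to $\WWapn$ test functions---this is exactly what the underlying Ball--Murat argument actually needs, so the spirit is identical but your version closes a small gap the paper leaves implicit.
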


Let us note that the growth assumption on $F$ cannot, in general, be removed from the statement of Lemma \ref{lemmaBallMurat84}. In other words, $\WWap$ and $\WWaq$-quasiconvexity are different notions in general. The previously mentioned paper \cite{BallMurat84} contains an example of a function that is $\WW^{1,p}$ quasiconvex only for sufficiently large exponents $p$, thus showing the importance of the relevant test space.

Finally, for future use, we introduce a third notion (analogous to the one introduced by Pedregal in \cite{Pedregal94} and later studied by Kristensen in \cite{Kristensen15}) that will be particularly useful for dealing with extended-real valued integrands.
\begin{defi}
We say that a function $F \colon \RRnm \rightarrow (-\infty, \infty]$ is closed $\WWap$-quasiconvex if $F$ is lower semicontinuous and Jensen's inequality holds for $F$ and every homogeneous oscillation $\WWap$-gradient Young measure, i.e.,
$$F(X) \leqslant \inf_{\nu \in \hp_{X}} \intRRnm F(W) \dd \nu(W),$$
where $\hp_{X}$ is the set of all homogenous $\WWap$-gradient Young measures with mean $X$. 
\end{defi}

As in Theorem \ref{thmCharacterizationOfYM}, we note that replacing $\nu \in \hp_X$ with $\delta_{X} \ast \mu$ for $\mu \in \hpn$ allows us to rewrite the inequality in the definition of closed $\WWap$-quasiconvexity as 
$$F(X) \leqslant \inf_{\mu \in \hpn} \intRRnm F(X + W) \dd \mu(W),$$
for all $X \in \RRnm$.

Before we proceed, we note the following lemma which, in the case of classical gradients, was first proven by Ball and Zhang in \cite{BallZhang90}:
\begin{lemma}\label{lemmaAllQCEquivalent}
Suppose that $F \colon \RRnm \rightarrow \RR$ is a continuous integrand satisfying $|F(X)| \leq C(|X|^p + 1)$ for some constant $C$ and for all $X \in \RRnm$. Then $F$ is $\ba$-quasiconvex if, and only if, it is $\WWap$-quasiconvex and if, and only if, it is closed $\WWap$-quasiconvex.
\end{lemma}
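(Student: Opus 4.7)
The plan is to prove the two nontrivial implications separately, since Lemma \ref{lemmaBallMurat84} already yields the equivalence between $\ba$-quasiconvexity and $\WWap$-quasiconvexity in the presence of continuity and $p$-growth. Consequently the task reduces to showing that $\WWap$-quasiconvexity is equivalent to closed $\WWap$-quasiconvexity. Continuity of $F$ trivially implies lower semicontinuity, so for closed $\WWap$-quasiconvexity the only nontrivial condition is Jensen's inequality against all homogeneous $\WWap$-gradient Young measures.

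For the implication closed $\WWap$-QC $\Rightarrow$ $\WWap$-QC I would fix $X \in \RRnm$ and a competitor $\phi \in \WWapn(Q;\RRn)$. Corollary \ref{corElementaryHomogenous} shows that $(\ggrad \phi)_{\#}\bigl(\LebesgueN \measurerestr Q / |Q|\bigr)$ is a homogeneous $\WWap$-gradient Young measure with barycentre $0$, so its translate
$$ \mu := \delta_X \ast (\ggrad \phi)_{\#} \left( \frac{\LebesgueN \measurerestr Q}{|Q|} \right) $$
is a homogeneous $\WWap$-gradient Young measure with mean $X$ (Proposition \ref{propTranslatedYM}). Inserting $\mu$ into the defining Jensen inequality of closed $\WWap$-quasiconvexity and unwinding the push-forward gives $F(X) \leq \dashint_Q F(X+\ggrad \phi)\dd x$, and taking the infimum over $\phi$ establishes $\WWap$-quasiconvexity.

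For the reverse direction I would start from an arbitrary homogeneous $\WWap$-gradient Young measure $\nu$ with mean $X$. Writing $\nu = \delta_X \ast \nu_0$ with $\nu_0 \in \hpn(Q)$, Corollary \ref{corOscillationEquiintegrable} supplies a sequence $\phi_k \in \Ccinfty(Q;\RRn)$ such that $\{\ggrad \phi_k\}$ is $p$-equiintegrable and generates $\nu_0$; by Proposition \ref{propTranslatedYM} the shifted sequence $X + \ggrad \phi_k$ generates $\nu$ and remains $p$-equiintegrable. The $p$-growth bound $|F(W)| \leq C(1+|W|^p)$ then dominates $\{F(X+\ggrad\phi_k)\}$ by an equiintegrable family, so point iii) of Theorem \ref{thmFToYM} lets me pass to the limit on the right-hand side of the $\WWap$-quasiconvexity inequality
$$ F(X) \leq \dashint_Q F(X+\ggrad \phi_k(x)) \dd x $$
to obtain $F(X) \leq \int F\dd\nu$, which is exactly closed $\WWap$-quasiconvexity evaluated at $\nu$.

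I do not expect a genuine obstacle in this argument: the heavy lifting has already been done in building the dual characterisation of homogeneous $\WWap$-gradient Young measures, and the only point that requires care is verifying the equiintegrability of $\{F(X+\ggrad \phi_k)\}$ so that Theorem \ref{thmFToYM} iii) applies rather than merely its lower semicontinuity variant in ii) — this is what prevents a spurious inequality in the wrong direction and what forces the hypothesis of $p$-growth of $F$ to enter the argument in an essential way.
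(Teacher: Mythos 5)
Your proposal is correct and follows essentially the same route as the paper: Lemma \ref{lemmaBallMurat84} for the first equivalence, Corollary \ref{corElementaryHomogenous} to show closed $\WWap$-quasiconvexity implies the elementary variants, and Corollary \ref{corOscillationEquiintegrable} together with Theorem \ref{thmFToYM}\,iii) (using $p$-equiintegrability of the generating sequence and the $p$-growth of $F$) for the converse. The only cosmetic difference is that you pass from closed $\WWap$-quasiconvexity directly to $\WWap$-quasiconvexity, whereas the paper targets $\ba$-quasiconvexity and then invokes Lemma \ref{lemmaBallMurat84}; the two routes are equivalent given that lemma.
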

\begin{proof}
Under these assumptions the equivalence between $\ba$-quasiconvexity and $\WWap$-quasiconvexity is the content of Lemma \ref{lemmaBallMurat84}. To prove the equivalence of closed $\WWap$-quasiconvexity with these notions let us first observe that clearly closed $\WWap$-quasiconvexity implies $\ba$-quasiconvexity, as for every $\phi \in \Ccinfty(Q)$ the measure given by $(\ggrad \phi)_{\#} \frac{\LebesgueN \measurerestr Q}{|Q|} $ is a homogeneous $\WWap$-gradient Young measure (see Corollary \ref{corElementaryHomogenous}). Note that this part does not use any assumptions on $F$ --- indeed, closed $\WWap$-quasiconvexity is the strongest of the three notions.

For the other direction fix a point $X \in \RRnm$ and an arbitrary $\mu \in \hpn$ and let $\phi_j \in \WWapn(Q)$ be such that the sequence $\{ \ggrad \phi_j \}$ is $p$-equintegrable and generates $\mu$, which is possible thanks to Corollary \ref{corOscillationEquiintegrable}. Due to the growth conditions on $F$ we know that the family $\{ F(X + \ggrad \phi_j) \}$ is equiintegrable as well, so that Theorem \ref{thmFToYM} shows that 
$$ \intRRnm F(X + W) \dd \mu(W) = \lim_{j \to \infty} \dashint_Q F(X + \ggrad \phi_j(x)) \dd x \geq F(X),$$
where the last inequality is due to $\WWap$-quasiconvexity of $F$. Since $X$ and $\nu$ were arbitrary, this ends the proof. Observe that we do not need to assume that $F$ is bounded from below to apply Theorem \ref{thmFToYM}, as we may simply consider its positive and negative parts separately.
\end{proof}

In the following we establish an initial result that identifies closed $\WWap$-quasiconvexity as a sufficient condition for lower semicontinuity of integral functionals in the mixed smoothness setting.

\begin{lemma}\label{lemmaClosedQCImpliesLSC}
Let $\Omega$ be a bounded open Lipschitz domain. Suppose that $F \colon \RRnm \to [0, \infty]$ is a closed $\WWap$-quasiconvex integrand. Then the functional
$$ \II (u) := \int_\Omega F(\ggrad u) \dd x$$
is sequentially weakly lower semicontinuous on $\WWap(\Omega)$.
\end{lemma}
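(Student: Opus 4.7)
The plan is to use the Young measure machinery developed in Section~\ref{chapter:chapter2}, reducing the semicontinuity statement to a pointwise Jensen inequality that holds by definition of closed $\WWap$-quasiconvexity. Fix a sequence $u_j \weakConv u$ in $\WWap(\Omega)$ and a target $\liminf_{j \to \infty} \II(u_j)$; passing to a subsequence (which is harmless for the $\liminf$) I may assume the liminf is a limit and that $\{\ggrad u_j\}$, being bounded in $\LL^p(\Omega;\RRnm)$, generates an oscillation Young measure $\nu = \{\nu_x\}_{x \in \Omega}$ in the sense of Theorem~\ref{thmFToYM}.

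Next I apply Theorem~\ref{thmFToYM}~ii) to the integrand $F$: since $F \colon \RRnm \to [0,\infty]$ is lower semicontinuous (closed $\WWap$-quasiconvexity requires this) and non-negative, it is a normal integrand bounded from below (interpreted as a function independent of the $x$ variable), so
\begin{equation*}
\liminf_{j \to \infty} \int_\Omega F(\ggrad u_j(x)) \dd x \geq \int_\Omega \langle \nu_x, F \rangle \dd x.
\end{equation*}
The task is thus reduced to showing $\langle \nu_x, F \rangle \geq F(\ggrad u(x))$ for almost every $x \in \Omega$.

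This pointwise inequality is where the structural information about $\nu$ enters. By Proposition~\ref{propLocalisation}, for $\Lebesgue^N$-almost every $x_0 \in \Omega$ the measure $\nu_{x_0}$ is itself a homogeneous $\WWap$-gradient Young measure on $Q$, with barycentre equal to $\langle \nu_{x_0}, \id \rangle$. Since $\ggrad u_j \weakConv \ggrad u$ in $\LL^p(\Omega;\RRnm)$, the barycentre map $x \mapsto \langle \nu_x, \id \rangle$ coincides almost everywhere with $\ggrad u$ (this is the standard identification of weak limit with the mean of the generated Young measure, applied coordinatewise via Theorem~\ref{thmFToYM}). Thus, at almost every $x \in \Omega$, $\nu_x \in \hp_{\ggrad u(x)}$, and by the definition of closed $\WWap$-quasiconvexity applied to $F$ at the point $\ggrad u(x)$,
\begin{equation*}
F(\ggrad u(x)) \leq \int_{\RRnm} F(W) \dd \nu_x(W) = \langle \nu_x, F \rangle.
\end{equation*}
Integrating over $\Omega$ and combining with the Young measure inequality above yields $\liminf_j \II(u_j) \geq \II(u)$, as required.

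The only delicate point is the legitimacy of invoking Theorem~\ref{thmFToYM}~ii) with an extended-real-valued integrand, but since $F \geq 0$ and lower semicontinuity is built into the definition of closed $\WWap$-quasiconvexity, $F$ qualifies as a normal integrand bounded from below and the theorem applies directly. Everything else is a bookkeeping assembly of Proposition~\ref{propLocalisation}, the Fundamental Theorem, and the defining inequality of closed $\WWap$-quasiconvexity.
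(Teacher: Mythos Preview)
Your proof is correct and follows essentially the same approach as the paper's own argument: pass to a subsequence generating a Young measure, apply Theorem~\ref{thmFToYM}~ii) to get the lower bound $\int_\Omega \langle \nu_x, F\rangle \dd x$, use Proposition~\ref{propLocalisation} to identify $\nu_x$ as a homogeneous $\WWap$-gradient Young measure with barycentre $\ggrad u(x)$, and conclude via the defining Jensen inequality of closed $\WWap$-quasiconvexity. Your explicit justification that $F$ qualifies as a normal integrand (lower semicontinuous, non-negative) is a nice addition that the paper leaves implicit.
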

\begin{proof}
Fix an arbitrary $u \in \WWap(\Omega)$ and a sequence $u_j \weakConv u$ in $\WWap(\Omega)$. We need to show that $\liminf \II(u_j) \geq \II(u)$. Since $u_j$ is weakly convergent in $\WWap(\Omega)$ it is also bounded in that space. Passing to a subsequence if necessary, we may assume that the $\liminf$ is a true limit and, passing to a further subsequence, that $\ggrad u_j$ generates some $\WWap$-gradient Young measure $\nu = \{\nu_x\}_{x \in \Omega}$. Observe that, since $p \in (1, \infty)$, the barycentre of $\nu_x$ is $\ggrad u(x)$ for almost every $x$ in $\Omega$. By Theorem \ref{thmFToYM} we know that 
$$ \lim_{j \to \infty} \II(u_j) = \lim_{j \to \infty} \int_\Omega F(\ggrad u_j) \dd x \geq \int_\Omega \int_{\RRnm} F \dd \nu_x \dd x.$$
Proposition \ref{propLocalisation} tells us that, for Lebesgue almost every $x$, the measure $\nu_x$ is a homogeneous $\WWap$-gradient Young measure. Hence $F$, as a closed $\WWap$-quasiconvex function, satisfies Jensen's inequality when tested with $\nu_x$ for almost every $x$, which ends the proof, as we now have
$$ \lim_{j \to \infty} \II(u_j) \geq \int_\Omega \int_{\RRnm} F \dd \nu_x \dd x \geq \int_\Omega F(\overline{\nu_x}) \dd x = \int_\Omega F(\ggrad u(x)) \dd x  = \II(u).$$
\end{proof}
\begin{corollary}
Since the proof above is based on localisation, we immediately get that, under the assumptions of the above lemma, the functional
$$ \WWap(\Omega) \ni u \mapsto \int_A F(\ggrad u) \dd x$$
is sequentially weakly lower semicontinuous on $\WWap(\Omega)$ for any measurable subset $A \subset \Omega$.
\end{corollary}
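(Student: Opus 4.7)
The plan is to mimic the proof of Lemma \ref{lemmaClosedQCImpliesLSC} almost verbatim, observing that every ingredient it uses is genuinely local in $x$ and therefore insensitive to restricting the domain of integration from $\Omega$ to a measurable subset $A \subset \Omega$. So the goal is just to flag the one bookkeeping change and verify that nothing else breaks.

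First, I would fix $u \in \WWap(\Omega)$ and $u_j \weakConv u$. As before, passing to a subsequence I may assume the $\liminf$ of $\int_A F(\ggrad u_j) \dd x$ is a genuine limit, and after a further subsequence extraction that $\{\ggrad u_j\}$ generates a $\WWap$-gradient Young measure $\nu = \{\nu_x\}_{x \in \Omega}$ on all of $\Omega$. Since $p \in (1,\infty)$ and $u_j \weakConv u$, the barycentre satisfies $\overline{\nu_x} = \ggrad u(x)$ for almost every $x \in \Omega$, and in particular for almost every $x \in A$.

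The one adjustment is in applying Theorem \ref{thmFToYM}. Instead of taking the $x$-independent normal integrand $F(W)$ on $\Omega$, apply the theorem with the modified integrand
$$\widetilde{F}(x,W) := \indyk_A(x) F(W),$$
which is Borel-measurable in $(x,W)$, lower semicontinuous in $W$ for each fixed $x$, and non-negative, hence a normal integrand bounded from below. Theorem \ref{thmFToYM} then yields
$$\liminf_{j \to \infty} \int_A F(\ggrad u_j) \dd x = \liminf_{j \to \infty} \int_\Omega \widetilde{F}(x, \ggrad u_j) \dd x \geq \int_\Omega \indyk_A(x) \int_{\RRnm} F \dd \nu_x \dd x = \int_A \int_{\RRnm} F \dd \nu_x \dd x.$$

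From here the proof concludes exactly as before: by Proposition \ref{propLocalisation}, for Lebesgue almost every $x \in \Omega$ (hence almost every $x \in A$) the measure $\nu_x$ is a homogeneous $\WWap$-gradient Young measure with barycentre $\ggrad u(x)$, so closed $\WWap$-quasiconvexity of $F$ applied pointwise a.e.\ gives $\int_{\RRnm} F \dd \nu_x \geq F(\ggrad u(x))$. Integrating over $A$ finishes the proof. I do not expect any genuine obstacle here: the only step in the proof of Lemma \ref{lemmaClosedQCImpliesLSC} that sees the full domain is the Young measure lower bound, and Theorem \ref{thmFToYM} already handles arbitrary normal integrands, so restricting by $\indyk_A$ is costless.
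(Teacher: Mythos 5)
Your proof is correct and fills in exactly what the paper's remark asserts: the argument of Lemma \ref{lemmaClosedQCImpliesLSC} is local in $x$, and inserting the factor $\indyk_A(x)$ into the integrand is handled by the $x$-dependent normal integrand version of Theorem \ref{thmFToYM}, while the pointwise Jensen estimate via Proposition \ref{propLocalisation} and closed $\WWap$-quasiconvexity is unchanged. One cosmetic point: a Lebesgue-measurable $A$ need not be Borel, so either replace $A$ by a Borel set differing from it by a Lebesgue null set, or read the measurability hypothesis in Theorem \ref{thmFToYM} in the Lebesgue--Borel product sense.
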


\begin{lemma}\label{lemmaLSCImpliesQC}
Let $\Omega$ be a bounded open Lipschitz domain satisfying the weak $\ba$-horn condition. Suppose that $F \colon \RRnm \to (-\infty, \infty]$ is a measurable integrand and that its associated functional $\II(u) := \int_\Omega F(\ggrad u) \dd x$ is sequentially weakly lower semicontinuous on $\WWap(\Omega)$. Then $F$ is $\WWap$-quasiconvex.
\end{lemma}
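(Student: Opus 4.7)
The strategy is the standard periodic-homogenisation construction, adapted to the anisotropic scaling $\scale$. Fix $X\in\RRnm$ and $\phi\in\WWapn(Q;\RRn)$; we have to establish
$$F(X)\leq\dashint_Q F(X+\ggrad\phi(y))\dd y.$$
Choose a polynomial $u_0\in C^\infty(\RRN;\RRn)$ with $\ggrad u_0\equiv X$, so that $u_0\in\WWap(\Omega)$ and $\II(u_0)=|\Omega|\,F(X)$. The perturbation sequence converging weakly to $u_0$ will be built by pasting together rescaled copies of $\phi$.

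For each $j\in\naturals$, decompose $\Omega$ up to a Lebesgue-null set into a countable family $\{Q^j_i=Q_{r^j_i}(x^j_i)\}_i$ of pairwise disjoint anisotropic boxes with $\sup_i r^j_i\leq 1/j$; such a decomposition is available by a Whitney-type argument using that $|\partial\Omega|=0$ and that any dyadic cube may be further refined into anisotropic boxes of prescribed maximal radius. Define
$$\phi_j(x):=\sum_i r^j_i\,\phi\bigl((r^j_i)^{-1}\scale(x-x^j_i)\bigr).$$
By Lemma~\ref{lemmaElementaryScaleAndTile}, $\phi_j\in\WWapn(\Omega)$ and $\phi_j\weakConv 0$ in $\WWapn(\Omega)$, so that $u_j:=u_0+\phi_j$ converges weakly to $u_0$ in $\WWap(\Omega)$.

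The key computation exploits the exact matching between the anisotropic scaling $\scale$ and the degrees $\langle\alpha,\bamo\rangle=1$ composing $\ggrad$: differentiating term by term one checks that $\ggrad u_j(x)=X+\ggrad\phi\bigl((r^j_i)^{-1}\scale(x-x^j_i)\bigr)$ on each $Q^j_i$, with no residual scaling factor. The change of variables $y=(r^j_i)^{-1}\scale(x-x^j_i)$ has Jacobian $|Q^j_i|/|Q|$, and since $\sum_i|Q^j_i|=|\Omega|$ we obtain
$$\II(u_j)=\sum_i\frac{|Q^j_i|}{|Q|}\int_Q F(X+\ggrad\phi(y))\dd y=\frac{|\Omega|}{|Q|}\int_Q F(X+\ggrad\phi(y))\dd y$$
for every $j$. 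Invoking the hypothesized sequential weak lower semicontinuity, $|\Omega|\,F(X)=\II(u_0)\leq\liminf_j\II(u_j)$, and division by $|\Omega|$ yields the desired inequality. The extended-real case $F(X)=+\infty$ is handled by the same formula, which then forces the right-hand side to be $+\infty$ too.

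The one genuinely delicate point is the availability of the exact anisotropic tiling in the first step, since a standard Vitali-type lemma is not at hand because anisotropic boxes have unbounded eccentricity; however this can be arranged by hand, starting from an isotropic Whitney decomposition of $\Omega$ and then subdividing each isotropic cube into anisotropic sub-boxes of controlled size. Everything else — the identity for $\ggrad$ of a rescaled function, the change of variables, and the final limit passage — is direct and does not require additional regularity on $F$ beyond what is assumed.
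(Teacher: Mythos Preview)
Your proposal is correct and follows essentially the same approach as the paper's own proof: the paper likewise fixes a polynomial $u$ with $\ggrad u\equiv X$, covers $\Omega$ up to a null set by anisotropic boxes of radii $<1/j$, builds $\phi_j$ by summing rescaled copies of $\phi$, appeals to Lemma~\ref{lemmaElementaryScaleAndTile} for the weak convergence, and concludes via the same change of variables giving $\int_\Omega F(X+\ggrad\phi_j)\dd x=|\Omega|\dashint_Q F(X+\ggrad\phi)\dd y$. Your explicit discussion of how to manufacture the anisotropic tiling (via Whitney decomposition followed by subdivision) and of the trivial case $F(X)=+\infty$ are welcome elaborations, but the core argument is identical.
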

\begin{proof}
Fix an arbitrary $\phi \in \WWapn(Q)$ and a $X \in \RRnm$. Let $u$ be an $\ba$-polynomial with $\ggrad u \equiv X$. For an arbitrary $k \in \naturals$ we may cover $\Omega$, up to a set of measure zero, with a countable family of anisotropic boxes of the form $\{ x^j_i + r^j_i \scale Q \}_i$, with $r^j_i < 1/j$ for all $j,i$. Let $\phi^j_i (x) := (r^j_i)\phi((r^j_i)^{-1} \scale (x - x^j))$ and set $\phi_j := \sum_{i = 1}^{\infty} \phi^j_i $.
Then $\phi_j$ converges weakly to $0$ in $\WWap(\Omega)$ (see Lemma \ref{lemmaElementaryScaleAndTile}), thus $u + \phi_j \weakConv u$ in $\WWap(\Omega)$, so that
$$ \liminf_{j \to \infty} \int_\Omega F( \ggrad (u + \phi_j) \dd x = \liminf_{j \to \infty} \int_\Omega F(X + \ggrad \phi_j) \dd x \geq \int_\Omega F(\ggrad u) \dd x = |\Omega| F(X).$$
However, for every $j$ we have, by a change variables,
$$ \int_\Omega F(X + \ggrad \phi_j) \dd x = |\Omega| \dashint_Q F(X + \ggrad \phi) \dd x.$$
which ends the proof.
\end{proof}

Putting together the content of Lemmas \ref{lemmaAllQCEquivalent}, \ref{lemmaClosedQCImpliesLSC}, and \ref{lemmaLSCImpliesQC} we obtain the following:

\begin{theorem}\label{thmLSCWithpGrowth}
Let $\Omega$ be a bounded open Lipschitz domain satisfying the weak $\ba$-horn condition. Suppose that $F \colon \RRnm \to [0, \infty)$ is a continuous integrand satisfying the $p$-growth condition $|F(X)| \leq C(|X|^p + 1)$ for some constant $C$ and all $X \in \RRnm$. Then the functional
$$ \II (u) := \int_\Omega F(\ggrad u) \dd x$$
is sequentially weakly lower semicontinuous on $\WWap$ if and only if $F$ is $\WWap$-quasiconvex.
\end{theorem}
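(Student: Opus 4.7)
The plan is to combine the three preparatory lemmas established immediately before the theorem statement, since each direction of the equivalence is essentially already accomplished by one of them.

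For the direction that weak lower semicontinuity implies $\WWap$-quasiconvexity, I would simply invoke Lemma \ref{lemmaLSCImpliesQC}: since $F$ is continuous it is a fortiori measurable, and the lemma gives $\WWap$-quasiconvexity directly from lower semicontinuity on a domain satisfying the weak $\ba$-horn condition. Note that this direction does not even need the growth bound or the continuity of $F$ beyond measurability, which is why the argument reduces to a one-line appeal.

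For the converse, the idea is to pass through the stronger notion of closed $\WWap$-quasiconvexity. First, Lemma \ref{lemmaAllQCEquivalent} tells us that under continuity and the $p$-growth assumption $|F(X)| \leq C(|X|^p + 1)$, the three a priori distinct notions --- $\ba$-quasiconvexity, $\WWap$-quasiconvexity, and closed $\WWap$-quasiconvexity --- coincide. Hence our hypothesis upgrades for free to closed $\WWap$-quasiconvexity. Then Lemma \ref{lemmaClosedQCImpliesLSC} delivers sequential weak lower semicontinuity of the functional $u \mapsto \int_\Omega F(\ggrad u)\,dx$ on $\WWap(\Omega)$. A minor bookkeeping point: Lemma \ref{lemmaClosedQCImpliesLSC} is stated for integrands taking values in $[0,\infty]$, and our $F$ takes values in $[0,\infty) \subset [0,\infty]$, so it applies verbatim.

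The only genuine obstacle, already absorbed into the preparatory lemmas, is the Young-measure machinery underlying Lemma \ref{lemmaClosedQCImpliesLSC}: the localisation result (Proposition \ref{propLocalisation}) is needed to reduce Jensen's inequality for the generated Young measure $\{\nu_x\}$ to the homogeneous case, where closed $\WWap$-quasiconvexity applies. Since these ingredients are already in place, the proof of the theorem itself is just the two-line synthesis described above, and I would write it as: (i) apply Lemma \ref{lemmaLSCImpliesQC} for one implication; (ii) chain Lemma \ref{lemmaAllQCEquivalent} with Lemma \ref{lemmaClosedQCImpliesLSC} for the other.
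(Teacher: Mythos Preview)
Your proposal is correct and matches the paper's own argument essentially verbatim: the paper explicitly states that the theorem follows by putting together Lemmas \ref{lemmaAllQCEquivalent}, \ref{lemmaClosedQCImpliesLSC}, and \ref{lemmaLSCImpliesQC}, which is exactly the two-line synthesis you describe.
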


\section{Relaxation}\label{sectionRelaxation}
We have already identified $\WWap$-quasiconvexity as an equivalent condition for lower semicontinuity of integral functionals in the mixed smoothness framework. When a functional lacks lower semicontinuity one typically studies its relaxation, defined as the sequentially weakly lower semicontinuous envelope of the original functional, i.e., through
$$ \overline{\II} (u) := \inf_{u_j \weakConv u} \left\{ \liminf_{j \to \infty} \II(u_j) \right\},$$
where the infimum is taken over all sequences $u_j$ converging to $u$ in the appropriate sense. In our case this means $u_j \weakConv u$ in $\WWap(\Omega)$. With this definition the relaxed functional $\overline{\II}$ is sequentially lower semicontinuous with respect to weak convergence in $\WWap(\Omega)$. The aim of this section is to show that, under appropriate assumptions, the relaxation is again an integral functional with integrand given by $\ba$-quasiconvexification of the original one. In the classical setting of first order gradients and under $(p,p)$ growth conditions on the integrand, this was first done by Dacorogna in \cite{Dacorogna82}. The main results of this section are contained in Theorems \ref{thmRelaxationPGrowth} and \ref{thmContinuousImpliesRelaxation}, which deal with the $p$-growth case and the extended-real valued case, respectively.

\subsection{The $p$-growth case}
\begin{lemma}\label{lemmaContinuousEnvelope}
Suppose that $F \colon \RRnm \to [0, \infty)$ is a continuous and $\LLp$ coercive integrand with $F(X) \leq C(|X|^p + 1)$. Assume further that $F$ is locally Lipschitz with
\begin{equation}\label{eqContinuousEnvelope}
|F(X) - F(W)| \leq D(1 + |X|^{p-1} + |W|^{p-1})|X-W|,
\end{equation}
for all $X,W \in \RRnm$ and some $D \in \RR$. Then $\QQ F$ is continuous.
\end{lemma}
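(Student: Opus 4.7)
The strategy is to show that $\QQ F$ is in fact locally Lipschitz (hence continuous) by directly comparing values at nearby points. As preliminaries, by Lemma \ref{lemmaQFGrowth}, $\QQ F$ is real-valued and satisfies $0 \leq \QQ F(X) \leq C(1 + |X|^p)$ for a (possibly enlarged) constant; moreover, by Corollary \ref{corQuasiconvexityIndependentOfDomain}, the envelope may be computed using test functions on any fixed bounded open Lipschitz set, and we work on the unit cube $Q$.

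Fix $X, W \in \RRnm$ and $\epsilon \in (0,1)$. By definition of $\QQ F$, choose $\phi_\epsilon \in \Ccinfty(Q)$ such that
$$ \dashint_Q F(X + \ggrad \phi_\epsilon(x)) \dd x \leq \QQ F(X) + \epsilon. $$
Using the same $\phi_\epsilon$ as a competitor for $\QQ F(W)$ and invoking the local Lipschitz bound \eqref{eqContinuousEnvelope} on $F$, one finds
$$ \QQ F(W) - \QQ F(X) \leq \epsilon + D |W - X| \dashint_Q \bigl(1 + |X + \ggrad \phi_\epsilon|^{p-1} + |W + \ggrad \phi_\epsilon|^{p-1}\bigr) \dd x. $$

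The decisive step is to control the integral on the right, uniformly in $\epsilon$, in terms of $X$ and $W$ alone. This is where the $\LL^p$ coercivity hypothesis enters. Via Lemma \ref{lemmaAuxiliaryFunction} and the equivalences in Theorem \ref{thmCoercivity}, coercivity furnishes constants $c_1 > 0$ and $c_2$ (possibly depending on $X$) with
$$ \dashint_Q F(X + \ggrad \phi_\epsilon) \dd x \geq c_1 \dashint_Q |X + \ggrad \phi_\epsilon|^p \dd x - c_2. $$
Combined with the near-optimality of $\phi_\epsilon$ and the upper bound $\QQ F(X) \leq C(1 + |X|^p)$, this yields $\dashint_Q |X + \ggrad \phi_\epsilon|^p \dd x \leq C'(X)$ independently of $\epsilon \in (0,1)$. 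Applying Hölder's inequality $\dashint_Q |\cdot|^{p-1} \leq (\dashint_Q |\cdot|^p)^{(p-1)/p}$ then controls the $(p-1)$-th moment of $X + \ggrad \phi_\epsilon$, while the pointwise inequality $|W + \ggrad \phi_\epsilon|^p \leq 2^{p-1}(|W-X|^p + |X + \ggrad \phi_\epsilon|^p)$ transfers the bound to $W + \ggrad \phi_\epsilon$.

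Putting these estimates together yields $\QQ F(W) - \QQ F(X) \leq \epsilon + K(X,W)|W-X|$, where $K(X,W)$ remains bounded whenever $W$ stays in a bounded neighbourhood of $X$. Sending $\epsilon \to 0$ and swapping the roles of $X$ and $W$ gives the desired local Lipschitz continuity, and in particular continuity, of $\QQ F$. The main obstacle is the uniform-in-$\epsilon$ control of $\dashint_Q |X + \ggrad \phi_\epsilon|^p$: the coercivity assumption must be translated into an estimate that applies to admissible competitors with non-zero mean gradient $X$, which is precisely the content of the mean-coercivity statement in Theorem \ref{thmCoercivity} applied with the Dirichlet datum $u_X$ (an $\ba$-polynomial with $\ggrad u_X \equiv X$).
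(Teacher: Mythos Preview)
Your approach is essentially the paper's: pick a near-optimal competitor $\phi_\epsilon$ for $\QQ F$ at one point, use the Lipschitz bound \eqref{eqContinuousEnvelope} to compare with a nearby point, and appeal to coercivity to control $\dashint_Q |X+\ggrad\phi_\epsilon|^p$. The paper splits this into upper semicontinuity (trivial, from the infimum formula) and lower semicontinuity, whereas you aim for a local Lipschitz estimate directly, but the substance is the same.

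There is, however, a genuine gap in the ``swap the roles of $X$ and $W$'' step. Your one-sided bound $\QQ F(W)-\QQ F(X)\leq K(X,W)|W-X|$ is fine: $K(X,W)$ depends on $C'(X)$ with $X$ fixed, and on $|W-X|$, so it stays bounded for $W$ near $X$. But after swapping you obtain $\QQ F(X)-\QQ F(W)\leq K(W,X)|X-W|$, and now the constant involves $C'(W)$ with $W$ \emph{moving}. To conclude continuity at $X$ you need $\sup_{W}C'(W)<\infty$ over $W$ in a neighbourhood of $X$, i.e.\ that the mean-coercivity constants can be chosen \emph{locally uniformly} in the base point. Theorem~\ref{thmCoercivity} gives you, for each boundary datum $u_W$, constants $c_1(W),c_2(W)$, but it does not assert any uniformity in $W$; your invocation of it therefore covers uniformity in $\epsilon$ only, not in the base point.

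The paper closes exactly this gap (and this is where the Lipschitz hypothesis is genuinely used, beyond the comparison step): starting from the distinguished point $X_0$ furnished by Theorem~\ref{thmCoercivity}(g), at which $F-c|\cdot|^p$ is $\ba$-quasiconvex, one uses \eqref{eqContinuousEnvelope} to compare $\dashint_Q F(X+\ggrad\phi)$ with $\dashint_Q F(X_0+\ggrad\phi)$ and, after an application of Young's inequality, arrives at
\[
\dashint_Q F(X+\ggrad\phi)\,\dd x \;\geq\; \tfrac{c}{2}\dashint_Q |\ggrad\phi|^p\,\dd x \;-\; c^{-1},
\]
with the constant $c$ uniform for $X$ in any compact set. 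Once you have this, your argument goes through verbatim. Without it, the swapped inequality only recovers upper semicontinuity, which was free to begin with.
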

\begin{proof}
For a continuous integrand of $p$-growth the mapping $X \mapsto \dashint_Q F(X + \ggrad \phi) \dd x$ is continuous for any fixed $\phi \in \Ccinfty(Q)$. Thus, by the Dacorogna formula \eqref{eqDefQCEnvelope}, we immediately see that the quasiconvex envelope is upper semicontinuous, as it is a pointwise infimum of a family of continuous functions. 

By Theorem \ref{thmCoercivity} we know that the functional induced by $F$ is $\LLp$ mean coercive with any boundary datum. However, the constants describing the coercivity may depend on the datum --- if this was not the case then $F$ would have to satisfy pointwise coercivity bounds of the form $F(X) \geq C|X|^p - C^{-1}$, which we do not assume. Nevertheless, under the assumption of local Lipschitz continuity of $F$, it may be shown that the constants may be chosen uniformly on compact sets. To prove that, let us take $X_0$ to be a point at which $F - c|\cdot|^p$, with some positive constant $c$, is $\ba$-quasiconvex --- such a point exists by Theorem \ref{thmCoercivity}. For an arbitrary $X \in \RRnm$ and any $\phi \in \Ccinfty(Q)$ we may, using \eqref{eqContinuousEnvelope}, write
$$ \dashint_Q \left| F(X + \ggrad \phi) - F(X_0 + \ggrad \phi) \right| \dd x \leq $$
$$ D \dashint_Q \left( 1 + \left| X + \ggrad \phi \right|^{p-1} + \left| X_0 + \ggrad \phi \right|^{p-1} \right) |X - X_0| \dd x.$$
Rearranging we get
$$ \dashint_Q \left| F(X + \ggrad \phi) - F(X_0 + \ggrad \phi) \right| \dd x \leq c_1 + c_2 \dashint_Q |\ggrad \phi|^{p-1} \dd x,$$
where the constants $c_i$ depend on $X$ but may be chosen uniformly on compact sets. These constants also depend on $D$ and $X_0$, but, since the integrand is fixed, this dependence is not important here. Rearranging and using the strict $\ba$-quasiconvexity at $X_0$ given by Theorem \ref{thmCoercivity} we may now write
$$  \dashint_Q F(X + \ggrad \phi) \dd x \geq c_0 \dashint_Q |\ggrad \phi |^p \dd x - c_1 - c_2 \dashint_Q |\ggrad \phi|^{p-1} \dd x,$$
with a different constant $c_1$. Using weighted Young's inequality we get
$$  \dashint_Q F(X + \ggrad \phi) \dd x \geq \frac{c_0}{2} \dashint_Q |\ggrad \phi |^p \dd x - c_1,$$
where, again, the constant $c_1$ has changed, but it is still independent of $\phi$ and may be chosen locally uniformly in $X$. Thus, we have shown that for any compact set $K$ there exists a constant $c > 0$ such that 
\begin{equation}\label{eqUniformCoercivity}  
\dashint_Q F(X + \ggrad \phi) \dd x \geq c \dashint_Q |\ggrad \phi |^p \dd x - c^{-1},
\end{equation}
for all $X \in K$ and all $\phi \in \Ccinfty(Q)$.

To show that $\QQ F$ is lower semicontinuous take an arbitrary point $X \in \RRnm$ and a sequence $X_j \to X$. Assume, without loss of generality, that $\liminf_{j \to \infty} \QQ F(X_j) = \lim_{j \to \infty} \QQ F(X_j) < \infty$. For every $j$ we may, by the Dacorogna formula, find a function $\phi_j \in \Ccinfty(Q)$ such that 
$$ \QQ F(X_j) > \dashint_Q F(X_j + \ggrad \phi_j) \dd x.$$
Since $X_j$ is a convergent sequence, it is contained in a compact set. Similarly, the sequence $\QQ F(X_j)$ is bounded, thus, based on what we have just proven in \eqref{eqUniformCoercivity}, we deduce that the family $\{ \ggrad \phi_j \}$ is bounded in $\LLp$. Using the Lipschitz bound \eqref{eqContinuousEnvelope}, we may write
\begin{equation}\label{eqContEnvelope2}
\begin{aligned}
 \dashint_Q F(X + \ggrad \phi_j) \dd x \leq & \dashint_Q F(X_j + \ggrad \phi_j) \dd x \\
+ & D \dashint_Q \left( 1 + |X|^{p-1} + |X_j|^{p-1} + |\ggrad \phi_j|^{p-1} \right) |X - X_j| \dd x.
 \end{aligned}
 \end{equation}
Now, $|X - X_j|$ converges to $0$ as $j \to \infty$, and the integral 
$$\dashint_Q \left( 1 + |X|^{p-1} + |X_j|^{p-1} + |\ggrad \phi_j|^{p-1} \right) \dd x$$ 
is bounded, hence the last term in \eqref{eqContEnvelope2} goes to $0$ with $j$. Thus
\begin{equation*}
\begin{aligned}
\QQ F(X) &\leq \liminf_{j \to \infty} \dashint_Q F(X + \ggrad \phi_j) \dd x \\
&\leq \liminf_{j \to \infty} \dashint_Q F(X_j + \ggrad \phi_j) \dd x = \liminf_{j \to \infty} \QQ F(X_j),
\end{aligned}
\end{equation*}
which ends the proof.
\end{proof}

Let us remark that the above could be easily generalised to the $\mathcal{A}$-free setting of compensated compactness due to Murat and Tartar (see \cite{Murat78}, \cite{Murat81} by Murat and \cite{Tartar79}, \cite{Tartar83}, \cite{Tartar92} by Tartar). It is known that in general, when the characteristic cone of $\mathcal{A}$ does not span the entire space, $\mathcal{A}$-quasiconvex envelopes of smooth functions need not be continuous (see, for example, Remark 3.5 in \cite{FonsecaMuller99}). Our proof shows that this is not a problem if one restricts to coercive integrands satisfying the bound \eqref{eqContinuousEnvelope}. That being said, in the context of $\mathcal{A}$-quasiconvexity, this may also be resolved in a different way (see the author's recent collaboration with Rai\c{t}\u{a} \cite{ProsinskiRaita19})), however that approach cannot be applied in the mixed smoothness case, and thus we do not elaborate on it further.

Our main relaxation result is the following:
\begin{theorem}\label{thmRelaxationPGrowth}
Let $\Omega$ be a bounded open Lipschitz domain satisfying the weak $\ba$-horn condition. Supppose that $F \colon \RRnm \to [0, \infty)$ is a continuous and $\LLp$ coercive integrand with $F(X) \leq C(|X|^p + 1)$. Assume furthermore that $F$ satisfies 
\begin{equation}\label{eqRelaxationLowerGrowthAssumption}
F(X) \geqslant D|X|^p - D^{-1}
\end{equation} 
or 
\begin{equation}\label{eqRelaxationLocallyLipschitzAssumption}
|F(X) - F(W)| \leq D(1 + |X|^{p-1} + |W|^{p-1})|X-W|,
\end{equation} 
for all $X, W \in \RRnm$.
Then the sequentially weakly lower semicontinuous envelope of the functional $\II_F$ is given by
$$\overline{\II}_F (u) := \inf_{u_j \weakConv u} \left\{ \liminf_{j \to \infty} \II_F (u_j) \right\}= \int_{\Omega} \QQ F(\ggrad u(x)) \dd x = \II_{\QQ F}(u),$$
where the infimum is taken over all sequences $u_j$ converging to $u$ weakly in $\WWap(\Omega)$.
\end{theorem}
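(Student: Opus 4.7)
I plan to establish the two inequalities $\overline{\II}_F(u) \geq \II_{\QQ F}(u)$ and $\overline{\II}_F(u) \leq \II_{\QQ F}(u)$ separately.

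\textbf{Lower bound.} Take $u_j \weakConv u$ in $\WWap(\Omega)$ with $\liminf_j \II_F(u_j) < \infty$. Passing to a subsequence, I may assume that the $\liminf$ is a limit and that $\ggrad u_j$ generates some oscillation Young measure $\nu = \{\nu_x\}_{x \in \Omega}$. Since $F \geq 0$, Theorem \ref{thmFToYM}(ii) yields $\lim_j \II_F(u_j) \geq \int_\Omega \int_{\RRnm} F \dd \nu_x \dd x$. By Proposition \ref{propLocalisation}, for a.e.\ $x$ the measure $\nu_x$ is a homogeneous $\WWap$-gradient Young measure with barycentre $\ggrad u(x)$. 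Corollary \ref{corOscillationEquiintegrable} then supplies $\psi^x_k \in \Ccinfty(Q)$ whose $\ba$-gradients are $p$-equiintegrable and such that $\ggrad u(x) + \ggrad \psi^x_k$ generates $\nu_x$. The $p$-growth of $F$ makes $\{F(\ggrad u(x) + \ggrad \psi^x_k)\}$ equiintegrable, so Theorem \ref{thmFToYM}(iii) together with Dacorogna's formula give
$$\int_{\RRnm} F \dd \nu_x = \lim_k \dashint_Q F(\ggrad u(x) + \ggrad \psi^x_k) \dd y \geq \QQ F(\ggrad u(x))$$
for a.e.\ $x$, and integrating closes this direction.

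\textbf{Upper bound.} For $\epsilon > 0$, Proposition \ref{propPiecewiseApprox} supplies $u_\epsilon \in \WWap_u(\Omega)$ with $\ggrad u_\epsilon \equiv X^\epsilon_i$ on each of finitely many disjoint boxes $\{Q^\epsilon_i\}_i$ that cover $\Omega$ up to measure $\epsilon$, and $\|u - u_\epsilon\|_{\WWap} < \epsilon$. For each $i$, Dacorogna's formula furnishes $\phi^{\epsilon,i}_k \in \Ccinfty(Q)$ with $\dashint_Q F(X^\epsilon_i + \ggrad \phi^{\epsilon,i}_k) \dd y \to \QQ F(X^\epsilon_i)$. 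Tiling $Q^\epsilon_i$ with shrinking copies of $Q$ (indexed by $m$) via Lemma \ref{lemmaElementaryScaleAndTile} produces $\widetilde{\phi}^{\epsilon,i}_{k,m} \in \WWapn(Q^\epsilon_i)$ that converge weakly to zero as $m \to \infty$ and whose $\ba$-gradients are $p$-equiintegrable uniformly in $m$; Theorem \ref{thmFToYM}(iii) applied with the homogeneous Young measure $(\ggrad \phi^{\epsilon,i}_k)_{\#}(\LebesgueN \measurerestr Q / |Q|)$ gives
$$\int_{Q^\epsilon_i} F(X^\epsilon_i + \ggrad \widetilde{\phi}^{\epsilon,i}_{k,m}) \dd x \xrightarrow{m \to \infty} |Q^\epsilon_i| \dashint_Q F(X^\epsilon_i + \ggrad \phi^{\epsilon,i}_k) \dd y.$$
A diagonalisation in $(k,m)$, extension of each $\widetilde{\phi}^{\epsilon,i}_{k,m}$ by zero outside its box, and summation over $i$ yield $u^\epsilon_j \in \WWap_u(\Omega)$ with $u^\epsilon_j \weakConv u_\epsilon$ and
$$\II_F(u^\epsilon_j) \to \int_{\bigcup_i Q^\epsilon_i} \QQ F(\ggrad u_\epsilon) \dd x + \int_{\Omega \setminus \bigcup_i Q^\epsilon_i} F(\ggrad u_\epsilon) \dd x.$$
The complement integral vanishes as $\epsilon \to 0$ by the $p$-growth of $F$ and uniform integrability of $\{|\ggrad u_\epsilon|^p\}$ (a consequence of $u_\epsilon \to u$ in $\WWap$), while the main integral tends to $\II_{\QQ F}(u)$. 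A final diagonalisation in $\epsilon$ produces the recovery sequence, which is $\WWap(\Omega)$-bounded thanks to the $\LLp$-coercivity of $F$ and Theorem \ref{thmCoercivity}.

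\textbf{Main obstacle.} The crux is justifying $\int_\Omega \QQ F(\ggrad u_\epsilon) \dd x \to \II_{\QQ F}(u)$, which by dominated convergence reduces to continuity of $\QQ F$ coupled with its $p$-growth (Lemma \ref{lemmaQFGrowth}). Under the local Lipschitz hypothesis \eqref{eqRelaxationLocallyLipschitzAssumption} continuity of $\QQ F$ is delivered by Lemma \ref{lemmaContinuousEnvelope}. Under the pointwise coercivity assumption \eqref{eqRelaxationLowerGrowthAssumption} only upper semicontinuity of $\QQ F$ is automatic from the Dacorogna infimum; the lower semicontinuity half calls for a separate compactness argument that leverages the inherited pointwise bound $\QQ F(X) \geq D|X|^p - D^{-1}$ to force near-optimal test functions in Dacorogna's formula to be $\WWapn(Q)$-bounded, after which a Young-measure passage through Theorem \ref{thmCharacterizationOfYM} (applied to the homogeneous Young measure generated by the near-optimal sequence) closes the remaining gap.
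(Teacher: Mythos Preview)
Your proposal is correct and, for the locally Lipschitz case \eqref{eqRelaxationLocallyLipschitzAssumption}, follows essentially the same line as the paper: piecewise-polynomial approximation via Proposition \ref{propPiecewiseApprox}, Dacorogna's formula on each box, scale-and-tile via Lemma \ref{lemmaElementaryScaleAndTile}, and a diagonal extraction, with continuity of $\QQ F$ supplied by Lemma \ref{lemmaContinuousEnvelope}. Your lower bound is actually slightly cleaner than the paper's: you bypass closed $\WWap$-quasiconvexity of $\QQ F$ entirely by testing $\nu_x$ directly against $F$ via a $p$-equiintegrable generator, so the lower bound never touches continuity of $\QQ F$.

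The genuine divergence is in the pointwise-coercive case \eqref{eqRelaxationLowerGrowthAssumption}. The paper does \emph{not} prove continuity of $\QQ F$ here at all; it postpones this case to Theorem \ref{thmContinuousImpliesRelaxation}, the extended-real-valued relaxation result, which works with the closed $\WWap$-quasiconvex envelope $\overline{F}$ (Proposition \ref{lemmaQCenvelope}) and the weaker notion of approximate $\ba$-gradient convergence. Your route---using coercivity to force near-optimal $\phi_j$ in Dacorogna's formula at $X_j \to X$ to be $\WWapn(Q)$-bounded, then passing the pushforward measures $(\ggrad \phi_j)_{\#}(\LebesgueN\measurerestr Q/|Q|) \in \hpn$ (Corollary \ref{corElementaryHomogenous}) through a subsequence to some $\nu \in \hpn$ via Lemma \ref{lemmahpoclosed}, and finally combining the portmanteau theorem with $\int F(X+\cdot)\,d\nu \geq \QQ F(X)$---is valid and delivers lower semicontinuity (hence continuity) of $\QQ F$ directly. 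This is more elementary and self-contained than the paper's detour through closed envelopes and approximate gradients; on the other hand, the paper's route has the payoff of establishing the more general extended-valued relaxation theorem along the way. One small point worth making explicit in your write-up: $F$ need not lie in $\calE$, so the last inequality should be justified via a $p$-equiintegrable generator for $\nu$ (exactly as in your lower bound) rather than by quoting Theorem \ref{thmCharacterizationOfYM} verbatim.
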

\begin{proof}
When $F$ satisfies the first of the two alternative conditions we have provided, i.e., when $F$ is of $p$-growth from below as well as from above, this result is a corollary of a more general relaxation result (for extended real-valued integrands) that we will demonstrate next, thus we postpone this part of the proof. 

Under the assumption that $F$ is locally Lipschitz, we have shown in Lemma \ref{lemmaContinuousEnvelope}, that $\QQ F$ is continuous. We also know, from Lemma \ref{lemmaQFGrowth}, that $\QQ F$ satisfies the same $p$-growth assumption as $F$. Lemma \ref{lemmaQgIsQC} tells us that $\QQ F$ is $\ba$-quasiconvex, which in view of the continuity and growth bounds implies, by Lemma \ref{lemmaAllQCEquivalent}, that it is $\WWap$-quasiconvex. Finally, Lemma \ref{lemmaClosedQCImpliesLSC} shows that the functional induced by $\QQ F$ is indeed sequentially weakly lower semicontinuous on $\WWap(\Omega)$. This translates to
$$ \overline{\II}_F (u) \geq  \II_{\QQ F}(u),$$
and so it remains to prove the reverse inequality.

Proceeding similarly to a proof in Dacorogna's book \cite{Dacorogna07} (see Theorem 9.1 therein) let us start with the simple case of $\Omega = Q$ and $u$ with $\ggrad u = X$ for some constant $X \in \RRnm$. By the Dacorogna formula \eqref{eqDefQCEnvelope} there exists a sequence $\phi_j \in \Ccinfty(Q)$ with 
$$ \int_Q F( \ggrad u + \ggrad \phi_j) \dd x \to \int_Q \QQ F(\ggrad u) \dd x.$$
It only remains to show that the sequence $\phi_j$ may be chosen in such a way as to satisfy $\phi_j \weakConv 0$ in $\WWap(Q)$. The argument here is essentially a simpler version of the one in the proof of Lemma \ref{lemmaElementaryScaleAndTile}. For a fixed $j$ extend $\phi_j$ periodically and consider the sequence
$$ \phi_j^k (x) := r_k^{-1} \phi_j(r_k \scale x),$$
with $r_k := k^{a_1 \cdot \ldots \cdot a_N}$. Then $\ggrad \phi_j^k$ preserves the integral, i.e., 
$$ \int_Q F( \ggrad u + \ggrad \phi_j^k) \dd x = \int_Q F( \ggrad u + \ggrad \phi_j) \dd x,$$
for every $k$, and $\phi_j^k \weakConv 0$ in $\WWap(Q)$ with $k \to \infty$. Thus, a diagonal extraction argument ends the proof in this basic case. 

For the general case let us fix an arbitrary function $u \in \WWap(\Omega)$. Using Proposition \ref{propPiecewiseApprox} we may find a sequence $v_j \in \WWap_u(\Omega)$ with $v_j \to u$ in $\WWap(\Omega)$ and such that for each $j$ there exists a finite family of anisotropic boxes $\{Q^j_i\}_i$ such that $\ggrad v_j$ is constant on each $Q^j_i$ and $\left| \Omega \setminus \bigcup_i Q^j_i \right| \to 0$ as $j \to \infty$. 
We know, from Lemma \ref{lemmaContinuousEnvelope}, that $\QQ F$ is continuous, and from Lemma \ref{lemmaQFGrowth} that it satisfies the $p$-growth bound. Thus
$$ \int_\Omega \QQ F(\ggrad v_j) \dd x \to \int_\Omega \QQ F(\ggrad u) \dd x.$$
Since $v_j$ converges to $u$ strongly in $\WWap$, the sequence $\ggrad v_j$ is $p$-equiintegrable. By assumption and Lemma \ref{lemmaQFGrowth} we know that both $F$ and $\QQ F$ satisfy the $p$-growth bound from above, so that 
$$ \int_{\Omega \setminus \bigcup_i Q^j_i} F(\ggrad v_j) \dd x \to 0 \text{ and } \int_{\Omega \setminus \bigcup_i Q^j_i} \QQ F(\ggrad v_j) \dd x \to 0.$$
In particular,
$$ \int_{\bigcup_i Q^j_i} \QQ F(\ggrad v_j) \dd x  + \int_{\Omega \setminus \bigcup_i Q^j_i} F(\ggrad v_j) \dd x \to \int_\Omega \QQ F(\ggrad u) \dd x.$$
Using the previous step we may, for any fixed $Q^j_i$, find a sequence $\phi^j_{i,k} \in \Ccinfty(Q^j_i)$ with $\phi^j_{i,k} \weakConv 0$ in $\WWap(\Omega)$ as $k \to \infty$ and such that
$$ \int_{Q^j_i} F(\ggrad v_j + \ggrad \phi^j_{i,k}) \dd x \to \int_{Q^j_i} \QQ F(\ggrad v_j) \dd x \text{ as } k \to \infty.$$
Letting $\phi^j_k := \sum_i \phi^j_{i,k} \in \Ccinfty(\Omega)$, we have $\phi^j_k \weakConv 0$ in $\WWap(\Omega)$ as $k \to \infty$ and
$$ \int_{\bigcup_i Q^j_i} F(\ggrad v_j + \ggrad \phi^j_{k}) \dd x \to\int_{\bigcup_i Q^j_i} \QQ F(\ggrad v_j) \dd x.$$
A standard diagonal extraction argument allows us to construct a, non-relabelled, diagonal sequence $\phi_j \in \Ccinfty(\Omega)$ with $\phi_j \weakConv 0$ in $\WWap(\Omega)$ and such that
$$ \int_\Omega F(\ggrad v_j + \ggrad \phi_j) \dd x \to \int_\Omega \QQ F(\ggrad u) \dd x.$$
Therefore, setting $u_j := v_j + \phi_j$, shows that
$$ \liminf_{j \to \infty} \II_F(u_j) \leq \II_{\QQ F}(u),$$
and thus ends the proof.
\end{proof}

Observe that we only use the locally Lipschitz assumption on $F$ in order to deduce continuity of $\QQ F$ from Lemma \ref{lemmaContinuousEnvelope}. If continuity can be ensured in a different way then we can dispense with the assumption \eqref{eqRelaxationLocallyLipschitzAssumption}. In the classical setting of first order gradients it is known that quasiconvex functions are convex along rank-one directions, and these span the entire space, so that one may deduce continuity from directional convexity. We have already mentioned that, in general, there is no good analogue of rank-one convexity in the mixed smoothness setting. However, if $\ba$ is such that all multi-indices $\alpha$ with $\langle \alpha, \bamo \rangle = 1$ are of the same parity, this has been resolved by Kazaniecki, Stolyarov, and Wojciechowski in \cite{KazanieckiStolyarovWojciechowski17}. Under this assumption they have shown that $\ba$-quasiconvex functions are convex along directions of the form
\begin{equation}\label{eqRankOneVectors}
\sum_{\langle \alpha, \bamo \rangle = 1} i^{|\alpha| + |\alpha_0|} x^\alpha b^i e_{\alpha, i}.
\end{equation}
Here $x \in \RRN$ and $b \in \RRn$ are arbitrary vectors, $i^2 = -1$, $e_{\alpha,i}$ is the canonical basis of $\RRnm$, and $\alpha_0$ is an arbitrary multi-index on the hyperplane of homogeneity, i.e., with $\langle \alpha_0, \bamo \rangle = 1$. When all the multi-indices have the same parity, the coefficients in \eqref{eqRankOneVectors} are real and vectors of this form span $\RRnm$, thus continuity follows from directional convexity. Thus, we obtain the following:

\begin{corollary}
If $\bamo$ is such that all the multi-indices $\alpha$ with $\langle \alpha, \bamo \rangle = 1$ have orders of the same parity then the conclusion of Theorem \ref{thmContinuousImpliesRelaxation} holds without the additional assumptions \eqref{eqRelaxationLowerGrowthAssumption} or \eqref{eqRelaxationLocallyLipschitzAssumption}.
\end{corollary}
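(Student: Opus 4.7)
The plan is to revisit the proof of Theorem \ref{thmRelaxationPGrowth} and observe that the alternative assumptions \eqref{eqRelaxationLowerGrowthAssumption} and \eqref{eqRelaxationLocallyLipschitzAssumption} enter only through the continuity of $\QQ F$. Indeed, once $\QQ F$ is known to be continuous and to satisfy the upper $p$-growth bound from Lemma \ref{lemmaQFGrowth}, the inequality $\overline{\II}_F(u)\leq \II_{\QQ F}(u)$ follows from the piecewise-polynomial approximation of Proposition \ref{propPiecewiseApprox} together with the diagonal rescaling argument used in the second half of that proof, while the reverse inequality $\overline{\II}_F(u)\geq \II_{\QQ F}(u)$ follows from Lemmas \ref{lemmaQgIsQC}, \ref{lemmaAllQCEquivalent} and \ref{lemmaClosedQCImpliesLSC}. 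Neither step appeals to the locally Lipschitz bound or to the lower growth hypothesis directly; they are only ever used to guarantee continuity of the envelope in Lemma \ref{lemmaContinuousEnvelope}.

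It therefore suffices to prove continuity of $\QQ F$ under the parity hypothesis alone. By the result of Kazaniecki, Stolyarov and Wojciechowski cited just before the corollary, any $\ba$-quasiconvex function, and in particular $\QQ F$ (see Lemma \ref{lemmaQgIsQC}), is convex along every direction of the form \eqref{eqRankOneVectors}. Under the parity assumption on $\bamo$ each exponent $|\alpha|+|\alpha_0|$ is even, so $i^{|\alpha|+|\alpha_0|}=\pm 1$ and these directions are real vectors in $\RRnm$; as noted in the preceding paragraph of the paper, the collection so obtained spans $\RRnm$. Selecting from this collection a basis $v_1,\ldots,v_{nm}$ of $\RRnm$, the function $\QQ F$ becomes separately convex in the coordinate system dual to $(v_k)$.

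Because $F\geq 0$ we have $\QQ F \geq 0$, and since $\QQ F \leq F$ is then not identically $-\infty$ the growth bound supplied by Lemma \ref{lemmaQFGrowth} gives $\QQ F(V)\leq C(1+|V|^p)$; hence $\QQ F$ is locally bounded on $\RRnm$. The classical fact that a locally bounded separately convex function on a finite-dimensional space is locally Lipschitz, and \emph{a fortiori} continuous, now yields continuity of $\QQ F$. Combining this with the first paragraph completes the relaxation formula.

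The step I expect to be the main obstacle is the last link in the continuity argument, namely the passage from "convex along a spanning real family of directions" to "locally Lipschitz". The separate-convexity reduction via choice of a basis is elementary, but one has to verify carefully that the real vectors produced by \eqref{eqRankOneVectors} under the parity hypothesis genuinely span $\RRnm$ (rather than merely a proper subspace), and that the separately convex plus locally bounded implies continuous statement is applied with the right coordinate system; all subsequent steps are direct quotations of results already established in the excerpt.
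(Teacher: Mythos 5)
Your proposal is correct and follows essentially the same approach as the paper: the preceding paragraph of the paper already observes that the only role of the extra hypotheses is to secure continuity of $\QQ F$, and that under the parity assumption the Kazaniecki--Stolyarov--Wojciechowski directions \eqref{eqRankOneVectors} become real and span $\RRnm$, so directional convexity gives continuity. You merely spell out the last link (separate convexity along a real basis plus finiteness/local boundedness $\Rightarrow$ locally Lipschitz), which the paper leaves implicit; this is the standard Dacorogna-type argument and is not a gap. One small remark: the phrase ``the conclusion of Theorem \ref{thmContinuousImpliesRelaxation}'' in the corollary is evidently a mis-reference for Theorem \ref{thmRelaxationPGrowth}, since the cited assumptions \eqref{eqRelaxationLowerGrowthAssumption} and \eqref{eqRelaxationLocallyLipschitzAssumption} belong to that theorem; your reading is the intended one.
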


If, on the other hand, the parities of $|\alpha|$ do not match, then the coefficients in \eqref{eqRankOneVectors} complexify, and we do not get any directional convexity. It is still possible to use the calculation leading to determining the form of the vectors in \eqref{eqRankOneVectors} to show that $\ba$-quasiconvex functions are (pluri)subharmonic in a certain sense (see the discussion in \cite{KazanieckiStolyarovWojciechowski17}), but we have not yet been able to use that to strengthen our relaxation results. We believe, however, that this should be studied further and intend to do so in future work.

\subsection{Closed $\WWap$-quasiconvex envelope}
The $p$-growth assumption on the integrand was crucial in the results of the previous subsections. Indeed, when $F$ is not of $p$-growth the notions of $\WWap$-quasiconvexity and closed $\WWap$-quasiconvexity need not coincide, and this is precisely the reason for introducing this stricter notion of closed $\WWap$-quasiconvexity. An explicit example, in the isotropic setting of first order gradients, of an integrand that is quasiconvex but not closed quasiconvex may be found in Example 1.3 in \cite{Kristensen15}.

The first issue we run into is that, for an extended real-valued integrand, the formula \eqref{eqDefQCEnvelope} need not yield a closed $\WWap$-quasiconvex function. The purpose of this subsection is to establish a formula that does. We start with the natural definition of the closed $\WWap$-quasiconvex envelope.

\begin{defi}\label{defiQCenvelope}
For a measurable function $F \colon \RRnm \to (-\infty, \infty]$ we define its closed $\WWap$-quasiconvex envelope by
$$ \overline{F}(X) := \sup\{G(X) \colon G \leqslant F, \, G \text{ is closed } \ba \text{-} p  \text{ quasiconvex} \} \ldotp$$
\end{defi}

Our goal is the following:
\begin{proposition}\label{lemmaQCenvelope}
For any $p \in (1,\infty)$, the closed $\WWap$-quasiconvex envelope of a lower semicontinuous function $F \colon \RRnm \to [0, \infty]$ satisfying the growth condition $F(X) \geqslant c|X|^p$ for some constant $c > 0$ is given by
$$ \overlineF (X) = \inf_{\nu \in \hpn} \langle F(\cdot + X), \nu \rangle = \inf_{\nu \in \mathbb{H}^p_{X}} \langle F, \nu \rangle \ldotp
$$
Moreover, the function $\overline{F}$ is indeed closed $\WWap$-quasiconvex. 
\end{proposition}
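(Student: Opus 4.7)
First observe that the two right-hand sides coincide via the bijection $\mu = \delta_X \ast \nu$ between $\hpn$ and $\hp_X$, which preserves the pairing with $F$. Define $G(X) := \inf_{\nu \in \hpn}\langle F(\cdot + X), \nu\rangle$. Testing with $\nu = \delta_0 \in \hpn$ (generated by the zero sequence) yields $G \leqslant F$. For the inequality $\overline{F} \leqslant G$: any closed $\WWap$-quasiconvex $H \leqslant F$ satisfies, for every $\mu \in \hp_X$, $H(X) \leqslant \langle H, \mu\rangle \leqslant \langle F, \mu\rangle$, so taking the infimum gives $H \leqslant G$ and hence $\overline{F} \leqslant G$. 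The remainder of the proof thus reduces to establishing that $G$ itself is closed $\WWap$-quasiconvex, for then $G$ is admissible in the supremum defining $\overline{F}$ and $G \leqslant \overline{F}$ follows.

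\medskip

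For lower semicontinuity of $G$, let $X_n \to X$ with $\alpha := \liminf G(X_n) < \infty$, and choose $\mu_n \in \hp_{X_n}$ with $\langle F, \mu_n\rangle \leqslant G(X_n) + 1/n$. The coercivity $F \geqslant c|W|^p$ bounds $\int |W|^p \dd\mu_n$ uniformly, so the translated measures $\tilde\mu_n := \delta_{-X_n} \ast \mu_n \in \hpn$ are bounded in $\calE^*$. Extracting a weak$^*$ convergent subsequence and invoking closedness of $\hpn$ (Lemma \ref{lemmahpoclosed}), one obtains $\tilde\mu_n \weaksConv \tilde\mu \in \hpn$, whence $\mu_n \weaksConv \mu := \delta_X \ast \tilde\mu \in \hp_X$ in $\calE^*$. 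Portmanteau for the non-negative lower semicontinuous $F$ (Lemma \ref{lemmaPortmanteau}) then gives $\langle F, \mu\rangle \leqslant \liminf_n \langle F, \mu_n\rangle \leqslant \alpha$, and hence $G(X) \leqslant \langle F, \mu\rangle \leqslant \alpha$.

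\medskip

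The heart of the argument is Jensen's inequality for $G$: given $\mu \in \hp_Y$ with $\langle G, \mu\rangle < \infty$ and $\epsilon > 0$, the strategy is to construct $\pi \in \hp_Y$ with $\langle F, \pi\rangle \leqslant \langle G, \mu\rangle + \epsilon$, which immediately yields $G(Y) \leqslant \langle G, \mu\rangle + \epsilon$. On the full-$\mu$-measure set $\{W : G(W) < \infty\}$, a measurable selection (Kuratowski--Ryll-Nardzewski, using lower semicontinuity of $\xi \mapsto \langle F(W + \cdot), \xi\rangle$) supplies $W \mapsto \xi_W \in \hpn$ with $\langle F(W + \cdot), \xi_W\rangle \leqslant G(W) + \epsilon$. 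Define the probability measure
\[
\pi := \int_{\RRnm} \bigl(\delta_W \ast \xi_W\bigr)\dd\mu(W).
\]
Elementary computations then give that $\pi$ has barycentre $Y$, finite $p$-th moment (since $F \geqslant c|\cdot|^p$ forces $\int |W'|^p \dd\xi_W(W') \leqslant C(G(W) + |W|^p)$, which is $\mu$-integrable), and, directly from the definition of $\pi$, $\langle F, \pi\rangle \leqslant \langle G, \mu\rangle + \epsilon$.

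\medskip

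Membership $\pi \in \hp_Y$ is the main obstacle. By Theorem \ref{thmCharacterizationOfYM} it suffices to verify $\langle g, \pi\rangle \geqslant \QQ g(Y)$ for every $g \in \calE$. Since $\delta_W \ast \xi_W \in \hp_W$, the same theorem applied pointwise gives $\langle g(W + \cdot), \xi_W\rangle \geqslant \QQ g(W)$, whence $\langle g, \pi\rangle \geqslant \langle \QQ g, \mu\rangle$. The step $\langle \QQ g, \mu\rangle \geqslant \QQ g(Y)$ cannot be obtained by a direct application of Theorem \ref{thmCharacterizationOfYM} to $\QQ g$, because $\QQ g$ is only upper semicontinuous and so does not lie in $\calE$. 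To dispatch it I invoke Lemma \ref{lemmaQFGrowth}: either $\QQ g \equiv -\infty$, in which case the inequality is vacuous, or $|\QQ g| \leqslant C(1 + |\cdot|^p)$. In the latter case, pick a $p$-equiintegrable generating sequence $\ggrad u_k$ for $\mu$ with $u_k - u_Y \in \Ccinfty(Q)$ (via Corollary \ref{corOscillationEquiintegrable} combined with Lemma \ref{lemmaZeroBoundary}); the $\ba$-quasiconvexity of $\QQ g$ (Lemma \ref{lemmaQgIsQC}) yields $\QQ g(Y) \leqslant \dashint_Q \QQ g(\ggrad u_k)\dd x$ for each $k$, while Theorem \ref{thmFToYM}(ii) applied to the non-negative lower semicontinuous integrand $-\QQ g(W) + C(1+|W|^p)$, combined with Theorem \ref{thmFToYM}(iii) applied to the continuous $p$-growth function $C(1+|W|^p)$, gives the matching upper bound $\limsup_k \dashint_Q \QQ g(\ggrad u_k)\dd x \leqslant \langle \QQ g, \mu\rangle$. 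Together these yield $\QQ g(Y) \leqslant \langle \QQ g, \mu\rangle$, completing the verification $\pi \in \hp_Y$ and hence the proof.
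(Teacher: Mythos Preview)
Your proof is correct and follows essentially the same route as the paper: define the candidate $G$ (the paper calls it $R$), show $\overline{F}\leqslant G\leqslant F$ by elementary means, prove lower semicontinuity via the coercivity bound, weak$^*$ compactness in $\calE^*$, closedness of $\hpn$ (Lemma~\ref{lemmahpoclosed}) and portmanteau, and then establish Jensen's inequality by a Kuratowski--Ryll-Nardzewski measurable selection $W\mapsto\xi_W$ and verification that the compound measure $\pi$ lies in $\hp_Y$ through Theorem~\ref{thmCharacterizationOfYM}.

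One point worth flagging: at the step $\langle \QQ g,\mu\rangle\geqslant \QQ g(Y)$ you are actually \emph{more} careful than the paper. The paper writes simply $\int \QQ g\,\dd\nu\geqslant \QQ(\QQ g)(X_0)=\QQ g(X_0)$, appealing to ``the respective property of $\nu$'' and Lemma~\ref{lemmaQgIsQC}; this tacitly applies the necessary condition of Theorem~\ref{thmCharacterizationOfYM} to the function $\QQ g$, which need not lie in $\calE$ since it is in general only upper semicontinuous. Your explicit argument---using Lemma~\ref{lemmaQFGrowth} to reduce to the finite-growth case, taking a $p$-equiintegrable $\Ccinfty$ generating sequence, applying $\ba$-quasiconvexity of $\QQ g$ pointwise along the sequence, and passing to the limit via Theorem~\ref{thmFToYM}(ii)--(iii) with the auxiliary non-negative lower semicontinuous integrand $C(1+|\cdot|^p)-\QQ g$---fills this gap cleanly. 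Conversely, the paper is more thorough than your sketch in setting up the measurable selection (the decomposition into the sets $\Omega_M$ and the verification of the Kuratowski--Ryll-Nardzewski hypotheses in Lemmas~\ref{lemmaMeasurability1}--\ref{lemmaMeasurability2}), which you pass over rather quickly; but these details carry through without difficulty.
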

The reason we put the lower growth assumption in this result is that in a number of places in the proof we will use an argument that selects, at each $X \in \RRnm$ a measure $\nu_{X} \in \hp_{X}$ which (nearly) achieves $\langle F, \nu_X \rangle = \overline{F}(X)$. The main idea of the proof is that then, with any fixed $\nu \in \hpn$ the measure $\mu$ defined by $\dd \mu := \dd \nu_X \dd \nu$ is again a homogeneous $\WWap$-gradient Young measure. To prove that we need to ensure that it has a finite $p$-th moment, which is where coercivity comes into the picture. For now we do not know whether it is possible to relax this assumption, nevertheless it is in line with the relaxation result we prove next. As in the case of integrands of $p$-growth, to prove a relaxation formula we need some sort of a coercivity assumption on the integrand. If we were to relax the pointwise coercivity to simply $\LLp$ or $\LLp$-mean coercivity we would need a Lipschitz assumption on our integrand of the form appearing in Theorem \ref{thmContinuousImpliesRelaxation}, which we cannot have if we wish to allow $F$ to take the value $+ \infty$. Thus, for now at least, we content ourselves with including the lower-growth bound also in our formula for the closed quasiconvex envelope. We note that the lower growth assumption is also present in the relaxation result in Kristensen's paper \cite{Kristensen15}, on which we base our relaxation proof.

Before we proceed to the proof let us recall the following classical result due to Kuratowski and Ryll-Nardzewski:
\begin{theorem}[see \cite{KuratowskiRyllNardzewski65}]\label{thmKuratowski}
Let $X$ be a metric space and $Y$ be a separable and complete metric space. Fix a multi-valued function $\mathcal{G} \colon X \rightarrow 2^Y$. If for any closed set $K \subset Y$ the set $\{ x \in X \colon \mathcal{G}(x) \cap K \not= \emptyset \}$ is Borel measurable then $\mathcal{G}$ admits a measurable selector, i.e., there exists a Borel measurable function $g \colon X \rightarrow Y$ such that for all $x \in X$ we have $g(x) \in \mathcal{G}(x)$. 
\end{theorem}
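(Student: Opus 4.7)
The plan is the standard construction of the measurable selector as a uniform-in-$x$ Cauchy limit of simple measurable approximate selectors taking values in a countable dense subset of $Y$. The implicit hypothesis, of course, is that each $\mathcal{G}(x)$ is non-empty and closed --- without closedness the limit can escape $\mathcal{G}(x)$, and without non-emptiness there is nothing to select; I would note this at the start of the proof.

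First I would fix, using separability of $Y$, an enumeration $\{y_k\}_{k \in \naturals}$ of a countable dense subset, and upgrade the hypothesis to open sets: for every open $U \subset Y$ the set $\{x \colon \mathcal{G}(x) \cap U \neq \emptyset\}$ is Borel, because $U$ can be written as a countable union of closed balls $\overline{B(y_k, r_j)}$ and the Borel $\sigma$-algebra is closed under countable unions. This is the only place where one converts the hypothesis into a working form.

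The main construction is inductive. I would build a sequence of measurable step functions $g_n \colon X \to \{y_k\}$ satisfying
\[ d(g_n(x), \mathcal{G}(x)) < 2^{-n}, \qquad d(g_n(x), g_{n-1}(x)) < 2^{-n} + 2^{-(n-1)} \quad (n \geq 1). \]
The initialisation sets $A_k^0 := \{x \colon \mathcal{G}(x) \cap B(y_k, 1) \neq \emptyset\}$, whose union is $X$ by density of $\{y_k\}$ and non-emptiness of $\mathcal{G}(x)$, and defines $g_0 := y_k$ on $B_k^0 := A_k^0 \setminus \bigcup_{j < k} A_j^0$. The inductive step uses
\[ A_k^{n+1} := \bigl\{ x \colon \mathcal{G}(x) \cap B(y_k, 2^{-n-1}) \neq \emptyset \bigr\} \cap \bigl\{ x \colon d(y_k, g_n(x)) < 2^{-n} + 2^{-n-1} \bigr\}, \]
which is measurable by the upgraded hypothesis and by measurability of $g_n$. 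These sets cover $X$: for each $x$ one picks $z \in \mathcal{G}(x)$ with $d(g_n(x), z) < 2^{-n}$ and then a $y_k$ with $d(y_k, z) < 2^{-n-1}$, which lands in $A_k^{n+1}$ by the triangle inequality. Disjointifying as before yields $g_{n+1}$.

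The finish is automatic. The second bound gives a geometric telescoping estimate $d(g_n(x), g_m(x)) \leq \sum_{j > \min(n,m)} (2^{-j} + 2^{-j+1})$, so $(g_n(x))$ is Cauchy in $Y$ uniformly in $x$; completeness provides $g(x) := \lim_n g_n(x)$, measurability is inherited from pointwise limits of measurable maps into a metric space, and $g(x) \in \mathcal{G}(x)$ follows from $d(g_n(x), \mathcal{G}(x)) \to 0$ together with closedness of $\mathcal{G}(x)$. The only technical obstacle, which is the recurrent point of the argument, is maintaining measurability of the sets $A_k^n$ through the induction, and this is precisely what the hypothesis on preimages of closed sets has been designed to deliver.
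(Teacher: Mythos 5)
The paper does not prove Theorem \ref{thmKuratowski}: it is the classical Kuratowski--Ryll-Nardzewski selection theorem, quoted from \cite{KuratowskiRyllNardzewski65} and used as a black box in the proof of Proposition \ref{lemmaQCenvelope}. Your argument is the standard proof of that theorem and is correct, including the two points worth flagging: the statement implicitly requires $\mathcal{G}$ to have non-empty closed values (which is exactly what Lemma \ref{lemmaMeasurability1} verifies before the theorem is invoked), and the hypothesis on closed test sets upgrades to open ones by writing an open set as a countable union of closed balls centred on the dense sequence.
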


\begin{proof}[Proof of Proposition \ref{lemmaQCenvelope}]
The argument that we present closely follows the one in a recent paper by the author (see \cite{Prosinski18}), where it was employed in the context of $\mathcal{A}$-quasiconvexity. Denote 
$$ R(X) := \inf_{\nu \in \hpn} \langle F(\cdot + X), \nu \rangle \ldotp$$
Clearly for any $\nu \in \hpn$ and $X \in \RRnm$ we have $ \overlineF(X) \leqslant \langle F(\cdot + X), \nu \rangle,$ therefore taking the infimum over $\nu \in \hpn$ yields
$$ \overlineF(X) \leqslant R(X),$$
hence showing that $R$ is closed $\WWap$-quasiconvex will give the reverse inequality and end the proof, as one immediately gets $R \leq F$ by testing with $\nu := \delta_0 \in \hpn$. 

To show that $R$ is lower semicontinuous fix $X_0 \in \RRnm$, a sequence $X_j \rightarrow X_0$, and an $\varepsilon > 0$. We will show that 
$$ \varepsilon + \liminf_{j \to \infty} R(X_j) \geqslant R(X_0) \ldotp$$
Without loss of generality assume that $ \lim_{j \to \infty} R(X_j) = \liminf_{j \to \infty} R(X_j) < \infty,$
and let $M$ be such that $R(X_j) + \varepsilon \leqslant M$ for all $j$. By definition of $R$, for each $X_j$ there exists $\nu_j \in \hpn$ with
$$ M \geqslant R(X_j) + \varepsilon \geqslant \langle F(\cdot + X_j), \nu_j \rangle \ldotp$$
Our growth assumption on $F$ and boundedness of $\left|X_j\right|$ (as a convergent sequence) give
$$ M \geqslant \int_{\RRnm} c \left| X + X_j \right|^p \dd \nu_j \geqslant C \left( \int_{\RRnm} \left|X\right|^p \dd \nu_j -1 \right),$$
which yields $ \sup_j \int_{\RRnm} \left|X\right|^p \dd \nu_j < \infty \ldotp$
We see that the family $\{ \nu_j \}$ is bounded in $\calE^*$, therefore we may extract a weakly*-convergent subsequence from it --- without loss of generality assume that the whole sequence converges, i.e., $\nu_j \weaksConv \nu_0$ in $\calE^*$. By Lemma \ref{lemmahpoclosed} we have $\nu_0 \in \hpn$. Moreover
$ \translation_{X_j} \ast \nu_j \weaksConv \translation_{X_0} \ast \nu_0 \ldotp$
Since $F$ is lower semicontinuous and bounded from below we have
\begin{equation*}
\begin{aligned}
\varepsilon + \liminf_{j \to \infty} R(X_j)\geqslant  & \liminf_{j \to \infty} \langle F, \translation_{X_j} \ast \nu_j \rangle \geqslant \langle F, \translation_{X_0} \ast \nu_0 \rangle  \\
= & \int_{\RRnm} F(\cdot + X_0) \dd \nu_0 \geqslant R(X_0),
\end{aligned}
\end{equation*}
where the last inequality comes from the definition of $R$ and the fact that $\nu \in \hpn$. Since $\varepsilon > 0$ was arbitrary we conclude that $R$ is in fact lower semicontinuous. 

It now remains to show that $R$ satisfies Jensen's inequality with respect to homogeneous oscillation $\WWap$-gradient Young measures. To that end fix $X_0 \in \RRnm$ and $\nu \in \mathbb{H}^p_{X_0}$. We wish to show that
$R(X_0) \leqslant \int_{\RRnm} R \dd \nu \ldotp$
Without loss of generality we may assume that $\int_{\RRnm} R \dd \nu < \infty$. Fix an $\varepsilon > 0$ and observe that, by definition of $R$, for all $X \in \RRnm$ there exists $\nu_{X} \in \hpn$ satisfying
$$ \langle F(\cdot + X), \nu_{X} \rangle \leqslant \varepsilon + R(X),$$
so that, for now only formally,
$$\int_{\RRnm} \left( \int_{\RRnm} F(\cdot + X) \dd \nu_{X} \right) \dd \nu(X) \leqslant \varepsilon + \int_{\RRnm} R \dd \nu \ldotp$$
Now --- if we manage to show that $\nu_{X}$ may be chosen in such a way that $X \mapsto \nu_{X}$ is weak* measurable and that the measure $\mu$ defined by duality as

\begin{equation}\label{eqMuDefDuality}
\langle g, \mu \rangle := \int_{\RRnm} \left( \int_{\RRnm} g(\cdot + X) \dd \nu_{X} \right) \dd \nu(X)
\end{equation}
is a homogeneous $\WWap$-gradient Young measure with mean $X_0$ then the claim will follow, as by definition $\langle F, \mu \rangle \geqslant R(X_0)$. 

Note that weak* measurability of $X \mapsto \nu_{X}$ only means Lebesgue measurability of $X \mapsto  \int_{\RRnm} g(\cdot + X) \dd \nu_{X}$, which need not be enough to integrate this function with respect to $\nu$. However, if we manage to get Borel measurability of the function in question then the construction is justified, as $\nu$ is a Radon (hence Borel) measure --- we will call such a map Borel weak* measurable. It is clear that if one makes sense of the integration on the right-hand side of \eqref{eqMuDefDuality} then it defines a linear functional on $C_0(\RRnm)$. Its boundedness follows from the fact that all $\nu_{X}, \nu$ are probability measures, thus showing that the functional is given by some finite Radon measure $\mu$. 

For the measurable selection part we define a multifunction $\mathcal{G}$ given by
$$ \mathcal{G}(X) := \left\{ \mu \in \hpn \colon \int_{\RRnm} F(\cdot + X) \dd \mu \leqslant \varepsilon + R(X) \right\} \ldotp$$
For the measurable selection result we intend to use (see Theorem \ref{thmKuratowski}) we need $\mathcal{G}$ to take values in $2^Y$ for some complete metric space $Y$. For that we define, for a given $M > 0$,
$$ \Omega_M := \{ X \in \RRnm \colon \left|X\right| < M, R(X) \leqslant M \} \ldotp $$
Observe that since we assumed $R$ to be integrable with respect to $\nu$, we have that $X \in \bigcup_{M = 1}^{\infty} \Omega_M$ for $\nu$-a.e. $X \in \RRnm$. Let us fix $M \in \naturals$. Then, for any $X \in \Omega_M$ and any $\mu \in \mathcal{G}(X)$, we have
$$ \int_{\RRnm} F(W + X) \dd \mu(W) \leqslant \varepsilon + R(X) \leqslant 2 \varepsilon + R(X) \leqslant M+2 \varepsilon \ldotp$$
The factor $2$ in front of $\varepsilon$ is not important here, we only put it there to allow for some room in later parts of the argument. Due to the growth assumption on $F$ we have
\begin{equation*}
\begin{aligned} 
\int_{\RRnm} F(W + X) \dd \mu (W) \geqslant & C \int_{\RRnm} \left|W + X\right|^p \dd \mu (W) \\
\geqslant  & C \int_{\RRnm} \left|W\right|^p \dd \mu (W) - C^{-1} \left|X\right|^p \ldotp
\end{aligned}
\end{equation*}
Finally $  \int_{\RRnm} \left|W\right|^p \dd \mu (W) \leqslant C_M,$ holds for all $\mu \in \mathcal{G}(X)$, with the constant $C_M$ depending only on $M$ (and $\varepsilon$). Therefore, we may consider our operator $\mathcal{G}$ as a map $\Omega_M \rightarrow 2^{Y_M}$, where
$$ Y_M := \left\{ \mu \in \hpn \colon \int_{\RRnm} \left|W\right|^p \dd \mu \leqslant C_M \right\} \ldotp$$
The set $Y_M$ may be equipped with the weak* topology inherited from $\calE^*$. Since we put a uniform bound on the $p$-th moments (so also on the norm in $\calE^*$), this topology is metrisable in a complete and separable manner when restricted to $Y_M$. To prove that, first recall that due to Lemma \ref{lemmahpoclosed} $\hpn$ is weak* closed in $\calE^*$. Since $| \cdot |^p \in \calE$ we know that the map $\mu \mapsto \intRRnm |W|^p \dd \mu$ is weak* continuous, thus $Y_M$ is weak* closed and bounded. The Banach-Alaoglu Theorem (see for example Theorem 3.16 in \cite{Brezis10}) then implies that $Y_M$ is weak* compact. Since $\calE$ is clearly separable we deduce that the weak* topology on $Y_M$ is metrisable (see Theorem 3.28 in \cite{Brezis10}). Finally, compact metric spaces are complete and separable, thus proving our claim. 
\begin{lemma}\label{lemmaMeasurability1}
For any $X \in \Omega_M$ the set $\mathcal{G}(X)$ is non-empty and closed.
\end{lemma}
\begin{proof}
The fact that $\mathcal{G}(X) \not= \emptyset$ comes straight from the definition of $R$. To show that it is closed it is enough to show that it is sequentially closed. Let us then fix a sequence $\{\mu_j\} \subset \mathcal{G}(X)$ and assume that it converges weak* in $\calE^*$ to some $\mu \in Y_M$. Since the function $F$ is lower semicontinuous and bounded from below we get by Lemma \ref{lemmaPortmanteau} that
$$ R(X) + \varepsilon \geqslant \liminf_{j \to \infty} \int_{\RRnm} F(\cdot + X) \dd \mu_j \geqslant  \int_{\RRnm} F(\cdot + X) \dd \mu,$$
so $\mu \in \mathcal{G}(X)$, which ends the proof.
\end{proof}

\begin{lemma}\label{lemmaMeasurability2}
For any non-empty closed set $O \subset Y_M$ the set 
$$\left\{ X \in \Omega_M \colon \mathcal{G}(X) \cap O \not= \emptyset \right\}$$ 
is Borel measurable. 
\end{lemma}
\begin{proof}
First note that we may rewrite the set in question as
$$\bigcap_{k=1}^{\infty} \left\{ X \in \Omega_M \colon \inf_{\mu \in O} \int_{\RRnm} F(\cdot + X) \dd \mu \leqslant R(X) + \varepsilon(1 + 2^{-k}) \right\}\ldotp$$
Hence, it is enough to show that the sets 
$$\left \{ X \in \Omega_M \colon \inf_{\mu \in O} \int_{\RRnm} F(\cdot + X) \dd \mu \leqslant R(X) + \varepsilon(1 + 2^{-k}) \right\}$$ 
are all Borel measurable. Define
$$ U(X) := \inf_{\mu \in O} \int_{\RRnm} F(\cdot + X) \dd \mu \ldotp $$
We claim that $U$ is lower semicontinuous. Let $X_j \rightarrow X$. We need to show that $ \liminf_{j \to \infty} U(X_j) \geqslant U(X) \ldotp$
Without loss of generality assume that the $\liminf$ is a true limit and that it is finite, i.e., 
$$\lim_{j \to \infty} U(X_j) = \liminf_{j \to \infty} U(X_j) < \infty.$$ 
By definition of $U$, for each $k$ there exists a measure $\mu_j \in O$ with
$$ \int_{\RRnm} F(\cdot + X_j) \dd \mu_j \leqslant U(X_j) + 1/k \ldotp$$
Therefore
$$ \lim_{j \to \infty} \int_{\RRnm} F(\cdot + X_j) \dd \mu_j = \lim_{j \to \infty} U(X_j) \ldotp$$
Since the set $O$ is a closed subset of a compact space $Y_M$ we may extract an $\calE^*$ weak* convergent subsequence from $\mu_j$. Without loss of generality assume that the entire sequence $\mu_j$ converges weak* to some $\mu \in O$. This, combined with $X_j \rightarrow X$, implies that we have $ \delta_{X_j} \ast \mu_j \weaksConv \delta_{X} \ast \mu$ in the sense of probability measures. Therefore, since $F$ is lower semicontinuous, the portmanteau theorem yields
\begin{equation*}
\begin{aligned}
\liminf_{j \to \infty} \int_{\RRnm} F(\cdot + X_j) \dd \mu_j   = & \liminf_{j \to \infty} \int_{\RRnm} F \dd \left( \delta_{X_j} \ast \mu_j \right)  \\
\geqslant  & \int_{\RRnm} F \dd \left( \delta_{X} \ast \mu \right) =  \int_{\RRnm} F(\cdot + X) \dd \mu \geqslant U(X),
\end{aligned}
\end{equation*}
which shows that $U$ is indeed lower semicontinuous. Since the set
$$ \left\{ X \in \Omega_M \colon \inf_{\mu \in O} \int_{\RRnm} F(\cdot + X) \dd \mu \leqslant R(X) + \varepsilon(1 + 2^{-k}) \right\} $$
is the same as
$$ \{ X \in \Omega_M \colon U(X) \leqslant R(X) + \varepsilon(1 + 2^{-k}) \},$$
and both $U$ and $R$ are lower semicontinuous (hence Borel measurable) the set in question is Borel measurable as well, which ends the proof.
\end{proof}

Now, thanks to Lemmas \ref{lemmaMeasurability1} and \ref{lemmaMeasurability2} we may use Theorem \ref{thmKuratowski} to deduce the existence of a weak* measurable map $\nu^M \colon \Omega_M \rightarrow \hpn$ such that for any $X \in \Omega_M$ the measure $\nu^M_{X}$ satisfies
$$ \int_{\RRnm} F(\cdot + X) d\nu^M_{X} \leqslant \varepsilon + R(X) \ldotp$$
Finally let us define the map $\widetilde{\nu} \colon \RRnm \rightarrow \hpn$ by
\begin{equation*} 
\widetilde{\nu}_X := \begin{cases}
\nu^M_{X} \text{ for } X \in \Omega_M \setminus \Omega_{M-1} \\
\widetilde{\mu} \text{ for } X \not\in \bigcup_{M=1}^{\infty} \Omega_M,
\end{cases}
\end{equation*}
where $\widetilde{\mu}$ is some arbitrary element of the (non-empty) set $\hpn$. Observe that the choice of $\widetilde{\mu}$ does not matter, as we have already observed that the set $\RRnm \setminus \bigcup_{M=1}^{\infty} \Omega_M$ is of $\nu$ measure $0$. This set is also Borel since we already know that $R$ is Borel measurable, hence each $\Omega_M$ is Borel. Clearly the map $\widetilde{\nu}$ is Borel weak* measurable, i.e., it is a measurable map from $\RRnm$ equipped with the Borel $\sigma$-algebra into $\hpn$ equipped with the weak* topology inherited from $\calE^*$. Therefore, we may define $\mu \in (C_0(\RRnm))^*$ as in \eqref{eqMuDefDuality}. It only remains to show that $\mu \in \hpn$. 

Positivity of $\mu$ results immediately from positivity of all $\nu_{X}$ and $\nu$. In the same way we show that $\mu$ is a probability measure, as
$$\langle 1, \mu \rangle =  \int_{\RRnm} \left( \int_{\RRnm} 1 \dd \nu_{X} \right) \dd \nu(X) =   \int_{\RRnm} 1 \dd \nu(X) = 1,$$
since all measures considered are probability measures. To prove that $\mu$ has a finite $p$-th moment we write
$$\langle \left|\cdot\right|^p, \mu \rangle = \int_{\RRnm} \left( \int_{\RRnm} \left|\cdot + X\right|^p \dd \nu_{X} \right) \dd \nu(X) \ldotp $$
Using the growth assumption on $F$ we get
$$ \int_{\RRnm} \left|\cdot + X\right|^p \dd \nu_{X} \leqslant C  \int_{\RRnm} F(\cdot + X) \dd \nu_{X} \leqslant C (R(X) + \varepsilon),$$
where the last inequality is satisfied for $\nu$-a.e. $X$. Integrating with respect to $\nu$ gives
$$\langle \left|\cdot\right|^p, \mu \rangle \leqslant C \left(\varepsilon + \int_{\RRnm} R(X) \dd \nu(X)\right) < \infty,$$
since, by assumption, $R$ is integrable with respect to $\nu$. Lastly, it remains to show that $\mu$ satisfies the inequality in Theorem \ref{thmCharacterizationOfYM}. Fix any continuous functions $g \colon \RRnm \to \RR$ with $|g(v)| \leqslant C(1 + |v|^p)$ for some constant $C$. We have
\begin{eqnarray*}
\langle \mu, g \rangle &=&  \int_{\RRnm} \left( \int_{\RRnm} g(\cdot + X) \dd \nu_{X} \right) \dd \nu(X) \\
&\geqslant&  \int_{\RRnm} \QQ g(X) \dd \nu(X) \geqslant \QQ (\QQ g) (X_0) = \QQ g(X_0),
\end{eqnarray*} 
where the first inequality comes from the fact that all $\nu_{X}$'s are Young measures with mean $0$, the second one from the respective property of $\nu$, and the last equality from Lemma \ref{lemmaQgIsQC}.     
This shows that we indeed have $\mu \in \mathbb{H}^p_{X_0}$ and ends the proof, as discussed earlier (see eq. \eqref{eqMuDefDuality}).
\end{proof}

\subsection{Relaxation in the extended real-valued setting}\label{sectionRelaxationExtended}
We begin by defining a relaxed notion of convergence for vector fields that are nearly (up to an $\LLp$-small error) $\ba$-gradients of functions in $\WWap$. The notion is reminiscent of the one often used in the $\mathcal{A}$-free setting, where instead of working with sequences that satisfy the constraint exactly, i.e., with $\mathcal{A} V_j = 0$, one only requires $\mathcal{A} V_j \to 0$ strongly in $\WW^{-1,p}$, see for example \cite{FonsecaMuller99}. In the case of standard first order gradients this corresponds to the condition $\mathrm{curl} \, V_j \to 0$ strongly in $\WW^{-1,p}(\Omega)$ investigated in \cite{Kristensen15}. 
\begin{defi}
We say that a sequence of vector fields $V_j \in \LLp(\Omega; \RRnm)$ is a sequence of approximate $\WWap$ gradients if there exist sequences $u_j \in \WWap(\Omega; \RRn)$ and $v_j \in \LLp(\Omega; \RRnm)$ such that 
$$ V_j = \ggrad u_j + v_j$$
and $v_j \to 0$ strongly in $\LLp$. 
\end{defi}

Following \cite{KristensenNotes} we introduce the following notion of convergence:
\begin{defi}
We say that a sequence of vector fields $V_j \in \LLp(\Omega; \RRnm)$ converges to $V$ in the sense of approximate $\WWap$ gradients if $V_j $ converges to $V$ weakly in $\LLp$ and $(V_j - V)$ is a sequence of approximate $\WWap$ gradients. In such a case we write $V_j \ApgradConv V$.
\end{defi}

Proposition \ref{propTranslatedYM} immediately implies the following:
\begin{lemma}\label{lemmaApproximateMeasures}
Assume that $\Omega$ satisfies the weak $\ba$-horn condition. Suppose that a sequence $V_j = \ggrad u_j + v_j \in \LLp(\Omega; \RRnm)$ of approximate $\WWap$ gradients converges weakly to $0$ in $\LLp$ and generates an oscillation Young measure $\nu$. Then $\{\ggrad u_j \}$ generates the same Young measure $\nu$. In particular, any oscillation Young measure generated by a sequence of approximate $\WWap$ gradients is an oscillation $\WWap$-gradient Young measure.
\end{lemma}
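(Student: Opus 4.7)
The plan is to invoke Proposition \ref{propTranslatedYM} directly. Write $\ggrad u_j = V_j + (-v_j)$. Since $v_j \to 0$ strongly in $\LLp(\Omega; \RRnm)$, the sequence $-v_j$ converges to $0$ in measure as well (strong $\LLp$ convergence implies convergence in measure on sets of finite measure, and $\Omega$ is bounded). Moreover, $\{V_j\}$ is bounded in $\LLp$ (as it generates an oscillation Young measure, which in the present context arises from an $\LLp$-bounded sequence), so $\{\ggrad u_j\} = \{V_j - v_j\}$ is also bounded in $\LLp$ and therefore admits a well-defined Young measure along any subsequence.

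Apply Proposition \ref{propTranslatedYM} to the sequence $V_j$, taking $W_j := -v_j$ and $W := 0$. Since $V_j$ generates the oscillation Young measure $\nu$ and $W_j \to W = 0$ in measure, the proposition asserts that $V_j + W_j = \ggrad u_j$ generates the translated measure
\[
\widetilde{\nu}_x = \delta_0 \ast \nu_x = \nu_x,
\]
where the last equality is just the identity $\langle \delta_0 \ast \nu_x, \varphi \rangle = \langle \nu_x, \varphi(\cdot + 0) \rangle = \langle \nu_x, \varphi \rangle$ for every $\varphi \in C_0(\RRnm)$. This proves the first claim.

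For the ``in particular'' assertion, observe that we have just exhibited a sequence $\{\ggrad u_j\}$ of genuine $\WWap$-gradients (the $u_j$ are elements of $\WWap(\Omega;\RRn)$ by hypothesis) that generates $\nu$ as its oscillation Young measure. By the very definition of oscillation $\WWap$-gradient Young measure, this means $\nu$ is such a measure.

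There is no genuine obstacle here; the statement is essentially a bookkeeping remark that packages Proposition \ref{propTranslatedYM} in the language of approximate gradients. The only small point worth verifying, which I did above, is that $\{\ggrad u_j\}$ is itself $\LLp$-bounded so that the notion of being a generating sequence makes sense — but this is automatic from $\LLp$-boundedness of $\{V_j\}$ and strong $\LLp$ convergence of $\{v_j\}$ to $0$.
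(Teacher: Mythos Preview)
Your proof is correct and matches the paper's approach exactly: the paper simply states that the lemma follows immediately from Proposition \ref{propTranslatedYM}, and your argument spells out precisely this application (write $\ggrad u_j = V_j + (-v_j)$ with $-v_j \to 0$ in measure, so the generated Young measure is unchanged). The extra remarks you make about $\LLp$-boundedness of $\{\ggrad u_j\}$ and the identity $\delta_0 \ast \nu_x = \nu_x$ are routine but harmless clarifications.
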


\begin{corollary}\label{corClosedQCSufficient}
Let $\Omega$ be a bounded open Lipschitz domain satisfying the weak $\ba$-horn condition. Suppose that $F \colon \RRnm \to (-\infty, \infty]$ is bounded from below and closed $\WWap$-quasiconvex. Then the functional
$$ \II (V) := \int_\Omega F(V) \dd x$$
is sequentially lower semicontinuous with respect to approximate $\WWap$ gradient convergence.
\end{corollary}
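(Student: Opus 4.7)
The plan is to identify the oscillation Young measure generated by $V_j$, show it falls into the right class at almost every point, and then apply Jensen's inequality coming directly from closed $\WWap$-quasiconvexity. Given $V_j \ApgradConv V$ in the sense of the definition, I first pass to a subsequence (without relabeling) achieving the $\liminf$ of $\II(V_j)$, reducing to the case where this is a finite limit. Since weak $\LLp$ convergence forces $\{V_j\}$ to be $\LLp$-bounded, Theorem \ref{thmFToYM} provides a further subsequence generating an oscillation Young measure $\{\nu_x\}_{x \in \Omega}$.

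Next I exploit the fact that, by definition of approximate $\WWap$-gradient convergence, $V_j - V \weakConv 0$ in $\LLp$ and $V_j - V$ is itself a sequence of approximate $\WWap$-gradients. Passing to another subsequence, $V_j - V$ generates some Young measure $\{\mu_x\}$, which by Lemma \ref{lemmaApproximateMeasures} is in fact an oscillation $\WWap$-gradient Young measure. Writing $V_j = (V_j - V) + V$, where $V$ is constant in $j$ (hence trivially converges strongly), Proposition \ref{propTranslatedYM} yields $\nu_x = \delta_{V(x)} \ast \mu_x$ for a.e. $x$. The $\LLp$-boundedness of $\{V_j - V\}$ together with $p>1$ yields equiintegrability of the coordinate projections, so the barycentre of $\mu_x$ equals the weak limit $0$ a.e.; therefore $\nu_x$ has barycentre $V(x)$.

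I then invoke the localisation Proposition \ref{propLocalisation} applied to $\mu$: for $\mathcal{L}^N$-a.e. $x_0 \in \Omega$, the measure $\mu_{x_0}$ (viewed as constant in a new spatial variable) is a homogeneous $\WWap$-gradient Young measure with mean $0$, i.e., $\mu_{x_0} \in \hpn$. Consequently $\nu_{x_0} = \delta_{V(x_0)} \ast \mu_{x_0} \in \hp_{V(x_0)}$. The assumed closed $\WWap$-quasiconvexity of $F$ then gives the pointwise Jensen-type inequality $F(V(x_0)) \leq \langle \nu_{x_0}, F \rangle$ for a.e. $x_0$. Combining this with part ii) of Theorem \ref{thmFToYM} (applicable because $F$ is lower semicontinuous and bounded below, hence a normal integrand in the sense used there) and integrating in $x$ produces
$$\II(V) = \int_\Omega F(V(x)) \dd x \leq \int_\Omega \langle \nu_x, F \rangle \dd x \leq \liminf_{j \to \infty} \int_\Omega F(V_j) \dd x,$$
which is the claimed sequential lower semicontinuity.

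The main conceptual step, in my view, is verifying that the Young measure $\mu$ generated by $V_j - V$ is genuinely a $\WWap$-gradient Young measure rather than merely one generated by an approximate gradient sequence; this is precisely the content of Lemma \ref{lemmaApproximateMeasures}, which rests on the observation that the strongly $\LLp$-null error $v_j$ can be absorbed without affecting the generated Young measure, via Proposition \ref{propTranslatedYM}. Once this identification is in hand, the remainder is a routine combination of the Fundamental Theorem of Young Measures with the definition of closed $\WWap$-quasiconvexity; no further structural results on $F$ or on the domain beyond what was used for Lemma \ref{lemmaApproximateMeasures} are needed.
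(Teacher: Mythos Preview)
Your proof is correct and follows exactly the route the paper intends: the paper's own argument is the one-liner that Lemma~\ref{lemmaApproximateMeasures} identifies the generated Young measure as a $\WWap$-gradient Young measure, after which the proof of Lemma~\ref{lemmaClosedQCImpliesLSC} (localisation via Proposition~\ref{propLocalisation} followed by Jensen from closed $\WWap$-quasiconvexity) applies verbatim. You have simply spelled out the details, in particular the translation step --- passing to $V_j - V$ so that Lemma~\ref{lemmaApproximateMeasures} applies to a weakly null sequence, and then recovering $\nu_x = \delta_{V(x)} \ast \mu_x$ via Proposition~\ref{propTranslatedYM} --- which the paper's brief proof leaves implicit.
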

\begin{proof}
Since, by Lemma \ref{lemmaApproximateMeasures} the Young measures generated by sequences of approximate $\WWap$ gradients are $\WWap$-gradient Young measures, the argument of Lemma \ref{lemmaClosedQCImpliesLSC} carries through unchanged.
\end{proof}

The main result here is the following:
\begin{theorem}\label{thmContinuousImpliesRelaxation}
If $F \colon \RRnm\to (-\infty, \infty]$ is a continuous integrand satisfying $F(X) \geqslant C|X|^p - C^{-1}$ for some $C > 0$ then the sequentially (with respect to approximate $\WWap$ gradient convergence) weakly lower semicontinuous envelope of the functional $\II_F$ is given by
$$\overline{\II}_F [V] := \inf_{V_j \ApgradConv V} \left\{ \liminf_{j \to \infty} \II_F [V_j] \right\}= \int_{\Omega} \overline{F}(V(x)) \dd x,$$
where the infimum is taken over all sequences $V_j$ converging to $V$ in the sense of approximate $\WWap$ gradient convergence. As before, $\overline{F}$ denotes the closed $\WWap$-quasiconvex envelope of $F$. 
\end{theorem}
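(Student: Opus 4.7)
The plan is to establish the two inequalities defining the sequentially weakly lower semicontinuous envelope.

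For the \emph{lower bound}, I would apply Proposition \ref{lemmaQCenvelope} to the non-negative shift $F + C^{-1} \geq C|X|^p$ to see that the closed $\WWap$-quasiconvex envelope $\overline{F}$ is itself closed $\WWap$-quasiconvex and bounded below by $-C^{-1}$. Since $\overline{F} \leq F$, Corollary \ref{corClosedQCSufficient} then yields, for every $V_j \ApgradConv V$,
$$
\int_\Omega \overline{F}(V) \dd x \leq \liminf_{j \to \infty} \int_\Omega \overline{F}(V_j) \dd x \leq \liminf_{j \to \infty} \int_\Omega F(V_j) \dd x,
$$
and infimum over such sequences gives $\int_\Omega \overline{F}(V)\dd x \leq \overline{\II}_F[V]$.

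For the \emph{upper bound} I would reduce to the case $\int_\Omega \overline{F}(V)\dd x < \infty$ and fix $\epsilon > 0$. Using the equivalent formula
$$
\overline{F}(V(x)) = \inf_{\mu \in \hpn} \langle F(V(x) + \cdot), \mu \rangle
$$
from Proposition \ref{lemmaQCenvelope}, together with the Kuratowski-Ryll-Nardzewski theorem (Theorem \ref{thmKuratowski}), I would construct a Borel weak* measurable selection $x \mapsto \mu_x^\epsilon \in \hpn$ with $\langle F(V(x) + \cdot), \mu_x^\epsilon \rangle \leq \overline{F}(V(x)) + \epsilon$ almost everywhere; the measurable selection verification parallels Lemmas \ref{lemmaMeasurability1}--\ref{lemmaMeasurability2}, with the coercivity $F \geq C|\cdot|^p - C^{-1}$ providing the uniform $p$-th moment bound that metrises the weak* topology. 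Next I would verify the three hypotheses of Theorem \ref{thmCharacterisationYMnonhom} for $\{\mu_x^\epsilon\}$: the barycenter vanishes; the $p$-th moment bound follows from $|W|^p \leq C^{-1}(F(V(x)+W)+C^{-1})$ integrated against $\mu_x^\epsilon$ and then over $\Omega$; and condition (iii) is automatic since each $\mu_x^\epsilon \in \hpn$. This produces a sequence $\{u_n^\epsilon\} \subset \WWap(\Omega)$ with $\ggrad u_n^\epsilon \weakConv 0$ generating $\mu^\epsilon$, which by Corollary \ref{corOscillationEquiintegrable} may be chosen in $\Ccinfty(\Omega)$ with $\{\ggrad u_n^\epsilon\}$ $p$-equiintegrable. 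The candidate recovery sequence is $V_n^\epsilon := V + \ggrad u_n^\epsilon \ApgradConv V$, which generates the translated Young measure $\{\delta_{V(x)} \ast \mu_x^\epsilon\}$ by Proposition \ref{propTranslatedYM}.

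The remaining task, and the \textbf{main obstacle}, is to show
$$
\limsup_{n \to \infty} \int_\Omega F(V_n^\epsilon) \dd x \leq \int_\Omega \langle F(V(x) + \cdot), \mu_x^\epsilon \rangle \dd x \leq \int_\Omega \overline{F}(V)\dd x + \epsilon|\Omega|,
$$
after which a standard diagonal extraction in $\epsilon \to 0$ concludes. The matching $\liminf$ bound follows readily from the Fundamental Theorem of Young Measures (Theorem \ref{thmFToYM}) applied to the truncations $F_M := \min(F, M)$ and monotone convergence in $M$. The $\limsup$ inequality is the delicate point, because the extended real-valuedness of $F$ breaks the usual Carath\'eodory equiintegrability argument and $F(V_n^\epsilon)$ need not be equiintegrable. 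The plan to resolve it is to exploit that $\int_\Omega \langle F(V(x) + \cdot), \mu_x^\epsilon\rangle\dd x < \infty$ forces $\mu_x^\epsilon(\{W : F(V(x) + W) = +\infty\}) = 0$ for almost every $x$, so by the portmanteau inequality (Lemma \ref{lemmaPortmanteau}) the exceptional sets $\{x : F(V_n^\epsilon(x)) > M\}$ have asymptotically vanishing Lebesgue measure as $n\to\infty$ and $M \to \infty$. Using the $p$-equiintegrability of $\{\ggrad u_n^\epsilon\}$, one then modifies $V_n^\epsilon$ on these small sets by replacing its value there with a reference point $W^\ast$ satisfying $F(W^\ast) < \infty$; the perturbation is $\LLp$-null, so the modified sequence remains an approximate $\WWap$ gradient converging to $V$ and generates the same Young measure, while $F$ becomes equiintegrable along it, whereupon Vitali's convergence theorem closes the argument.
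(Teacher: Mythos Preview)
Your proposal is correct and follows essentially the same route as the paper: measurable selection of near-optimal $\mu_x^\epsilon \in \hpn$, verification of Theorem \ref{thmCharacterisationYMnonhom}, generation by a $p$-equiintegrable sequence, truncation of $F$, and modification of the sequence on the bad set $\{F > M\}$ by a fixed reference point $W^\ast$ with $F(W^\ast)<\infty$. The only slip is in the last line: the claim that $F$ becomes equiintegrable along the modified sequence is not quite right (on the good set $F$ can still be as large as $M_n\to\infty$), and the paper does not invoke Vitali but instead computes directly $\int_\Omega F(\tilde V_n)\,\dd x \leq \int_\Omega F^{M_n}(V_n^\epsilon)\,\dd x + |G_n^c|\,F(W^\ast)$ and controls the first term via a diagonal extraction in $(M,n)$ using the already-established bound $\int_\Omega F^M(V_n^\epsilon)\,\dd x \to \int_\Omega \langle F^M(V(x)+\cdot),\mu_x^\epsilon\rangle\,\dd x \leq \int_\Omega \overline F(V)\,\dd x + \epsilon|\Omega|$; the paper also takes $F^M=\min(F,\,M(|\cdot|^p+1))$ rather than $\min(F,M)$, which makes the $\LLp$-smallness of the modification immediate without appealing separately to $p$-equiintegrability.
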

\begin{proof}
Corollary \ref{corClosedQCSufficient} guarantees that $\overline{\II}_F [V] \geqslant \int_{\Omega} \overline{F}(V(x)) \dd x$, thus we only need to prove the opposite inequality. If $F$ is identically equal $+ \infty$ then there is nothing to show, so we may restrict to proper integrands.
Using a translation we may, without loss of generality, assume $F(X) \geqslant C|X|^p$. In any case, the fact that $F$ is bounded from below immediately implies the same for $\overline{F}$. Fix any $V \in \LL^p$. Without loss of generality we may assume  $\int_{\Omega} \overline{F}(V(x)) \dd x < \infty$, as otherwise there is nothing to prove. Fix an $\varepsilon > 0$ and observe that we must have $ \overline{F}(V(x)) < \infty \text{ a.e. in } \Omega \ldotp$
Therefore, using Proposition \ref{lemmaQCenvelope}, we may find a family of homogeneous oscillation $\WWap$-gradient Young measures $\{\nu_x\}_{x \in \Omega}$ with mean $0$ and such that, for almost every $x \in \Omega$, we have
\begin{equation}\label{eqNuxFbar} 
\overline{F}(V(x)) + \varepsilon \geqslant \intRRnm F(\cdot + V(x)) \dd \nu_x \ldotp
\end{equation}
Using exactly the same argument as in the proof of Proposition \ref{lemmaQCenvelope} we may ensure weak* measurability of $x \rightarrow \nu_x$. We intend to show that $\nu$ is a suitable Young measure using Theorem \ref{thmCharacterisationYMnonhom}. Recall that we need to prove the following:

i) there exists $v \in \WWap(\Omega)$ such that
$$ \ggrad v(x) = \langle \nu_x, \id \rangle \text{ for a.e. } x \in \Omega;$$

ii) 
$$ \int_{\Omega} \intRRnm |W|^p \dd \nu_x(W) \dd x < \infty;$$

iii) for a.e. $x \in \Omega$ and all continuous functions $g \colon \RRnm \to \RR$ satisfying $|g(W)| \leqslant C(1 + |W|^p)$ for some positive constant $C$ one has
$$ \langle \nu_x, g \rangle \geqslant \QQ g( \langle \nu_x, \id \rangle) \ldotp$$

The first point is clearly satisfied, as all our measures are of mean $0$. The second one may be checked in the same way as in the already mentioned proof of Proposition \ref{lemmaQCenvelope}, using the growth assumption on $F$. Finally, the third point results immediately from the fact that all $\nu_x$'s are, by definition, elements of $\hpn$, so we may use Theorem \ref{thmCharacterizationOfYM}. 

This shows that $\nu$ is indeed generated by some $p$-equiintegrable family $\{\ggrad w_j\}$ with $w_j \in \WWap(\Omega)$ and $w_j \weakConv 0$ in $\WWap$. For a given $M \in \naturals$ consider 
$$F^M(z) := \min(F(z), M(|z|^p + 1)).$$
Clearly, for each $M$, the function $F^M$ is continuous and the family $\{F^M(V + \ggrad w_j)\}_j$ is $p$-equiintegrable, due to the same property of $\{V + \ggrad w_j\}$. Theorem \ref{thmFToYM} then yields
$$ \int_{\Omega} F^M(V + \ggrad w_j) \dd x \rightarrow \int_{\Omega} \left( \intRRnm F^M(V(x) + \cdot) \dd \nu_x \right) \dd x \ldotp$$
On the other hand, since $F^M \leqslant F$ and $\nu_x$ are non-negative and satisfy \eqref{eqNuxFbar}, we have
\begin{eqnarray*}
\int_{\Omega} \left( \intRRnm F^M(V(x) + \cdot) \dd \nu_x \right) \dd x &\leqslant& \int_{\Omega} \left( \intRRnm F(V(x) + \cdot) \dd \nu_x \right) \dd x\\ 
&\leqslant& \int_{\Omega} \overline{F}(V(x)) \dd x + \varepsilon \ldotp
\end{eqnarray*}
From this we deduce, through a diagonal extraction, that there exists a sequence $j(M) \in \naturals$ with $\lim_{M \rightarrow \infty} j(M) = \infty$ such that for all $M$ one has
\begin{equation}\label{eqEstimateForFM} 
\int_{\Omega} F^M(V + \ggrad w_{j(M)}) \dd x \leqslant \int_{\Omega} \overline{F}(V(x)) \dd x + 2 \varepsilon \ldotp
\end{equation}
Define the set
$$ \gm := \left\{ x \in \Omega \colon F(V(x) + \ggrad w_{j(M)}(x)) \leqslant M(|V(x) + \ggrad w_{j(M)}(x)|^p + 1) \right\},$$
and fix some $X_0 \in \RRnm$ for which $F(X_0) < \infty$ --- such a point exists, as $F$ is proper. Next define a vector field $W_M$ in such a way that
\begin{equation}\label{eqDefWnTilde} 
V(x) + W_M(x) = (V(x) + \ggrad w_{j(M)}(x)) \indyk_{\gm} + X_0 \indyk_{\gm^c} \ldotp
\end{equation}
We claim that $\{V + W_M\}_M$ is an admissible vector field in the $\overline{\II}_F[V]$ problem.
For that it is enough to show that $\| V + W_M - (V + \ggrad w_{j(M)}) \|_{\LL^p(\Omega)} \rightarrow 0$. 
By definition we have 
\begin{equation*}
\begin{aligned}
\| V + W_M - (V + \ggrad w_{j(M)})  \|_{\LL^p(\Omega)} & =  \| V + W_M - (V + \ggrad w_{j(M)}) \|_{\LL^p(\gm^c)}  \\
& \leqslant   \|X_0\|_{\LL^p(\gm^c)} + M^{-1} \left(\intOmega F^M(V + \ggrad w_{j(M)}) \dd x \right)^{1/p},
\end{aligned}
\end{equation*}
where the last inequality comes from the definition of the set $\gm^c$ and extending the integral to all of $\Omega$. Now, \eqref{eqEstimateForFM} yields 
$$  M^{-1} \left(\intOmega F^M(V + \ggrad w_{j(M)}) \dd x \right)^{1/p} \leq M^{-1} \left( \int_{\Omega} \overline{F}(V(x)) \dd x + 2 \varepsilon \right)^{1/p},$$
thus showing the desired convergence to $0$ in $\LL^p$, as $\|X_0\|_{\LL^p(\gm^c)} \to 0$ results simply from the fact that clearly the Lebesgue measure of $\gm^c$ tends to $0$, because 
$$F(V(x) + \ggrad w_{j(M)}(x)) > M \text{ on } \gm^c, $$ 
and we have a uniform (with respect to $M$) bound on the integral of the function in question.
This implies in particular that $V + W_M$ converges to $V$ in the sense of approximate $\WWap$ gradients convergence. Therefore, we have
\begin{eqnarray*}
\overline{\II}_F[V] &\leqslant& \liminf_{M \rightarrow \infty} \int_{\Omega} F(V + W_M) \dd x\\ 
&=& \liminf_{M \rightarrow \infty} \int_{\gm} F^M(V + w_{j(M)}) \dd x + \int_{\gm^c} F(X_0) \dd x\\
&\leqslant& \liminf_{M \rightarrow \infty} \int_{\Omega} \overline{F}(V(x)) \dd x + 2 \varepsilon = \int_{\Omega} \overline{F}(V(x)) \dd x + 2 \varepsilon,
\end{eqnarray*} 
where the last inequality results from \eqref{eqEstimateForFM} and the measure of $\gm^c$ tending to $0$. Since $\varepsilon > 0$ was arbitrary the proof is complete.
\end{proof}

\end{document}